\def\enumfix{%
\if@inlabel
 \noindent \par\nobreak\vskip-\topsep\hrule\@height\z@
\fi}
\let\olditemize\itemize
\def\itemize{\enumfix\olditemize}
\begin{document}

\title{A naive approach to genuine $G$-spectra and cyclotomic spectra}

\author{David Ayala, Aaron Mazel-Gee, and Nick Rozenblyum}

\date{\today}

\begin{abstract}
For any compact Lie group $G$, we give a description of genuine $G$-spectra in terms of the naive equivariant spectra underlying their geometric fixedpoints.  We use this to give an analogous description of cyclotomic spectra in terms of naive $\TT$-spectra (where $\TT$ denotes the circle group), generalizing Nikolaus--Scholze's recent work in the eventually-connective case.  We also give an explicit formula for the homotopy invariants of the cyclotomic structure on a cyclotomic spectrum in these terms.
\end{abstract}

\maketitle

\setcounter{tocdepth}{2}
\tableofcontents
\setcounter{tocdepth}{2}

\setcounter{section}{-1}

\section{Introduction}
\label{section.intro}

\startcontents[sections]


\subsection{Overview}
\label{subsection.overview}

\introparagraph

\traceismysterious
This is largely due to the fact that $\TC$ is defined through \textit{genuine equivariant stable homotopy theory}, which likewise remains mysterious from an algebro-geometric point of view.  Specifically, $\TC$ is defined using the \bit{cyclotomic structure} on \textit{topological Hochschild homology} ($\THH$), which is in turn defined using \textit{genuine} $\TT$-spectra (where $\TT$ denotes the circle group), as will be recalled in \Cref{subsection.cyclotomic.spt}.

The main output of this paper is a reidentification of the $\infty$-category of {cyclotomic spectra} in terms of \textit{naive} $\TT$-spectra (\Cref{mainthm.cyclo.spt}), as well as a formula for the operation taking $\THH$ to $\TC$ in these terms (\Cref{mainthm.formula.for.TC}); these both generalize recent work of Nikolaus--Scholze \cite{NS} in the eventually-connective case (see \Cref{remark.compare.with.NS}).  Along the way, we also provide a naive reidentification of the $\infty$-category of genuine $G$-spectra for any compact Lie group $G$ (\Cref{mainthm.genuine.G.spt}), which is inspired by recent work of Glasman \cite{Saul-strat} and Mathew--Naumann--Noel \cite{MNN}.  Our work relies heavily on the \textit{generalized Tate construction}, whose functoriality in extremely broad generality we also establish (see \Cref{remark.sell.proto.tate.package}).\footnote{The generalization Tate construction is a variation on the usual Tate construction, but where one quotients by norms from \textit{all} proper subgroups rather than just from the trivial subgroup (see Remarks \ref{rmk.tate.is.genzd.tate} \and \ref{remark.genzd.tate}).}

This paper is part of a trilogy, whose overarching purpose is to provide a precise conceptual description of the cyclotomic trace at the level of derived algebraic geometry: this is explained in
\cite[\S 0]{AMR-trace}.  In \cite{AMR-fact} we construct the $\THH$ of spectrally-enriched $\infty$-categories, and in \cite{AMR-trace} we endow this construction with its cyclotomic structure in the sense defined here and construct the cyclotomic trace in these terms.  The results of the latter paper both arise from the \textit{linearization} (in the sense of Goodwillie calculus) of more primitive structures for spaces obtained in the former paper, namely an unstable cyclotomic structure on spaces-enriched $\THH$ and an unstable cyclotomic trace map to the resulting unstable version of $\TC$.

\subsection{Genuine cyclotomic spectra}
\label{subsection.cyclotomic.spt}

In their influential paper \cite{BHM}, B\"okstedt--Hsiang--Madsen defined $\TC$ by observing that the $\THH$ spectrum (of an associative ring spectrum) carries certain additional structure maps -- resulting from what is now called its \bit{cyclotomic structure} -- and taking a limit over those maps.  Following work of Hesselholt--Madsen \cite{HessMad-Witt}, this construction was placed on firmer categorical footing by Blumberg--Mandell \cite{BluMan-cyclo}: they defined a homotopy theory -- precisely, a ``spectrally-enriched model* category'' -- of (what we'll refer to as \bit{genuine}) \bit{cyclotomic spectra}, which we denote by $\Cyclo^\gen(\Spectra)$, and they showed that $\TC$ could be recovered as the (derived) hom-spectrum
\begin{equation}
\label{TC.as.hom.spectrum}
\TC
\simeq
\ulhom_{\Cyclo^\gen(\Spectra)} ( \triv(\SS) , \THH)
\end{equation}
out of the sphere spectrum equipped with its trivial cyclotomic structure.  In \cite{BG-cyclo}, Barwick--Glasman refined this to a presentable stable $\infty$-category, which we continue to denote by $\Cyclo^\gen(\Spectra)$.

These approaches to cyclotomic spectra are based in \textit{genuine equivariant homotopy theory}.  Recall that for a compact Lie group $G$, the $\infty$-category
\[ \Spectra^{\gen G} \]
of \bit{genuine $G$-spectra} is an enhancement of the $\infty$-category
\[
\Spectra^{\htpy G}
:=
\Fun(\BG,\Spectra)
\]
of \bit{homotopy $G$-spectra}.\footnote{In addition to nicely paralleling the notation $\Spectra^{\gen G}$, the notation $\Spectra^{\htpy G}$ is consistent: this is the homotopy fixedpoints of the trivial $G$-action on the $\infty$-category $\Spectra$.}\footnote{The term ``naive $G$-spectra'' is often used in the literature to refer to the stabilization of the $\infty$-category of genuine $G$-spaces (i.e.\! spectral presheaves on the orbit category of $G$).  Thus, from here onwards we use the unambiguous term ``homotopy $G$-spectra'' whenever we wish to make precise statements, though we will still at times refer colloquially e.g.\! to ``naive equivariant spectra''.  (Homotopy $G$-spectra are also sometimes called ``very naive $G$-spectra'', ``doubly naive $G$-spectra'', or ``spectra with $G$-action'').}  Roughly speaking, this is obtained by ``remembering the genuine $H$-fixedpoints'' for all closed subgroups $H \leq G$, instead of simply taking them to be the homotopy $H$-fixedpoints of the underlying homotopy $G$-spectrum.  Thus, genuine $G$-spectra are essentially a stable analog of the $\infty$-category of \textit{genuine $G$-spaces}, which keeps track of the difference e.g.\! between $\sE G$ 
and a point.\footnote{Actually, this analogy is slightly imperfect: for various reasons, in defining genuine $G$-spectra (but not genuine $G$-spaces) one also forces certain ``representation spheres'' to be invertible under the smash product monoidal structure.}  This enhancement is crucial for many applications -- for instance, it is necessary for equivariant Poincar\'e duality -- but the algebro-geometric significance of genuine $G$-spectra is poorly understood.

More generally, there is a notion of a \textit{family} of subgroups of $G$, which permits a notion of $G$-spectra which are genuine only with respect to that family.  In particular, we obtain the $\infty$-category
\[
\Spectra^{\gen^\proper G}
\]
of \bit{proper-genuine $G$-spectra} by taking our family to consist of all proper closed subgroups of $G$.  Following \cite{BG-cyclo}, we refer to the $\infty$-category
\[
\Spectra^{\gen^\proper \TT}
\]
of proper-genuine $\TT$-spectra as that of \bit{cyclonic spectra}.

Now, the world of genuine $G$-spectra admits yet a third type of fixedpoints beyond genuine fixedpoints and homotopy fixedpoints, namely the \bit{geometric $H$-fixedpoints} functor
\[
\Spectra^{\gen G}
\xra{\Phi^H}
\Spectra^{\gen \Weyl (H)}
~,
\]
where we write
\[
\Weyl(H)
:=
\Weyl_G(H)
:=
{\sf N}_G(H)/H
\]
for the Weyl group of $H$ (the quotient by it of its normalizer in $G$).\footnote{More invariantly, one can also define $\Weyl(H)$ as the compact Lie group of $G$-equivariant automorphisms of $G/H$.}  Allowing ourselves a slight abuse of notation, let us also write
\[
\Phi^{\Cyclic_r}
:
\Spectra^{\gen^\proper \TT}
\xra{\Phi^{\Cyclic_r}}
\Spectra^{\gen^\proper (\TT/\Cyclic_r)}
\xra[\sim]{(\TT/\Cyclic_r) \simeq \TT}
\Spectra^{\gen^\proper \TT}
\]
for the composite endofunctor on cyclonic spectra.  Then, an object of $\Cyclo^\gen(\Spectra)$ is given by a cyclonic spectrum $T \in \Spectra^{\gen^\proper \TT}$ equipped with a system of equivalences
\[
\Phi^{\Cyclic_r} T
\xra[\sim]{\sigma^\gen_r}
T
\]
in $\Spectra^{\gen^\proper \TT}$ for all $r \in \Nx$, which we refer to as \bit{genuine cyclotomic structure maps}.  These must be suitably compatible: for instance, for any pair $r,s \in \Nx$ we must have a commutative square
\[ \begin{tikzcd}[row sep=1.5cm, column sep=1.5cm]
\Phi^{\Cyclic_{rs}} T
\arrow{r}{\sigma^\gen_{rs}}[swap]{\sim}
\arrow{d}[sloped, anchor=north]{\sim}
&
T
\\
\Phi^{\Cyclic_s} \Phi^{\Cyclic_r} T
\arrow{r}{\sim}[swap]{\Phi^{\Cyclic_s} (\sigma^\gen_r)}
&
\Phi^{\Cyclic_s} T
\arrow{u}[sloped, anchor=south]{\sim}[swap]{\sigma^\gen_s}
\end{tikzcd} \]
of equivalences in $\Spectra^{\gen^\proper \TT}$, where the canonical equivalence on the left arises from the fact that geometric fixedpoints functors compose.\footnote{This is actually not quite the compatibility condition given in \cite[Definitions 4.7 and 4.8]{BluMan-cyclo}, which appears to be a typo (comparing e.g.\! with the discussion of \cite[\S 6]{BluMan-cyclo}).}

\subsection{A naive approach to genuine equivariant spectra}
\label{subsection.naive.genuine.spectra}

By work of Guillou--May \cite{GM-gen}, one can actually \textit{define} genuine $G$-spectra in terms of their diagrams of genuine fixedpoints (at least when $G$ is a finite group) -- see also Barwick's work \cite{Bar-Mack}.  On the other hand, there has been much recent activity aimed towards alternative presentations of genuine $G$-spectra entirely in terms of the \textit{naive} equivariant spectra extracted from their geometric fixedpoints \cite{AbKriz-MUG,Saul-strat,MNN}.

In this paper, we contribute a new perspective to this body of work.  Namely, for an arbitrary compact Lie group $G$, we identify the $\infty$-category $\Spectra^{\gen G}$ as the {right-lax limit} of a certain {left-lax diagram}, which is comprised of the $\infty$-categories $\Spectra^{\htpy \Weyl(H)}$ as $H \leq G$ varies over conjugacy classes of closed subgroups; the canonical functor from the right-lax limit is given by the composite
\[
\Spectra^{\gen G}
\xra{\Phi^H}
\Spectra^{\gen \Weyl(H)}
\xlongra{U}
\Spectra^{\htpy \Weyl(H)}
~,
\]
where $U$ denotes the forgetful functor.  This diagram is indexed by the poset $\pos_G$ of closed subgroups of $G$ ordered by subconjugacy.  Precisely, the first main result of this paper reads as follows.

\begin{maintheorem}[A naive description of genuine $G$-spectra (\Cref{cor.gen.G.spt.as.rlax.lim})]
\label{mainthm.genuine.G.spt}
For any compact Lie group $G$, there exists a canonical left-lax left $\pos_G$-module
\[
\themod^{\gen G}
\in
\LMod_{\llax.\pos_G}
~,
\]
whose value on an object $H \in \pos_G$ is the $\infty$-category $\Spectra^{\htpy \Weyl(H)}$ of homotopy $\Weyl(H)$-spectra, as well as a canonical equivalence
\[
\Spectra^{\gen G}
\simeq
\lim^\rlax
\left(
\pos_G
\overset{\llax}{\lacts}
\themod^{\gen G}
\right)
\]
between the $\infty$-category of genuine $G$-spectra and its right-lax limit.
\end{maintheorem}

\begin{remark}
\Cref{mainthm.genuine.G.spt} follows by combining two general results regarding \bit{fractured stable $\infty$-categories}, a notion we introduce here -- a variation on the ``stratified stable $\infty$-categories'' of \cite{Saul-strat} (see \Cref{rmk.fracture.neq.strat}).  To be more specific, consider a stable $\infty$-category $\cC$ and a poset $\pos$.
\begin{itemize}

\item In \Cref{thm.fracs.are.llax.modules}, we show that a fracture of $\cC$ over $\pos$ determines a certain left-lax left $\pos$-module, of which $\cC$ is the right-lax limit.

\item In \Cref{thm.dcc.and.fracturing.gives.fracture}, we give general criteria for obtaining a fracture of $\cC$ over $\pos$.

\end{itemize}
We verify that the criteria of \Cref{thm.dcc.and.fracturing.gives.fracture} are satisfied when $\cC = \Spectra^{\gen G}$ and $\pos = \pos_G$ in \Cref{prop.can.fracture.of.gen.G.spt}.
\end{remark}

\begin{remark}
As a warmup for \Cref{thm.fracs.are.llax.modules}, in \Cref{subsection.exs.of.fracs.of.gen.spt} we study the fracture decompositions guaranteed by \Cref{mainthm.genuine.G.spt} of the $\infty$-categories
\[
\Spectra^{\gen \Cyclic_p}
~,
\qquad
\Spectra^{\gen \Cyclic_{p^2}}~
,
\qquad
\textup{and}
\qquad
\Spectra^{\Cyclic_{pq}}
\]
for $p$ and $q$ distinct primes.  This can be read largely independently of (in fact, as motivation for) \Cref{section.lax.actions.and.limits}.
\end{remark}

\begin{remark}
Though the full generality of our definition is new, fractured stable $\infty$-categories have a rich history: they are a vast generalization of \textit{recollements}, which determine fractures over the poset $[1]$.  Perhaps the most prominent example of this phenomenon is the reidentification of sheaves on a scheme in terms of
\begin{itemize}
\item sheaves on a closed subcheme,
\item sheaves on its open complement, and
\item gluing data therebetween.\footnote{On the other hand, stable homotopy theorists will also be familiar with the \textit{chromatic fracture squares} that govern the reassembly of the stable homotopy category from its various chromatic localizations.}
\end{itemize}
In light of this, \Cref{mainthm.genuine.G.spt} paves the way for an \textit{algebro-geometric} interpretation of genuine $G$-spectra, namely as the sheaves of spectra on some putative stack which admits a decomposition into the stacks $\{ \sB \Weyl(H) \}_{H \leq G}$.
\end{remark}

\subsection{A naive approach to cyclotomic spectra}
\label{subsection.intro.cyclo.spt}

Since cyclotomic spectra are defined in terms of cyclonic (i.e.\! proper-genuine $\TT$-)spectra, we actually seek a slight variant of \Cref{mainthm.genuine.G.spt} which reidentifies these in naive terms.  For this, observe that
\[
\pos_\TT
\cong
(\Ndiv)^\rcone
~,
\]
where $\Ndiv$ denotes the poset of natural numbers under divisibility: the element $r \in \Ndiv$ corresponds to the subgroup $\Cyclic_r \leq \TT$, while the cone point corresponds to the full subgroup $\TT \leq \TT$.  As it turns out, passing from genuine $\TT$-spectra to proper-genuine $\TT$-spectra amounts to ignoring the cone point.

\begin{cor}[A naive description of cyclonic spectra (\Cref{cor.proper.gen.G.spt.as.rlax.lim})]
\label{mainthm.cyclonic.spt}
There exists a canonical left-lax left $\Ndiv$-module
\[
\themod^{\gen^\proper \TT}
\in
\LMod_{\llax.\Ndiv}
~,
\]
whose value on an object $r \in \Ndiv$ is the $\infty$-category $\Spectra^{\htpy (\TT/\Cyclic_r)} \simeq \Spectra^{\htpy \TT}$ of homotopy $\TT$-spectra, as well as a canonical equivalence
\[
\Spectra^{\gen^\proper \TT}
\simeq
\lim^\rlax
\left(
\Ndiv
\overset{\llax}{\lacts}
\themod^{\gen^\proper \TT}
\right)
\]
between the $\infty$-category of cyclonic spectra and its right-lax limit.
\end{cor}

\noindent Of course, the left-lax left $\Ndiv$-module $\themod^{\gen^\proper \TT}$ is obtained simply by restricting the left-lax left $\pos_\TT$-module $\themod^{\gen \TT}$ along the inclusion $\Ndiv \hookra \pos_\TT$.

\begin{remark}
To a morphism
\[
i
\longra
ir
\]
in $\Ndiv$, the left-lax left $\Ndiv$-module $\themod^{\gen^\proper \TT}$ assigns the \bit{generalized Tate construction}, a functor
\[
\Spectra^{\htpy (\TT/\Cyclic_i)}
\xra{(-)^{\tate \Cyclic_r}}
\Spectra^{\htpy (\TT/\Cyclic_{ir})}
~.
\]
By definition, this to be the composite
\[
{(-)^{\tate \Cyclic_r}}
:
\Spectra^{\htpy (\TT/\Cyclic_i)}
\xlongra{\beta}
\Spectra^{\gen (\TT/\Cyclic_i)}
\xra{\Phi^{\Cyclic_r}}
\Spectra^{\gen (\TT/\Cyclic_{ir})}
\xlongra{U}
\Spectra^{\htpy (\TT/\Cyclic_{ir})}
~,
\]
where $\beta$ denotes the right adjoint of the forgetful functor $U$.  However, we note here that this functor admits a characterization which makes no reference to genuine equivariant homotopy theory (see \Cref{rmk.genzd.tate.isnt.genuine}).
\end{remark}

\begin{remark}
The module $\themod^{\gen^\proper \TT}$ is only \textit{left-lax} because generalized Tate constructions don't strictly compose.  Rather, a commutative triangle
\[ \begin{tikzcd}[row sep=1.5cm]
&
ir
\arrow{rd}
\\
i
\arrow{ru}
\arrow{rr}
&
&
irs
\end{tikzcd} \]
in $\Ndiv$ is assigned to a \textit{lax-commutative} triangle
\begin{equation}
\label{lax.comm.triangle.witnessing.left.laxness.of.the.mod.for.cyclonic.spt}
\begin{tikzcd}[row sep=1.5cm, column sep=0cm]
&
\Spectra^{\htpy (\TT/\Cyclic_{ir})}
\arrow{rd}{(-)^{\tate \Cyclic_s}}
\\
\Spectra^{\htpy (\TT/\Cyclic_i)}
\arrow{ru}{(-)^{\tate \Cyclic_r}}
\arrow{rr}[transform canvas={yshift=0.6cm}]{\rotatebox{90}{$\Rightarrow$}}[swap]{(-)^{\tate \Cyclic_{rs}}}
&
&
\Spectra^{\htpy (\TT/\Cyclic_{irs})}
\end{tikzcd}~.
\end{equation}
Of course, these lax-commutative triangles are just the beginning of an infinite hierarchy of compatibility data that collectively witnesses the left-laxness of the module $\Spectra^{\gen^\proper \TT}$.
\end{remark}

Now, observe that the commutative monoid $\Nx$ of natural numbers under multiplication acts on the poset $\Ndiv$ by dilation: the element $r \in \Nx$ acts as the endomorphism
\[ \begin{tikzcd}[row sep=0cm]
\Ndiv
&
\Ndiv
\arrow{l}[swap]{r}
\\
\rotatebox{90}{$\in$}
&
\rotatebox{90}{$\in$}
\\
ir
&
i
\arrow[mapsto]{l}
\end{tikzcd}~. \]
This endomorphism induces a functor
\begin{equation}
\label{functor.to.be.identified.as.an.endofunctor.on.gen.proper.T.spt}
\lim^\rlax
\left(
\Ndiv
\overset{\llax}{\lacts}
\themod^{\gen^\proper \TT}
\right)
\longra
\lim^\rlax
\left(
\Ndiv
\overset{\llax}{\lacts}
r^* \themod^{\gen^\proper \TT}
\right)
\end{equation}
on right-lax limits.  The equivariance of our constructions furnishes compatible equivalences
\[
r^*\themod^{\gen^\proper \TT}
\simeq
\themod^{\gen^\proper \TT}
~,
\]
and thereafter it will follow that the functor \Cref{functor.to.be.identified.as.an.endofunctor.on.gen.proper.T.spt} corresponds through \Cref{mainthm.cyclonic.spt} with the endofunctor
\[
\Spectra^{\gen^\proper \TT}
\xra{\Phi^{\Cyclic_r}}
\Spectra^{\gen^\proper (\TT/\Cyclic_r)}
\xra[\sim]{(\TT/\Cyclic_r) \simeq \TT}
\Spectra^{\gen^\proper \TT}
~.
\]
All in all, these considerations lead to the main result of this paper.

\begin{maintheorem}[A naive description of cyclotomic spectra (\Cref{thm.in.body.cyclo.spt})]
\label{mainthm.cyclo.spt}
There exists a canonical left-lax right $\BN$-module
\[
\themod^\Cyclo
\in
\RMod_{\llax.\BN}
~,
\]
whose value on the unique object of $\BN$ is the $\infty$-category $\Spectra^{\htpy \TT}$ of homotopy $\TT$-spectra, as well as a canonical equivalence
\[
\Cyclo^\gen(\Spectra)
\simeq
\lim^\rlax
\left(
\themod^\Cyclo
\overset{\llax}{\racts}
\BN
\right)
\]
between the $\infty$-category of genuine cyclotomic spectra and its right-lax limit.
\end{maintheorem}

\begin{remark}
It is ultimately the construction of the cyclotomic structure on $\THH$ of \cite{AMR-trace} which dictates that $\Spectra^\Cyclo$ should be a left-lax \textit{right} module over $\BN$.
\end{remark}

\begin{remark}
\label{remark.naive.cyclo.structure}
In order to unwind the content of \Cref{mainthm.cyclo.spt}, let us allow ourselves a slight abuse of notation by simply writing
\[
(-)^{\tate \Cyclic_r}
:
\Spectra^{\htpy \TT}
\xra{(-)^{\tate \Cyclic_r}}
\Spectra^{\htpy (\TT/\Cyclic_r)}
\xra[\sim]{(\TT/\Cyclic_r) \simeq \TT}
\Spectra^{\htpy \TT}
\]
for the composite endofunctor, which is the value of the module $\themod^\Cyclo$ on the morphism $[1] \xra{r} \BN$.  Then, \Cref{mainthm.cyclo.spt} asserts that a genuine cyclotomic spectrum is specified by a homotopy $\TT$-spectrum
\[
T \in \Spectra^{\htpy \TT}
\]
equipped with a \bit{cyclotomic structure map}
\begin{equation}
\label{naive.cyclo.str.map}
T
\xlongra{\sigma_r}
T^{\tate \Cyclic_r}
\end{equation}
in $\Spectra^{\htpy \TT}$ for each $r \in \Nx$, along with an infinite hierarchy of compatibility data.  As a first example of such compatibility data, for any pair of elements $r,s \in \Nx$ we must be given the \textit{data} of a commutative square
\begin{equation}
\label{comm.square.in.defn.of.cyclo.spt}
\begin{tikzcd}[row sep=1.5cm, column sep=1.5cm]
T
\arrow{r}{\sigma_s}
\arrow{d}[swap]{\sigma_{rs}}
&
T^{\tate \Cyclic_s}
\arrow{d}{(\sigma_r)^{\tate \Cyclic_s}}
\\
T^{\tate \Cyclic_{rs}}
\arrow{r}
&
\left( T^{\tate \Cyclic_r} \right)^{\tate \Cyclic_s}
\end{tikzcd}
\end{equation}
in $\Spectra^{\htpy \TT}$, where the bottom map is essentially the natural transformation in the lax-commutative triangle \Cref{lax.comm.triangle.witnessing.left.laxness.of.the.mod.for.cyclonic.spt}.  More generally, for each word $W = (r_1,\ldots,r_n)$ in $\Nx$, we must be given the \textit{data} of a commutative $n$-cube in $\Spectra^{\htpy \TT}$, and these must be suitably compatible as the word $W$ varies.\footnote{The case where $n=3$ is illustrated in \cite[\Cref*{trace:tau.action.only.left.lax}]{AMR-trace}.}  Given a genuine cyclotomic spectrum
\[
\tilde{T} \in \Cyclo^\gen(\Spectra)
~,
\]
the cyclotomic structure map \Cref{naive.cyclo.str.map} on its underlying homotopy $\TT$-spectrum
\[
U\tilde{T} \in \Spectra^{\htpy \TT}
\]
is given by the composite
\[
U\tilde{T}
\xla[\sim]{U(\sigma^\gen_r)}
U\Phi^{\Cyclic_r}\tilde{T}
\longra
U \Phi^{\Cyclic_r} \beta U \tilde{T}
=:
(U\tilde{T})^{\tate \Cyclic_r}
~.
\]
\end{remark}

\begin{remark}
The sorts of words in $\Nx$ that are relevant to us are parametrized by the subdivision category $\sd(\BN)$: its objects are precisely the equivalence classes of words in $\Nx$ under the relation that any instances of the element $1 \in \Nx$ may be freely inserted or omitted. 
In fact, this is not a coincidence: subdivision plays a key role in the definition of a \textit{right}-lax limit of a \textit{left}-lax action (i.e.\! it appears because these handednesses disagree), and it will reappear in our explicit formula for $\TC$
in \Cref{mainthm.formula.for.TC} below.
\end{remark}

\subsection{The formula for $\TC$}

Justified by \Cref{mainthm.cyclo.spt}, we write
\[
\Cyclo(\Spectra)
:=
\lim^\rlax
\left(
\themod^\Cyclo
\overset{\llax}{\racts}
\BN
\right)
\]
for the $\infty$-category of \bit{cyclotomic spectra}.  There is an adjunction
\begin{equation}
\label{intro.adjn.htpy.fps.of.cyclo.str}
\begin{tikzcd}[column sep=2cm]
\Spectra
\arrow[transform canvas={yshift=0.9ex}]{r}{\triv}
\arrow[leftarrow, transform canvas={yshift=-0.9ex}]{r}[yshift=-0.2ex]{\bot}[swap]{(-)^{\htpy \Cyclo}}
&
\Cyclo(\Spectra)
\end{tikzcd}
~,
\end{equation}
whose right adjoint we refer to informally and notationally as the \textit{homotopy invariants} of the cyclotomic structure.  As this right adjoint takes $\THH$ to $\TC$, it is crucial for us to obtain an explicit description thereof: this justifies our algebro-geometric description of $\TC$ given in \cite[\S 0]{AMR-trace}.\footnote{The expression \Cref{TC.as.hom.spectrum} for $\TC$ as a hom-spectrum does not suffice for our purposes: we require a precise description of how $\TC$ is built using the data described in \Cref{remark.naive.cyclo.structure}.}
Such an explicit description is the content of our final main theorem. 

\begin{maintheorem}[The explicit formula for $\TC$ (\Cref{thm.formula.for.cyclo.fixedpts})]
\label{mainthm.formula.for.TC}
There is a canonical factorization
\[
\begin{tikzcd}[row sep=1.5cm]
\Spectra
\arrow[leftarrow]{rr}{(-)^{\htpy \Cyclo}}
&
&
\Cyclo(\Spectra)
\arrow{ld}{(-)^{\htpy \TT}}
\\
&
\Fun(\sd(\BN),\Spectra)
\arrow{lu}{\lim}
\end{tikzcd}
~,\footnote{It would be slightly more correct to write $\sd \left( \BNop \right)$ instead of $\sd(\BN)$, but these two categories are canonically equivalent (due to the commutativity of the monoid $\Nx$) and so to simplify our notation we elide the distinction here.}
\]
where the first functor takes a cyclotomic spectrum $T \in \Cyclo(\Spectra)$ to the diagram
\begin{equation}
\label{diagram.indexed.by.sd.BN}
\begin{tikzcd}[row sep=0cm, column sep=1.5cm]
&
\sd(\BN)
\arrow{r}{T^{\htpy \TT}}
&
\Spectra
\\
&
\rotatebox{90}{$\in$}
&
\rotatebox{90}{$\in$}
\\
(r_1,\ldots,r_k)
=:
\hspace{-2.2cm}
&
W
\arrow[maps to]{r}
&
\left( T^{\tate \Cyclic_W} \right)^{\htpy \TT}
&
\hspace{-1.7cm}
:= \left( T^{\tate \Cyclic_{r_1} \cdots \tate \Cyclic_{r_k}} \right)^{\htpy \TT}
\end{tikzcd}
\end{equation}
of spectra.
\end{maintheorem}

\begin{remark}
We unwind the definitions of the category $\sd(\BN)$ and the diagram \Cref{diagram.indexed.by.sd.BN} in \Cref{unwind.sd.BN}, and we explain how \Cref{mainthm.formula.for.TC} supports our algebro-geometric description of $\TC$ in \cite[\Cref*{trace:geometry.of.lim.sd.BN}]{AMR-trace}.
\end{remark}

\subsection{Miscellaneous remarks}

\begin{remark}
\label{remark.compare.with.NS}
This paper finds an important precedent in the recent work \cite{NS} of Nikolaus--Scholze.  Their main theorem provides a similar -- but far simpler -- naive reidentification of the full subcategory
\[
\Cyclo(\Spectra)_+
\subset
\Cyclo(\Spectra)
\]
of cyclotomic spectra whose underlying spectra are \textit{eventually-connective} (i.e.\! bounded below).  Let us describe some of their various remarkable results as they relate to the present paper.

\begin{enumerate}

\item\label{NS.prove.primes.dont.talk}
First of all, they prove that the generalized Tate construction is an essentially $p$-primary phenomenon: as soon as the product $r_1 \cdots r_n$ is divisible by more than one prime (for any $n \geq 1$), the functor
\[
(-)^{\tate \Cyclic_{r_1} \cdots \tate \Cyclic_{r_n}}
\]
is zero.  It follows that the data of a cyclotomic structure on any $T \in \Spectra^{\htpy \TT}$ may be specified one prime at a time, e.g.\! we only need specify the data of the commutative square \Cref{comm.square.in.defn.of.cyclo.spt} when $r = p^i$ and $s = p^j$ for some fixed prime number $p$.

\item Next, they prove a generalization of the Segal conjecture, which (after \Cref{NS.prove.primes.dont.talk}) can be read as saying that the left-lax right $\BN$-module $\themod^\Cyclo$ of \Cref{mainthm.cyclo.spt} restricts to a \textit{strict} right $\BN$-module when considering only eventually-connective spectra.  It then follows that the $p$-primary part of a cyclotomic structure on any $T \in \Spectra^{\htpy \TT}_+$ is already canonically determined by the single cyclotomic structure map
\[
T
\xra{\sigma_p}
T^{\tate \Cyclic_p}
~;
\]
for instance, this determines both the structure map
\[
T
\xra{\sigma_{p^2}}
T^{\tate \Cyclic_{p^2}}
\]
and the commutative square \Cref{comm.square.in.defn.of.cyclo.spt} in the case where $r=s=p$ through the unique lift
\[
\begin{tikzcd}[row sep=1.5cm, column sep=1.5cm]
T
\arrow{r}{\sigma_p}
\arrow[dashed]{d}[swap]{\sigma_{p^2}}
&
T^{\tate \Cyclic_p}
\arrow{d}{(\sigma_p)^{\tate \Cyclic_p}}
\\
T^{\tate \Cyclic_{p^2}}
\arrow{r}[swap]{\sim}
&
\left( T^{\tate \Cyclic_p} \right)^{\tate \Cyclic_p}
\end{tikzcd}
~.
\]

\item Finally, they prove that the homotopy fixedpoints of any $T \in \Cyclo(\Spectra)_+$ can be recovered as the equalizer
\[
T^{\htpy \Cyclo}
\xlongra{\sim}
{\sf eq}
\left(
\begin{tikzcd}[column sep=0.5cm]
T^{\htpy \TT}
\arrow[transform canvas={yshift=0.7ex}]{r}
\arrow[transform canvas={yshift=-0.7ex}]{r}
&
{\displaystyle \left( \prod_{p \textup{ prime}} \left( T^{\tate \Cyclic_p} \right)^{\htpy \TT} \right) }
\end{tikzcd}
\right)~,
\]
where in the factor corresponding to the prime $p$, one of the maps is $(\sigma_p)^{\htpy \TT}$ and the other is the canonical map
\[
T^{\htpy \TT}
\simeq
\left( T^{\htpy \Cyclic_r} \right)^{\htpy (\TT/\Cyclic_r)}
\longra
\left( T^{\tate \Cyclic_r} \right)^{\htpy (\TT/\Cyclic_r)}
\simeq
\left( T^{\tate \Cyclic_r} \right)^{\htpy \TT}
\]
(where the first equivalence is due to the fact that homotopy fixedpoints compose while the second equivalence passes through the identification $(\TT/\Cyclic_r) \simeq \TT$).  In fact, the parallel morphisms
\[ \begin{tikzcd}
T^{\htpy \TT}
\arrow[transform canvas={yshift=0.7ex}]{r}
\arrow[transform canvas={yshift=-0.7ex}]{r}
&
\left( T^{\tate \Cyclic_p} \right)^{\htpy \TT}
\end{tikzcd} \]
are precisely the image under the functor \Cref{diagram.indexed.by.sd.BN} in \Cref{mainthm.formula.for.TC} of certain parallel morphisms
\[ \begin{tikzcd}
\emptyword
\arrow[transform canvas={yshift=0.7ex}]{r}
\arrow[transform canvas={yshift=-0.7ex}]{r}
&
(p)
\end{tikzcd} \]
in $\sd(\BN)$.\footnote{From here, it is not hard to see that our formula for the functor $(-)^{\htpy \Cyclo}$ given in \Cref{mainthm.formula.for.TC} reduces to theirs in the eventually-connective case.}\footnote{On the other hand, we view the additional complexity present in our formula (as a limit over $\sd(\BN)$) as a feature: it is necessary in order to capture the full algebro-geometric significance of $\TC$ described in \cite[\S 0]{AMR-trace} (see in particular \cite[\Cref*{trace:geometry.of.lim.sd.BN}]{AMR-trace}).}

\end{enumerate}

We note that the methods employed in \cite{NS} are quite different from those employed here.  We also note that the framework of \cite{NS} only applies to $\THH(A)$ for \textit{connective} ring spectra $A$, since $\THH(A)$ may not be eventually-connective if $A$ is nonconnective.
\end{remark}

\begin{remark}
\label{remark.sell.proto.tate.package}
As mentioned in \Cref{subsection.overview}, in this work we establish the functoriality of the generalization Tate construction (for finite groups) in extremely broad generality: simultaneously
\begin{itemize}
\item in parametrized families,
\item as it receives a map from the homotopy fixedpoints,
\item for extensions of groups, and
\item interspersed with pullbacks (i.e.\! changing the base $\infty$-category that parametrizes the family).
\end{itemize}
We refer to this result (\Cref{proto.tate}) as the \bit{proto Tate package}: in addition to its use in the present paper, it is an essential input for the \textit{Tate package} (constructed in \cite{AMR-trace}), which endows $\THH$ of an arbitrary spectrally-enriched $\infty$-category with its cyclotomic structure.  We find this result to be of independent interest, and we expect that it may be useful yet elsewhere -- for instance, in establishing higher-dimensional versions of $\TC$.
\end{remark}

\subsection{Outline}

This paper is organized as follows.

\begin{itemize}

\item We begin in \Cref{section.lax.actions.and.limits} with an overview of the theory of lax actions and lax limits.  Since these notions will be very important throughout this trilogy, we go into explicit detail and provide many worked-through examples.  Our presentation of certain of these notions is new, but we show in \Cref{subsection.lax.and.adjns} that they all agree with the corresponding notions coming from $(\infty,2)$-category theory \cite{GR}.

\item Then, in \Cref{section.naive.approach.to.gen.G.spt} we study fractured stable $\infty$-categories, and we apply their general theory to deduce our naive descriptions of genuine $G$-spectra and of cyclotomic spectra (Theorems \ref{mainthm.genuine.G.spt} \and \ref{mainthm.cyclo.spt}).

\item Finally, in \Cref{section.proof.of.TC.formula} we obtain our explicit formula for $\TC$ (\Cref{mainthm.formula.for.TC}).

\end{itemize}

\subsection{Notation and conventions}

\begin{enumerate}

\item \catconventions \inftytwoconventions  

\item \functorconventions

\item \circconventions

\item \kanextnconventions

\item \spacescatsspectraconventions


\item \fibrationconventions \dualfibnsconvention

\item \efibconventions



\end{enumerate}

\subsection{Acknowledgments}

We would like to thank Thomas Nikolaus and Peter Scholze for generously sharing their ideas about cyclotomic spectra and for numerous inspiring discussions of this material.  AMG additionally thanks Saul Glasman, Denis Nardin, and Aaron Royer for a number of helpful conversations, and NR thanks Akhil Mathew for the same.

\acksupport

\stopcontents[sections]

\section{Actions and limits, strict and lax}
\label{section.lax.actions.and.limits}

In this section, for a group (or monoid) $G$, we provide definitions of strict, left-lax, and right-lax $G$-actions on $\infty$-categories, as well as strict, left-lax, and right-lax limits thereof.\footnote{Perhaps surprisingly, these various limits are actually well-defined in all cases.}  In fact, these notions only depend on the category $\BG$, and indeed it will be useful for us to give our definitions in terms of an arbitrary base $\infty$-category $\cB$; the above cases are obtained by taking $\cB = \BG$.

\begin{warning}
\label{warning.about.module.terminology}
This choice has the slightly unfortunate consequence that for instance the terminology ``$\cB$-modules'' specializes to ``$\BG$-modules'' instead of ``$G$-modules''.  Thus, in this paper (but \textit{not} in the papers \cite{AMR-fact} and \cite{AMR-trace}), to maintain uniformity we use the terminology ``$\BG$-modules'', as well as its accompanying notation.
\end{warning}

In \Cref{subsection.lax.actions}, we introduce all of our notions of $\cB$-modules and most of our notions of equivariant functors.  Then, in \Cref{subsection.lax.limits} we define most of the sorts of limit that exist.  Next, in \Cref{subsection.lax.limits.in.hard.case} we introduce the remaining sorts of limit.  This leads to our remaining notions of equivariant functors, which we introduce in \Cref{subsection.lax.equivt.functors.in.hard.case}.  Finally, in \Cref{subsection.lax.and.adjns} we assemble a pair of results regarding lax actions and lax limits which will be immensely useful throughout this trilogy.

\begin{remark}
We omit essentially all mention of lax \textit{colimits}, as we will have no explicit need for them.  On the other hand, they will certainly be present: for example, the left-lax colimit of a functor $\cB \ra \Cat$ is nothing other than the total $\infty$-category $\cE$ of the cocartesian fibration $\cE \da \cB$ that it classifies.  See \Cref{rmk.lax.colimit.isnt.a.segal.space} for a somewhat exotic variation on this idea.
\end{remark}

\subsection{Strict and lax actions}
\label{subsection.lax.actions}

We begin this subsection with an omnibus definition, which the remainder of the subsection is dedicated to discussing.

\begin{definition}
\label{define.almost.all.modules}
In \Cref{figure.define.almost.all.modules}, various $\infty$-categories of \bit{$\cB$-modules} depicted on the left side are defined as indicated on the right side.  The objects in the $\infty$-categories in the upper left diagram are (various sorts of) \bit{left} $\cB$-modules, while the objects in the $\infty$-categories in the lower left diagram are (various sorts of) \bit{right} $\cB$-modules.  In both diagrams on the left side, we refer
\begin{itemize}
\item to the objects
\begin{itemize}
\item in the middle rows as (\bit{strict}) $\cB$-modules,
\item in the top rows as \bit{left-lax} $\cB$-modules, and
\item in the bottom rows as \bit{right-lax} right $\cB$-modules,
\end{itemize}
and
\item to the morphisms
\begin{itemize}
\item in the middle columns as (\bit{strictly}) \bit{equivariant},
\item in the left columns as \bit{left-lax equivariant}, and
\item in the right columns as \bit{right-lax equivariant}.
\end{itemize}
\end{itemize}
So in our notation, laxness of the actions is indicated by a subscript (placed before ``.$\cB$''), while laxness of the morphisms is indicated by a superscript.
\begin{sidewaysfigure}
\vspace{450pt}
\[ \begin{tikzcd}[row sep=1.5cm]
\LMod^\llax_{\llax.\cB}
\arrow[\surjmonoleft]{r}
&
\LMod_{\llax.\cB}
\\
\LMod^\llax_\cB
\arrow[\surjmonoleft]{r}
\arrow[hook]{u}{\ff}
&
\LMod_\cB
\arrow[hook, two heads]{r}
\arrow[hook]{u}[swap]{\ff}
\arrow[hook]{d}[swap]{\ff}
&
\LMod^\rlax_\cB
\arrow[hook]{d}{\ff}
\\
&
\LMod_{\rlax.\cB}
\arrow[hook, two heads]{r}
&
\LMod^\rlax_{\rlax.\cB}
\end{tikzcd}
\qquad
:=
\qquad
\begin{tikzcd}[row sep=1.5cm]
\Cat_{\loc.\cocart/\cB}
\arrow[\surjmonoleft]{r}
&
\loc.\coCart_\cB
\\
\Cat_{\cocart/\cB}
\arrow[\surjmonoleft]{r}
\arrow[hook]{u}{\ff}
&
\coCart_\cB
\arrow[hook]{u}[swap]{\ff}
\\[-1.7cm]
&
&[-1.3cm]
\rotatebox{-30}{$\simeq$}
&[-1.2cm]
\\[-1.7cm]
&
&
&
\Cart_{\cB^\op}
\arrow[hook, two heads]{r}
\arrow[hook]{d}[swap]{\ff}
&
\Cat_{\cart/\cB^\op}
\arrow[hook]{d}{\ff}
\\
&
&
&
\loc.\Cart_{\cB^\op}
\arrow[hook, two heads]{r}
&
\Cat_{\loc.\cart/\cB^\op}
\end{tikzcd} \]

\vspace{50pt}

\[ \begin{tikzcd}[row sep=1.5cm]
\RMod^\llax_{\llax.\cB}
\arrow[\surjmonoleft]{r}
&
\RMod_{\llax.\cB}
\\
\RMod^\llax_\cB
\arrow[\surjmonoleft]{r}
\arrow[hook]{u}{\ff}
&
\RMod_\cB
\arrow[hook, two heads]{r}
\arrow[hook]{u}[swap]{\ff}
\arrow[hook]{d}[swap]{\ff}
&
\RMod^\rlax_\cB
\arrow[hook]{d}{\ff}
\\
&
\RMod_{\rlax.\cB}
\arrow[hook, two heads]{r}
&
\RMod^\rlax_{\rlax.\cB}
\end{tikzcd}
\qquad
:=
\qquad
\begin{tikzcd}[row sep=1.5cm]
\Cat_{\loc.\cocart/\cB^\op}
\arrow[\surjmonoleft]{r}
&
\loc.\coCart_{\cB^\op}
\\
\Cat_{\cocart/\cB^\op}
\arrow[\surjmonoleft]{r}
\arrow[hook]{u}{\ff}
&
\coCart_{\cB^\op}
\arrow[hook]{u}[swap]{\ff}
\\[-1.5cm]
&
&[-1.3cm]
\rotatebox{-30}{$\simeq$}
&[-1.2cm]
\\[-1.5cm]
&
&
&
\Cart_\cB
\arrow[hook, two heads]{r}
\arrow[hook]{d}[swap]{\ff}
&
\Cat_{\cart/\cB}
\arrow[hook]{d}{\ff}
\\
&
&
&
\loc.\Cart_\cB
\arrow[hook, two heads]{r}
&
\Cat_{\loc.\cart/\cB}
\end{tikzcd} \]

\vspace{50pt}

\caption{The commutative diagrams of monomorphisms among $\infty$-categories on the left are defined to be those on the right.}
\label{figure.define.almost.all.modules}
\end{sidewaysfigure}
\end{definition}

\begin{remark}
We'll give definitions in \Cref{subsection.lax.equivt.functors.in.hard.case} that extend the diagrams of \Cref{figure.define.almost.all.modules} to full $3 \times 3$ grids (see \Cref{obs.that.defn.of.rlax.lim.extends}).
\end{remark}

\begin{notation}
In this paper, we denote e.g.\! a right-lax left $\cB$-module
\[
(\cE \da \cB^\op)
\in
\loc.\Cart_{\cB^\op}
:=
\RMod_{\rlax.\cB^\op}
:=
\LMod_{\rlax.\cB}
\qquad
\textup{by}
\qquad
\cB
\overset{\rlax}{\lacts}
\cE
~,
\]
and analogously for all other sorts of $\cB$-modules.
\end{notation}

\begin{warning}
In concordance with \Cref{warning.about.module.terminology}, in the papers \cite{AMR-fact} and \cite{AMR-trace} we endow the action arrows $\lacts$ and $\racts$ with their more usual meaning: for instance, in those works we would write
\[
G
\overset{\rlax}{\lacts}
\cE_0
\qquad
\textup{instead of}
\qquad
\BG
\overset{\rlax}{\lacts}
\cE
~,
\]
where $\cE_0$ denotes the fiber of $\cE$ over the unique object of $\BG^\op$.
\end{warning}

\begin{example}
\label{lax.equivariance.of.strict.modules.over.walking.arrow}
Let us unwind the definitions of the $\infty$-categories
\[
\LMod_\cB
~,
\qquad
\LMod_\cB^\llax
~,
\qquad
\RMod_\cB
~,
\qquad
\textup{and}
\qquad
\RMod_\cB^\rlax
\]
in the simplest nontrivial case, namely when $\cB = [1]$.
\begin{enumerate}
\item\label{left.lax.equivariance.of.strict.modules}
Let $\cE \da [1]$ and $\cF \da [1]$ be cocartesian fibrations, the unstraightenings of functors
\[
[1]
\xra{\brax{\cE_{|0} \xra{E} \cE_{|1}}}
\Cat
\]
and
\[
[1]
\xra{\brax{\cF_{|0} \xra{F} \cF_{|1}}}
\Cat
~,
\]
respectively.  Then, let us consider a left-lax equivariant functor
\[ \begin{tikzcd}
\cE
\arrow{rr}{\varphi}
\arrow{rd}
&
&
\cF
\arrow{ld}
\\
&
{[1]}
\end{tikzcd}~. \]
Given a cocartesian morphism $e \ra E(e)$ in $\cE$ with $e \in \cE_{|0}$ and $E(e) \in \cE_{|1}$, the functor $\varphi$ takes it to some not-necessarily-cocartesian morphism $\varphi(e) \ra \varphi(E(e))$ in $\cF$ with $\varphi(e) \in \cF_{|0}$ and $\varphi(E(e)) \in \cF_{|1}$.  This admits a unique factorization
\[ \begin{tikzcd}
\varphi(e)
\arrow[dashed]{r}
\arrow{rd}
&
F(\varphi(e))
\arrow{d}
\\
&
\varphi(E(e))
\end{tikzcd} \]
as a cocartesian morphism followed by a fiber morphism.  This operation is functorial in $e \in \cE_{|0}$, which implies that our left-lax equivariant functor amounts to the data of a lax-commutative square
\[ \begin{tikzcd}
\cE_{|0}
\arrow{r}{E}[swap, transform canvas={yshift=-0.4cm}]{\rotatebox{45}{$\Rightarrow$}}
\arrow{d}[swap]{\varphi_{|0}}
&
\cE_{|1}
\arrow{d}{\varphi_{|1}}
\\
\cF_{|0}
\arrow{r}[swap]{F}
&
\cF_{|1}
\end{tikzcd}~. \]
To say that the left-lax equivariant functor is actually strictly equivariant is equivalently to say that this square actually commutes, i.e.\! that the natural transformation is a natural equivalence.

\item\label{right.lax.equivariance.of.strict.modules}
Dually, let $\cE \da [1]$ and $\cF \da [1]$ be cartesian fibrations, the unstraightenings of functors
\[
[1]^\op
\xra{\brax{\cE_{|0^\circ} \xla{E} \cE_{|1^\circ}}}
\Cat
\]
and
\[
[1]^\op
\xra{\brax{\cF_{|0^\circ} \xla{F} \cF_{|1^\circ}}}
\Cat
~,
\]
respectively.  Then, a right-lax equivariant functor
\[ \begin{tikzcd}
\cE
\arrow{rr}{\varphi}
\arrow{rd}
&
&
\cF
\arrow{ld}
\\
&
{[1]}
\end{tikzcd} \]
likewise amounts to the data of a lax-commutative square
\[ \begin{tikzcd}
\cE_{|0^\circ}
\arrow[leftarrow]{r}{E}[swap, transform canvas={yshift=-0.4cm}]{\rotatebox{-45}{$\Rightarrow$}}
\arrow{d}[swap]{\varphi_{|0^\circ}}
&
\cE_{|1^\circ}
\arrow{d}{\varphi_{|1^\circ}}
\\
\cF_{|0^\circ}
\arrow[leftarrow]{r}[swap]{F}
&
\cF_{|1^\circ}
\end{tikzcd}~. \]
To say that the right-lax equivariant functor is actually strictly equivariant is equivalently to say that this square actually commutes, i.e.\! that the natural transformation is a natural equivalence.
\end{enumerate}
\end{example}

\begin{example}
Let us unwind the definitions of the $\infty$-categories
\[
\LMod^\llax_\cB
~,
\qquad
\LMod^\rlax_\cB
~,
\qquad
\RMod^\rlax_\cB
~,
\qquad
\textup{and}
\qquad
\RMod^\llax_\cB
\]
in the simple but illustrative case that $\cB = \BG$ for a group $G$.  Choose any two objects
\[
\cE
,
\cF
\in
\Cat_{({\sf co})\cart/\BG^{(\op)}}
~,
\]
with the two choices of whether or not to include the parenthesized bits made independently.  These are classified by left or right $G$-actions on the fibers $\cE_0$ and $\cF_0$ over the basepoint of $\BG^{(\op)}$ -- right if the choices coincide, left if they do not -- and morphisms between them are left-lax equivariant in the case of ``$\cocart$'' and right-lax equivariant in the case of ``$\cart$''.  In all four cases, a morphism
\[ \begin{tikzcd}
\cE
\arrow{rr}{\varphi}
\arrow{rd}
&
&
\cF
\arrow{ld}
\\
&
\BG^{(\op)}
\end{tikzcd} \]
is the data of a functor
\[ \cE_0 \xra{\varphi_0} \cF_0 \]
on underlying $\infty$-categories equipped with certain natural transformations indexed over all $g \in G$, as recorded in \Cref{table.laxness.G.action}.
\begin{figure}[h]
\begin{tabular}{ | c | c | }
\hline
$\LMod^\llax_\BG$
&
$g \cdot \varphi_0(-) \longra \varphi_0(g \cdot -)$
\\
\hline
$\RMod^\rlax_\BG$
&
$\varphi_0(- \cdot g) \longra \varphi_0(-) \cdot g$
\\
\hline
$\LMod^\rlax_\BG$
&
$\varphi_0(g \cdot -) \longra g \cdot \varphi_0( - )$
\\
\hline
$\RMod^\llax_\BG$
&
$\varphi_0(- \cdot g) \longra \varphi_0(-) \cdot g$
\\
\hline
\end{tabular}
\caption{Given two $\infty$-categories equipped with (strict) left or right $G$-actions, defining a left- or right-lax equivariant functor between them amounts to defining a functor on underlying $\infty$-categories along with compatible lax structure maps indexed by $g \in G$, as indicated.
\label{table.laxness.G.action}}
\end{figure}
Moreover, these must be equipped with compatibility data with respect to the group structure: for example, in the case of $\LMod^\rlax_\BG$, for all $g,h \in G$ the diagram
\[ \begin{tikzcd}
\varphi_0(ghe)
\arrow{rr}
\arrow{rd}
&
&
gh\varphi_0(e)
\\
&
g\varphi_0(he)
\arrow{ru}
\end{tikzcd} \]
must commute, naturally in $e \in \cE_0$.
\end{example}

\begin{example}
\label{example.lax.actions}
Let us unwind the definitions of the $\infty$-categories
\[
\LMod^\llax_{\llax.\cB}
\qquad
\textup{and}
\qquad
\RMod^\rlax_{\rlax.\cB}
\]
in the simplest nontrivial case, namely when $\cB = [2]$.
\begin{enumerate}
\item
\label{describe.and.map.locally.cocart.fibns}
\begin{enumerate}[label=(\alph*)]
\item\label{describe.locally.cocart.fibn} Let $\cE \da [2]$ be a locally cocartesian fibration; let us write $\cE_{|i}$ for its fibers (for $i \in [2]$) and $E_{ij}$ for its cocartesian monodromy functors (for $0 \leq i < j \leq 2$).  An object $e \in \cE_{|0}$ determines a pair of composable cocartesian morphisms $e \ra E_{01}(e) \ra E_{12}(E_{01}(e))$ with $E_{01}(e) \in \cE_{|1}$ and $E_{12}(E_{01}(e)) \in \cE_{|2}$.  Their composite is a not-necessarily-cocartesian morphism, which admits a unique factorization
\[ \begin{tikzcd}
e
\arrow[dashed]{r}
\arrow{rd}
&
E_{02}(e)
\arrow{d}
\\
&
E_{12}(E_{01}(e))
\end{tikzcd} \]
as a cocartesian morphism followed by a fiber morphism.  This is functorial in $e \in \cE_{|0}$, which implies that our left-lax left $[2]$-module amounts to the data of a lax-commutative triangle
\[ \begin{tikzcd}
&
\cE_{|1}
\arrow{rd}[sloped, pos=0.2]{E_{12}}
\\
\cE_{|0}
\arrow{ru}[sloped, pos=0.8]{E_{01}}
\arrow{rr}[transform canvas={yshift=0.3cm}]{\rotatebox{90}{$\Rightarrow$}}[swap]{E_{02}}
&
&
\cE_{|2}
\end{tikzcd}~. \]
This should be thought as the unstraightening of a \textit{left-lax} functor
\[
[2]
\xra{\llax}
\Cat
\]
of $(\infty,2)$-categories.
\item Let $\cE \da [2]$ and $\cF \da [2]$ be locally cocartesian fibrations, and let us continue to use notation as in part \Cref{describe.locally.cocart.fibn} for both $\cE$ and $\cF$.  Then, a left-lax equivariant functor
\[ \begin{tikzcd}
\cE
\arrow{rr}{\varphi}
\arrow{rd}
&
&
\cF
\arrow{ld}
\\
&
{[2]}
\end{tikzcd} \]
amounts to the data of left-lax equivariant functors over the three nonidentity morphisms in $[2]$ (as described in \Cref{lax.equivariance.of.strict.modules.over.walking.arrow}\Cref{left.lax.equivariance.of.strict.modules}), along with an equivalence between the composite 2-morphisms
\[ \begin{tikzcd}[column sep=1.5cm]
&
\cE_{|1}
\arrow{rd}[sloped, pos=0.3]{E_{12}}[swap, transform canvas={xshift=0.25cm, yshift=-1.1cm}]{\rotatebox{30}{$\Rightarrow$}}
\arrow{dd}{\varphi_{|1}}
\\
\cE_{|0}
\arrow{ru}[sloped, pos=0.7]{E_{01}}[swap, transform canvas={xshift=-0.2cm, yshift=-1cm}]{\rotatebox{55}{$\Rightarrow$}}
\arrow{dd}[swap]{\varphi_{|0}}
&
&
\cE_{|2}
\arrow{dd}{\varphi_{|2}}
\\
&
\cF_{|1}
\arrow{rd}[sloped, pos=0.3]{F_{12}}
\\
\cF_{|0}
\arrow{ru}[sloped, pos=0.7]{F_{01}}
\arrow{rr}[transform canvas={yshift=0.3cm}]{\rotatebox{90}{$\Rightarrow$}}[swap]{F_{02}}
&
&
\cF_{|2}
\end{tikzcd} \]
and
\[ \begin{tikzcd}[column sep=1.5cm]
&
\cE_{|1}
\arrow{rd}[sloped, pos=0.3]{E_{12}}
\\
\cE_{|0}
\arrow{ru}[sloped, pos=0.7]{E_{01}}
\arrow{rr}[transform canvas={yshift=0.3cm}]{\rotatebox{90}{$\Rightarrow$}}[swap]{E_{02}}[swap, transform canvas={yshift=-1.1cm}]{\rotatebox{30}{$\Rightarrow$}}
\arrow{d}[swap]{\varphi_{|0}}
&
&
\cE_{|2}
\arrow{d}{\varphi_{|2}}
\\[1.25cm]
\cF_{|0}
\arrow{rr}[swap]{F_{02}}
&
&
\cF_{|2}
\end{tikzcd} \]
(i.e.\! a 3-morphism filling in the triangular prism).
\end{enumerate}
\item
\label{describe.and.map.locally.cart.fibns}
\begin{enumerate}[label=(\alph*)]
\item\label{describe.locally.cart.fibn}
Dually, let $\cE \da [2]$ be a locally cartesian fibration; let us write $\cE_{|i^\circ}$ for its fibers (for $i \in [2]$) and $E_{j^\circ i^\circ}$ for its cartesian monodromy functors (for $0 \leq i < j \leq 2$).  Then, this right-lax right $[2]$-module amounts to the data of a lax-commutative triangle
\[ \begin{tikzcd}
&
\cE_{|1^\circ}
\arrow[leftarrow]{rd}[sloped, pos=0.1]{E_{2^\circ1^\circ}}
\\
\cE_{|0^\circ}
\arrow[leftarrow]{ru}[sloped, pos=0.9]{E_{1^\circ0^\circ}}
\arrow[leftarrow]{rr}[transform canvas={yshift=0.3cm}]{\rotatebox{-90}{$\Rightarrow$}}[swap]{E_{2^\circ0^\circ}}
&
&
\cE_{|2^\circ}
\end{tikzcd}~. \]
This should be thought as the unstraightening of a \textit{right-lax} functor
\[
[2]^\op
\xra{\rlax}
\Cat
\]
of $(\infty,2)$-categories.
\item Let $\cE \da [2]$ and $\cF \da [2]$ be locally cartesian fibrations, and let us continue to use notation as in part \Cref{describe.locally.cart.fibn} for both $\cE$ and $\cF$.  Then, a right-lax equivariant functor
\[ \begin{tikzcd}
\cE
\arrow{rr}{\varphi}
\arrow{rd}
&
&
\cF
\arrow{ld}
\\
&
{[2]}
\end{tikzcd} \]
amounts to the data of right-lax equivariant functors over the three nonidentity morphisms in $[2]$ (as described in \Cref{lax.equivariance.of.strict.modules.over.walking.arrow}\Cref{right.lax.equivariance.of.strict.modules}), along with an equivalence between the composite 2-morphisms
\[ \begin{tikzcd}[column sep=1.5cm]
&
\cE_{|1^\circ}
\arrow[leftarrow]{rd}[sloped, pos=0.25]{E_{2^\circ1^\circ}}[swap, transform canvas={xshift=0.2cm, yshift=-1.2cm}]{\rotatebox{-55}{$\Rightarrow$}}
\arrow{dd}{\varphi_{|1^\circ}}
\\
\cE_{|0^\circ}
\arrow[leftarrow]{ru}[sloped, pos=0.75]{E_{1^\circ0^\circ}}[swap, transform canvas={xshift=-0.3cm, yshift=-1.1cm}]{\rotatebox{-30}{$\Rightarrow$}}
\arrow{dd}[swap]{\varphi_{|0^\circ}}
&
&
\cE_{|2^\circ}
\arrow{dd}{\varphi_{|2^\circ}}
\\
&
\cF_{|1^\circ}
\arrow[leftarrow]{rd}[sloped, pos=0.25]{F_{2^\circ1^\circ}}
\\
\cF_{|0^\circ}
\arrow[leftarrow]{ru}[sloped, pos=0.75]{F_{1^\circ0^\circ}}
\arrow[leftarrow]{rr}[transform canvas={yshift=0.3cm}]{\rotatebox{-90}{$\Rightarrow$}}[swap]{F_{2^\circ0^\circ}}
&
&
\cF_{|2^\circ}
\end{tikzcd} \]
and
\[ \begin{tikzcd}[column sep=1.5cm]
&
\cE_{|1^\circ}
\arrow[leftarrow]{rd}[sloped, pos=0.25]{E_{2^\circ1^\circ}}
\\
\cE_{|0^\circ}
\arrow[leftarrow]{ru}[sloped, pos=0.75]{E_{1^\circ0^\circ}}
\arrow[leftarrow]{rr}[transform canvas={yshift=0.3cm}]{\rotatebox{-90}{$\Rightarrow$}}[swap]{E_{2^\circ0^\circ}}[swap, transform canvas={yshift=-1.1cm}]{\rotatebox{-30}{$\Rightarrow$}}
\arrow{d}[swap]{\varphi_{|0^\circ}}
&
&
\cE_{|2^\circ}
\arrow{d}{\varphi_{|2^\circ}}
\\[1.25cm]
\cF_{|0^\circ}
\arrow[leftarrow]{rr}[swap]{F_{2^\circ0^\circ}}
&
&
\cF_{|2^\circ}
\end{tikzcd} \]
(i.e.\! a 3-morphism filling in the triangular prism).
\end{enumerate}
\end{enumerate}
\end{example}

\begin{example}
Let us unwind the definitions of the $\infty$-categories
\[
\LMod_{\llax.\cB}
~,
\qquad
\RMod_{\rlax.\cB}
~,
\qquad
\LMod_{\rlax.\cB}
~,
\qquad
\textup{and}
\qquad
\RMod_{\llax.\cB}
\]
in the simple but illustrative case that $\cB = \BG$ for a group $G$.  Choose an object
\[
\cE
\in \Cat_{\loc.({\sf co})\cart/\BG^{(\op)}}
~,
\]
with the two choices of whether or not to include the parenthesized bits made independently.  Write $\cE_0$ for the fiber over the basepoint of $\BG^{(\op)}$, the underlying $\infty$-category.  Then, this is the data of an endofunctor $(g \cdot -)$ or $(- \cdot g)$ of $\cE_0$ for each $g \in G$, along with compatible natural transformations, as recorded in \Cref{lax.group.action}.
\begin{figure}[h]
\begin{tabular}{|c|c|}
\hline
$\LMod_{\llax.\BG}$
&
$(gh \cdot -) \longra g \cdot (h \cdot -)$
\\
\hline
$\RMod_{\rlax.\BG}$
&
$(- \cdot g) \cdot h \longra (- \cdot gh)$
\\
\hline
$\LMod_{\rlax.\BG}$
&
$g \cdot (h \cdot -) \longra (gh \cdot -)$
\\
\hline
$\RMod_{\llax.\BG}$
&
$(- \cdot gh) \longra (- \cdot g) \cdot h$
\\
\hline
\end{tabular}
\caption{Equipping an $\infty$-category with a left- or right-lax left or right $G$-action amounts to defining endofunctors indexed by $g \in G$, equipped with lax structure maps corresponding to multiplication in $G$, as indicated.
\label{lax.group.action}}
\end{figure}
Of course, these must also be compatible with iterated multiplication in $G$.
\end{example}

\subsection{Strict and lax limits}
\label{subsection.lax.limits}

We begin this subsection with an omnibus definition, which the remainder of the subsection is dedicated to discussing.

\begin{definition}
\label{define.almost.all.limits}
In \Cref{figure.define.almost.all.limits}, we define various \bit{limit} functors on various $\infty$-categories of $\cB$-modules.
\begin{figure}
\vspace{-50pt}
\[
\begin{tikzcd}[row sep=1.5cm, column sep=1.5cm]
\LMod^\llax_{\llax.\cB}
\arrow[bend left]{rdd}{\lim^\llax}
\\
\LMod_{\llax.\cB}
\arrow{rd}[swap]{\lim}[sloped, transform canvas={xshift=-0.2cm, yshift=0.8cm}]{\rotatebox{90}{$\Rightarrow$}}
\arrow[hook, two heads]{u}
\\
\LMod_\cB
\arrow[hook]{u}{\ff}
&
\Cat
\\
\LMod_{\rlax.\cB}
\arrow[hookleftarrow]{u}{\ff}
\arrow{ru}{\lim}[swap, sloped, transform canvas={xshift=-0.2cm, yshift=-0.8cm}]{\rotatebox{-90}{$\Rightarrow$}}
\\
\LMod^\rlax_{\rlax.\cB}
\arrow[\surjmonoleft]{u}
\arrow[bend right]{ruu}[swap]{\lim^\rlax}
\end{tikzcd}
\qquad
:=
\qquad
\begin{tikzcd}[row sep=1.5cm, column sep=1.5cm]
\Cat_{\loc.\cocart/\cB}
\arrow[bend left]{rddd}{\Gamma}
\\
\loc.\coCart_\cB
\arrow{rdd}[swap, pos=0.6]{\Gamma^\cocart}[sloped, transform canvas={xshift=-0.2cm, yshift=1cm}]{\rotatebox{90}{$\Rightarrow$}}
\arrow[hook, two heads]{u}
\\
\coCart_\cB
\arrow[hook]{u}{\ff}
\\[-1.5cm]
\rotatebox{90}{$\simeq$}
&
\Cat
\\[-1.5cm]
\Cart_{\cB^\op}
\\
\loc.\Cart_{\cB^\op}
\arrow[hookleftarrow]{u}{\ff}
\arrow{ruu}[pos=0.55]{\Gamma^\cart}[swap, sloped, transform canvas={xshift=-0.2cm, yshift=-1cm}]{\rotatebox{-90}{$\Rightarrow$}}
\\
\Cat_{\loc.\cart/\cB^\op}
\arrow[\surjmonoleft]{u}
\arrow[bend right]{ruuu}[swap]{\Gamma}
\end{tikzcd}
\]

\vspace{20pt}

\[
\begin{tikzcd}[row sep=1.5cm, column sep=1.5cm]
\RMod^\llax_{\llax.\cB}
\arrow[bend left]{rdd}{\lim^\llax}
\\
\RMod_{\llax.\cB}
\arrow{rd}[swap]{\lim}[sloped, transform canvas={xshift=-0.2cm, yshift=0.8cm}]{\rotatebox{90}{$\Rightarrow$}}
\arrow[hook, two heads]{u}
\\
\RMod_\cB
\arrow[hook]{u}{\ff}
&
\Cat
\\
\RMod_{\rlax.\cB}
\arrow[hookleftarrow]{u}{\ff}
\arrow{ru}{\lim}[swap, sloped, transform canvas={xshift=-0.2cm, yshift=-0.8cm}]{\rotatebox{-90}{$\Rightarrow$}}
\\
\RMod^\rlax_{\rlax.\cB}
\arrow[\surjmonoleft]{u}
\arrow[bend right]{ruu}[swap]{\lim^\rlax}
\end{tikzcd}
\qquad
:=
\qquad
\begin{tikzcd}[row sep=1.5cm, column sep=1.5cm]
\Cat_{\loc.\cocart/\cB^\op}
\arrow[bend left]{rddd}{\Gamma}
\\
\loc.\coCart_{\cB^\op}
\arrow{rdd}[swap, pos=0.6]{\Gamma^\cocart}[sloped, transform canvas={xshift=-0.2cm, yshift=1cm}]{\rotatebox{90}{$\Rightarrow$}}
\arrow[hook, two heads]{u}
\\
\coCart_{\cB^\op}
\arrow[hook]{u}{\ff}
\\[-1.5cm]
\rotatebox{90}{$\simeq$}
&
\Cat
\\[-1.5cm]
\Cart_\cB
\\
\loc.\Cart_\cB
\arrow[hookleftarrow]{u}{\ff}
\arrow{ruu}[pos=0.55]{\Gamma^\cart}[swap, sloped, transform canvas={xshift=-0.2cm, yshift=-1cm}]{\rotatebox{-90}{$\Rightarrow$}}
\\
\Cat_{\loc.\cart/\cB}
\arrow[\surjmonoleft]{u}
\arrow[bend right]{ruuu}[swap]{\Gamma}
\end{tikzcd}
~.
\]

\vspace{20pt}

\caption{The rightwards functors to $\Cat$ on the left are defined to be those on the right.}
\label{figure.define.almost.all.limits}
\end{figure}
We refer
\begin{itemize}
\item to $\lim$ as the (\bit{strict}) limit,
\item to $\lim^\llax$ as the \bit{left-lax} limit, and
\item to $\lim^\rlax$ as the \bit{right-lax} limit.
\end{itemize}
\end{definition}

\begin{remark}
The strict limit of a strict (left or right) $\cB$-module as given in \Cref{define.almost.all.limits} computes the limit of its unstraightening, considered as a functor to $\Cat$.  In particular, the strict limit of a left $\cB$-module is canonically equivalent to the strict limit of its corresponding right $\cB^\op$-module, and dually: all middle triangles in the diagrams of \Cref{figure.define.almost.all.limits} commute.
\end{remark}

\begin{example}
Let us unwind the definitions of the functors in the diagrams
\[
\begin{tikzcd}[column sep=1.5cm]
\LMod_\cB
\arrow[bend left]{r}{\lim}[swap, transform canvas={yshift=-0.3cm}]{\rotatebox{-90}{$\Rightarrow$}}
\arrow[bend right]{r}[swap]{\lim^\llax}
&
\Cat
\end{tikzcd}
\qquad
\textup{and}
\qquad
\begin{tikzcd}[column sep=1.5cm]
\RMod_\cB
\arrow[bend left]{r}{\lim}[swap, transform canvas={yshift=-0.3cm}]{\rotatebox{-90}{$\Rightarrow$}}
\arrow[bend right]{r}[swap]{\lim^\rlax}
&
\Cat
\end{tikzcd}
\]
in the simple but illustrative case that $\cB = \BG$ for a group $G$.
\begin{enumerate}
\item Suppose that
\[
(\cE \da \BG)
\in
\coCart_\BG
=:
\LMod_\BG
\]
is classified by a left $G$-action on $\cE_0$.
\begin{enumerate}[label=(\alph*)]
\item An object of the strict limit is given by an object $e \in \cE_0$ equipped with equivalences
\[ g \cdot e \xlongra{\sim} e \]
for all $g \in G$ that are compatible with the group structure of $G$.
\item An object of the left-lax limit is given by an object $e \in \cE_0$ equipped with morphisms
\[ g \cdot e \longra e \]
for all $g \in G$ that are compatible with the group structure of $G$.
\end{enumerate}
\item Suppose that
\[
(\cE \da \BG)
\in
\Cart_\BG
=:
\RMod_\BG
\]
is classified by a right $G$-action on $\cE_0$.
\begin{enumerate}[label=(\alph*)]
\item An object of the strict limit is given by an object $e \in \cE_0$ equipped with equivalences
\[ e \xlongra{\sim} e \cdot g \]
for all $g \in G$ that are compatible with the group structure of $G$.
\item An object of the left-lax limit is given by an object $e \in \cE_0$ equipped with morphisms
\[ e \longra e \cdot g \]
for all $g \in G$ that are compatible with the group structure of $G$.
\end{enumerate}
\end{enumerate}
\end{example}

\begin{example}
\label{example.lax.limits.with.agreeing.lax.action}
Let us unwind the definitions of the functors
\[
\LMod^\llax_{\llax.\cB}
\xra{\lim^\llax}
\Cat
\qquad
\textup{and}
\qquad
\RMod^\rlax_{\rlax.\cB}
\xra{\lim^\rlax}
\Cat
\]
in the simplest nontrivial case, namely when $\cB = [2]$.
\begin{enumerate}

\item Let $\cE \da [2]$ be a locally cocartesian fibration, and let us employ the notation of \Cref{example.lax.actions}\Cref{describe.and.map.locally.cocart.fibns}\Cref{describe.locally.cocart.fibn}.  Then, an object of the left-lax limit of this left-lax left $[2]$-module is given by the data of
\begin{itemize}
\item objects $e_i \in \cE_{|i}$ (for $0 \leq i \leq 2$),
\item morphisms
\[
E_{ij}(e_i)
\xra{\vareps_{ij}}
e_j
\]
(for $0 \leq i < j \leq 2$), and
\item a commutative square
\[ \begin{tikzcd}[column sep=1.5cm, row sep=1.5cm]
E_{02}(e_0)
\arrow{r}{\vareps_{02}}
\arrow{d}[swap]{{\sf can}}
&
e_2
\\
E_{12}(E_{01}(e_0))
\arrow{r}[swap]{E_{12}(\vareps_{01})}
&
E_{12}(e_1)
\arrow{u}[swap]{\vareps_{12}}
\end{tikzcd} \]
in $\cE_{|2}$.
\end{itemize}
Note that the structure map $\vareps_{02}$ is \textit{canonically} determined by the structure maps $\vareps_{01}$ and $\vareps_{12}$.

\item Let $\cE \da [2]$ be a locally cartesian fibration, and let us employ the notation of \Cref{example.lax.actions}\Cref{describe.and.map.locally.cart.fibns}\Cref{describe.locally.cart.fibn}.  Then, an object of the right-lax limit of this right-lax right $[2]$-module is given by the data of
\begin{itemize}
\item objects $e_{i^\circ} \in \cE_{|i^\circ}$ (for $0 \leq i \leq 2$),
\item morphisms 
\[
e_{i^\circ}
\xra{\vareps_{j^\circ i^\circ}}
E_{j^\circ i^\circ}(e_{j^\circ})
\]
(for $0 \leq i < j \leq 2$), and
\item a commutative square
\[ \begin{tikzcd}[column sep=2cm, row sep=2cm]
e_{0^\circ}
\arrow{r}{\vareps_{2^\circ0^\circ}}
\arrow{d}[swap]{\vareps_{1^\circ0^\circ}}
&
E_{2^\circ0^\circ}(e_{2^\circ})
\\
E_{1^\circ0^\circ}(e_{1^\circ})
\arrow{r}[swap]{E_{1^\circ0^\circ}(\vareps_{2^\circ1^\circ})}
&
E_{1^\circ0^\circ}(E_{2^\circ1^\circ}(e_{2^\circ}))
\arrow{u}[swap]{\sf can}
\end{tikzcd} \]
in $\cE_{|0^\circ}$.
\end{itemize}
Note that the structure map $\vareps_{2^\circ0^\circ}$ is likewise \textit{canonically} determined by the structure maps $\vareps_{2^\circ1^\circ}$ and $\vareps_{1^\circ0^\circ}$.

\end{enumerate}
\end{example}

\subsection{Left-lax limits of right-lax modules and right-lax limits of left-lax modules}
\label{subsection.lax.limits.in.hard.case}

\begin{definition}
\label{define.sd}
The \bit{subdivision} functor is defined on the cosimplicial indexing category as the functor
\[ \begin{tikzcd}[row sep=0cm]
\bDelta
\arrow{r}{\sd}
&
\Cat
\\
\rotatebox{90}{$\in$}
&
\rotatebox{90}{$\in$}
\\
{[n]}
\arrow[mapsto]{r}
&
\power_{\not=\es}([n])
\end{tikzcd} \]
taking an object to the poset of its nonempty subsets, ordered by inclusion; we left Kan extend this definition as
\[ \begin{tikzcd}
\bDelta
\arrow{r}{\sd}
\arrow[hook]{d}
&
\Cat
\\
\Cat
\arrow[dashed]{ru}[swap]{\sd}
\end{tikzcd} \]
to obtain an endofunctor on $\Cat$, for which we use the same notation and terminology.
\end{definition}

\begin{observation}
For any $[n] \in \bDelta$, there are evident functors
\[
\sd([n])
\xra{\max}
[n]
\qquad
\textup{and}
\qquad
\sd([n])^\op
\xra{\min}
[n]
~,
\]
which respectively take a nonempty subset of $[n]$ to its maximal or minimal element.  These are respectively a locally cocartesian fibration and a locally cartesian fibration: in both cases the monodromy functors are given by union, as illustrated in \Cref{sd.of.brackets.2}.
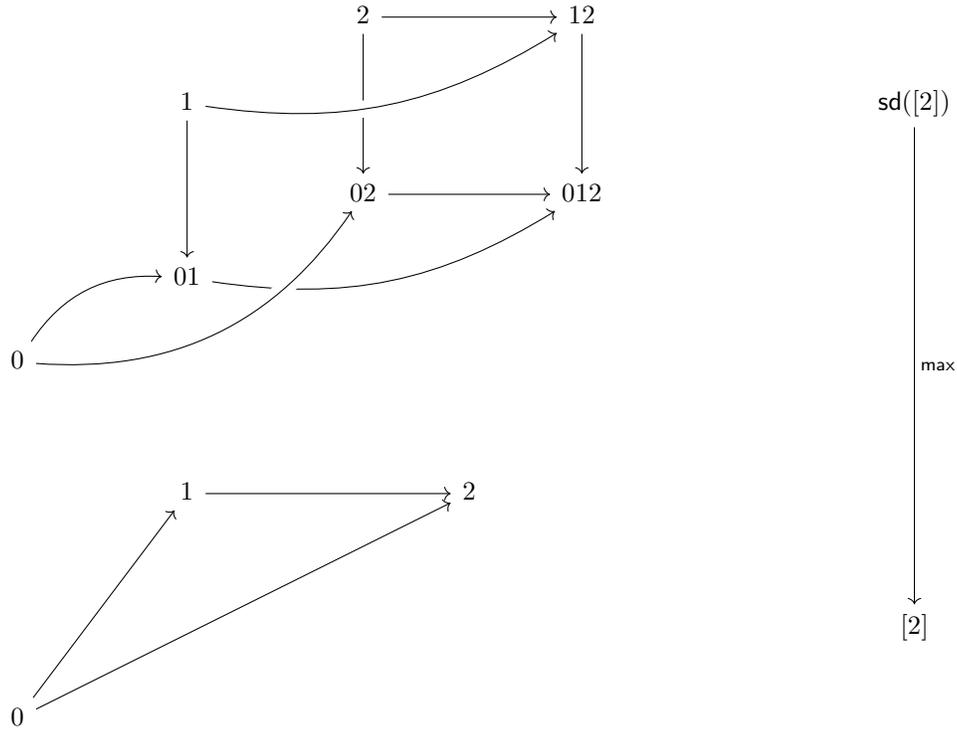
\begin{figure}[h]
\[ \begin{tikzcd}
&
&
&
&
2
\arrow{rr}
\arrow{dd}
&
&
12
\arrow{dd}
\\
&
&
1
\arrow{dd}
\arrow[bend right=20, crossing over]{rrrru}
&
&
&
&
&
&
&
&
\sd([2])
\arrow{ddddddd}{\max}
\\
&
&
&
&
02
\arrow{rr}
&
&
012
\\
&
&
01
\arrow[bend right=20]{rrrru}
\\
0
\arrow[bend right=30, crossing over]{rrrruu}
\arrow[bend left=30]{rru}
\\
&
\\
&
&
1
\arrow{rrr}
&
&
&
2
\\
&
\\
&
&
&
&
&
&
&
&
&
&
{[2]}
\\
0
\arrow{rruuu}
\arrow{rrrrruuu} 
\end{tikzcd} \]
\caption{The functor $\sd([2]) \xra{\max} [2]$ is a locally cocartesian fibration; the curved arrows denote cocartesian morphisms.  Note that the functor $\sd([1]) \xra{\max} [1]$ can also be seen here in three different ways, corresponding to the three nonidentity maps in $[2]$.}
\label{sd.of.brackets.2}
\end{figure}
By functoriality of left Kan extensions, these induce augmentations
\[
\sd
\xra{\max}
\id
\qquad
\textup{and}
\qquad
\sd^\op
\xra{\min}
\id
\]
in $\Fun(\Cat,\Cat)$, which are respectively componentwise locally cocartesian fibrations and locally cartesian fibrations.\footnote{It is not hard to see that the colimit defining $\sd(\cB)$, when translated to Segal spaces, can actually be computed in simplicial spaces; thereafter, it is straightforward to check e.g.\! that the functor $\sd(\cB) \xra{\max} \cB$ is indeed a locally cocartesian fibration.} 
\end{observation}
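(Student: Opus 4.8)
I would establish the two assertions in turn. For a fixed $[n] \in \bDelta$ the claim is purely combinatorial. Since $\sd([n]) = \power_{\not=\es}([n])$ and $[n]$ are (nerves of) posets, the functor $\max$ is automatically an inner fibration, so it remains only to produce locally $\max$-cocartesian lifts. Given a nonempty subset $S \subseteq [n]$ with $\max(S) = i$ and a morphism $i \leq j$ in $[n]$, I claim the inclusion $S \hookra S \cup \{j\}$ is such a lift: pulling $\max$ back along the functor $[1] \xra{i \leq j} [n]$ yields the poset on $\{S' : \max S' = i\} \sqcup \{T' : \max T' = j\}$ in which one declares $S' \leq T'$ exactly when $S' \subseteq T'$, and there $S \hookra S \cup \{j\}$ is cocartesian because for every $T''$ with $\max T'' = j$ one has $S \cup \{j\} \subseteq T''$ if and only if $S \subseteq T''$ (the forward direction is trivial, and the reverse holds since $S \subseteq T''$ together with $\max T'' = j$ forces $j \in T''$). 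Thus the cocartesian monodromy over $i \leq j$ is the functor $S \mapsto S \cup \{j\}$, i.e.\! union, as asserted and as pictured in \Cref{sd.of.brackets.2}; and this is genuinely only a \emph{locally} cocartesian fibration, since $(S \cup \{j\}) \cup \{k\} = S \cup \{j,k\}$ is in general strictly larger than $S \cup \{k\}$, so the monodromy functors do not compose on the nose. The statement for $\sd([n])^\op \xra{\min} [n]$ follows by the evident order-reversing variant (there the cartesian monodromy over $i \leq j$ is $T \mapsto T \cup \{i\}$).

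For a general $\infty$-category $\cB$, I would use that $\sd$ is by construction (\Cref{define.sd}) a left Kan extension along $\bDelta \hookra \Cat$ and hence preserves colimits, so that $\sd(\cB) \simeq \colim_{([n] \to \cB) \in \bDelta_{/\cB}} \sd([n])$ — the colimit over the category of simplices of $\cB$ — and $\sd(\cB) \xra{\max} \cB$ is the colimit of the diagram $\sd([n]) \to [n] \to \cB$. The technical heart of the argument, and the step I expect to be the main obstacle, is the claim recorded in the footnote that this colimit may be computed in simplicial spaces: that the levelwise colimit $\colim_{\bDelta_{/\cB}} \sd([n])$ of simplicial spaces is already a complete Segal space, so that no Segal-ification or completion intervenes in passing to the colimit in $\Cat$. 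What makes this believable is that the face maps of the indexing diagram induce fully faithful poset inclusions $\sd([m]) \hookra \sd([n])$, so the colimit is glued from nerves of posets along comparatively tame maps; the work is in pinning this down — e.g.\! via an explicit $1$-categorical model for $\sd(\cB)$, or via a skeletal induction exploiting that $\sd$ preserves the pushouts that build $\cB$ from its nondegenerate simplices — while correctly accounting for the degeneracies in $\bDelta_{/\cB}$.

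Granting this concrete model for $\sd(\cB)$, verifying that $\sd(\cB) \xra{\max} \cB$ is an inner fibration and admits locally $\max$-cocartesian lifts is then a direct computation with the colimit, reducing to the first part of the argument: every object and morphism of $\sd(\cB)$ is represented by data in some $\sd([n])$ sitting over a simplex $[n] \to \cB$, and the lifting property is checked against such representatives, the locally cocartesian lift of a morphism of $\cB$ at a subset $S$ being represented by $S \hookra S \cup \{j\}$ exactly as before. The dual assertion — that $\sd^\op \xra{\min} \id$ is componentwise a locally cartesian fibration — then follows by the analogous argument with $\max$ replaced by $\min$ throughout.
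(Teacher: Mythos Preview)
Your proposal is correct and follows essentially the same approach as the paper, which treats this as an observation justified only by the illustrative figure and the footnote: the combinatorial verification for $[n]$ (with locally cocartesian lifts given by $S \hookra S \cup \{j\}$) is exactly what the figure depicts, and your identification of the ``technical heart'' --- that the colimit defining $\sd(\cB)$ can be computed in simplicial spaces --- is precisely the content of the footnote. The paper offers no further detail, so your expansion is a faithful fleshing-out of what the authors leave implicit.
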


\begin{notation}
Given two objects
\[
(\cE_1 \da \cB)
,
(\cE_2 \da \cB)
\in
\Cat_{/\cB}~,
\]
we write
\[
\Fun^{({\sf co})\cart}_{/\cB}(\cE_1,\cE_2)
\subset
\Fun_{/\cB} ( \cE_1, \cE_2)
\]
for the full subcategory on those functors which take all (co)cartesian morphisms over $\cB$ in $\cE_1$ to (co)cartesian morphisms over $\cB$ in $\cE_2$.  This evidently defines a $\Cat$-enrichment of the subcategory
\[
\Cat^{({\sf co})\cart}_{/\cB}
\longrsurjmono
\Cat_{/\cB}
\]
on those morphisms that preserve all (co)cartesian morphisms over $\cB$.
\end{notation}

\begin{definition}
\label{define.rest.of.limits}
We define the functors
\[
\LMod_{\llax.\cB}
\xra{\lim^\rlax}
\Cat
\qquad
:=
\qquad
\loc.\coCart_\cB
\xra{\Fun^\cocart_{/\cB}(\sd(\cB),-)}
\Cat
~,
\]
\[
\RMod_{\rlax.\cB}
\xra{\lim^\llax}
\Cat
\qquad
:=
\qquad
\loc.\Cart_\cB
\xra{\Fun^\cocart_{/\cB}(\sd(\cB)^\op,-)}
\Cat
~,
\]
\[
\LMod_{\rlax.\cB}
\xra{\lim^\llax}
\Cat
\qquad
:=
\qquad
\loc.\Cart_{\cB^\op}
\xra{\Fun^\cart_{/\cB^\op}(\sd(\cB^\op)^\op,-)}
\Cat
~,
\]
and
\[
\RMod_{\llax.\cB}
\xra{\lim^\rlax}
\Cat
\qquad
:=
\qquad
\loc.\coCart_{\cB^\op}
\xra{\Fun^\cocart_{/\cB^\op}(\sd(\cB^\op),-)}
\Cat
~.
\]
We continue to refer
\begin{itemize}
\item to $\lim^\llax$ as the \bit{left-lax limit}, and
\item to $\lim^\rlax$ as the \bit{right-lax limit}.
\end{itemize}
\end{definition}

\begin{remark}
The purpose of \Cref{subsection.lax.equivt.functors.in.hard.case} is to extend the domains of the various lax limit functors of \Cref{define.rest.of.limits} along surjective monomorphisms.  We will indicate how these extensions agree with the various lax limits of \Cref{define.almost.all.limits} in \Cref{obs.that.defn.of.rlax.lim.extends}.
\end{remark}

As the following example shows, in contrast with \Cref{example.lax.limits.with.agreeing.lax.action}, when the laxness of the action does not align with the laxness of the limit, it is no longer the case that the lax structure maps for two composable morphisms in the base determine the lax structure map for their composite.

\begin{example}
\label{example.limits.of.lax.actions.when.laxness.disagrees}
Let us unwind the definitions of the functors
\[
\LMod_{\llax.\cB}
\xra{\lim^\rlax}
\Cat
\qquad
\textup{and}
\qquad
\RMod_{\rlax.\cB}
\xra{\lim^\llax}
\Cat
\]
in the simplest nontrivial case, namely when $\cB = [2]$.
\begin{enumerate}

\item Let $\cE \da [2]$ be a locally cocartesian fibration, and let us employ the notation of \Cref{example.lax.actions}\Cref{describe.and.map.locally.cocart.fibns}\Cref{describe.locally.cocart.fibn}.  Then, an object of the right-lax limit of this left-lax left $[2]$-module is given by the data of
\begin{itemize}
\item objects $e_i \in \cE_{|i}$ (for $0 \leq i \leq 2$),
\item morphisms
\[
e_j
\xra{\vareps_{ij}}
E_{ij}(e_i)
\]
(for $0 \leq i < j \leq 2$), and
\item a commutative square
\[ \begin{tikzcd}[column sep=1.5cm, row sep=1.5cm]
e_2
\arrow{r}{\varepsilon_{01}}
\arrow{d}[swap]{\varepsilon_{02}}
&
E_{12}(e_1)
\arrow{d}{E_{12}(\varepsilon_{01})}
\\
E_{02}(e_0)
\arrow{r}[swap]{\sf can}
&
E_{12}(E_{01}(e_0))
\end{tikzcd} \]
in $\cE_{|2}$.
\end{itemize}
Note that the structure map $\vareps_{02}$ is \textit{not} determined by the structure maps $\vareps_{01}$ and $\vareps_{12}$ (unless the morphism marked ``${\sf can}$'' -- the component at $e_0$ of the left-lax structure map -- is an equivalence).

\item Let $\cE \da [2]$ be a locally cartesian fibration, and let us employ the notation of \Cref{example.lax.actions}\Cref{describe.and.map.locally.cart.fibns}\Cref{describe.locally.cart.fibn}.  Then, an object of the left-lax limit of this right-lax right $[2]$-module is given by the data of
\begin{itemize}
\item objects $e_{i^\circ} \in \cE_{|i^\circ}$ (for $0 \leq i \leq 2$),
\item morphisms 
\[
E_{j^\circ i^\circ}(e_{j^\circ})
\xra{\vareps_{j^\circ i^\circ}}
e_{i^\circ}
\]
(for $0 \leq i < j \leq 2$), and
\item a commutative square
\[ \begin{tikzcd}[column sep=2cm, row sep=2cm]
E_{1^\circ0^\circ}(E_{2^\circ1^\circ}(e_{2^\circ}))
\arrow{r}{\sf can}
\arrow{d}[swap]{E_{1^\circ0^\circ}(\vareps_{2^\circ1^\circ})}
&
E_{2^\circ0^\circ}(e_{2^\circ})
\arrow{d}{\vareps_{2^\circ0^\circ}}
\\
E_{1^\circ0^\circ}(e_{1^\circ})
\arrow{r}[swap]{\vareps_{1^\circ0^\circ}}
&
e_{0^\circ}
\end{tikzcd} \]
in $\cE_{|0^\circ}$.
\end{itemize}
Note that the structure map $\vareps_{2^\circ0^\circ}$ is likewise \textit{not} determined by the structure maps $\vareps_{2^\circ1^\circ}$ and $\vareps_{1^\circ0^\circ}$ (unless the morphism marked ``${\sf can}$'' -- the component at $e_{0^\circ}$ of the right-lax structure map -- is an equivalence).

\end{enumerate}
\end{example}

\begin{remark}
It is because we are taking e.g.\! the \textit{right}-lax limit of a \textit{left}-lax module that we end up with the perhaps unfamiliar compatibility conditions of the commutative squares in \Cref{example.limits.of.lax.actions.when.laxness.disagrees}.  Comparing with \Cref{example.lax.limits.with.agreeing.lax.action}, we see that the analogous compatibility condition for e.g.\! the left-lax limit of a left-lax module as a section of a locally cocartesian fibration is simply that the section preserves composition of morphisms -- that is, that it is even a functor at all.
\end{remark}

\begin{observation}
\label{obs.rlax.lim.of.trivial.action}
Consider the projection from the product
\[
\ul{\cG}
:=
\cG \times \cB
\longra
\cB
\]
as an object of $\LMod_\cB \subset \LMod_{\llax.\cB}$, the trivial left-lax left $\cB$-module with value $\cG$.  Note that the localization of the category $\sd([n])$ with respect to the cocartesian morphisms for the locally cocartesian fibration
\[
\sd([n]) \xra{\max} [n]
\]
is $[n]^\op$; it follows that the localization of the $\infty$-category $\sd(\cB)$ with respect to the cocartesian morphisms for the locally cocartesian fibration
\[
\sd(\cB) \xra{\max} \cB
\]
is simply $\cB^\op$.  Hence, we find that
\[
\lim^\rlax \left( \cB \lacts \ul{\cG} \right)
:=
\Fun^\cocart_{/\cB} \left( \sd(\cB) , \ul{\cG} \right)
:=
\Fun^\cocart_{/\cB} \left( \sd(\cB) , \cG \times \cB \right)
\simeq
\Fun \left( \cB^\op , \cG \right)
~.
\]
Dually, considering
\[
\left( \ul{\cG} \da \cB \right) \in \RMod_\cB \subset \RMod_{\rlax.\cB}
\]
as the trivial right-lax right $\cB$-module with value $\cG$, we find that
\[
\lim^\llax \left( \ul{\cG} \racts \cB \right)
\simeq
\Fun(\cB,\cG)
~.
\]
\end{observation}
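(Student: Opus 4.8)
The plan is to reduce the computation of $\Fun^\cocart_{/\cB}(\sd(\cB), \cG \times \cB)$ to the universal property of localization. This needs two ingredients: an identification of the cocartesian morphisms of the trivial fibration $\cG \times \cB \da \cB$, and the identification of $\sd(\cB)$ -- localized at the cocartesian morphisms for $\sd(\cB) \xra{\max} \cB$ -- with $\cB^\op$.

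For the first ingredient, I would record the standard fact that the trivial fibration $\cG \times \cB \da \cB$ is simultaneously cartesian and cocartesian (it classifies the constant diagram at $\cG$), and that a morphism of $\cG \times \cB$ lying over $\cB$ is cocartesian precisely when its image under the projection functor $\cG \times \cB \to \cG$ is an equivalence -- indeed the cocartesian lift of a morphism $\beta \colon b \to b'$ of $\cB$ at an object $(\gamma, b)$ is $(\id_\gamma, \beta)$. Moreover, for any $(\cE \da \cB) \in \Cat_{/\cB}$, the data of a functor $\cE \to \cG \times \cB$ over $\cB$ is the same as the data of a functor $\cE \to \cG$, the $\cB$-component being forced to be the structure functor of $\cE$; so the projection to $\cG$ induces an equivalence $\Fun_{/\cB}(\cE, \cG \times \cB) \xra{\sim} \Fun(\cE, \cG)$. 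Taking $\cE = \sd(\cB)$ with its $\max$-locally-cocartesian structure, these two observations together identify $\Fun^\cocart_{/\cB}(\sd(\cB), \cG \times \cB)$ with the full subcategory of $\Fun(\sd(\cB), \cG)$ on the functors that invert every cocartesian morphism for $\sd(\cB) \xra{\max} \cB$.

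For the second ingredient, I would first treat a single $[n] \in \bDelta$: the functor $\min \colon \sd([n]) \to [n]^\op$ sending a nonempty subset to its minimal element exhibits its target as the desired localization. It inverts the cocartesian morphisms (adjoining to a subset an element larger than its current maximum leaves the minimum unchanged), and conversely every object in a fixed fiber of $\min$ -- the poset of nonempty subsets of $[n]$ with a given minimal element $i$ -- is connected to the initial object $\{i\}$ by a chain of cocartesian morphisms, obtained by adjoining the remaining elements of the subset in increasing order, so that each step adjoins a new maximum; hence any functor out of $\sd([n])$ inverting the cocartesian morphisms factors uniquely through $\min$. To pass to a general $\cB$, I would invoke that $\sd$ is the left Kan extension of $\bDelta \xra{\sd} \Cat$ along $\bDelta \hookrightarrow \Cat$, so that $\sd(\cB) \simeq \colim_{([n] \to \cB)} \sd([n])$ (the colimit indexed by the simplices of $\cB$) compatibly with the markings by cocartesian morphisms; since localization (as a functor on marked $\infty$-categories) and $(-)^\op$ both preserve colimits, the localization of $\sd(\cB)$ is $\colim_{([n]\to\cB)} [n]^\op \simeq \left( \colim_{([n]\to\cB)} [n] \right)^\op \simeq \cB^\op$.

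Combining the two ingredients with the universal property of localization then gives
\[
\Fun^\cocart_{/\cB}(\sd(\cB), \cG \times \cB) \simeq \Fun(\cB^\op, \cG) ,
\]
which is the first asserted equivalence. The dual statement $\lim^\llax(\ul{\cG} \racts \cB) \simeq \Fun(\cB, \cG)$ follows from the entirely parallel argument, using the augmentation $\sd(\cB)^\op \xra{\min} \cB$ in place of $\sd(\cB) \xra{\max} \cB$; the relevant localization is now $(\cB^\op)^\op \simeq \cB$. I expect the step requiring the most care to be the passage from $[n]$ to general $\cB$ in the localization computation -- specifically, checking that under the presentation $\sd(\cB) \simeq \colim_{([n]\to\cB)} \sd([n])$ the cocartesian morphisms of $\sd(\cB)$ are precisely those assembled from the cocartesian morphisms of the $\sd([n])$, which is the content behind the footnoted assertion that $\sd(\cB) \xra{\max} \cB$ is a locally cocartesian fibration; granting this, everything else is formal.
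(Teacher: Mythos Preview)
Your proposal is correct and follows the same approach as the paper: the observation in the paper is stated with its justification embedded (identify the localization of $\sd([n])$ at the $\max$-cocartesian morphisms as $[n]^\op$, pass to general $\cB$ via the colimit presentation of $\sd$, and then invoke the universal property of localization), and you have simply filled in the details the paper leaves implicit. Your identification of the localization functor as $\min$ and your argument that inverting the cocartesian morphisms suffices (via the chain $\{i\} \to \{i,s_2\} \to \cdots \to S$) are exactly what underlies the paper's terse ``Note that\ldots''.
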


\begin{observation}
\label{obs.map.from.strict.limit.to.lax.limit}
Given a $\cB$-module of any sort, there are canonical fully faithful inclusions from its strict limit to its various lax limits: these are corepresented by the epimorphisms (in fact localizations)
\[
\sd(\cB)
\xra{\max}
\cB
\qquad
\textup{and}
\qquad
\sd(\cB)
\xra{\min}
\cB
~,
\]
which respectively preserve cocartesian and cartesian morphisms over $\cB$.
\end{observation}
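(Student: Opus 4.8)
The plan is to realize each comparison functor as the map of corepresentable functors induced by $\max$ or $\min$, and to deduce full faithfulness from the fact that these functors are localizations. I spell this out for the comparison $\lim\to\lim^\rlax$ on $\LMod_{\llax.\cB}=\loc.\coCart_\cB$; the remaining three cases of \Cref{define.rest.of.limits} are formally identical, with $\min$, $\sd(\cB)^\op$, $\sd(\cB^\op)$, etc.\ in place of $\max$, $\sd(\cB)$, $\cB$, and with opposites inserted as needed.

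\textbf{Step 1: the comparison functor via corepresentability.} First I would observe that $\max\colon\sd(\cB)\to\cB$ is a morphism in $\loc.\coCart_\cB$ from $(\sd(\cB),\max)$ to $(\cB,\id_\cB)$: both pairs are objects of $\loc.\coCart_\cB$ (the former because $\max$ is a locally cocartesian fibration, as observed above; the latter trivially), the triangle witnessing that $\max$ lies over $\cB$ commutes on the nose, and $\max$ preserves cocartesian morphisms over $\cB$ because every morphism of $\cB$ is $\id_\cB$-cocartesian --- this last point being precisely the clause in the statement. Since $\lim^\rlax=\Fun^\cocart_{/\cB}(\sd(\cB),-)$ is the $\Cat$-valued functor on $\loc.\coCart_\cB$ corepresented by $\sd(\cB)$, while $\lim=\Gamma^\cocart=\Fun^\cocart_{/\cB}(\cB,-)$ (the $\infty$-category of cocartesian sections) is corepresented by $\cB$, composition with $\max$ produces a natural transformation $\lim\Rightarrow\lim^\rlax$, whose value at a locally cocartesian fibration $\cE\da\cB$ is precomposition $\max^\ast\colon\Fun^\cocart_{/\cB}(\cB,\cE)\to\Fun^\cocart_{/\cB}(\sd(\cB),\cE)$.

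\textbf{Step 2: full faithfulness from epimorphy.} The main point is then to show this functor is fully faithful, which reduces to showing that $\max\colon\sd(\cB)\to\cB$ is an epimorphism in $\Cat$. Given that, precomposition $\max^\ast\colon\Fun(\cB,\cC)\to\Fun(\sd(\cB),\cC)$ is fully faithful for every $\infty$-category $\cC$; taking $\cC=\cE$ and $\cC=\cB$ and passing to the pullbacks $\Fun_{/\cB}(-,\cE)=\Fun(-,\cE)\times_{\Fun(-,\cB)}\{\ast\}$ --- a componentwise fully faithful map of cospans induces a fully faithful map on pullbacks --- shows $\Fun_{/\cB}(\cB,\cE)\to\Fun_{/\cB}(\sd(\cB),\cE)$ is fully faithful, and restricting to the full subcategories cut out by the cocartesian condition (into which $\max^\ast$ lands, again since $\max$ preserves cocartesian morphisms) gives the claim.

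\textbf{Step 3: $\max$ and $\min$ are localizations (the main obstacle).} It remains to see that $\max\colon\sd(\cB)\to\cB$ is a localization --- hence an epimorphism --- and dually for $\min$. Since both $\sd$ and the augmentation $\max$ are left Kan extended from $\bDelta$ and $\cB\simeq\colim_{\bDelta_{/\cB}}[n]$, the functor $\max\colon\sd(\cB)\to\cB$ is the colimit over $\bDelta_{/\cB}$ of the functors $\max\colon\sd([n])\to[n]$. Each of the latter is a reflective localization, with fully faithful right adjoint $[n]\hookrightarrow\sd([n])$ given by $i\mapsto\{0,1,\dots,i\}$, unit the inclusion $S\subseteq\{0,\dots,\max S\}$, and $\max$ itself as a splitting. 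Since localization at a class of morphisms is a left adjoint --- hence preserves colimits --- it follows that $\max\colon\sd(\cB)\to\cB$ is itself the localization of $\sd(\cB)$ at the class of morphisms it sends to equivalences of $\cB$, and in particular an epimorphism; the statement for $\min$ follows by passing to opposites. I expect the only delicate bookkeeping to be in this last step --- tracking the four variants and their opposites, and confirming that the colimit presentation of $\sd(\cB)$ from \Cref{define.sd} is compatible with the tautological presentation of $\cB$ --- but no genuine difficulty arises: the statement is essentially a formal consequence of corepresentability together with the localization property of subdivision.
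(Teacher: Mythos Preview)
Your proposal is correct and unpacks precisely the reasoning the paper leaves implicit: in the paper this is stated as an \emph{Observation} without proof, the content being exactly that $\max$ and $\min$ are localizations (hence epimorphisms) and that the comparison maps are the induced precomposition functors. Your three steps spell out that implicit argument faithfully; the only place I would tighten is Step~3, where the cleanest route is simply that $\Fun(-,\cE)$ takes the colimit presentations $\sd(\cB)\simeq\colim_{\bDelta_{/\cB}}\sd([n])$ and $\cB\simeq\colim_{\bDelta_{/\cB}}[n]$ to limits, and a limit of fully faithful functors is fully faithful---this avoids the slightly ambiguous appeal to ``localization is a left adjoint.''
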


\subsection{Left-lax equivariant functors of right-lax modules and right-lax equivariant functors of left-lax modules}
\label{subsection.lax.equivt.functors.in.hard.case}

\begin{observation}
\label{define.Yo.rlax}
Directly from \Cref{define.rest.of.limits}, we see that there are commutative diagrams
\[ \begin{tikzcd}[row sep=0cm]
\LMod_{\llax.\cB}
\arrow[bend left=15]{rrdd}[pos=0.4]{\lim^\rlax}
\\
\rotatebox{-90}{$:=$}
\\
\loc.\coCart_\cB
\arrow{r}[swap]{\Yo^\rlax}
&
\Fun \left( \left( \bDelta_{/\cB} \right)^\op , \Cat \right)
\arrow{r}[swap]{\lim}
&
\Cat
\\
\rotatebox{90}{$\in$}
&
\rotatebox{90}{$\in$}
\\
(\cE \da \cB)
\arrow[mapsto]{r}
&
\left(
\left( [n] \xlongra{\varphi} \cB \right)
\longmapsto
\Fun^\cocart_{/[n]} \left( \sd([n]) , \varphi^* \cE \right)
\right)
\end{tikzcd} \]
and
\[ \begin{tikzcd}[row sep=0cm]
\RMod_{\rlax.\cB}
\arrow[bend left=15]{rrdd}[pos=0.4]{\lim^\llax}
\\
\rotatebox{-90}{$:=$}
\\
\loc.\Cart_\cB
\arrow{r}[swap]{\Yo^\llax}
&
\Fun \left( \left( \bDelta_{/\cB} \right)^\op , \Cat \right)
\arrow{r}[swap]{\lim}
&
\Cat
\\
\rotatebox{90}{$\in$}
&
\rotatebox{90}{$\in$}
\\
(\cE \da \cB)
\arrow[mapsto]{r}
&
\left(
\left( [n] \xlongra{\varphi} \cB \right)
\longmapsto
\Fun^\cart_{/[n]} \left( \sd([n])^\op , \varphi^* \cE \right)
\right)
\end{tikzcd}~. \]
\end{observation}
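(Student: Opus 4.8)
\emph{Strategy.} The plan is to prove that $\lim^\rlax$ of \Cref{define.rest.of.limits} factors through $\Yo^\rlax$ as displayed; the factorization of $\lim^\llax$ through $\Yo^\llax$ will then be formally dual, obtained by interchanging $\cocart\leftrightarrow\cart$, $\sd\leftrightarrow\sd^\op$, and $\max\leftrightarrow\min$ throughout, and using that $\sd^\op\xra{\min}\id$ is componentwise a locally cartesian fibration. The argument rests on two inputs about $\sd$: its colimit description, and the behaviour of its cocartesian morphisms under that description.

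\emph{Step 1: $\sd(\cB)$ as a colimit.} First I would recall from \Cref{define.sd} that $\sd$ is by definition the left Kan extension of $\sd|_\bDelta$ along $\bDelta\hookrightarrow\Cat$, so that $\sd(\cB)\simeq\operatorname{colim}_{(\varphi\colon[n]\to\cB)\in\bDelta_{/\cB}}\sd([n])$ in $\Cat$; by naturality of the augmentation $\sd\xra{\max}\id$, this colimit is compatible with the composites $\sd([n])\xra{\max}[n]\xra{\varphi}\cB$, so that $\max\colon\sd(\cB)\to\cB$ is the colimit of the diagram $(\varphi\colon[n]\to\cB)\mapsto(\sd([n])\to\cB)$ in $\Cat_{/\cB}$. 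Per the footnote to that discussion this colimit may be computed in simplicial spaces, and from this I would extract the two facts that (a) every morphism of $\sd(\cB)$ is the image of a morphism of some $\sd([n])$ under the canonical functor $\sd([n])\to\sd(\cB)$, and (b) every cocartesian morphism of the locally cocartesian fibration $\sd(\cB)\xra{\max}\cB$ is the image of a cocartesian morphism of some $\sd([n])\xra{\max}[n]$ (conversely, $\sd$ of any morphism of $\bDelta_{/\cB}$ visibly carries $\max$-cocartesian morphisms to $\max$-cocartesian ones).

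\emph{Step 2: mapping out, then cutting down.} The functor $\Fun_{/\cB}(-,\cE)$ carries colimits in $\Cat_{/\cB}$ to limits (it is a fiber product of functors of the form $\Fun(-,\cE)$ and $\Fun(-,\cB)$ over the point cut out by the structure map, each of which turns colimits into limits). Combining this with Step 1 and the universal property of the pullback $\varphi^*\cE\to[n]$ yields a natural equivalence
\[
\Fun_{/\cB}(\sd(\cB),\cE)\;\simeq\;\lim_{(\bDelta_{/\cB})^\op}\Fun_{/\cB}(\sd([n]),\cE)\;\simeq\;\lim_{(\bDelta_{/\cB})^\op}\Fun_{/[n]}(\sd([n]),\varphi^*\cE)\,.
\]
Next I would pass to cocartesian-preserving full subcategories: by facts (a) and (b), a functor $\sd(\cB)\to\cE$ over $\cB$ preserves all cocartesian morphisms over $\cB$ if and only if each of its restrictions to the $\sd([n])$ does, and --- transporting such a restriction through the pullback $\varphi^*\cE\to[n]$, itself a locally cocartesian fibration --- this is equivalent to the adjunct functor $\sd([n])\to\varphi^*\cE$ lying in $\Fun^\cocart_{/[n]}(\sd([n]),\varphi^*\cE)$. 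Since a limit of $\infty$-categories restricts to the limit of a compatible system of full subcategories, the equivalence above restricts to
\[
\Fun^\cocart_{/\cB}(\sd(\cB),\cE)\;\simeq\;\lim_{(\varphi\colon[n]\to\cB)\in(\bDelta_{/\cB})^\op}\Fun^\cocart_{/[n]}(\sd([n]),\varphi^*\cE)\;=\;\lim\bigl(\Yo^\rlax(\cE)\bigr)\,,
\]
naturally in $(\cE\da\cB)\in\loc.\coCart_\cB$; by \Cref{define.rest.of.limits} the left-hand side is $\lim^\rlax(\cB\lacts\cE)$, which is the asserted factorization. (That $\Yo^\rlax$ is a functor $\loc.\coCart_\cB\to\Fun((\bDelta_{/\cB})^\op,\Cat)$ at all follows from the same observation that $\sd$ of a morphism of $\bDelta_{/\cB}$ preserves $\max$-cocartesian morphisms, so that the precomposition maps are well-defined on the relevant $\Fun^\cocart$'s.)

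\emph{Main obstacle.} I expect the sole non-formal point to be fact (b) of Step 1, namely that the colimit $\sd(\cB)\xra{\max}\cB$ acquires no cocartesian morphisms beyond the images of those of the $\sd([n])$. This is precisely what the footnote near \Cref{define.sd} asserts: once the defining colimit is computed in simplicial spaces, the locally cocartesian fibration $\sd(\cB)\xra{\max}\cB$ and its cocartesian edges can be analyzed one simplex of $\cB$ at a time, using that the cocartesian morphisms of $\sd([n])\xra{\max}[n]$ are precisely the poset inclusions $S\hookrightarrow S\cup\{j\}$ with $j>\max S$, and that $\sd$ of any operator in $\bDelta$ sends such a morphism either to one of the same shape or to an identity. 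Everything else --- the colimit formula, mapping out of a colimit, restriction to full subcategories, and naturality in $\cE$ --- is routine.
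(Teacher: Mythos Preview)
Your argument is correct and supplies precisely the detail that the paper omits: the paper treats this as an Observation with no proof beyond the phrase ``Directly from \Cref{define.rest.of.limits}, we see that\ldots'', so there is nothing to compare against. Your identification of the only non-formal point --- that the cocartesian morphisms of $\sd(\cB)\xra{\max}\cB$ are exactly the images of those from the various $\sd([n])$ --- is on target, and your appeal to the colimit being computable in simplicial spaces (as the paper's footnote indicates) is the right way to dispatch it.
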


\begin{remark}
\label{rmk.lax.Yo.fails.Segal}
Another way of phrasing the fact that the structure maps $\varepsilon_{02}$ and $\varepsilon_{2^\circ0^\circ}$ of \Cref{example.limits.of.lax.actions.when.laxness.disagrees} are not determined by the others is to say that the functors in the image of $\Yo^\rlax$ and $\Yo^\llax$ typically do \textit{not} satisfy the ($\cB$-parametrized) Segal condition.
\end{remark}

\begin{definition}
\label{define.rest.of.equivariant.functors}
We define the full images
\[ \begin{tikzcd}
\LMod_{\llax.\cB}
:=
\hspace{-1cm}
&
\loc.\coCart_\cB
\arrow[dashed, hook, two heads]{r}
\arrow{rd}[swap, pos=0.6]{\Yo^\rlax}
&
\LMod^\rlax_{\llax.\cB}
\arrow[hook]{d}{\ff}
\\
&
&
\Fun \left( \left( \bDelta_{/\cB} \right)^\op , \Cat \right)
\end{tikzcd}~, \]
\[ \begin{tikzcd}
\RMod_{\rlax.\cB}
:=
\hspace{-1cm}
&
\loc.\Cart_\cB
\arrow[dashed, hook, two heads]{r}
\arrow{rd}[swap, pos=0.6]{\Yo^\llax}
&
\RMod^\llax_{\rlax.\cB}
\arrow[hook]{d}{\ff}
\\
&
&
\Fun \left( \left( \bDelta_{/\cB} \right)^\op , \Cat \right)
\end{tikzcd}~, \]
\[ \begin{tikzcd}
\LMod_{\rlax.\cB}
:=
\hspace{-1cm}
&
\loc.\Cart_{\cB^\op}
\arrow[dashed, hook, two heads]{r}
\arrow{rd}[swap, pos=0.6]{\Yo^\llax}
&
\LMod^\llax_{\rlax.\cB}
\arrow[hook]{d}{\ff}
\\
&
&
\Fun \left( \left( \bDelta_{/\cB^\op} \right)^\op , \Cat \right)
\end{tikzcd}~, \]
and
\[ \begin{tikzcd}
\RMod_{\llax.\cB}
:=
\hspace{-1cm}
&
\loc.\coCart_{\cB^\op}
\arrow[dashed, hook, two heads]{r}
\arrow{rd}[swap, pos=0.6]{\Yo^\rlax}
&
\RMod^\rlax_{\llax.\cB}
\arrow[hook]{d}{\ff}
\\
&
&
\Fun \left( \left( \bDelta_{/\cB^\op} \right)^\op , \Cat \right)
\end{tikzcd}~.\footnote{That $\Yo^\rlax$ and $\Yo^\llax$ are indeed monomorphisms follows immediately from \Cref{obs.defns.of.rlax.map.of.llax.modules.agree}.}
\]
Our terminology for these $\infty$-categories extends that given in \Cref{define.almost.all.modules} in the evident way, so that for instance a morphism in $\RMod^\rlax_{\llax.\cB}$ is called a right-lax equivariant functor of left-lax right $\cB$-modules.
\end{definition}

\begin{observation}
\label{obs.could.use.Delta.over.B.or.Cat.over.B}
The functor
\[
\bDelta_{/\cB}
\longhookra
\Cat_{/\cB}
\]
is the inclusion of a \textit{strongly generating} subcategory: left Kan extension along it (into a cocomplete target) is fully faithful.  In particular, the right Kan extension
\[
\Fun \left( \left( \bDelta_{/\cB} \right)^\op , \Cat \right)
\longhookra
\Fun \left( \left( \Cat_{/\cB} \right)^\op , \Cat \right)
\]
is fully faithful, and so we may equivalently state \Cref{define.rest.of.equivariant.functors} in terms of this larger functor $\infty$-category.\footnote{In particular, this explains the ``$\Yo$'' notation: it's a sort of restricted Yoneda functor.}
\end{observation}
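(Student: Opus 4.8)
The plan is to deduce the whole statement from the single observation that $\bDelta$ is a \emph{full} subcategory of $\Cat$, together with three inheritance properties of full-faithfulness: under passage to slices, under passage to opposites, and under Kan extension into (co)complete targets.

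First I would check that $\bDelta_{/\cB} \longhookra \Cat_{/\cB}$ is fully faithful. This is formal: $\bDelta \longhookra \Cat$ is fully faithful (its objects are the posets $[n]$, and every functor between them is a morphism of $\bDelta$), and for any fully faithful functor $\iota$ between $\infty$-categories and any object $c$ of its target, the induced functor on slices over $c$ is again fully faithful, since the mapping spaces in either slice over $c$ are fibers — over the relevant structure maps — of the corresponding mapping spaces in the source and the target of $\iota$, which agree by full-faithfulness. Next I would invoke the standard fact that a fully faithful functor induces a fully faithful left Kan extension into any cocomplete $\infty$-category: the pointwise colimit formula shows that restricting such a left Kan extension back along $\iota$ recovers the original functor, because for each object $a$ of the source the relevant colimit is indexed by a comma $\infty$-category which, by full-faithfulness, is the slice over $a$ and hence has a terminal object; thus the unit of the adjunction $\iota_! \dashv \iota^*$ is an equivalence, and a left adjoint with invertible unit is fully faithful. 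Applied to $\bDelta_{/\cB} \longhookra \Cat_{/\cB}$ this is exactly the asserted strong generation. (In fact $\bDelta_{/\cB}$ is even \emph{dense} in $\Cat_{/\cB}$: every $(\cC \xra{\varphi} \cB) \in \Cat_{/\cB}$ is the colimit of the canonical diagram $\bDelta_{/\cC} \to \Cat_{/\cB}$, since $\cC \simeq \operatorname{colim}_{\bDelta_{/\cC}} [n]$ in $\Cat$ and the projection $\Cat_{/\cB} \to \Cat$ creates colimits; but strong generation is all that is used below.)

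For the ``in particular'' clause I would dualize. As $(\bDelta_{/\cB})^\op \longhookra (\Cat_{/\cB})^\op$ is again fully faithful and $\Cat$ is complete, the formally dual argument — the relevant comma $\infty$-category now having an \emph{initial} object, so that the counit of the right-Kan-extension adjunction is an equivalence — shows that the right Kan extension $\Fun((\bDelta_{/\cB})^\op, \Cat) \longhookra \Fun((\Cat_{/\cB})^\op, \Cat)$ is fully faithful. Post-composing $\Yo^\rlax$ (and likewise $\Yo^\llax$) with this embedding therefore does not change its full image, so the $\infty$-categories introduced in \Cref{define.rest.of.equivariant.functors} may equally well be presented inside the larger functor $\infty$-category. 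To make this transparent I would identify the composite of $\Yo^\rlax$ with this right Kan extension with the manifest ``global'' assignment $(\cC \xra{\varphi} \cB) \mapsto \Fun^\cocart_{/\cC}(\sd(\cC), \varphi^*\cE)$: by \Cref{define.sd}, $\sd$ is the left Kan extension of $\bDelta \xra{\sd} \Cat$ along $\bDelta \longhookra \Cat$, so $\sd(\cC) \simeq \operatorname{colim}_{([n] \to \cC) \in \bDelta_{/\cC}} \sd([n])$, whence mapping out of $\sd(\cC)$ over $\cC$ turns this colimit into precisely the $\bDelta_{/\cC}$-indexed limit that computes the right Kan extension at $\cC$.

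I expect the only step requiring genuine care to be this last identification — specifically, checking that the ``$\cocart$'' decoration is compatible with the colimit decomposition of $\sd(\cC)$, i.e.\ that a functor $\sd(\cC) \to \varphi^*\cE$ over $\cC$ preserves $\max$-cocartesian morphisms if and only if each of its restrictions along the canonical functors $\sd([n]) \to \sd(\cC)$ does. This holds because every $\max$-cocartesian morphism of $\sd(\cC)$ lies in the image of one of these functors, the cocartesian monodromy of $\sd(\cC) \xra{\max} \cC$ being computed by unions of subsets exactly as in the simplicial case. Everything else is a formal consequence of full-faithfulness being inherited by slices, opposites, and Kan extensions; the evident size issues are handled by the paper's universe conventions.
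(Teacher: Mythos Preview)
Your argument is correct. The paper offers no proof of this observation at all---it is simply asserted as a standard fact---so your write-up is strictly more detailed than what appears there; the key input (full faithfulness of $\bDelta \hookrightarrow \Cat$, inherited by slices and opposites, hence invertibility of the (co)unit of the Kan-extension adjunction) is exactly the intended justification, and your additional identification of the right-Kan-extended $\Yo^\rlax$ with the global assignment $(\cC \xra{\varphi} \cB) \mapsto \Fun^\cocart_{/\cC}(\sd(\cC), \varphi^*\cE)$ is a helpful elaboration that the paper leaves implicit.
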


\begin{remark}
\label{rmk.lax.colimit.isnt.a.segal.space}
Given a left-lax left $\cB$-module
\[
(\cE \da \cB)
\in \loc.\coCart_\cB
=:
\LMod_{\llax.\cB}
~,
\]
let us consider its right-lax Yoneda functor
\[
\left( \bDelta_{/\cB} \right)^\op
\xra{\Yo^\rlax(\cE)}
\Cat
~.
\]
Observe that any functor $\cB' \ra \cB$ determines a composite
\[
\left( \bDelta_{/\cB'} \right)^\op
\longra
\left( \bDelta_{/\cB} \right)^\op
\xra{\Yo^\rlax(\cE)}
\Cat
~;
\]
we may think of this as the \textit{restriction} of $\Yo^\rlax(\cE)$, which allows us to study this object ``locally''.  In particular, restricting along a functor
\[
[1]
\longra
\cB
\]
determines a cartesian fibration over $[1]$: the cocartesian dual of the cocartesian fibration $\cE_{|[1]}$.  As noted in \Cref{rmk.lax.Yo.fails.Segal}, the essential subtlety of the situation lies in the fact that the corresponding cartesian monodromy functors don't generally compose: restricting along a functor
\[
[2]
\longra
\cB
\]
does \textit{not} generally determine a cartesian fibration over $[2]$.  Nevertheless, one might still consider the functor $\Yo^\rlax(\cE)$ as the ``locally cocartesian dual'' of the locally cocartesian fibration $\cE \da \cB$: the right-lax \textit{colimit} of the left-lax functor
\[
\cB
\xra{\llax}
\Cat
\]
that classifies it.
\end{remark}

\begin{observation}
\label{obs.that.defn.of.rlax.lim.extends}
It follows immediately from \Cref{obs.defns.of.rlax.map.of.llax.modules.agree} that the diagrams of \Cref{figure.define.almost.all.modules} defined in \Cref{define.almost.all.modules} extend to commutative diagrams
\[ \hspace{-1cm}
\begin{tikzcd}[row sep=1.5cm]
\LMod^\llax_{\llax.\cB}
\arrow[\surjmonoleft]{r}
&
\LMod_{\llax.\cB}
\arrow[hook, two heads]{r}
&
\LMod^\rlax_{\llax.\cB}
\\
\LMod^\llax_\cB
\arrow[\surjmonoleft]{r}
\arrow[hook]{u}{\ff}
\arrow[hook, dashed]{d}{\ff}
&
\LMod_\cB
\arrow[hook, two heads]{r}
\arrow[hook]{u}{\ff}
\arrow[hook]{d}{\ff}
&
\LMod^\rlax_\cB
\arrow[dashed, hook]{u}[swap]{\ff}
\arrow[hook]{d}{\ff}
\\
\LMod^\llax_{\rlax.\cB}
\arrow[\surjmonoleft]{r}
&
\LMod_{\rlax.\cB}
\arrow[hook, two heads]{r}
&
\LMod^\rlax_{\rlax.\cB}
\end{tikzcd}
\qquad
\textup{and}
\qquad
\begin{tikzcd}[row sep=1.5cm]
\RMod^\llax_{\llax.\cB}
\arrow[\surjmonoleft]{r}
&
\RMod_{\llax.\cB}
\arrow[hook, two heads]{r}
&
\RMod^\rlax_{\llax.\cB}
\\
\RMod^\llax_\cB
\arrow[\surjmonoleft]{r}
\arrow[hook]{u}{\ff}
\arrow[hook, dashed]{d}{\ff}
&
\RMod_\cB
\arrow[hook, two heads]{r}
\arrow[hook]{u}{\ff}
\arrow[hook]{d}{\ff}
&
\RMod^\rlax_\cB
\arrow[dashed, hook]{u}[swap]{\ff}
\arrow[hook]{d}{\ff}
\\
\RMod^\llax_{\rlax.\cB}
\arrow[\surjmonoleft]{r}
&
\RMod_{\rlax.\cB}
\arrow[hook, two heads]{r}
&
\RMod^\rlax_{\rlax.\cB}
\end{tikzcd}
\]
(with the dotted arrows being nontrivial to obtain), such that all relevant notions -- strict limit, left-lax limit, right-lax limit, and underlying $\infty$-category -- all remain unambiguous.
\end{observation}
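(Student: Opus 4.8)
The plan is to assemble the two $3\times 3$ grids out of the data already produced in \Cref{define.almost.all.modules} and \Cref{define.rest.of.equivariant.functors}, and then to read off the claimed unambiguity from \Cref{obs.defns.of.rlax.map.of.llax.modules.agree}. I will describe the left-module grid; the right-module grid is obtained from it by interchanging $\cB$ and $\cB^\op$ throughout the fibrational models of \Cref{figure.define.almost.all.modules}. Its inner ``staircase'' -- all of its solid non-dotted arrows together with the seven corners other than $\LMod^\rlax_{\llax.\cB}$ and $\LMod^\llax_{\rlax.\cB}$ -- is literally the diagram of \Cref{figure.define.almost.all.modules}, which is already commutative. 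The two missing corners, together with the factorizations of $\Yo^\rlax$ and $\Yo^\llax$ through their full images (i.e.\ the arrows $\LMod_{\llax.\cB} \to \LMod^\rlax_{\llax.\cB} \hookrightarrow \Fun((\bDelta_{/\cB})^\op,\Cat)$ and $\LMod_{\rlax.\cB} \to \LMod^\llax_{\rlax.\cB} \hookrightarrow \Fun((\bDelta_{/\cB^\op})^\op,\Cat)$), are exactly what \Cref{define.rest.of.equivariant.functors} supplies. So it remains to (i) construct the two dotted fully faithful inclusions $\LMod^\llax_\cB \hookrightarrow \LMod^\llax_{\rlax.\cB}$ and $\LMod^\rlax_\cB \hookrightarrow \LMod^\rlax_{\llax.\cB}$; (ii) check that the resulting nine-term grid commutes; and (iii) check that each of $\lim$, $\lim^\llax$, $\lim^\rlax$, and the underlying-$\infty$-category functor is unambiguous on every corner it touches.

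The organizing principle is that all six ``lax'' corners ought to sit as full subcategories of a single ambient functor $\infty$-category by compatible restricted-Yoneda functors: for the top two rows, inside $\Fun((\Cat_{/\cB})^\op,\Cat)$ via the functor sending a locally cocartesian fibration $\cE \da \cB$ to the assignment $(\cB' \xra{\varphi} \cB) \mapsto \Fun^\cocart_{/\cB'}(\sd(\cB'),\varphi^*\cE)$ -- using \Cref{obs.could.use.Delta.over.B.or.Cat.over.B} to pass freely between $\bDelta_{/\cB}$ and $\Cat_{/\cB}$ -- and for the bottom row, inside $\Fun((\Cat_{/\cB^\op})^\op,\Cat)$ via the cartesian analogue over $\cB^\op$, with the localization computation of \Cref{obs.rlax.lim.of.trivial.action} reconciling the two with the middle (strict) row. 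Granting this, (ii) is automatic: every solid -- and every dotted -- arrow of the grid is then an inclusion of full subcategories of the ambient category. The role of \Cref{obs.defns.of.rlax.map.of.llax.modules.agree} is precisely to identify this ``$\Yo$-theoretic'' notion of right-lax (resp.\ left-lax) equivariant functor with the fibrational notions already in force on the subcategories $\coCart_\cB$ and $\Cat_{\cocart/\cB}$ of $\loc.\coCart_\cB$ (resp.\ on the cartesian analogues). With that in hand, the dotted inclusion $\LMod^\rlax_\cB \hookrightarrow \LMod^\rlax_{\llax.\cB}$ is obtained by restricting $\Yo^\rlax$ along $\LMod_\cB \hookrightarrow \LMod_{\llax.\cB}$, precomposed with the dualization identifying $\LMod^\rlax_\cB = \Cat_{\cart/\cB^\op}$ with the $\Yo^\rlax$-image of $\coCart_\cB$; it is fully faithful because $\Yo^\rlax$ is a monomorphism. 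The inclusion $\LMod^\llax_\cB \hookrightarrow \LMod^\llax_{\rlax.\cB}$ is constructed dually from $\Yo^\llax$.

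For (iii): each of the four functors is corepresented, on the ambient functor $\infty$-category, by an object that does not move between corners -- $\lim$ by $\cB$ (equivalently $\cB^\op$ after dualization), $\lim^\rlax$ and $\lim^\llax$ by the subdivision objects $\sd(\cB) \xra{\max} \cB$ and $\sd(\cB)^\op \xra{\min} \cB$ coming from the augmentations $\sd \xra{\max} \id$ and $\sd^\op \xra{\min} \id$ (see Observations \ref{obs.rlax.lim.of.trivial.action} \and \ref{obs.map.from.strict.limit.to.lax.limit}), and the underlying-$\infty$-category functor by the relevant forgetful object. Since these corepresenting objects are the same no matter which corner an object of the grid is viewed in, evaluating any of the four functors gives the same answer regardless of the route taken -- this is the required unambiguity, and it simultaneously yields that the comparison natural transformations $\lim^\llax \Rightarrow \lim \Rightarrow \lim^\rlax$ of \Cref{figure.define.almost.all.limits} extend compatibly over the whole grid.

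The main obstacle is step (i): unlike every other arrow of the grid, the dotted inclusions are \emph{not} restrictions of a functor that has already been written down, because a left-lax equivariant map of \emph{strict} left $\cB$-modules -- an arbitrary functor over $\cB$ between cocartesian fibrations -- does not dualize in any naive way to a map of the associated locally cartesian fibrations over $\cB^\op$, and symmetrically on the other side. It is exactly \Cref{obs.defns.of.rlax.map.of.llax.modules.agree}, reconciling the fibrational and $\Yo$-theoretic descriptions of these maps, that both makes the construction of the dotted arrows possible and forces commutativity in (ii); granted it, everything else is bookkeeping with full subcategories and corepresentable functors.
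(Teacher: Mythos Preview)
Your proposal is essentially correct and aligned with the paper's approach: the paper also treats this as following immediately from \Cref{obs.defns.of.rlax.map.of.llax.modules.agree} and offers no further detail, so your elaboration is more than the paper itself provides, and you correctly identify that observation as the crux for constructing the dotted arrows and verifying compatibility.

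One point to tighten: your invocation of \Cref{obs.rlax.lim.of.trivial.action} as what ``reconciles the two with the middle (strict) row'' is not quite right. That observation computes the right-lax limit only for \emph{trivial} modules and does not by itself compare the $\Yo^\rlax$-side (over $\cB$) with the $\Yo^\llax$-side (over $\cB^\op$) on general strict modules. The actual reconciliation --- that for a strict module $\cE \in \coCart_\cB \simeq \Cart_{\cB^\op}$ the two competing descriptions of right-lax morphisms (via $\Cat_{\cart/\cB^\op}$ versus via the full image of $\Yo^\rlax$) coincide --- is exactly what \Cref{obs.defns.of.rlax.map.of.llax.modules.agree} supplies, through its identification of the $\Yo^\rlax$-image with $2\Cat_{1\cart/\llax(\cB)^{1\op}}$ together with the fact that strict functors $\cB \to \Cat$ are equivalently strict functors $\llax(\cB) \to \Cat$. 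Once you route the middle row through the $(\infty,2)$-categorical model of \cite{GR} rather than through the trivial-module computation, your argument goes through cleanly. Similarly, in part (iii) the ``corepresenting object'' phrasing is a bit loose: the genuine content is that $\lim^\rlax := \lim \circ \Yo^\rlax$ on the top side agrees with $\Gamma$ on the cartesian side for strict modules, and again this is packaged inside \Cref{obs.defns.of.rlax.map.of.llax.modules.agree} (specifically the final paragraph identifying $\sd([n]) \xra{\max} [n]$ with the free-cartesian-fibration construction).
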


\subsection{Lax equivariance and adjunctions}
\label{subsection.lax.and.adjns}

The primary output of this subsection is a pair of results -- Lemmas \ref{lemma.ptwise.radjt.has.ptwise.ladjt} \and \ref{lem.get.r.lax.left.adjt} -- which will be repeatedly useful throughout the trilogy to which this paper belongs.  In contrast with the rest of this section, in this final subsection we only work out the specific things we'll need, without spelling out their variously dual assertions.

The material in this subsection is drawn from the appendix of \cite{GR}.  We also use some of the notation introduced there (possibly with some minor cosmetic changes), the meaning of which should always be clear from context.

\begin{notation}
For an $\infty$-category $\cB$, we write
\[
\llax(\cB) \in 2\Cat
\]
for its \textit{left-laxification}: the $(\infty,2)$-category corepresenting left-lax functors from $\cB$.  Precisely, this is the 1-opposite of the non-unital right-laxification \cite[Chapter 11, \S A.1.2]{GR} of its (1-)opposite, which can be unitalized by \cite[Chapter 11, Lemma A.3.5]{GR}.  It follows from \cite[Chapter 11, Theorem-Construction 3.2.2]{GR} that left-lax functors out of $\cB$ are equivalent to strict functors out of $\llax(\cB)$.  In particular, we have an equivalence
\[
\LMod_{\llax.\cB}
:=
\loc.\coCart_\cB
\simeq
\Fun(\llax(\cB),\Cat)
~.
\]
\end{notation}

\begin{observation}
\label{left.laxification.is.left.kan.extended}
It is immediate from the definition of a left-lax functor that the commutative triangle
\[ \begin{tikzcd}[column sep=1.5cm]
\bDelta
\arrow{r}{\llax(-)}
\arrow[hook]{d}
&
\Cat_2
\\
\Cat
\arrow{ru}[swap, sloped, pos=0.2]{\llax(-)}
\end{tikzcd} \]
is a left Kan extension diagram: for any $\infty$-category $\cB \in \Cat$, we have an identification
\[
\llax(\cB)
\simeq
\colim_{([n] \da \cB) \in \bDelta_{/\cB}}
\llax([n])
\]
of its left-laxification.
\end{observation}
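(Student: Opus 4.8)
The plan is to reduce the assertion to the corepresentability property of the left-laxification recalled just above, and then to observe that the resulting statement about left-lax functor $\infty$-categories is exactly what it means to be a left-lax functor. I would begin from the standard fact that every $\infty$-category is canonically the colimit of its simplices: the inclusion $\bDelta \hookra \Cat$ is dense, so for any $\cB \in \Cat$ the canonical comparison $\colim_{([n] \da \cB) \in \bDelta_{/\cB}} [n] \to \cB$ is an equivalence in $\Cat$. Since $\llax(-)$ restricts on $\bDelta$ to the functor being Kan-extended, the universal property of left Kan extension furnishes a canonical comparison functor
\[
\colim_{([n] \da \cB) \in \bDelta_{/\cB}} \llax([n])
\longra
\llax(\cB)
\]
in $\Cat_2$, and the content of the Observation is precisely that this is an equivalence for every $\cB$. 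By the Yoneda lemma in $\Cat_2$ (applied after passing to maximal sub-$\infty$-groupoids of functor $\infty$-categories), it suffices to check that for every $(\infty,2)$-category $\cD$ the induced functor
\[
\Fun(\llax(\cB),\cD)
\longra
\lim_{([n] \da \cB) \in (\bDelta_{/\cB})^\op} \Fun(\llax([n]),\cD)
\]
is an equivalence of $\infty$-categories.

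Next I would invoke the corepresentability equivalence $\Fun(\llax(\cC),\cD) \simeq \Fun^{\llax}(\cC,\cD)$ of \cite[Chapter 11, Theorem-Construction 3.2.2]{GR}, which is natural in $\cC \in \Cat$; the displayed comparison is then identified with
\[
\Fun^{\llax}(\cB,\cD)
\xlongra{\ \sim\ }
\lim_{([n] \da \cB) \in (\bDelta_{/\cB})^\op} \Fun^{\llax}([n],\cD)
~.
\]
This says that a left-lax functor out of $\cB$ is precisely a coherent family of left-lax functors out of the simplices of $\cB$, glued along the structure maps of $\bDelta_{/\cB}$ -- an object of $\cD$ for each vertex, a $1$-morphism for each edge, a (generally noninvertible) comparison $2$-morphism witnessing lax-compatibility with composition for each $2$-simplex, a coherence $3$-morphism for each $3$-simplex, and so on. This is immediate from the definition of a left-lax functor: concretely, Gaitsgory--Rozenblyum construct the non-unital right-laxification -- and hence, after applying $1$-opposites and unitalizing, the left-laxification $\llax(-)$ on all of $\Cat$ -- by first defining it on $\bDelta$ via the relevant Gray-type cosimplicial object and then left Kan extending along $\bDelta \hookra \Cat$, so the displayed equivalence is built into the construction.

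The main obstacle, and really the only point requiring care, is the bookkeeping around this packaging: $\llax(-)$ is assembled from the non-unital right-laxification of the (1-)opposite by passing to $1$-opposites and then unitalizing via \cite[Chapter 11, Lemma A.3.5]{GR}, and one must check that both operations are compatible with left Kan extension from $\bDelta$. The $1$-opposite is an autoequivalence of $\Cat_2$ and hence preserves all colimits, so the issue is entirely with unitalization: one must verify that passing to $\bDelta_{/\cB}$ and unitalizing can be performed in either order (equivalently, that the unitalization of \cite[Chapter 11, Lemma A.3.5]{GR} commutes with the relevant colimits in $\Cat$). Granting this compatibility, the chain of equivalences above goes through and the triangle is a left Kan extension diagram.
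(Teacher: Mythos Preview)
Your proposal is correct and matches the paper's approach: the paper offers no proof beyond the phrase ``immediate from the definition of a left-lax functor,'' and your steps 3--5 unpack exactly what that phrase means, namely that the corepresentability $\Fun(\llax(\cB),\cD) \simeq \Fun^{\llax}(\cB,\cD)$ together with the simplex-by-simplex nature of the definition of a left-lax functor yields the colimit formula by Yoneda.

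One comment: your final paragraph about unitalization is unnecessary and slightly muddies the argument. Once you have reduced (via Yoneda and corepresentability) to the statement that $\Fun^{\llax}(\cB,\cD) \simeq \lim_{[n]\to\cB} \Fun^{\llax}([n],\cD)$, you are done---this determines $\llax(\cB)$ as the colimit regardless of how it was built. You do not need to separately verify that the specific construction (opposites, non-unital laxification, unitalization) commutes with the colimit, because corepresentability pins down $\llax(\cB)$ up to equivalence. The ``obstacle'' you flag would only arise if you were trying to prove the result by tracking the construction rather than by invoking the universal property, which is not the route you (or the paper) are taking.
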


\begin{remark}
In fact, $\llax([n])$ can be described quite explicitly: the $\infty$-category
\[
\hom_{\llax([n])}(i,j)
\]
is just the poset of strictly increasing sequences
\[
i < k_1 < \cdots < k_l < j
\]
in $[n]$ from $i$ to $j$ (ordered by inclusion), with composition given by concatenation.\footnote{This can be presented as the simplicially-enriched category $\fC (\Delta^n)$ (where $\fC$ denotes the left adjoint of the homotopy-coherent nerve functor to simplicial sets), but thought of as enriched in $\infty$-categories (via quasicategories) rather than in spaces (via Kan complexes, after fibrant replacement).}
\end{remark}

\begin{observation}
\label{obs.defns.of.rlax.map.of.llax.modules.agree}
The notion of right-lax equivariant morphism between left-lax modules given in \Cref{define.rest.of.equivariant.functors} agrees with the one given in \cite[Chapter 10, \S 3]{GR}, as we now explain.

We use the terminology of \cite[Chapter 11]{GR}: a \textit{1-co/cartesian fibration} over an $(\infty,2)$-category is a co/cartesian fibration whose fibers are all $(\infty,1)$-categories (so that a 0-co/cartesian fibration is a left/right fibration).  With the evident corresponding notation, we have the composite equivalence
\[
\LMod_{\llax.\cB}
:=
\loc.\coCart_\cB
\simeq
\Fun(\llax(\cB) , \Cat)
\simeq
1\Cart_{\llax(\cB)^{1\op}}
~,
\]
whereafter the surjective monomorphism
\[
1\Cart_{\llax(\cB)^{1\op}}
\longrsurjmono
2\Cat_{1\cart/\llax(\cB)^{1\op}}
\]
witnesses the passage from strictly equivariant functors to right-lax equivariant functors in the sense of \cite[Chapter 11]{GR}.  So it suffices to provide a fully faithful inclusion
\begin{equation}
\label{want.ff.incl.so.notions.of.rlax.functors.of.llax.modules.are.the.same}
2\Cat_{1\cart/\llax(\cB)^{1\op}}
\xlonghookra{\ff}
\Fun \left( \left( \bDelta_{/\cB} \right)^\op , \Cat \right)
~,
\end{equation}
such that the composite
\begin{equation}
\label{composite.which.should.be.rlax.Yo}
\loc.\coCart_\cB
\simeq
1\Cart_{\llax(\cB)^{1\op}}
\longrsurjmono
2\Cat_{1\cart/\llax(\cB)^{1\op}}
\xlonghookra{\ff}
\Fun \left( \left( \bDelta_{/\cB} \right)^\op , \Cat \right)
\end{equation}
is the right-lax Yoneda embedding.  We take the functor \Cref{want.ff.incl.so.notions.of.rlax.functors.of.llax.modules.are.the.same} to be the restricted ($\Cat$-enriched) Yoneda functor along the functor
\begin{equation}
\label{functor.out.of.bDelta.over.B.along.which.we.take.restricted.Yo}
\begin{tikzcd}[column sep=2cm, row sep=0cm]
\bDelta_{/\cB}
\arrow{r}
&
2\Cat_{1\cart/\llax(\cB)^{1\op}}
\\
\rotatebox{90}{$\in$}
&
\rotatebox{90}{$\in$}
\\
([n] \da \cB)
\arrow[mapsto]{r}
&
\left( \llax([n])^{1\op} \da \llax(\cB)^{1\op} \right)
\end{tikzcd}
~.
\end{equation}
The functor \Cref{functor.out.of.bDelta.over.B.along.which.we.take.restricted.Yo} is the fully faithful inclusion of a strongly generating subcategory, so that the functor \Cref{want.ff.incl.so.notions.of.rlax.functors.of.llax.modules.are.the.same} is indeed a fully faithful inclusion.  To see that the resulting composite \Cref{composite.which.should.be.rlax.Yo} is indeed the right-lax Yoneda embedding, it suffices to check this when $\cB = [n]$, in which case this is readily verified: one can check directly that the locally cocartesian fibration $\sd([n]) \xra{\max} [n]$ is obtained through the composite operation of
\begin{itemize}
\item freely turning the identity functor on $\llax([n])^{1\op}$ into a cartesian fibration,
\item turning this into a cocartesian fibration over $\llax([n])$, and then
\item turning this into a locally cocartesian fibration over $[n]$.  
\end{itemize}
\end{observation}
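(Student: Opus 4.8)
The plan is to exhibit the Gaitsgory--Rozenblyum notion of right-lax equivariant morphism as the morphisms of a full subcategory of $\Fun\!\left(\left(\bDelta_{/\cB}\right)^\op,\Cat\right)$, and to match that full subcategory --- together with its embedding --- against the one cut out by $\Yo^\rlax$ in \Cref{define.rest.of.equivariant.functors}. Since right-lax equivariant functors in either sense are by construction \emph{all} morphisms between objects in a fixed essential image, and since a full image depends only on the functor being factored, it is enough to match the two resulting functors into $\Fun\!\left(\left(\bDelta_{/\cB}\right)^\op,\Cat\right)$. Concretely, using the equivalences $\loc.\coCart_\cB \simeq \Fun(\llax(\cB),\Cat) \simeq 1\Cart_{\llax(\cB)^{1\op}}$ and the surjective monomorphism
\[
1\Cart_{\llax(\cB)^{1\op}}
\longrsurjmono
2\Cat_{1\cart/\llax(\cB)^{1\op}}
\]
that witnesses the passage to right-lax equivariant functors in the sense of \cite[Chapter 11]{GR}, the whole statement reduces to producing a fully faithful embedding
\[
2\Cat_{1\cart/\llax(\cB)^{1\op}}
\xlonghookra{\ff}
\Fun\!\left(\left(\bDelta_{/\cB}\right)^\op,\Cat\right)
\]
whose precomposition with the displayed surjective monomorphism recovers $\Yo^\rlax$ of \Cref{define.rest.of.limits}.

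I would take this embedding to be the $\Cat$-enriched restricted Yoneda functor along the functor $\bDelta_{/\cB} \to 2\Cat_{1\cart/\llax(\cB)^{1\op}}$ sending $([n]\da\cB)$ to $(\llax([n])^{1\op} \da \llax(\cB)^{1\op})$ --- this is also what explains the ``$\Yo$'' notation. The first task is to check that this functor out of $\bDelta_{/\cB}$ is fully faithful and strongly generating, so that restricted Yoneda along it is fully faithful, exactly as in \Cref{obs.could.use.Delta.over.B.or.Cat.over.B}. Strong generation is where \Cref{left.laxification.is.left.kan.extended} enters: the identification $\llax(\cB) \simeq \colim_{([n]\da\cB)\in\bDelta_{/\cB}} \llax([n])$ shows that the objects $\llax([n])^{1\op}$ generate $\llax(\cB)^{1\op}$ under the relevant colimits, and this passes to the slice $(\infty,2)$-category; full faithfulness is a direct computation of slice mapping $(\infty,2)$-categories, using the explicit description of the hom-categories of $\llax([n])$ as posets of strictly increasing sequences. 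Both of these are essentially formal given the $(\infty,2)$-categorical infrastructure of \cite{GR}.

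The substantive point --- and the step I expect to be the main obstacle --- is identifying the resulting composite $\loc.\coCart_\cB \to \Fun\!\left(\left(\bDelta_{/\cB}\right)^\op,\Cat\right)$ with $\Yo^\rlax$. Because the functor $\bDelta_{/\cB}\to 2\Cat_{1\cart/\llax(\cB)^{1\op}}$, the equivalences, and the surjective monomorphism above are all natural with respect to base change along $[n]\da\cB$, and because $\Yo^\rlax$ is by definition computed ``locally'' over the simplices of $\cB$, this reduces at once to the case $\cB = [n]$. There the claim becomes the single assertion that the locally cocartesian fibration $\sd([n]) \xra{\max} [n]$ of \Cref{define.sd} corresponds, under $1\Cart_{\llax([n])^{1\op}} \simeq \loc.\coCart_{[n]}$, to the identity $1$-cartesian fibration on $\llax([n])^{1\op}$ regarded in the slice --- equivalently, that $\sd([n]) \xra{\max} [n]$ is obtained through the composite operation of (i) freely turning the identity functor on $\llax([n])^{1\op}$ into a cartesian fibration, (ii) turning this into a cocartesian fibration over $\llax([n])$, and (iii) turning this into a locally cocartesian fibration over $[n]$. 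I would verify this by direct computation, tracking which edges become (co)cartesian at each of the three stages: the output should be $\max$ together with the union-of-subsets cocartesian morphisms described in \Cref{define.sd}, and the fact that only a \emph{locally} cocartesian fibration appears in step (iii) is exactly the failure of strict composition already seen in the non-determinacy of the structure map $\varepsilon_{02}$ in \Cref{example.limits.of.lax.actions.when.laxness.disagrees}. The bookkeeping through three successive ``free'' fibration constructions is the delicate part; once it is pinned down for each $[n]$, functoriality of left Kan extension (as in \Cref{left.laxification.is.left.kan.extended}) propagates the identification to arbitrary $\cB$.
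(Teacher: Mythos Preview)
Your proposal is correct and follows essentially the same route as the paper: you take the restricted $\Cat$-enriched Yoneda functor along $([n]\da\cB)\mapsto(\llax([n])^{1\op}\da\llax(\cB)^{1\op})$, invoke strong generation to get full faithfulness, and reduce the identification with $\Yo^\rlax$ to the case $\cB=[n]$ via the three-step fibration translation of $\sd([n])\xra{\max}[n]$. The only difference is cosmetic: you supply a bit more commentary on why strong generation holds (via \Cref{left.laxification.is.left.kan.extended}) and flag the bookkeeping in the three-step verification as the delicate part, whereas the paper simply asserts these as readily verified.
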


\begin{lemma}
\label{lemma.ptwise.radjt.has.ptwise.ladjt}
The datum of a morphism
\[
\cE_0
\longla
\cE_1
\]
in $\LMod^\llax_{\llax.\cB}$ whose restriction to each $b \in \cB$ is a right adjoint is equivalent to the datum of a morphism
\[
\cE_0
\longra
\cE_1
\]
in $\LMod^\rlax_{\llax.\cB}$ whose restriction to each $b \in \cB$ is a left adjoint, with the inverse equivalences given fiberwise by passing to adjoints.
\end{lemma}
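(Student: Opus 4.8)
The plan is to recognize the asserted equivalence as an instance of the $(\infty,2)$-categorical mate correspondence, applied uniformly over the base $\cB$; the only genuinely new work is bookkeeping --- matching this correspondence to the explicit models for our $\infty$-categories of modules and equivariant functors, and keeping track of variances.

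First I would translate both sides into the language of functors out of the left-laxification. Under the equivalence $\LMod_{\llax.\cB} \simeq \Fun(\llax(\cB),\Cat)$, let $F_0, F_1 : \llax(\cB) \to \Cat$ be the functors classifying $\cE_0$ and $\cE_1$. By \Cref{obs.defns.of.rlax.map.of.llax.modules.agree}, a morphism $\cE_0 \to \cE_1$ in $\LMod^\rlax_{\llax.\cB}$ is precisely a right-lax natural transformation $F_0 \Rightarrow F_1$; and unwinding the definition of $\Cat_{\loc.\cocart/\cB}$ (cf. \Cref{lax.equivariance.of.strict.modules.over.walking.arrow} and \Cref{example.lax.actions}) identifies a morphism $\cE_0 \leftarrow \cE_1$ in $\LMod^\llax_{\llax.\cB}$ with a left-lax natural transformation $F_1 \Rightarrow F_0$. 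In either case, the ``restriction to $b \in \cB$'' of the transformation is its component, an honest functor $F_1(b) \to F_0(b)$ or $F_0(b) \to F_1(b)$ of $\infty$-categories, so the adjointability hypotheses in the statement are hypotheses on these components; thus the lemma asserts that taking pointwise adjoints identifies the left-lax natural transformations $F_1 \Rightarrow F_0$ with pointwise right-adjoint components and the right-lax natural transformations $F_0 \Rightarrow F_1$ with pointwise left-adjoint components.

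Next I would reduce to the universal case and build the correspondence at the level of $2$-cells. Using \Cref{left.laxification.is.left.kan.extended} to write $\llax(\cB) \simeq \colim_{([n] \to \cB) \in \bDelta_{/\cB}} \llax([n])$, the two $\infty$-categories of morphisms in question are limits over $\bDelta_{/\cB}$ of the corresponding ones for $\cB = [n]$, and since the adjointability conditions are imposed on components they are compatible with this limit; so it suffices to treat $\cB = [n]$, where $\llax([n])$ is the explicit chain-enriched category and a left-lax natural transformation $\tau : F_1 \Rightarrow F_0$ consists of components $\tau_i$ together with one structure $2$-cell per nonempty chain in $[n]$, subject to an explicit hierarchy of coherences. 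Choosing left adjoints $\lambda_i$ of the $\tau_i$ and forming the mate of each structure $2$-cell along these adjunctions produces a candidate $\lambda : F_0 \Rightarrow F_1$; one then checks that the mates point in the right-lax rather than the left-lax direction, so that $\lambda$ assembles to a right-lax natural transformation, and that iterating the construction recovers $\tau$ up to canonical equivalence, since the mate of a mate is the original $2$-cell. Uniqueness of adjoints then gives the final clause of the statement.

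The main obstacle is coherence, together with getting the variances right: one must check that the full hierarchy of compatibilities witnessing that $\lambda$ is right-lax corresponds --- $2$-cell by $2$-cell, and at every higher level --- to the hierarchy witnessing that $\tau$ is left-lax, so that ``taking mates'' is an equivalence of the relevant morphism $\infty$-categories and not merely a pointwise bijection. Rather than hand-assemble infinitely many coherences, I would package the mate operation as a genuine equivalence of $(\infty,2)$-categories between the sub-$(\infty,2)$-categories of $\Cat$ on the left-adjoint, respectively right-adjoint, $1$-morphisms --- equivalently, invoke that an adjunction in any $(\infty,2)$-category is a functor out of the free-living adjunction and that the latter is self-dual under the appropriate combination of $1$- and $2$-opposites --- and then transport this equivalence along $\llax(\cB)$ (or along each $\llax([n])$), using that the transport is compatible with the passage from strict to left- and right-lax natural transformations. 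The $(\infty,2)$-categorical input needed for this last step is exactly the machinery of the appendix of \cite{GR} already in use throughout this subsection; combined with the identifications of \Cref{obs.defns.of.rlax.map.of.llax.modules.agree}, it yields the lemma.
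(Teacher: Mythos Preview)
Your proposal is correct and takes essentially the same approach as the paper: both invoke the mate correspondence from the appendix of \cite{GR}, applied over $\llax(\cB)$. The paper's proof is simply the one-line citation ``This follows from \cite[Chapter 12, Corollary 3.1.7]{GR} by taking $\SS_1 = \llax(\cB)$, $\SS_2 = [1]$, and $\TT = \Cat$''; your longer exposition unpacks what that corollary says and why it applies, but the substance is the same.
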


\begin{proof}
This follows from \cite[Chapter 12, Corollary 3.1.7]{GR} by taking $\SS_1 = \llax(\cB)$, $\SS_2 = [1]$, and $\TT = \Cat$.
\end{proof}

\begin{remark}
One couldn't hope to improve \Cref{lemma.ptwise.radjt.has.ptwise.ladjt} to obtain an adjunction after passing to some sort of limits, because the two functors back and forth have different sorts of lax equivariance.
\end{remark}

\begin{lemma}
\label{lem.get.r.lax.left.adjt}
A morphism
\[
\cE_0
\longla
\cE_1
\]
in $\LMod_{\llax.\cB}$ whose restriction to each $b \in \cB$ is a right adjoint admits a (necessarily unique) extension
\[ \begin{tikzcd}
{[1]}
\arrow{r}
\arrow[hook]{d}[swap]{\sf r.adjt}
&
\LMod_{\llax.\cB}
\arrow[hook, two heads]{d}
\\
\Adj
\arrow[dashed]{r}
&
\LMod^\rlax_{\llax.\cB}
\end{tikzcd} \]
to an adjunction in $\LMod^\rlax_{\llax.\cB}$.
\end{lemma}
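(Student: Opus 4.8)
The plan is to derive \Cref{lem.get.r.lax.left.adjt} from the universal property of the free adjunction $\Adj$ together with \Cref{lemma.ptwise.radjt.has.ptwise.ladjt}. First I would recall the relevant form of that universal property, as developed in \cite[Chapter 12, \S 3]{GR}: for any $(\infty,2)$-category $\TT$, restriction along the inclusion ${\sf r.adjt}\colon [1] \hookra \Adj$ of the right-adjoint $1$-morphism exhibits $\Fun(\Adj,\TT)$ as the full subcategory of $\Fun([1],\TT)$ spanned by the right-adjoint $1$-morphisms. Taking $\TT = \LMod^\rlax_{\llax.\cB}$ --- viewed as an $(\infty,2)$-category through the identification $\LMod^\rlax_{\llax.\cB} \simeq 2\Cat_{1\cart/\llax(\cB)^{1\op}}$ of \Cref{obs.defns.of.rlax.map.of.llax.modules.agree} --- this at once supplies the uniqueness clause and reduces the existence of the asserted extension to a single assertion: that the image of the given morphism $\cE_0 \longla \cE_1$ under $\LMod_{\llax.\cB} \hookra \LMod^\rlax_{\llax.\cB}$ is a right-adjoint $1$-morphism there.

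To prove that assertion, I would pass through the identification $\LMod_{\llax.\cB} \simeq \Fun(\llax(\cB),\Cat)$, under which $\cE_0 \longla \cE_1$ becomes a natural transformation of functors $\llax(\cB) \to \Cat$ whose component over every object of $\llax(\cB)$ --- equivalently, over every $b \in \cB$ --- is a right adjoint in $\Cat$. The objectwise left adjoints, together with the objectwise unit and counit, are furnished by \Cref{lemma.ptwise.radjt.has.ptwise.ladjt} and the contractibility of adjunction data in $\Cat$. It then remains to assemble these objectwise adjunctions into a single $\Adj$-shaped diagram valued in $2\Cat_{1\cart/\llax(\cB)^{1\op}} \simeq \LMod^\rlax_{\llax.\cB}$; I expect this to follow from \cite[Chapter 12, Corollary 3.1.7]{GR} --- the same result that underlies \Cref{lemma.ptwise.radjt.has.ptwise.ladjt} --- now applied with $\SS_1 = \llax(\cB)$, $\SS_2 = \Adj$, and $\TT = \Cat$ in place of $\SS_2 = [1]$. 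The resulting $\Adj$-shaped diagram has the image of $\cE_0 \longla \cE_1$ as its right-adjoint leg and the right-lax equivariant functor $\cE_0 \longra \cE_1$ of \Cref{lemma.ptwise.radjt.has.ptwise.ladjt} as its left-adjoint leg, and (by the universal property recalled above) it is exactly the sought-after lift.

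The step I anticipate to be the main obstacle is this last assembly: checking that the hypotheses of \cite[Chapter 12, Corollary 3.1.7]{GR} are genuinely met with the second variable taken to be $\Adj$ rather than $[1]$ --- that is, that the ``objectwise-adjoint implies adjoint'' principle of \emph{loc.\ cit.}\ produces not only the left-adjoint functor but all of the accompanying unit, counit, and higher coherence data as honest $2$-morphisms and higher cells of $\LMod^\rlax_{\llax.\cB}$, so that the whole package is a strict functor out of $\Adj$. This should be formal, since $\Adj$ is a fixed $(\infty,2)$-category and the cited corollary is stated with an arbitrary such second variable; the only genuine content --- the existence of the objectwise adjoints and their data --- is already in hand. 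If instead that corollary is stated only for $\SS_2 = [1]$, the fallback is to extract the left-adjoint functor from \Cref{lemma.ptwise.radjt.has.ptwise.ladjt}, build the unit and counit by applying the same corollary to the two free-standing $2$-morphisms of $\Adj$, and then verify the triangle identities objectwise, again invoking contractibility of adjunction data in $\Cat$; this is more laborious but entirely routine.
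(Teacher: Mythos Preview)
Your overall strategy --- reduce via the universal property of $\Adj$ to showing that the given strict morphism is a right-adjoint $1$-morphism in $\LMod^\rlax_{\llax.\cB} \simeq \Fun^\rlax(\llax(\cB),\Cat)$ --- matches the paper's, and you correctly flag the assembly step as the crux. But your proposed execution of that step does not parse. Corollary 3.1.7 of \cite[Chapter 12]{GR} takes as \emph{input} a functor out of $\SS_1 \times \SS_2$ (sending $\SS_2$-arrows to adjoints) and produces one out of a Gray product; with $\SS_2 = \Adj$ you have no such input --- you only possess a functor $\llax(\cB) \times [1] \to \Cat$, and you cannot feed in $\SS_2 = \Adj$ until you have already built the very extension you are after.

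The paper instead invokes \cite[Chapter 12, Theorem 1.2.4]{GR} with $\SS = [1]$ and $\TT = \Fun^\rlax(\llax(\cB),\Cat)$: the extension to $\Adj$ exists once one produces, functorially in each $[i] \times [j] \to [1]$, a functor $[j]^\op \circledast [i] \to \TT$. This is done by uncurrying to $[i] \times [j] \times \llax(\cB) \to \Cat$, swapping to $[i] \times \llax(\cB) \times [j] \to \Cat$, applying Corollary 3.1.7 with $\SS_2 = [j]$ (not $\Adj$) to obtain $[j]^\op \circledast ([i] \times \llax(\cB)) \to \Cat$, precomposing along $[j]^\op \circledast [i] \circledast \llax(\cB)$, and currying back. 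The swap of cartesian factors is precisely where the strictness hypothesis is consumed: the cartesian product is symmetric but the Gray product is not, so a merely right-lax morphism would not survive this maneuver. Your fallback (extract $L$ from \Cref{lemma.ptwise.radjt.has.ptwise.ladjt}, build unit and counit, verify triangle identities objectwise) is in principle workable but amounts to an ad hoc reconstruction of the Theorem 1.2.4 machinery; the paper's route is the clean packaging of exactly that content.
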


\begin{proof}
Unwinding \cite[Chapter 12, Theorem 1.2.4]{GR} in the case that
\begin{itemize}
\item we take
$
\SS
=
[1]
$
to be the walking arrow,
\item we take the marking
$
{\bf C}
\subset
\SS
$
to be maximal (i.e.\! ${\bf C} = \SS$), and
\item we take
\[
\TT
=
\Fun^\rlax(\llax(\cB),\Cat)
\simeq
\LMod^\rlax_{\llax.\cB}
\]
to be the $(\infty,2)$-category whose objects are (strict) functors $\llax(\cB) \ra \Cat$ and whose morphisms are right-lax natural transformations,
\end{itemize}
we see that to give this extension it is equivalent to specify, functorially for all pairs of objects $[i],[j] \in \bDelta$ and all maps
\begin{equation}
\label{test.map.from.grid.to.walking.arrow}
[i] \times [j]
\longra
[1]
~,
\end{equation}
a functor
\begin{equation}
\label{provide.this.functor}
[j]^\op \circledast [i]
\longra
\Fun^\rlax(\llax(\cB),\Cat)
\end{equation}
out of the Gray product, such that in the case of the equivalence
\[
[1] \times [0] \xlongra{\sim} [1]
\]
we recover (the postcomposition of) our original map
\[
[0]^\op \circledast [1]
\simeq
[1]
\longra
\Fun(\llax(\cB),\Cat)
\longhookra
\Fun^\rlax(\llax(\cB),\Cat)
~.
\]
For this, we postcompose the map \Cref{test.map.from.grid.to.walking.arrow} to obtain a composite functor
\[
[i] \times [j]
\longra
[1]
\longra
\Fun(\llax(\cB),\Cat)
~,
\]
which is equivalent data by adjunction to a functor
\[
[i] \times [j] \times \llax(\cB)
\longra
\Cat
~,
\]
which is in turn equivalent data to a functor
\begin{equation}
\label{swap.cartesian.factors.involving.llax.B}
[i] \times \llax(\cB) \times [j]
\longra
\Cat
\end{equation}
since the cartesian product is symmetric.\footnote{This is where we use that our original morphism was in the subcategory $\LMod_{\llax.\cB} \subset \LMod^\rlax_{\llax.\cB}$: by contrast, the Gray product is not symmetric.}  Now, the assumption that our original morphism in $\LMod_{\llax.\cB}$ restricts to a right adjoint on each object $b \in \cB$ implies that the functor \Cref{swap.cartesian.factors.involving.llax.B} sends each morphism ``along $[j]$'' (i.e.\! each morphism in the source whose images in $[i]$ and $\llax(\cB)$ are equivalences) to a right adjoint.  By \cite[Chapter 12, Corollary 3.1.7]{GR}, the functor \Cref{swap.cartesian.factors.involving.llax.B} therefore induces a functor
\[
[j]^\op \circledast \left( [i] \times \llax(\cB) \right)
\longra
\Cat
~.
\]
This precomposes to a functor
\[
[j]^\op
\circledast
[i]
\circledast
\llax(\cB)
\longra
\Cat
~,
\]
which is in turn adjoint to our desired functor \Cref{provide.this.functor}.  By construction, this satisfies the stated compatibility condition.
\end{proof}

\section{A naive approach to genuine $G$-spectra}
\label{section.naive.approach.to.gen.G.spt}

In this section, we prove Theorems \ref{mainthm.genuine.G.spt} and \ref{mainthm.cyclo.spt}: our naive descriptions of genuine $G$-spectra and of cyclotomic spectra.

We prove these theorems using the formalism of \textit{fractured stable $\infty$-categories}.  This notion is introduced in \Cref{subsection.frac.cats}, where we also prove (\Cref{thm.fracs.are.llax.modules}) that a fracture $\fF$ of a stable $\infty$-category $\cC$ over a poset $\pos$ is equivalent data to a left-lax left $\pos$-module $\makemod(\fF)$, with the inverse equivalence being given by right-lax limits.  In particular, we recover the underlying stable $\infty$-category as the right-lax limit
\[
\cC
\simeq
\lim^\rlax \left( \pos \overset{\llax}{\lacts} \makemod(\fF) \right)
~.
\]

In \Cref{subsection.can.frac.of.gen.spt}, we establish a canonical fracture of the stable $\infty$-category $\Spectra^{\gen G}$ of genuine $G$-spectra over the poset $\pos_G$ of subgroups of $G$ ordered by subconjugacy.  We achieve this by first proving a general result (\Cref{thm.dcc.and.fracturing.gives.fracture}) which gives sufficient conditions under which an assignment
\[
\pos
\ni
p
\longmapsto
X_p
\in
\cC
\]
of a compact object of $\cC$ to each element of $\pos$ determines a fracture of $\cC$ over $\pos$, and then showing (\Cref{prop.can.fracture.of.gen.G.spt}) that the assignment
\[
\pos_G
\ni
H
\longmapsto
\Sigma^\infty_G(G/H)_+
\in
\Spectra^{\gen G}
\]
satisfies these conditions.  \Cref{mainthm.genuine.G.spt} follows; we record this as \Cref{cor.gen.G.spt.as.rlax.lim}.  We also establish its analog for proper-genuine $G$-spectra (stated as \Cref{mainthm.cyclonic.spt} in the special case that $G = \TT$) as \Cref{cor.proper.gen.G.spt.as.rlax.lim}.

In \Cref{subsection.naive.cyclo.spectra.for.real}, we recall the definition of genuine cyclotomic spectra, and we prove \Cref{mainthm.cyclo.spt} as \Cref{obs.Ndiv.mod.Nx.is.BN} through a straightforward analysis of the equivariance properties of \Cref{mainthm.genuine.G.spt}.

This section begins with \Cref{subsection.equivrt.conventions}, whose main purpose is to fix notation and recall some classical facts regarding genuine and naive equivariant spectra.  However, we also include some discussion of the generalized Tate construction, and we establish a generalization of a classical fact regarding its exactness properties (\Cref{Cr.genzd.tate.of.rth.power.is.exact}), which will be essential in our construction of the cyclotomic structure on $\THH$ and of the cyclotomic trace in \cite{AMR-trace}.

Before proceeding to the general formalism of fractured stable $\infty$-categories, as motivation we unpack a few sample applications of \Cref{mainthm.genuine.G.spt} in \Cref{subsection.exs.of.fracs.of.gen.spt}.

\subsection{Preliminaries on equivariant spectra}
\label{subsection.equivrt.conventions}

In this subsection, we fix our notation concerning equivariant spectra and we study the generalized Tate construction.  For further background on genuine equivariant spectra, we refer the reader to \cite{LMS,May-Alaska,ManMay-eq}

\begin{notation}
In this subsection, we write $G$ for an arbitrary compact Lie group, we write $H \leq G$ for an arbitrary closed subgroup, and we write
\[
\Weyl(H)
:=
\Weyl_G(H)
:=
{\sf N}_G(H)/H
\]
for its Weyl group (the quotient by it of its normalizer in $G$).
\end{notation}

\begin{remark}
By work of Guillou--May \cite{GM-gen}, when $G$ is a \textit{finite} group then everything presented in this subsection can be expressed in the language of spectral Mackey functors.  This will be crucial for us in \Cref{subsection.proto.tate}.
\end{remark}

\needspace{2\baselineskip}
\begin{notation}
\label{notation.various.genuine.spectra}
\begin{enumerate}
\item[]

\item We write
\[
\Spectra^{\htpy G}
:=
\Fun(\BG,\Spectra)
\]
for the $\infty$-category of \textit{homotopy $G$-spectra}.

\item We write
\[ \Spectra^{\gen G} \]
for the $\infty$-category of \textit{genuine $G$-spectra}, i.e.\! the stable $\infty$-category of spectral presheaves on the orbit $\infty$-category of $G$ with the representation spheres inverted under the smash product monoidal structure.

\item We write
\[ \Spectra^{\gen^\proper G} \]
for the $\infty$-category of \textit{proper-genuine $G$-spectra}, i.e.\! the reflective subcategory of $\Spectra^{\gen G}$ on those objects that are local with respect to the family of proper closed subgroups of $G$.

\end{enumerate}
\end{notation}

\begin{notation}
We write
\[ \begin{tikzcd}[column sep=2cm, row sep=0cm]
\Spectra^{\gen G}
\arrow[transform canvas={yshift=0.9ex}]{r}{U_G}
\arrow[hookleftarrow, transform canvas={yshift=-0.9ex}]{r}[yshift=-0.2ex]{\bot}[swap]{\beta_G}
&
\Spectra^{\htpy G}
\end{tikzcd} \]
for the adjunction -- a reflective localization -- whose left adjoint is the forgetful functor and whose right adjoint is the ``Borel-complete genuine $G$-spectrum'' functor.  The right adjoint factors through the subcategory $\Spectra^{\gen^\proper G} \subset \Spectra^{\gen G}$ of proper-genuine $G$-spectra, and we use the same notation for this factorization.  We omit the subscripts whenever the group $G$ is clear from the notation, simply writing $U \adj \beta$ instead of $U_G \adj \beta_G$.
\end{notation}

\begin{remark}
\label{remark.borel.complete}
The functor $\beta$ takes a homotopy $G$-spectrum $E \in \Spectra^{\htpy G}$ to a genuine $G$-spectrum whose genuine $H$-fixedpoints are the homotopy $H$-fixedpoints $E^{\htpy H}$.
\end{remark}

\begin{observation}
\label{obs.Ind.commutes.with.beta}
The square
\[ \begin{tikzcd}[row sep=1.5cm, column sep=1.5cm]
\Spectra^{\htpy H}
\arrow{r}{\beta_H}
\arrow{d}[swap]{\Ind_H^G}
&
\Spectra^{\gen H}
\arrow{d}{\Ind_H^G}
\\
\Spectra^{\htpy G}
\arrow{r}[swap]{\beta_G}
&
\Spectra^{\gen G}
\end{tikzcd} \]
commutes: it consists of right adjoints, and the corresponding square of left adjoints commutes.
\end{observation}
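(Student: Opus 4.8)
The plan is to deduce the commutativity formally, by passing to adjoints, exactly as the one-line remark in the statement suggests. First I would record the adjunctions in play: by construction $U_H \dashv \beta_H$ and $U_G \dashv \beta_G$, so the two horizontal arrows $\beta_H$ and $\beta_G$ are right adjoints, with left adjoints the forgetful functors. I would then observe that the two vertical arrows $\Ind_H^G$ are likewise right adjoints: on genuine $G$-spectra this is the Wirthm\"uller isomorphism (for $G$ finite it is the bare ambidexterity $\Ind_H^G \simeq \mathrm{CoInd}_H^G$, exhibiting $\Ind_H^G$ as right adjoint to $\mathrm{Res}_H^G$; for a general compact Lie group $G$ the isomorphism $\mathrm{CoInd}_H^G \simeq \Ind_H^G(-\wedge S^{\mathfrak{g}/\mathfrak{h}})$ still exhibits $\Ind_H^G$ as a right adjoint, now to a twist of $\mathrm{Res}_H^G$ by the tangent representation), and on homotopy $G$-spectra the analogous statement holds for the same reason. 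Thus the square under consideration is a square of right adjoints.

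Next I would pass to left adjoints. Recalling that $(F_2 \circ F_1)^{\mathrm{L}} \simeq F_1^{\mathrm{L}} \circ F_2^{\mathrm{L}}$, the left adjoint of $\beta_G \circ \Ind_H^G$ is $\mathrm{Res}_H^G \circ U_G$ and the left adjoint of $\Ind_H^G \circ \beta_H$ is $U_H \circ \mathrm{Res}_H^G$, both regarded as functors $\Spectra^{\gen G} \to \Spectra^{\htpy H}$ (with the appropriate tangential twist inserted uniformly on both sides in the compact Lie case). By uniqueness of adjoints it then suffices to produce a natural equivalence $\mathrm{Res}_H^G \circ U_G \simeq U_H \circ \mathrm{Res}_H^G$ — equivalently, to check that the square of left adjoints obtained from the given one by replacing every arrow with its left adjoint and reversing all arrows commutes.

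That last equivalence is essentially tautological: both composites carry a genuine $G$-spectrum to the homotopy $H$-spectrum underlying it, since forgetting down to the Borel world and restricting the equivariance along $H \leq G$ are induced by compatible restrictions of indexing data and hence commute. The one point carrying genuine content — and the step I expect to be the only real obstacle — is the assertion that $\Ind_H^G$ is a right adjoint once $G$ is an arbitrary compact Lie group rather than a finite group; here I would either invoke the Wirthm\"uller isomorphism explicitly, carefully tracking the tangent-representation twist through the adjoint computation above, or else avoid the twist altogether by building the Beck--Chevalley mate $\Ind_H^G \circ \beta_H \to \beta_G \circ \Ind_H^G$ directly out of the manifestly commuting square $U_G \circ \Ind_H^G \simeq \Ind_H^G \circ U_H$ of left adjoints and checking that it is an equivalence — which, using that induction preserves Borel-completeness so that both sides lie in the image of $\beta_G$, reduces once more to the tautological identification of underlying homotopy $H$-spectra.
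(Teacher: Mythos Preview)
Your proposal is correct and follows exactly the paper's one-line justification: exhibit all four arrows as right adjoints (invoking the Wirthm\"uller isomorphism to see the inductions as right adjoints) and then check that the resulting square of left adjoints --- forgetful functors and (twisted) restrictions --- commutes tautologically. You go further than the paper in flagging the tangential twist needed in the general compact Lie case, which the paper leaves implicit; in the paper's actual applications only finite subgroups of $\TT$ arise, where $\Ind_H^G \simeq \CoInd_H^G$ without twist and your argument simplifies accordingly.
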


\begin{definition}
\label{define.tate}
The \bit{generalized Tate construction} with respect to $H$ is the composite
\[
(-)^{\tate H}
:
\Spectra^{\htpy G}
\xra{\beta}
\Spectra^{\gen G}
\xra{\Phi^H}
\Spectra^{\gen \Weyl(H)}
\xra{U}
\Spectra^{\htpy \Weyl(H)}
~,
\]
where $\Phi^H$ denotes the geometric $H$-fixedpoints functor.  In a slight abuse of notation, we also write
\[
\Spectra^{\htpy \TT}
\xra{(-)^{\tate \Cyclic_r}}
\Spectra^{\htpy \TT}
\]
for either long composite endofunctor in the commutative diagram
\[ \begin{tikzcd}[column sep=2cm, row sep=1.5cm]
\Spectra^{\htpy \TT}
\arrow{r}{\beta}
&
\Spectra^{\gen^\proper\TT}
\arrow{r}{\Phi^{\Cyclic_r}}
&
\Spectra^{\gen^\proper(\TT/\Cyclic_r)}
\arrow{r}{(\TT/\Cyclic_r) \simeq \TT}[swap]{\sim}
\arrow{d}[swap]{U}
&
\Spectra^{\gen^\proper\TT}
\arrow{d}{U}
\\
&
&
\Spectra^{\htpy (\TT/\Cyclic_r)}
\arrow{r}{\sim}[swap]{(\TT/\Cyclic_r) \simeq \TT}
&
\Spectra^{\htpy \TT}
\end{tikzcd}~. \]
\end{definition}

\begin{remark}
\label{rmk.tate.is.genzd.tate}
When $G = H = \Cyclic_p$ for a prime $p$, then the generalized Tate construction reduces to the ordinary Tate construction: the cofiber
\[
(-)_{\htpy \Cyclic_p}
\xra{\Nm}
(-)^{\htpy \Cyclic_p}
\longra
(-)^{{\sf t} \Cyclic_p}
\]
of the norm map.\footnote{This will be consistent with the slogan of \Cref{remark.genzd.tate}, as $\Cyclic_p$ has no proper subgroups.}
\end{remark}

\begin{warning}
In \cite{AMR-trace}, for brevity we omit the word ``generalized'' from the terminology ``generalized Tate construction''.
\end{warning}

\begin{remark}
A similar construction was studied by Greenlees--May in \cite{GreenMay-Tate}.
\end{remark}

\begin{observation}
\label{obs.genzd.tate}
There is a reflective localization
\[ \begin{tikzcd}[column sep=2cm, row sep=0cm]
\Spectra^{\gen G}
\arrow[transform canvas={yshift=0.9ex}]{r}{\locL_{\Phi^H}}
\arrow[hookleftarrow, transform canvas={yshift=-0.9ex}]{r}[yshift=-0.2ex]{\bot}
&
\Spectra^{\gen G}/\Spectra^{\gen G}_{^< H}
\end{tikzcd}~, \]
where the right side (which denotes the quotient in $\PrL$) might be more standardly identified as the localization
\[
\Spectra^{\gen G}/\Spectra^{\gen G}_{^< H}
\simeq
\Spectra^{\gen G}[\ms{F}^{-1}_{^< H}]
\]
away from the family $\ms{F}_{^< H}$ of closed subgroups of $G$ that are properly subconjugate to $H$.\footnote{Our notation here is concordant with that of \Cref{subsection.can.frac.of.gen.spt}, where we construct the canonical fracture of $\Spectra^{\gen G}$.}  The geometric $H$-fixedpoints functor can then be identified as the composite
\[
\Phi^H
:
\Spectra^{\gen G}
\xra{\locL_{\Phi^H}}
\Spectra^{\gen G}/\Spectra^{\gen G}_{^< H} 
\xlonghookra{\ff}
\Spectra^{\gen G}
\xra{(-)^H}
\Spectra^{\gen \Weyl(H)}
~, \]
which is also a reflective localization.
\end{observation}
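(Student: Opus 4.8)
The plan is to derive all three claims from the classical isotropy-separation package for genuine $G$-spectra (see \cite{LMS, May-Alaska}), translated into the language of presentable stable $\infty$-categories. For a family $\ms{F}$ of closed subgroups of $G$, write $E\ms{F}$ for its universal $G$-space and $\tilde{E}\ms{F}$ for the cofiber of $\Sigma^\infty_G (E\ms{F})_+ \to \SS$, so that $\tilde{E}\ms{F} \wedge (-)$ is a smashing Bousfield localization of $\Spectra^{\gen G}$ whose essential image consists of those $X$ with $\Sigma^\infty_G (E\ms{F})_+ \wedge X \simeq 0$; by the cellular filtration of $E\ms{F}$ this is equivalent to $X^K \simeq 0$ (genuine $K$-fixedpoints) for all $K \in \ms{F}$, and -- running a downward induction over $\ms{F}$, which is licit because the closed subgroups of a compact Lie group satisfy the descending chain condition, using the cofiber sequences $(\Sigma^\infty_G (E\ms{P}_K)_+ \wedge X)^K \to X^K \to \Phi^K X$ with $\ms{P}_K$ the family of proper subgroups of $K$ -- equivalent as well to $\Phi^K X \simeq 0$ for all $K \in \ms{F}$. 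I will also use the classical formula $\Phi^H X \simeq (\tilde{E}\ms{F}[H] \wedge X)^H$, where $\ms{F}[H]$ is the family of closed subgroups of $G$ containing no conjugate of $H$ and $(-)^H \colon \Spectra^{\gen G} \to \Spectra^{\gen \Weyl(H)}$ denotes genuine $H$-fixedpoints, together with the classical fact that $(-)^H$ restricts to an equivalence $\mathrm{Mod}_{\tilde{E}\ms{F}[H]}(\Spectra^{\gen G}) \xra{\sim} \Spectra^{\gen \Weyl(H)}$.

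First I would handle the existence of the localization and the identification of its target. By construction (concordantly with \Cref{subsection.can.frac.of.gen.spt}), $\Spectra^{\gen G}_{^< H}$ is the localizing subcategory of $\Spectra^{\gen G}$ generated by the compact orbits $\Sigma^\infty_G(G/K)_+$ for $K \in \ms{F}_{^< H}$; hence the Verdier quotient in $\PrL$ exists and its right adjoint is fully faithful, which is exactly the statement that $\locL_{\Phi^H}$ is a reflective localization. Its essential image is the right orthogonal of those generators, i.e.\ $\{ X : X^K \simeq 0 \text{ for all } K \in \ms{F}_{^< H} \}$, which by the previous paragraph is precisely the localization $\Spectra^{\gen G}[\ms{F}_{^< H}^{-1}] = \mathrm{Mod}_{\tilde{E}\ms{F}_{^< H}}(\Spectra^{\gen G})$ away from the family $\ms{F}_{^< H}$. (One checks along the way that $\ms{F}_{^< H}$ really is a family: any closed subgroup of a properly-subconjugate-to-$H$ subgroup is again subconjugate to $H$, and cannot be conjugate to $H$, since for a compact Lie group a subgroup cannot be simultaneously subconjugate and properly subconjugate to $H$.)

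Next comes the factorization of $\Phi^H$. The containment $\ms{F}_{^< H} \subseteq \ms{F}[H]$ holds -- a proper subconjugate of $H$ contains no conjugate of $H$ -- so $\tilde{E}\ms{F}[H] \wedge (-)$ is a further smashing Bousfield localization of the $\ms{F}_{^< H}$-local category, and on that smaller category $(-)^H$ is the equivalence onto $\Spectra^{\gen \Weyl(H)}$ recalled above. It remains to check that $(-)^H \circ j \simeq \Phi^H \circ j$ for $j$ the fully faithful inclusion of $\Spectra^{\gen G}[\ms{F}_{^< H}^{-1}]$, i.e.\ that for $X$ in the $\ms{F}_{^< H}$-local category one has $X^H \simeq (\tilde{E}\ms{F}[H] \wedge X)^H$, equivalently that the genuine $H$-fixedpoints of the fiber $\Sigma^\infty_G (E\ms{F}[H])_+ \wedge X$ vanish. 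Restricting along $\Spectra^{\gen G} \to \Spectra^{\gen N_G(H)}$, one has $\mathrm{res}^G_{N_G(H)} E\ms{F}[H] \simeq E\ms{G}$ for the family $\ms{G}$ of subgroups of $N_G(H)$ containing no conjugate of $H$ (in particular containing no copy of the normal subgroup $H \triangleleft N_G(H)$), whence $(\Sigma^\infty_G (E\ms{F}[H])_+ \wedge X)^H \simeq (\Sigma^\infty_{N_G(H)} (E\ms{G})_+ \wedge \mathrm{res}^G_{N_G(H)} X)^H$; the cellular filtration of $E\ms{G}$ together with the fixed-point (double coset) formula then exhibits this as built out of genuine fixedpoints $X^L$ with $L$ of the form $H \cap nKn^{-1}$ for $H \not\leq K$, each such $L$ being a proper subgroup of $H$ and hence lying in $\ms{F}_{^< H}$, so that $X^L \simeq 0$. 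This establishes the factorization; and since $\Phi^H = \bigl( (-)^H \circ j \bigr) \circ \locL_{\Phi^H}$ is now exhibited as a composite of two reflective localizations, it is itself a reflective localization.

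The main obstacle is the third paragraph: correctly identifying the $N_G(H)$-restriction of $E\ms{F}[H]$ and running the (standard but, in the compact Lie case, somewhat delicate) analysis of $(\Sigma^\infty_{N_G(H)} (E\ms{G})_+ \wedge -)^H$ via cells and the Wirthm\"uller isomorphism, so as to justify replacing the family $\ms{F}[H]$ in the usual formula for $\Phi^H$ by the generally strictly smaller family $\ms{F}_{^< H}$. Everything else is either formal $\infty$-categorical bookkeeping -- existence of Bousfield localizations via compact generation, and the fact that a composite of reflective localizations is a reflective localization -- or a direct appeal to the classical structure theory of genuine equivariant spectra.
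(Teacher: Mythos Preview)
The paper does not supply a proof of this Observation; it is stated as a known fact, with the remark immediately following citing \cite[Corollary II.9.6]{LMS} for the case where $H \trianglelefteq G$ is normal. Your write-up is therefore filling in genuine content, and the argument for the first two claims (existence and identification of the reflective localization, and the factorization of $\Phi^H$ through it) is correct and nicely organized. The double-coset analysis in your third paragraph does establish that $(E\ms{F}[H]_+ \wedge X)^H$ vanishes for $\ms{F}_{^<H}$-local $X$, which is exactly the factorization statement.

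There is, however, a real gap in your argument that the composite is a reflective localization. You invoke as a ``classical fact'' that $(-)^H$ restricts to an equivalence $\Mod_{\tilde E\ms{F}[H]}(\Spectra^{\gen G}) \xra{\sim} \Spectra^{\gen \Weyl(H)}$. This is only true when $H$ is \emph{normal} in $G$; for general $H$ it fails. Concretely, take $G = S_4$ and $H = \langle (12)\rangle$, so $N_G(H) = \langle (12),(34)\rangle$ and $\Weyl(H) \cong C_2$. The family $\ms{F}[H]$ consists of subgroups containing no transposition, and $\Mod_{\tilde E\ms{F}[H]}$ is then generated by the orbits $(S_4/K)_+$ with $K$ ranging over the conjugacy classes of $\langle(12)\rangle$, $\langle(12),(34)\rangle$, $S_3$, $D_8$, and $S_4$ --- five classes, whereas $\Spectra^{\gen C_2}$ has only two orbit types. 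More pointedly, one computes $(S_4/N_G(H))^H$ to be two points with trivial $\Weyl(H)$-action, so $\Phi^H(S_4/N_G(H))_+ \simeq \SS \vee \SS$; feeding this through your own factorization shows that the right adjoint $R$ of $\Phi^H$ satisfies $((RY)^H)^{\Weyl(H)} = (RY)^{N_G(H)} \simeq Y^{\Weyl(H)} \oplus Y^{\Weyl(H)}$, so the claimed equivalence cannot hold.

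This means your concluding sentence --- that $\Phi^H$ is a composite of two reflective localizations --- does not go through as written once $H$ is non-normal, since the second factor $(-)^H \circ j$ is not exhibited as a localization by your argument. The factorization you prove is still correct and is the main content; but to deduce the final ``reflective localization'' assertion for non-normal $H$ you need a different justification (or to restrict to normal $H$, which is the case actually cited by the paper and the only case used in the body of the trilogy, where $G = \TT$ is abelian).
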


\begin{remark}
When $H \trianglelefteq G$ is a \textit{normal} closed subgroup, \Cref{obs.genzd.tate} first appears as \cite[Corollary II.9.6]{LMS}, which moreover implies that in this case the composite functor
\[
\Spectra^{\gen G}/\Spectra^{\gen G}_{^< H} 
\xlonghookra{\ff}
\Spectra^{\gen G}
\xra{(-)^H}
\Spectra^{\gen (G/H)}
\]
is an equivalence.
\end{remark}

\begin{remark}
\label{remark.genzd.tate}
Combining \Cref{obs.genzd.tate} with \Cref{remark.borel.complete} and \Cref{obs.Ind.commutes.with.beta} justifies the heuristic description of the generalized Tate construction as
``quotienting the homotopy fixedpoints by norms from all proper subgroups''.
Explicitly, this quotient map
\begin{equation}
\label{h.to.tau.quotient.map}
(-)^{\htpy H}
\longra
(-)^{\tate H}
\end{equation}
is given by the composite
\[
U_{\Weyl(H)}
\left(
(-)^H
\longra
\Phi^H
\right)
\beta_G
~.
\]
\end{remark}

\begin{observation}
\label{obs.interesting.embedding.of.htpy.Weyl.H.spt.in.gen.G.spt}
From \Cref{obs.genzd.tate}, we obtain a composite localization
\[ \begin{tikzcd}[column sep=2cm]
\Spectra^{\gen G}
\arrow[transform canvas={yshift=0.9ex}]{r}{\Phi^H}
\arrow[hookleftarrow, transform canvas={yshift=-0.9ex}]{r}[yshift=-0.2ex]{\bot}
&
\Spectra^{\gen \Weyl(H)}
\arrow[transform canvas={yshift=0.9ex}]{r}{U}
\arrow[hookleftarrow, transform canvas={yshift=-0.9ex}]{r}[yshift=-0.2ex]{\bot}[swap]{\beta}
&
\Spectra^{\htpy \Weyl(H)}
\end{tikzcd}~. \]
In particular, we obtain a fully faithful embedding
\begin{equation}
\label{embed.htpy.Weyl.H.spectra.into.genuine.G.spectra}
\Spectra^{\htpy \Weyl(H)}
\xlonghookra{\ff}
\Spectra^{\gen G}
~.
\end{equation}
\end{observation}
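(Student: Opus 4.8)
The plan is to deduce the statement directly from \Cref{obs.genzd.tate}, using only the formal fact that a composite of reflective localizations is again a reflective localization.

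First I would record the two ingredients. By \Cref{obs.genzd.tate}, the geometric fixedpoints functor $\Phi^H : \Spectra^{\gen G} \to \Spectra^{\gen \Weyl(H)}$ is a reflective localization; denote its (fully faithful) right adjoint by $R^H$, so that $R^H$ is the composite of the fully faithful inclusion $\Spectra^{\gen G}/\Spectra^{\gen G}_{^< H} \longhookra \Spectra^{\gen G}$ with a right adjoint of $(-)^H$. Second, the adjunction $U \adj \beta$ on $\Weyl(H)$-spectra is by construction a reflective localization as well, with $\beta$ fully faithful. Now I would simply compose: the composite of left adjoints $U \circ \Phi^H : \Spectra^{\gen G} \to \Spectra^{\htpy \Weyl(H)}$ is left adjoint to the composite $R^H \circ \beta : \Spectra^{\htpy \Weyl(H)} \to \Spectra^{\gen G}$ of their right adjoints, and $R^H \circ \beta$ is fully faithful since each of $R^H$ and $\beta$ is. Therefore $U \circ \Phi^H$ is a reflective localization --- this is exactly the composite adjunction displayed in the statement --- and its right adjoint $R^H \circ \beta$ is the claimed fully faithful embedding $\Spectra^{\htpy \Weyl(H)} \xlonghookra{\ff} \Spectra^{\gen G}$ of \Cref{embed.htpy.Weyl.H.spectra.into.genuine.G.spectra}.

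I do not expect any substantial obstacle here: all of the actual mathematical content is packaged into \Cref{obs.genzd.tate}, and what remains --- that composites of adjoints are adjoints and composites of fully faithful functors are fully faithful --- is entirely formal. The one point worth a moment's care is that the right adjoint produced this way is genuinely the composite $R^H \circ \beta$, which is immediate from the uniqueness of adjoints.
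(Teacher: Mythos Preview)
Your proposal is correct and is precisely the argument the paper has in mind: the observation is stated without a separate proof because it follows immediately by composing the reflective localization $\Phi^H$ from \Cref{obs.genzd.tate} with the reflective localization $U \adj \beta$, exactly as you describe.
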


\begin{remark}
The embedding \Cref{embed.htpy.Weyl.H.spectra.into.genuine.G.spectra} is a nonstandard one, which will be essential to our work.  As we will see in \Cref{subsection.frac.cats}, a fracture of a stable $\infty$-category $\cC$ over a poset $\pos$ in particular determines an assignment
\[
\pos
\ni
p
\longmapsto
\cC_p
\subset
\cC
\]
of a reflective subcategory of $\cC$ to each element of $\pos$.  The canonical fracture of $\Spectra^{\gen G}$ of \Cref{prop.can.fracture.of.gen.G.spt} will assign this reflective subcategory to the element $H \in \pos_G$.
\end{remark}

\begin{remark}
\label{rmk.genzd.tate.isnt.genuine}
The generalized Tate construction
\[
\Spectra^{\htpy G}
\xra{(-)^{\tate G}}
\Spectra
\]
admits a description making no reference to genuine equivariant homotopy theory: it is the lower composite in the left Kan extension diagram
\[ \begin{tikzcd}[row sep=1.5cm, column sep=1.5cm]
\Spectra^{\htpy G}
\arrow{r}{(-)^{\htpy G}}
\arrow{d}[transform canvas={xshift=0.7cm, yshift=0.1cm}]{\rotatebox{-30}{$\Downarrow$}}
&
\Spectra
\\
\dfrac{\Spectra^{\htpy G}}{\{ \Sigma^\infty_G ( G/H)_+ \}_{H \proper G}}
\arrow{ru}
\end{tikzcd}~, \]
where the quotient is taken in the $\infty$-category $\StCat$ of stable $\infty$-categories and exact functors.  Of course, the natural transformation is nothing but the quotient map \Cref{h.to.tau.quotient.map}.
\end{remark}

\begin{remark}
Despite \Cref{rmk.genzd.tate.isnt.genuine}, in \Cref{subsection.proto.tate} it will be convenient to use the existing framework of genuine equivariant homotopy theory obtain the \textit{proto Tate package}, which simultaneously packages a great multitude of functorialities enjoyed by the generalized Tate constructions (recall \Cref{remark.sell.proto.tate.package}).
\end{remark}

\begin{remark}
By \cite[Proposition 5.25]{MNN-ind}, the functor $(-)^{\tate \Cyclic_n}$ vanishes on $H\ZZ$-module spectra as soon as $n$ is not prime.\footnote{This is closely related to the Tate orbit and fixedpoints lemmas of \cite{NS}.}\footnote{In turn, Mathew--Naumann--Noel credit this result independently to Carlson \cite{Carlson-coh} and Balmer \cite{Balmer-extns}.}  One consequence of this fact is that the $\infty$-category $\Mod_{H\ZZ} \left( \Spectra^{\gen \TT} \right)$ of genuine $\TT$-equivariant $H\ZZ$-module spectra admits a much simpler description than the full $\infty$-category $\Spectra^{\gen \TT}$ of genuine $\TT$-spectra.
\end{remark}

We end this subsection by recording the following crucial fact about the generalized Tate construction.

\begin{lemma}
\label{Cr.genzd.tate.of.rth.power.is.exact}
The functor
\begin{equation}
\label{tate.Cr.of.rth.power}
\begin{tikzcd}[row sep=0cm]
\Spectra
\arrow{r}
&
\Spectra
\\
\rotatebox{90}{$\in$}
&
\rotatebox{90}{$\in$}
\\
E
\arrow[maps to]{r}
&
(E^{\otimes r})^{\tate \Cyclic_r}
\end{tikzcd}
\end{equation}
is exact (where $\Cyclic_r$ acts on $E^{\otimes r}$ by its action on the indexing set).
\end{lemma}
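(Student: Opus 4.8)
The plan is to show that the functor \Cref{tate.Cr.of.rth.power} preserves the zero object and cofiber sequences; since it takes values in a stable $\infty$-category, this forces it to be exact. Preservation of the zero object is immediate, as $0^{\otimes r} \simeq 0$ and the generalized Tate construction $(-)^{\tate \Cyclic_r} = U \Phi^{\Cyclic_r} \beta$ is exact, being a composite of exact functors. The real content is preservation of cofiber sequences, and here the only potential obstruction is the $r$-fold smash power $E \mapsto E^{\otimes r}$ (regarded, with its permutation action, as a functor to $\Spectra^{\htpy \Cyclic_r}$), which is a priori only polynomial of degree $r$; the Tate construction will kill all of the ``cross terms''.

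The first step is a vanishing lemma: for any proper subgroup $\Cyclic_d \proper \Cyclic_r$ and any $X \in \Spectra^{\htpy \Cyclic_d}$, one has $\left( \Ind_{\Cyclic_d}^{\Cyclic_r} X \right)^{\tate \Cyclic_r} \simeq 0$. Indeed, \Cref{obs.Ind.commutes.with.beta} lets us rewrite $\left( \Ind_{\Cyclic_d}^{\Cyclic_r} X \right)^{\tate \Cyclic_r} \simeq U \Phi^{\Cyclic_r} \Ind_{\Cyclic_d}^{\Cyclic_r} \beta X$, and $\Phi^{\Cyclic_r} \Ind_{\Cyclic_d}^{\Cyclic_r} \simeq 0$ since the geometric fixedpoints at the full group of a genuine spectrum induced up from a proper subgroup vanish (a standard fact, see e.g.\ \cite{LMS}). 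This is the manifestation of the slogan of \Cref{remark.genzd.tate} that we will use.

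The second step is to produce, from a cofiber sequence $A \to B \to C$ in $\Spectra$, a finite $\Cyclic_r$-equivariant multiplicative filtration on $B^{\otimes r}$. Regarding $B$ as a filtered spectrum with $\mathrm{Fil}^{\geq 0} \simeq B$, $\mathrm{Fil}^{\geq 1} \simeq A$, and $\mathrm{Fil}^{\geq 2} \simeq 0$, we form the $r$-fold tensor power in filtered spectra -- which carries a permutation $\Cyclic_r$-action -- and pass to underlying objects. This yields a finite filtration $A^{\otimes r} \simeq \mathrm{Fil}^{\geq r} \to \cdots \to \mathrm{Fil}^{\geq 0} \simeq B^{\otimes r}$ in $\Spectra^{\htpy \Cyclic_r}$ whose associated graded in weight $w$ is $\bigoplus_{|S| = r - w} A^{\otimes S} \otimes C^{\otimes ([r] \setminus S)}$, with $\Cyclic_r$ permuting the $(r-w)$-element subsets $S \subseteq \{1, \ldots, r\}$. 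The weight-$0$ piece is $C^{\otimes r}$ and the weight-$r$ piece is $A^{\otimes r}$; for $0 < w < r$, every $\Cyclic_r$-orbit on the set of $(r-w)$-element subsets has proper stabilizer (a nonempty, non-full subset of $\{1, \ldots, r\}$ cannot be $\Cyclic_r$-invariant), so each intermediate graded piece is a finite direct sum of spectra induced from proper subgroups of $\Cyclic_r$. (When $r = p$ is prime, all these stabilizers are trivial, recovering the classical ``Tate diagonal'' argument.)

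Finally, I would apply the exact functor $(-)^{\tate \Cyclic_r}$ to this finite filtration. By the vanishing of the first step, every intermediate associated graded piece dies, so $\left( A^{\otimes r} \right)^{\tate \Cyclic_r} \simeq \left( \mathrm{Fil}^{\geq r} \right)^{\tate \Cyclic_r} \to \left( \mathrm{Fil}^{\geq 1} \right)^{\tate \Cyclic_r}$ is an equivalence and $\left( \mathrm{Fil}^{\geq 1} \right)^{\tate \Cyclic_r} \to \left( B^{\otimes r} \right)^{\tate \Cyclic_r}$ has cofiber $\left( C^{\otimes r} \right)^{\tate \Cyclic_r}$; identifying these maps with the ones induced by functoriality of \Cref{tate.Cr.of.rth.power} (the bottom inclusion and top quotient of the filtration are, $\Cyclic_r$-equivariantly, induced by $A \to B$ and $B \to C$) produces the desired cofiber sequence. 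The main obstacle is this second step -- carefully setting up the equivariant multiplicative filtration, verifying $\Cyclic_r$-equivariantly that its intermediate layers split as sums of proper inductions, and making precise that the resulting cofiber sequence is the functorial one; the rest is formal.
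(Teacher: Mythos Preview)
Your proposal is correct and follows essentially the same strategy as the paper: both arguments produce a $\Cyclic_r$-equivariant filtration on (a variant of) the $r$-fold tensor power, observe that every intermediate graded piece is induced from a proper subgroup of $\Cyclic_r$ and hence dies under $(-)^{\tate \Cyclic_r}$ (via \Cref{obs.Ind.commutes.with.beta} and \Cref{obs.genzd.tate}), and conclude. The only cosmetic difference is that the paper filters $\fib\bigl(E_1^{\otimes r} \to E_2^{\otimes r}\bigr)$ (citing \cite[Proposition~2.2.3]{LurieDAGXIII}) with graded pieces built from $E_0$ and $E_1$, whereas you filter $B^{\otimes r}$ via the Day-convolution power of a two-step filtration with graded pieces built from $A$ and $C$ --- note, incidentally, that your indexing has a small slip (the weight-$w$ piece should have $|S| = w$, not $r-w$, to match your stated extremes), but this does not affect the argument.
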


\begin{proof}
It is easy to see that the functor \Cref{tate.Cr.of.rth.power} preserves the zero object, so it suffices to show that it also preserves cofiber sequences.  So, let
\[
E_0
\longra
E_1
\xlongra{\varphi}
E_2
\]
be a cofiber sequence of spectra.  By the same argument as in the proof of \cite[Proposition 2.2.3]{LurieDAGXIII}, we obtain a filtration of the object
\[
\fib(\varphi^{\otimes r})
\in
\Spectra^{\htpy \Cyclic_r}
\]
whose associated graded object decomposes as
\[
\bigoplus_{i=1}^r
\left(
\bigoplus_{ \substack{ \{1,\ldots r\} = I \sqcup J \\ |I|= i} }
(E_0)^{\otimes I}
\otimes
(E_1)^{\otimes J}
\right)
~,
\]
where the $\Cyclic_r$-action on each parenthesized summand is through its action on the set $\{1,\ldots,r\}$.  But for all $i<r$, the $i\th$ summand is induced up from a proper subgroup of $\Cyclic_r$.  It then follows from Observations \ref{obs.Ind.commutes.with.beta} \and \ref{obs.genzd.tate} and the fact that the generalized $\Cyclic_r$-Tate construction is exact that we have an identification
\[
\fib \left( \left( \varphi^{\otimes r} \right)^{\tate \Cyclic_r} \right)
\simeq
\fib \left( \varphi^{\otimes r} \right)^{\tate \Cyclic_r}
\simeq
\left( (E_0)^{\otimes r} \right)^{\tate \Cyclic_r}
\]
(which respects the evident maps to $\left( (E_1)^{\otimes r} \right)^{\tate \Cyclic_r}$).
\end{proof}

\begin{remark}
\label{gen.tate.not.filt.cocts}
On the other hand, the functor \Cref{tate.Cr.of.rth.power} does \textit{not} preserve filtered colimits.  Rather, for any spectrum $E \in \Spectra$, there is a natural comparison morphism
\begin{equation}
\label{comparison.morphism.for.genzd.tate.vs.completion}
E \otimes \left( \SS^{\otimes r} \right)^{\tate \Cyclic_r}
\longra
\left( E^{\otimes r} \right)^{\tate \Cyclic_r}
~,
\end{equation}
which is an equivalence when $E \in \Spectra^\fin$ is finite.
\end{remark}

\begin{remark}
In the special case that $r=p$ is prime, \Cref{Cr.genzd.tate.of.rth.power.is.exact} is a classical result going back to \cite{JonesWeg} and \cite[Chapter II.3]{Hinfty}.
\end{remark}

\subsection{Examples of fracture decompositions of genuine $G$-spectra}
\label{subsection.exs.of.fracs.of.gen.spt}

In this subsection we give three sample applications of \Cref{mainthm.genuine.G.spt}, in order to motivate the abstract formalism that follows.

\begin{notation}
In this subsection, in the interest of uniformity, even in the case that $G$ is the trivial group we will often still include the forgetful functor
\[
\Spectra^{\gen G}
\xra{U}
\Spectra
\]
(now an equivalence) in our notation.
\end{notation}

\begin{example}
\label{example.genuine.Cp.spectra}
We recall the situation (which is well known) in the case that $G = \Cyclic_p$: a genuine $\Cyclic_p$-spectrum
\[
\tilde{E} \in \Spectra^{\gen \Cyclic_p}
\]
is completely specified by the data of
\begin{itemize}
\item its underlying homotopy $\Cyclic_p$-spectrum
\[
E_0
:=
U \tilde{E}
\in
\Spectra^{\htpy \Cyclic_p}
~,
\]
\item its geometric $\Cyclic_p$-fixedpoints spectrum
\[
E_1
:=
U \Phi^{\Cyclic_p} \tilde{E}
\in
\Spectra
~,
\]
and
\item the gluing data of a comparison map
\[
U \Phi^{\Cyclic_p}
\left(
\tilde{E}
\longra
\beta U \tilde{E}
\right)
=:
\left(
E_1
\longra
(E_0)^{\tate \Cyclic_p}
\right)
\]
from $E_1$ to the $\Cyclic_p$-Tate construction on $E_0$,
\end{itemize}
and conversely any such data collectively determine a genuine $\Cyclic_p$-spectrum.  Meanwhile, the poset of subgroups of $\Cyclic_p$ is simply $\pos_{\Cyclic_p} \cong [1]$, so that our left-lax left $\pos_{\Cyclic_p}$-module $\themod^{\gen \Cyclic_p}$ has no choice but to be strict: indeed, it simply classifies the $\Cyclic_p$-Tate construction
\[
\Spectra^{\htpy \Cyclic_p}
\xra{(-)^{\tate \Cyclic_p}}
\Spectra
~.
\]
Then, the above assertion can be reformulated as an identification
\[
\Spectra^{\gen \Cyclic_p}
\simeq
\lim^\rlax
\left(
\Spectra^{\htpy \Cyclic_p}
\xra{(-)^{\tate \Cyclic_p}}
\Spectra
\right)
\]
of the $\infty$-category of genuine $\Cyclic_p$-spectra as the right-lax limit of this left $\pos_{\Cyclic_p}$-module, precisely as guaranteed by \Cref{mainthm.genuine.G.spt}.
\end{example}

\begin{example}
\label{example.genuine.Cpsquared.spectra}
The situation when $G = \Cyclic_{p^2}$ is more subtle.  We would like to specify a genuine $\Cyclic_{p^2}$-spectrum
\[
\tilde{E}
\in
\Spectra^{\gen \Cyclic_{p^2}}
\]
in terms of the naive equivariant spectra
\[
E_0
:=
U \tilde{E}
\in \Spectra^{\htpy \Cyclic_{p^2}}
~,
\qquad
E_1
:=
U \Phi^{\Cyclic_p} \tilde{E}
\in \Spectra^{\htpy \Cyclic_p}
~,
\qquad
\textup{and}
\qquad
E_2
:=
U \Phi^{\Cyclic_{p^2}} \tilde{E}
\in
\Spectra
~;
\]
taking our cue from \Cref{example.genuine.Cp.spectra}, we seek gluing data that relate these naive equivariant spectra.
Again, the unit maps of various adjunctions yield
\begin{itemize}
\item a map
\begin{equation}
\label{str.map.one.to.two}
U \Phi^{\Cyclic_p}
\left(
\tilde{E}
\longra
\beta U \tilde{E}
\right)
=:
\left(
E_1
\longra
\left( U \Phi^{\Cyclic_p} \beta \right)
E_0
\right)
=:
\left(
E_1
\longra
( E_0 )^{\tate \Cyclic_p}
\right)
\end{equation}
in $\Spectra^{\htpy \Cyclic_p}$,
\item a map
\begin{equation}
\label{str.map.zero.to.two}
U \Phi^{\Cyclic_{p^2}}
\left(
\tilde{E}
\longra
\beta U \tilde{E}
\right)
=:
\left(
E_2
\longra
\left( U \Phi^{\Cyclic_{p^2}} \beta \right) E_0
\right)
=:
\left(
E_2
\longra
(E_0)^{\tate \Cyclic_{p^2}}
\right)
\end{equation}
in $\Spectra$, and
\item a map
\begin{equation}
\label{str.map.zero.to.one}
U \Phi^{\Cyclic_p}
\left(
\Phi^{\Cyclic_p} \tilde{E}
\longra
\beta U \Phi^{\Cyclic_p} \tilde{E}
\right)
\simeq
\left(
U \Phi^{\Cyclic_{p^2}} \tilde{E}
\longra
\left( U \Phi^{\Cyclic_p} \beta \right) \left( U \Phi^{\Cyclic_p} \tilde{E} \right)
\right)
=:
\left(
E_2
\longra
( E_1 )^{\tate \Cyclic_p}
\right)
\end{equation}
in $\Spectra$.
\end{itemize}
However, these maps are not unrelated: they must fit into a certain commutative square
\begin{equation}
\label{comm.square.for.gen.Cpsquared.spt}
\begin{tikzcd}[row sep=1.5cm]
E_2
\arrow{r}{\Cref{str.map.zero.to.one}}
\arrow{d}[swap]{\Cref{str.map.zero.to.two}}
&
(E_1)^{\tate \Cyclic_p}
\arrow{d}{\Cref{str.map.one.to.two}^{\tate \Cyclic_p}}
\\
(E_0)^{\tate \Cyclic_{p^2}}
\arrow{r}
&
\left( (E_0)^{\tate \Cyclic_p} \right)^{\tate \Cyclic_p}
\end{tikzcd}
\end{equation}
in $\Spectra$ as a direct consequence of the commutativity of the diagram
\[ \begin{tikzcd}[row sep=1.5cm]
\left( U \Phi^{\Cyclic_p} \right) \left( \Phi^{\Cyclic_p} \right)
\arrow{r}
\arrow{d}
&
\left( U \Phi^{\Cyclic_p} \right) \beta U \left( \Phi^{\Cyclic_p} \right)
\arrow{d}
\\
\left( U \Phi^{\Cyclic_p} \right) \left( \Phi^{\Cyclic_p} \right) U \beta
\arrow{r}
&
\left( U \Phi^{\Cyclic_p} \right) \beta U \left( \Phi^{\Cyclic_p} \right) \beta U
\end{tikzcd} \]
in $\Fun(\Spectra^{\gen \Cyclic_{p^2}},\Spectra)$ and the canonical equivalence $\Phi^{\Cyclic_{p^2}} \simeq \Phi^{\Cyclic_p} \Phi^{\Cyclic_p}$.

In this case, we have an identification $\pos_{\Cyclic_{p^2}} \cong [2]$, and our left-lax left $\pos_{\Cyclic_{p^2}}$-module $\themod^{\gen \Cyclic_{p^2}}$ classifies the lax-commutative diagram
\[ \begin{tikzcd}[row sep=1.5cm]
&
\Spectra^{\htpy \Cyclic_p}
\arrow{rd}{(-)^{\tate \Cyclic_p}}
\\
\Spectra^{\htpy \Cyclic_{p^2}}
\arrow{ru}{(-)^{\tate \Cyclic_p}}
\arrow{rr}[transform canvas={yshift=0.6cm}]{\rotatebox{90}{$\Rightarrow$}}[swap]{(-)^{\tate \Cyclic_{p^2}}}
&
&
\Spectra
\end{tikzcd}~, \]
whose natural transformation is precisely the bottom map in diagram \Cref{comm.square.for.gen.Cpsquared.spt}.  Then, \Cref{mainthm.genuine.G.spt} guarantees the reidentification
\[
\Spectra^{\gen \Cyclic_{p^2}}
\simeq
\lim^\rlax
\left( \pos_{\Cyclic_{p^2}} \overset{\llax}{\lacts} \themod^{\gen \Cyclic_{p^2}} \right)
~,
\]
which reduces to the assertion that the above data -- the naive equivariant spectra $E_0$, $E_1$, and $E_2$, the maps \Cref{str.map.one.to.two}, \Cref{str.map.zero.to.two}, and \Cref{str.map.zero.to.one}, and the commutative square \Cref{comm.square.for.gen.Cpsquared.spt} -- are indeed collectively equivalent to the datum of the genuine $\Cyclic_{p^2}$-spectrum $\tilde{E}$.
\end{example}

\begin{example}
\label{example.gen.Cpq.spt}
Finally, let us consider the case that $G = \Cyclic_{pq}$, where $p$ and $q$ are distinct primes.  First of all, the poset of subgroups of $\Cyclic_{pq} \cong [1] \times [1]$ is the commutative square
\[ \begin{tikzcd}
\Cyclic_1
\arrow{r}
\arrow{d}
&
\Cyclic_p
\arrow{d}
\\
\Cyclic_q
\arrow{r}
&
\Cyclic_{pq}
\end{tikzcd}~; \]
extrapolating from \Cref{example.genuine.Cpsquared.spectra} we see that the left-lax left $\pos_{\Cyclic_{pq}}$-module $\themod^{\gen \Cyclic_{pq}}$ classifies a lax-commutative diagram
\begin{equation}
\label{lax.comm.diagram.for.Cpq}
\begin{tikzcd}[row sep=2cm, column sep=1.5cm]
\Spectra^{\htpy \Cyclic_{pq}}
\arrow{r}{(-)^{\tate \Cyclic_p}}
\arrow{d}[swap]{(-)^{\tate \Cyclic_q}}
\arrow{rd}[sloped, pos=0.3]{(-)^{\tate \Cyclic_{pq}}}[sloped, transform canvas={xshift=0.3cm, yshift=0.55cm}]{\Uparrow}[sloped, swap, transform canvas={xshift=-0.3cm, yshift=-0.55cm}]{\Downarrow}
&
\Spectra^{\htpy \Cyclic_q}
\arrow{d}{(-)^{\tate \Cyclic_q}}
\\
\Spectra^{\htpy \Cyclic_p}
\arrow{r}[swap]{(-)^{\tate \Cyclic_p}}
&
\Spectra
\end{tikzcd}~.
\end{equation}
However, \textit{all functors from the top left to the bottom right are zero}.  It follows that the equivalence
\[
\Spectra^{\gen \Cyclic_{pq}}
\simeq
\lim^\rlax \left( \pos_{\Cyclic_{pq}} \overset{\llax}{\lacts} \themod^{\gen \Cyclic_{pq}} \right)
\]
guaranteed by \Cref{mainthm.genuine.G.spt} reduces to the assertion that a genuine $\Cyclic_{pq}$-spectrum $\tilde{E} \in \Spectra^{\gen \Cyclic_{pq}}$ is completely specified by the data of
\begin{itemize}
\item the naive equivariant spectra
\[ \begin{tikzcd}[row sep=1.5cm]
E_{00}
:=
U \tilde{E}
\in \Spectra^{\htpy \Cyclic_{pq}}
&
E_{01}
:=
U \Phi^{\Cyclic_p} \tilde{E}
\in \Spectra^{\htpy \Cyclic_q}
\\
E_{10}
:=
U \Phi^{\Cyclic_q} \tilde{E}
\in \Spectra^{\htpy \Cyclic_p}
&
E_{11} := U\Phi^{\Cyclic_{pq}} \tilde{E} \in \Spectra
\end{tikzcd} \]
and
\item the gluing data of comparison maps
\[ \begin{tikzcd}
&
&
&
\left( E_{01} \longra (E_{00})^{\tate \Cyclic_p} \right)
\\
&
\\
&
&
&
\left( E_{11} \longra (E_{01})^{\tate \Cyclic_q} \right)
\\
\left( E_{10} \longra (E_{00})^{\tate \Cyclic_q} \right)
&
\left( E_{11} \longra (E_{10})^{\tate \Cyclic_p} \right)
\end{tikzcd}~.\footnote{These data are organized so as to reflect their positions within the diagram \Cref{lax.comm.diagram.for.Cpq}.} \]
\end{itemize}
\end{example}

\begin{remark}
\Cref{example.gen.Cpq.spt} makes manifest the equivalence
\[
\Spectra^{\gen \Cyclic_{pq}} \simeq \Spectra^{\gen \Cyclic_p} \otimes \Spectra^{\gen \Cyclic_q}
\]
(where the tensor product is taken in $\PrL$).  In other words, a genuine $\Cyclic_{pq}$-spectrum is the data of a genuine $\Cyclic_p$-object in genuine $\Cyclic_q$-spectra (and vice versa).
\end{remark}

\subsection{Fractured stable $\infty$-categories}
\label{subsection.frac.cats}

In this subsection, we introduce fractured stable $\infty$-categories over a poset $\pos$, and we prove that they are equivalently specified by \textit{stable left-lax left $\pos$-modules}, with the inverse construction given by right-lax limits (\Cref{thm.fracs.are.llax.modules}).

\begin{notation}
In this subsection, we fix a stable $\infty$-category $\cC$.  We write $\Loc_\cC$ for its poset of reflective subcategories, ordered by inclusion.  We also write $\pos$ for an arbitrary poset.
\end{notation}

\begin{definition}
A nonempty full subposet $\posQ \subset \pos$ is called a \bit{convex subset} of $\pos$ if whenever there exist maps $x \ra z \ra y$ in $\pos$ with $x,y \in \posQ$, then also $z \in \posQ$.
\end{definition}

\begin{notation}
We write $\Conv_\pos$ for the poset of convex subsets of $\pos$, ordered by inclusion.  To ease notation, for an element $p \in \pos$ we simply write $p$ (rather than $\{p\}$) for the corresponding singleton element of $\Conv_\pos$.
\end{notation}

\begin{construction}
\label{constr.localization.to.gluing.category}
Suppose we are given a functor
\[
\Conv_{[1]}
\longra
\Loc_\cC
~;
\]
write
\[ \begin{tikzcd}
&
\cC_1
\\
\cC_0
\arrow[hook]{r}
&
\cC_{[1]}
\arrow[hookleftarrow]{u}
\end{tikzcd} \]
for the diagram of reflective subcategories of $\cC$ which it selects, and for any $\posQ \in \Conv_{[1]}$ write
\[ \begin{tikzcd}
\cC
\arrow[transform canvas={yshift=0.9ex}]{r}{\locL_\posQ}
\arrow[hookleftarrow, transform canvas={yshift=-0.9ex}]{r}[yshift=-0.2ex]{\bot}
&
\cC_\posQ
\end{tikzcd} \]
for the corresponding adjunction.  Consider the full subcategories
\begin{equation}
\label{get.right.adjoint.to.localization.to.gluing.category.before.limits}
\begin{tikzcd}
&
{\cC \times [1]}
\arrow{dd}
\arrow[hookleftarrow]{rd}[sloped, pos=0.35]{\ff}
\\
{\cC_{[1]} \times [1]}
\arrow[hook]{ru}[sloped, pos=0.7]{\ff}
\arrow{rd}
\arrow[hookleftarrow, crossing over, dashed]{rr}[pos=0.6]{\ff}
&
&
{\cC_0 \backslash \cC_1}
\arrow{ld}
\\
&
{[1]}
\end{tikzcd}
\end{equation}
of the product $\cC \times [1]$, where on the right the fiber over $i \in [1]$ contains those objects lying in $\cC_i$, and the factorization exists by functoriality (i.e.\! by the existence of the morphisms $i \ra [1]$ in $\Conv_{[1]}$).  Note that the two lower diagonal functors in diagram \Cref{get.right.adjoint.to.localization.to.gluing.category.before.limits} define cocartesian fibrations over $[1]$: the one on the left is classified by the identity functor on $\cC_{[1]}$, while the one on the right is classified by the composite
\[
\cC_0
\longhookra
\cC
\xra{\locL_1}
\cC_1
~.
\]
Hence, the horizontal functor in diagram \Cref{get.right.adjoint.to.localization.to.gluing.category.before.limits} defines a morphism in $\LMod^\llax_{[1]}$.  Evidently, its restriction to each object of $[1]$ is a right adjoint: the left adjoint over the object $i \in [1]$ is given by the composite
\[ \begin{tikzcd}
&
\cC
\arrow{rd}[sloped, pos=0.3]{\locL_i}
\\
\cC_{[1]}
\arrow[hook]{ru}
\arrow[dashed]{rr}[swap]{\locL_i}
&
&
\cC_i
\end{tikzcd}~. \]
Hence, by \Cref{lemma.ptwise.radjt.has.ptwise.ladjt} these left adjoints canonically assemble into a morphism
\[
\cC_{[1]} \times [1]
\longra
\cC_0 \backslash \cC_1
\]
in $\LMod^\rlax_{[1]}$, which in turn induces a composite
\begin{equation}
\label{localizn.to.gluing.cat}
\cC_{[1]}
\simeq
\lim \left( [1] \lacts \left( \cC_{[1]} \times [1] \right) \right)
\xlonghookra{\ff}
\lim^\rlax \left( [1] \lacts \left( \cC_{[1]} \times [1] \right) \right)
\longra
\lim^\rlax \left( [1] \lacts \left( \cC_0 \backslash \cC_1 \right) \right)
\end{equation}
(where the fully faithful inclusion is that of \Cref{obs.map.from.strict.limit.to.lax.limit}).
\end{construction}

\begin{remark}
An object of the target
\[ \lim^\rlax \left( [1] \lacts \left( \cC_0 \backslash \cC_1 \right) \right) \]
of the functor \Cref{localizn.to.gluing.cat} can be informally described as the data of a triple
\[
\left(
X \in \cC_0
~,~
Y \in \cC_1
~,~
\begin{tikzcd}
Y
\arrow{d}
\\
\locL_1 X
\end{tikzcd}
\right)
~.
\]
Then, the functor \Cref{localizn.to.gluing.cat} itself can be informally described by the association
\[
Z
\longmapsto
\left(
\locL_0 Z \in \cC_0
~,~
\locL_1 Z \in \cC_1
~,~
\begin{tikzcd}
\locL_1 Z
\arrow{d}
\\
\locL_1 \locL_0 Z
\end{tikzcd}
\right)
~,
\]
where the morphism (the third datum) is obtained by applying the functor $\locL_1$ to the unit $Z \ra \locL_0 Z$.
\end{remark}

\begin{definition}
\label{define.fracture}
A \bit{fracture} of a stable $\infty$-category $\cC$ over a poset $\pos$ is a functor
\[ \begin{tikzcd}[column sep=2cm, row sep=0cm]
\Conv_\pos
\arrow{r}
&
\Loc_\cC
\\
\rotatebox{90}{$\in$}
&
\rotatebox{90}{$\in$}
\\
\posQ
\arrow[mapsto]{r}
&
\cC_\posQ
\end{tikzcd} \]
that satisfies the following conditions.
\begin{enumerate}
\item\label{require.frac.preserves.terminal.objs}
It preserves terminal objects, i.e.\! $\cC_\pos = \cC$.
\item\label{require.localizn.to.gluing.cat.is.an.equivalence}
Choose any object $\posQ \in \Conv_\pos$ and any functor $\posQ \ra [1]$; this induces a functor $\Conv_{[1]} \ra \Conv_\pos$ selecting the diagram
\[ \begin{tikzcd}
&
\posQ_1
\\
\posQ_0
\arrow[hook]{r}
&
\posQ
\arrow[hookleftarrow]{u}
\end{tikzcd} ~,\]
which postcomposes to a functor
\[
\Conv_{[1]}
\longra
\Conv_\pos
\longra
\Loc_\cC
~.
\]
Then, the resulting functor
\[
\cC_\posQ
\xra{\Cref{localizn.to.gluing.cat}}
\lim^\rlax ( [1] \lacts (\cC_{\posQ_0} \backslash \cC_{\posQ_1} ) )
\]
is an equivalence.
\item\label{require.cofiltered.limit}
If a functor
\[
\cI^\rcone
\longra
\Conv_\pos
\]
witnesses $\posQ \in \Conv_\pos$ as a filtered colimit, then the canonical functor
\[
\cC_\posQ
\longra
\lim_{i \in \cI}
\cC_{\posQ_i}
\]
(to the limit over the left adjoint localization functors) is an equivalence.
\end{enumerate}
\end{definition}

\begin{remark}
\label{rmk.fracture.neq.strat}
Condition \Cref{require.localizn.to.gluing.cat.is.an.equivalence} of \Cref{define.fracture} implies the second condition in \cite[Definition 3.4]{Saul-strat}, but they are not equivalent.\footnote{The simplest example of this discrepancy occurs when $\pos = [1]$ and the functor $\Conv_{[1]} \ra \Loc_\cC$ is constant at the maximal reflective subcategory of $\cC$, namely $\cC$ itself: this defines a stratification, but not a fracture.}
\end{remark}

\begin{observation}
\label{obs.fractures.restrict}
By passing to a reflective subcategory of $\cC$, it is always possible to assume that condition \Cref{require.frac.preserves.terminal.objs} of \Cref{define.fracture} is satisfied.  Indeed, a fracture of $\cC$ over $\pos$ in particular yields a fracture of $\cC_\posQ$ over $\posQ$ for all $\posQ \in \Conv_\pos$.
\end{observation}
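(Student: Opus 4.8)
The plan is to prove the following single statement, which contains both assertions: given any functor $\fF : \Conv_\pos \to \Loc_\cC$ satisfying conditions \Cref{require.localizn.to.gluing.cat.is.an.equivalence} and \Cref{require.cofiltered.limit} of \Cref{define.fracture}, and any $\posQ \in \Conv_\pos$, the functor $\fF$ restricts to a \emph{fracture} of $\cC_\posQ := \fF(\posQ)$ over $\posQ$. Taking $\fF$ to be an honest fracture recovers the restriction statement, while taking $\posQ = \pos$ --- so that $\cC_\pos = \fF(\pos)$ is the reflective subcategory picked out by the terminal object --- recovers the first claim, since the restricted functor then automatically satisfies condition \Cref{require.frac.preserves.terminal.objs}.

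Two elementary facts set everything up. First, since $\posQ$ is convex in $\pos$, a subset of $\posQ$ is convex in $\posQ$ if and only if it is convex in $\pos$; thus $\Conv_\posQ$ is canonically the full subposet $\{ \posQ' \in \Conv_\pos : \posQ' \subseteq \posQ \}$ of $\Conv_\pos$, and this inclusion preserves filtered colimits (both are computed as unions, and a filtered union of convex subsets is convex). Second, if $\cD \subseteq \cC$ is reflective and $\cE \subseteq \cD$ is a full subcategory, then $\cE$ is reflective in $\cD$ precisely when it is reflective in $\cC$, and then the reflector of $\cC$ onto $\cE$ restricts along $\cD \subseteq \cC$ to the reflector of $\cD$ onto $\cE$; hence $\Loc_{\cC_\posQ}$ is canonically the subposet of $\Loc_\cC$ on the reflective subcategories contained in $\cC_\posQ$, and all the localization functors that occur below are mutually compatible restrictions. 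Because the terminal object $\posQ$ of $\Conv_\posQ$ is sent by $\fF$ to $\cC_\posQ$, every $\cC_{\posQ'}$ with $\posQ' \in \Conv_\posQ$ is contained in $\cC_\posQ$, so $\fF$ does restrict to a functor $\fF_\posQ : \Conv_\posQ \to \Loc_{\cC_\posQ}$ sending $\posQ'$ to $\cC_{\posQ'}$.

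It then remains to verify the three conditions of \Cref{define.fracture} for $\fF_\posQ$. Condition \Cref{require.frac.preserves.terminal.objs} is immediate. For condition \Cref{require.localizn.to.gluing.cat.is.an.equivalence}, the point is that \Cref{constr.localization.to.gluing.category} is \emph{intrinsic}: inspecting it (together with \Cref{lemma.ptwise.radjt.has.ptwise.ladjt}), the gluing category $\cC_0 \backslash \cC_1$, the cocartesian fibration it carries over $[1]$, and the comparison functor \Cref{localizn.to.gluing.cat} out of $\cC_{[1]}$ are manufactured entirely from the reflective subcategories $\cC_0, \cC_1 \subseteq \cC_{[1]}$ and the reflectors among them, with no mention of an enveloping category. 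By the second elementary fact, these data are unchanged whether $\cC_{[1]}$ is regarded as sitting in $\cC_\posQ$ or in $\cC$. Consequently, for every $\posQ' \in \Conv_\posQ$ and every map $\posQ' \to [1]$, the comparison functor \Cref{localizn.to.gluing.cat} produced by $\fF_\posQ$ is literally the one produced by $\fF$ for the same $\posQ' \in \Conv_\pos$ and $\posQ' \to [1]$, which is an equivalence by hypothesis on $\fF$. Condition \Cref{require.cofiltered.limit} is similar: a functor $\cI^\rcone \to \Conv_\posQ$ witnessing $\posQ' \in \Conv_\posQ$ as a filtered colimit is, by the first elementary fact, also such a witness in $\Conv_\pos$, so the canonical functor $\cC_{\posQ'} \to \lim_{i \in \cI} \cC_{\posQ'_i}$ is the same for $\fF_\posQ$ as for $\fF$, and is an equivalence by hypothesis.

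I expect the one genuinely substantive point --- and the main obstacle --- to be making the ``intrinsicness'' of \Cref{constr.localization.to.gluing.category} precise: one must trace through that construction carefully and confirm that each $\infty$-category, functor, adjunction, and natural transformation appearing in it (in particular those emerging from the passage to adjoints in \Cref{lemma.ptwise.radjt.has.ptwise.ladjt}) is built from the subcategories $\cC_{\posQ'}$ and their reflectors alone, so that enlarging or shrinking the ambient stable $\infty$-category --- as long as it continues to contain all of these subcategories --- has no effect on the output. Everything else is a routine unwinding.
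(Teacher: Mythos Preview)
The paper gives no proof of this statement: it is labeled an \emph{Observation} and left to the reader as evident. Your argument correctly supplies the details the authors omitted, and your identification of the one substantive point---that \Cref{constr.localization.to.gluing.category} depends only on the data $\cC_0, \cC_1 \subset \cC_{[1]}$ and not on the ambient $\cC$---is exactly right; the appearance of $\cC$ in that construction (e.g.\ in the composite $\cC_0 \hookrightarrow \cC \xrightarrow{\locL_1} \cC_1$ describing the cocartesian monodromy) is inessential precisely because of your second elementary fact about compatibility of reflectors.
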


Loosely speaking, condition \Cref{require.localizn.to.gluing.cat.is.an.equivalence} of \Cref{define.fracture} asserts that as the object $\posQ \in \Conv_\pos$ grows (i.e.\! acquires more elements of $\pos$), the corresponding reflective subcategory $\cC_\posQ$ must be obtained by ``gluing together'' smaller reflective subcategories, in a suitable sense.  On the other hand, condition \Cref{require.cofiltered.limit} of \Cref{define.fracture} requires a certain regularity as the object $\posQ \in \Conv_\pos$ grows transfinitely.  These observations suggests that $\cC$ itself should be obtainable from the ``one-point'' localizations $\cC_p$ for $p \in \pos$ through an iterated gluing process, or more generally as a cofiltered limit of such gluings if $\pos$ is infinite.  This is exactly the content of the main result of this subsection (\Cref{thm.fracs.are.llax.modules}), which is stated as an equivalence of $\infty$-categories, whose constituent data we describe now.

\begin{notation}
We write
\[
\StCat_{\fr/\pos}
\]
for the $\infty$-category of stable $\infty$-categories fractured over $\pos$ and exact fracture-preserving functors between them.
\end{notation}

{somewhere, Observe that fractures of disjoint unions are directly seen to correspond to product decompositions.}

\begin{definition}
A module (of any flavor, including lax) is called \bit{stable} if its fibers are stable $\infty$-categories and its monodromy functors are exact.
\end{definition}

\begin{notation}
We write
\[
\StLMod^\rlax_{\llax.\pos}
\longhookra
\LMod^\rlax_{\llax.\pos}
\]
for the subcategory on the stable left-lax left $\pos$-modules and the fiberwise exact functors between them.
\end{notation}

\begin{construction}
Given a fracture
\[
\Conv_\pos
\xlongra{\fF}
\Loc_\cC
~,
\]
we construct a stable left-lax left $\pos$-module
\[
\makemod(\fF)
\in
\LMod_{\llax.\pos}
\]
as the full subcategory
\[ \begin{tikzcd}
\makemod(\fF)
\arrow[hook]{r}{\ff}
\arrow{d}
&
\cC \times \pos
\arrow{ld}
\\
\pos
\end{tikzcd} \]
on those pairs of the form
\[
(X \in \cC_p ~,~ p)
\in
\cC \times \pos
~;
\]
over a morphism $p \ra q$ in $\pos$, the cocartesian monodromy functor is thus given by the composite
\[
\cC_p
\longhookra
\cC
\xra{\locL_q}
\cC_q
~.
\]
This evidently defines a functor
\[
\StCat_{\fr/\pos}
\xra{\makemod}
\StLMod^\rlax_{\llax.\pos}
~.
\]
\end{construction}

\begin{construction}
Given a stable left-lax left $\pos$-module
\[
\cE
\in
\StLMod^\rlax_{\llax.\pos}
~,
\]
we construct a stable $\infty$-category fractured over $\pos$ by the prescription
\[
\posQ
\longmapsto
\lim^\rlax \left( \posQ \overset{\llax}{\lacts} \cE \right)
\]
taking an object $\posQ \in \Conv_\pos$ to the right-lax limit of the pullback of $\cE$ along the inclusion $\posQ \hookra \pos$ (which is left implicit in the notation).  Note that this does indeed define an object
\[
\lim^\rlax(\cE)
\in
\StCat_{\fr/\pos}
~:
\]
these right-lax limits are easily seen to be stable $\infty$-categories, right-lax limits are contravariantly functorial in the base $\infty$-category under restriction, and the requirement that an element $\posQ \in \Conv_\pos$ be a convex subset of $\pos$ (rather than an arbitrary subposet) guarantees that the resulting restriction
\[
\lim^\rlax \left( \pos \overset{\llax}{\lacts} \cE \right)
\longra
\lim^\rlax \left( \posQ \overset{\llax}{\lacts} \cE \right)
\]
admits a (diagramatically defined) fully faithful right adjoint.  As right-lax limits are functorial for right-lax equivariant functors, it is easy to see that this construction defines a functor
\[
\StLMod^\rlax_\pos
\xra{\limrlaxfam}
\StCat_{\fr/\pos}
~.
\]
\end{construction}

\begin{theorem}
\label{thm.fracs.are.llax.modules}
The functors
\[ \begin{tikzcd}[column sep=2cm, row sep=0cm]
\StCat_{\fr/\pos}
\arrow[transform canvas={yshift=0.9ex}]{r}{\makemod}[swap]{\sim}
\arrow[leftarrow, transform canvas={yshift=-0.9ex}]{r}[swap]{\limrlaxfam}
&
\StLMod^\rlax_{\llax.\pos}
\end{tikzcd} \]
are canonically inverse equivalences of $\infty$-categories.
\end{theorem}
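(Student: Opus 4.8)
The plan is to exhibit a unit and a counit witnessing $\makemod$ and $\limrlaxfam$ as mutually inverse, and then to check pointwise that each is a natural equivalence. First one must confirm that the two constructions land where claimed. For $\makemod(\fF)$ the cocartesian monodromy along $p \to q$ is $\cC_p \hookra \cC \xra{\locL_q} \cC_q$; that this is only \emph{left-lax} over $\pos$ (localizations do not strictly compose) is automatic, and that it is \emph{exact} I would extract from condition~\Cref{require.localizn.to.gluing.cat.is.an.equivalence} of \Cref{define.fracture}, which forces the localizations entering a fracture to be of recollement type. For $\limrlaxfam(\cE)$, conditions~\Cref{require.frac.preserves.terminal.objs} and~\Cref{require.cofiltered.limit} of \Cref{define.fracture} are formal (the latter because right-lax limits turn filtered unions of index posets into limits), while condition~\Cref{require.localizn.to.gluing.cat.is.an.equivalence} is a direct reformulation of \Cref{constr.localization.to.gluing.category} together with \Cref{lemma.ptwise.radjt.has.ptwise.ladjt} and \Cref{lem.get.r.lax.left.adjt}; these also supply the fully faithful right adjoints $\beta_\posQ$ to the restrictions $\lim^\rlax(\pos \overset{\llax}{\lacts} \cE) \to \lim^\rlax(\posQ \overset{\llax}{\lacts} \cE)$ that make $\limrlaxfam(\cE)$ a fracture in the first place.

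For the round-trip $\makemod \circ \limrlaxfam$, fix a stable left-lax left $\pos$-module $\cE$ and write $\cD := \lim^\rlax(\pos \overset{\llax}{\lacts} \cE)$. The fracture $\limrlaxfam(\cE)$ equips $\cD$ with the reflective subcategories $\cD_\posQ := \lim^\rlax(\posQ \overset{\llax}{\lacts} \cE)$, and since $\sd$ of a point is a point we have $\cD_p \simeq \cE_{|p}$, compatibly with the inclusions into $\cD$ along the $\beta_p$. Thus $\makemod(\limrlaxfam(\cE))$ --- the full subcategory of $\cD \times \pos$ on the pairs $(X \in \cD_p, p)$ --- is fiberwise $\cE_{|p}$ over $p \in \pos$, and the $\beta_p$ assemble into a functor $\cE \to \makemod(\limrlaxfam(\cE))$ over $\pos$ which is a fiberwise equivalence. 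Unwinding \Cref{constr.localization.to.gluing.category} exactly as in \Cref{example.limits.of.lax.actions.when.laxness.disagrees} shows that this functor carries cocartesian morphisms to cocartesian morphisms --- equivalently, that $\cE_{|p} \xra{\beta_p} \cD \xra{\locL_q} \cD_q \simeq \cE_{|q}$ recovers the original monodromy of $\cE$ --- so it is an equivalence of locally cocartesian fibrations, manifestly exact and natural in $\cE$.

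For the round-trip $\limrlaxfam \circ \makemod$, fix a fracture $\fF$ of $\cC$ over $\pos$. The obvious generalization of the functor~\Cref{localizn.to.gluing.cat} provides a comparison functor $\cC \to \lim^\rlax(\pos \overset{\llax}{\lacts} \makemod(\fF))$, sending $Z$ to the right-lax section $p \mapsto \locL_p Z$ with its tautological structure maps; by \Cref{obs.fractures.restrict} it is natural in $\posQ \in \Conv_\pos$ and intertwines the two families of reflective subcategories, so it suffices to prove it is an equivalence for $\posQ = \pos$. I would argue this in three stages. When $\pos = [1]$ it is exactly condition~\Cref{require.localizn.to.gluing.cat.is.an.equivalence} of \Cref{define.fracture}, since $\makemod(\fF)$ for a $[1]$-fracture is the cocartesian fibration $\cC_0 \backslash \cC_1 \da [1]$ of \Cref{constr.localization.to.gluing.category} and the comparison functor is~\Cref{localizn.to.gluing.cat}. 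For finite $\pos$ with $|\pos| \geq 2$, choose a minimal element $p_0$, so that $\{p_0\}$ is a downward-closed and $\pos \setminus \{p_0\}$ an upward-closed convex subset, together defining a functor $\pos \to [1]$; by condition~\Cref{require.localizn.to.gluing.cat.is.an.equivalence}, $\cC$ is the right-lax limit over $[1]$ of a locally cocartesian fibration with fibers $\cC_{p_0}$ and $\cC_{\pos \setminus \{p_0\}}$, and by the inductive hypothesis (applied to the restricted fracture, using \Cref{obs.fractures.restrict}) the latter fiber is $\lim^\rlax((\pos \setminus \{p_0\}) \overset{\llax}{\lacts} \makemod(\fF))$; a Fubini identity for right-lax limits relative to $\pos \to [1]$ then reassembles these into $\lim^\rlax(\pos \overset{\llax}{\lacts} \makemod(\fF))$. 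Finally, for general $\pos$ one bootstraps from the finite case using condition~\Cref{require.cofiltered.limit} of \Cref{define.fracture} together with the compatibility of right-lax limits with the corresponding filtered colimits of index posets.

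The step I expect to be the main obstacle is the Fubini identity invoked in the finite induction: computing the right-lax limit of a left-lax left $\pos$-module, for $\pos$ equipped with a map to $[1]$, as an iterated right-lax limit --- over $[1]$ of the right-lax limits over the two fibers. As \Cref{example.limits.of.lax.actions.when.laxness.disagrees} stresses, when the handedness of the action disagrees with that of the limit the lax structure map of a composite is \emph{not} determined by those of the factors, so this cannot be read off from a naive Segal-type argument; it instead requires a direct analysis of the subdivision category $\sd(\pos)$ and its cocartesian morphisms over $\pos$, which is precisely where the machinery of \Cref{subsection.lax.limits.in.hard.case} does the real work. A secondary point to handle with care throughout is the exactness bookkeeping --- that $\makemod(\fF)$ has exact monodromy and that the subcategories $\cD_\posQ$ cut out by $\limrlaxfam(\cE)$ are exact localizations --- which again reduces to unwinding \Cref{constr.localization.to.gluing.category} together with \Cref{lemma.ptwise.radjt.has.ptwise.ladjt} and \Cref{lem.get.r.lax.left.adjt}.
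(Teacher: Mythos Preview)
Your proposal is correct and follows the same overall architecture as the paper: one round-trip is immediate, the other is handled by constructing a comparison morphism and proving it is an equivalence by induction on finite $\pos$, with the general case following from condition~\Cref{require.cofiltered.limit}. Two differences are worth flagging.

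First, the paper peels off a \emph{maximal} element $\infty \in \pos$ rather than a minimal one; this is inessential.

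Second, and this addresses what you identify as the main obstacle: the paper does not prove (or need) a free-standing Fubini identity for right-lax limits. Instead, having constructed the comparison morphism $\fF \to \limrlaxfam(\makemod(\fF))$ as a morphism in $\StCat_{\fr/\pos}$, it restricts along $\Conv_{[1]} \to \Conv_\pos$ and observes that \emph{both} source and target are thereby fractures over $[1]$. The already-established base case $\StCat_{\fr/[1]} \simeq \StLMod^\rlax_{\llax.[1]}$ then converts the remaining question (is the comparison an equivalence at $[1] \in \Conv_{[1]}$?) into: is the induced morphism of $[1]$-modules an equivalence? Since it is a fiberwise equivalence (by induction on $\pos' := \pos \setminus \{\infty\}$ and by inspection on $\{\infty\}$), this reduces to checking that a single right-lax structure square commutes on the nose, i.e.\ that one natural transformation between two functors $\cC_{\pos'} \to \cC_\infty$ is an equivalence. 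The paper finishes this by noting (from the inductive hypothesis) that every object of $\cC_{\pos'}$ is a finite limit of objects lying in the various $\cC_p$, so by exactness of both functors it suffices to check on each $\cC_p \subset \cC_{\pos'}$, where both sides visibly compute the composite $\cC_p \hookra \cC \xra{\locL_\infty} \cC_\infty$. In short, your anticipated Fubini obstacle dissolves: applying the $[1]$-case simultaneously to both sides of the comparison, together with an exactness-and-generators argument, replaces it entirely.
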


\begin{proof}
First of all, we observe that both side are evidently compatible with filtered colimits in the variable $\pos$ (i.e.\! they take filtered colimits of posets to cofiltered limits of $\infty$-categories) -- the left side by condition \Cref{require.cofiltered.limit} of \Cref{define.fracture}.  Thus, it suffices to prove the claim in the case that $\pos$ is finite.

Now, the composite $\makemod \circ \limrlaxfam$ is obviously naturally equivalent to the identity, by construction.  So it remains to show that the composite $\limrlaxfam \circ \makemod$ is as well.  We begin by constructing a natural comparison morphism.  Suppose that the functor
\[
\Conv_\pos
\xlongra{\fF}
\Loc_\cC
\]
defines a fracture of $\cC$ over $\pos$.  Let us consider the defining inclusion
\[
\cC \times \pos
\longhookla
\makemod(\fF)
\]
as a morphism in $\LMod^\llax_{\llax.\pos}$.  By construction, over each object $p \in \pos$ this is a right adjoint, and so their left adjoints assemble into a canonical morphism
\[
\cC \times \pos
\longra
\makemod(\fF)
\]
in $\LMod^\rlax_{\llax.\pos}$ by \Cref{lemma.ptwise.radjt.has.ptwise.ladjt}.  This allows us to construct the composite
\[
\cC
\simeq
\lim \left( \pos \lacts ( \cC \times \pos ) \right)
\longhookra
\lim^\rlax \left( \pos \lacts (\cC \times \pos) \right)
\longra
\lim^\rlax \left( \pos \overset{\llax}{\lacts} \makemod(\fF) \right)
=:
\left( \limrlaxfam ( \makemod(\fF) ) \right)_\pos
~;
\]
for any $\posQ \in \Conv_\pos$, in light of the factorization
\[ \begin{tikzcd}
\cC \times \pos
\arrow[hookleftarrow]{r}
&
\makemod(\fF)
\\
\cC_\posQ \times \posQ
\arrow[hook]{u}
\arrow[dashed, hookleftarrow]{r}
&
\makemod(\fF)_{|\posQ}
\arrow[hook]{u}
\end{tikzcd}~,
\]
an analogous construction yields a functor
\[
\cC_\posQ
\longra
\left( \limrlaxfam ( \makemod(\fF) ) \right)_\posQ
~. \]
Taken together, these functors assemble into a morphism
\begin{equation}
\label{comparison.morphism.of.frac.cats}
\fF
\longra
\limrlaxfam(\makemod(\fF))
\end{equation}
in $\StCat_{\fr/\pos}$, which it remains to show is an equivalence.  We will show this by induction on the number of elements of $\pos$.  The case where $\pos = \pt$ is trivial, while the case where $\pos = [1]$ is immediate: a fracture
\[
\Conv_{[1]}
\xlongra{\fF}
\Loc_\cC
\]
is precisely the data of a diagram
\[ \begin{tikzcd}
&
\cC_1
\\
\cC_0
\arrow[hook]{r}
&
\cC
\arrow[hookleftarrow]{u}
\end{tikzcd} \]
of reflective subcategories of $\cC$ such that the functor
\[
\cC
\longra
\lim^\rlax \left( [1] \lacts \cC_0 \backslash \cC_1 \right)
\simeq
\lim^\rlax \left( [1] \overset{\llax}{\lacts} \makemod(\fF) \right)
\]
is an equivalence.

So suppose that $\pos$ is a finite poset.  By our inductive assumption, the morphism \Cref{comparison.morphism.of.frac.cats} is an equivalence at all non-maximal elements of $\Conv_\pos$, so it only remains to check that it is an equivalence at the maximal element $\pos \in \Conv_\pos$.  For this, let us choose a maximal element $\infty \in \pos$; this induces a functor $\pos \ra [1]$ whose fiber over $0 \in [1]$ is $\pos' := \pos \backslash \{ \infty \}$ and whose fiber over $1 \in [1]$ is $\{ \infty \}$.  Through this choice, the morphism \Cref{comparison.morphism.of.frac.cats} restricts to a morphism in $\StCat_{\fr/[1]}$ which is an equivalence at both singletons of $\Conv_{[1]}$ -- at the object $0 \in \Conv_{[1]}$ it is given by the morphism
\[
\cC_{\pos'}
\longra
\lim^\rlax \left( \pos' \overset{\llax}{\lacts} \makemod(\fF) \right)
~,
\]
which is an equivalence by the inductive assumption, while at the object $1 \in \Conv_{[1]}$ it is given by the morphism
\[
\cC_\infty
\longra
\lim^\rlax \left( \{ \infty \} \overset{\llax}{\lacts} \makemod(\fF) \right)
~,
\]
which is an equivalence by inspection -- and for which it remains to show that the morphism
\[
\cC
\longra
\lim^\rlax \left( \pos \overset{\llax}{\lacts} \makemod(\fF) \right)
\]
at the remaining object $[1] \in \Conv_{[1]}$ is an equivalence.  With the case of $\pos = [1]$ in hand, we see that this morphism is equivalently given by a morphism in $\StLMod^\rlax_{\llax.[1]}$ which is an equivalence on fibers, and for which it remains to show that the induced natural transformation
\begin{equation}
\label{square.with.nat.trans.giving.map.of.frac.cats.over.walking.arrow}
\begin{tikzcd}[row sep=1.5cm, column sep=1.5cm]
\cC_{\pos'}
\arrow{r}{\sim}[swap, transform canvas={xshift=0.5cm, yshift=-1cm}]{\rotatebox{45}{$\Rightarrow$}}
\arrow{d}
&
\lim^\rlax \left( \pos' \overset{\llax}{\lacts} \makemod(\fF) \right)
\arrow{d}
\\
\cC_\infty
\arrow{r}[swap]{\sim}
&
\lim^\rlax \left( \{ \infty \} \overset{\llax}{\lacts} \makemod(\fF) \right)
\end{tikzcd}
\end{equation}
is a natural equivalence.  Observe that by our inductive assumption, every object of $\cC_{\pos'}$ is a finite limit of objects of the reflective subcategories $\cC_p \subset \cC_{\pos'}$ for $p \in \pos'$.  Since both vertical functors in the diagram \Cref{square.with.nat.trans.giving.map.of.frac.cats.over.walking.arrow} are exact, it therefore suffices to show that the natural transformation therein restricts to a natural equivalence on these subcategories.  Unwinding the definitions, we see that on any such subcategory $\cC_p \subset \cC_{\pos'}$ both composites are canonically equivalent to the composite
\[
\cC_p
\longhookra
\cC
\xra{\locL_\infty}
\cC_\infty
\]
(and the natural transformation of diagram \Cref{square.with.nat.trans.giving.map.of.frac.cats.over.walking.arrow} witnesses their resulting equivalence with each other), which proves the claim.
\end{proof}

\begin{remark}
A fracture of $\cC$ over a disjoint union of posets is equivalent data to to a product decomposition of $\cC$ and a fracture of the factors of $\cC$ over the poset summands: this follows from \Cref{thm.fracs.are.llax.modules}, perhaps more readily than from \Cref{define.fracture}.
\end{remark}

\subsection{The canonical fracture of genuine $G$-spectra}
\label{subsection.can.frac.of.gen.spt}

In this section, we prove a general result (\Cref{thm.dcc.and.fracturing.gives.fracture}) which provides sufficient conditions to obtain a fracture of a stable $\infty$-category over a poset.  We then apply this to show that the stable $\infty$-category $\Spectra^{\gen G}$ of genuine $G$-spectra is canonically fractured over the poset $\pos_G$ of closed subgroups of $G$ ordered by subconjugacy (\Cref{prop.can.fracture.of.gen.G.spt}).  Using this, we deduce naive reidentifications of $\Spectra^{\gen G}$ and $\Spectra^{\gen^\proper G}$ (and thereafter \Cref{mainthm.genuine.G.spt} and \Cref{mainthm.cyclonic.spt}) as Corollaries \ref{cor.gen.G.spt.as.rlax.lim} \and \ref{cor.proper.gen.G.spt.as.rlax.lim}, respectively.

\begin{notation}
Throughout this subsection, we fix a presentable stable $\infty$-category $\cC$.
\end{notation}


\begin{definition}
A full stable subcategory $\cC_0 \subset \cC$ is called \bit{admissible} if its inclusion is colimit-preserving with colimit-preserving right adjoint.
\end{definition}

\begin{example}
\label{ex.of.adm.subcat.gend.by.cpct.objs}
Given a set of compact objects in $\cC$, the full stable subcategory that they generate under colimits is admissible.
\end{example}

\begin{observation}
By presentability, to say that $\cC_0 \subset \cC$ is admissible is precisely to say that its inclusion fits into a diagram
\[ \begin{tikzcd}[column sep=1.5cm]
\cC_0
\arrow[hook, bend left]{r}
\arrow[dashed,leftarrow]{r}[transform canvas={yshift=0.05cm}]{\bot}[swap,transform canvas={yshift=-0.05cm}]{\bot}
\arrow[dashed,bend right]{r}
&
\cC
\end{tikzcd} \]
of adjunctions.
\end{observation}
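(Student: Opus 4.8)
The plan is to deduce both implications from the adjoint functor theorem for presentable $\infty$-categories, together with the fact that left adjoints preserve colimits. For the forward implication, suppose $\cC_0 \subset \cC$ is admissible, so that the inclusion $\iota \colon \cC_0 \hookrightarrow \cC$ is colimit-preserving and admits a colimit-preserving right adjoint $R \colon \cC \to \cC_0$. First I would observe that $\cC_0$ is itself presentable: since $\iota$ is fully faithful, the colimit-preserving endofunctor $\iota R$ of $\cC$ is an idempotent comonad -- the colocalization onto $\cC_0$ -- hence accessible, so that its essential image $\cC_0$ is an accessible, and therefore presentable, subcategory of $\cC$. (In every case to which we apply this, $\cC_0$ is moreover generated under colimits by a set of compact objects, cf.\ \Cref{ex.of.adm.subcat.gend.by.cpct.objs}, which makes presentability immediate.) Now $R$ is a colimit-preserving functor between presentable $\infty$-categories, so the adjoint functor theorem furnishes a right adjoint $R' \colon \cC_0 \to \cC$ to $R$, yielding the asserted chain of adjunctions $\iota \dashv R \dashv R'$.

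Conversely, if the inclusion $\iota$ of the full stable subcategory $\cC_0$ extends to a chain $\iota \dashv R \dashv R'$, then $\iota$ preserves colimits, being a left adjoint, and $R$ likewise preserves colimits, being the left adjoint of $R'$; hence $\cC_0$ is admissible by definition.

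The only step requiring any care is the presentability of $\cC_0$ used in the forward direction -- needed before the adjoint functor theorem can be applied to $R$ -- which I would handle either through the accessibility of the colocalization $\iota R$ as above or, more expediently, by noting that the admissible subcategories arising in this paper are generated by compact objects. Everything else is immediate from the adjoint functor theorem and the preservation of colimits by left adjoints.
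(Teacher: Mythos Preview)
Your proof is correct and follows the same idea the paper gestures at: the observation is left unproved in the paper beyond the phrase ``by presentability,'' which is exactly the appeal to the adjoint functor theorem you spell out. Your additional care about the presentability of $\cC_0$ (via accessibility of the colocalization $\iota R$, or alternatively via compact generation in the cases of interest) fills in the one point the paper takes for granted.
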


\begin{observation}
\label{obs.admissible.iff.right.orthog.colim.closed}
A full stable subcategory $\cC_0 \subset \cC$ which is closed under colimits is admissible if and only if its right-orthogonal subcategory
\[
\{ Z \in \cC : \ulhom_\cC(X,Z) \simeq 0 \textup{ for all }X \in \cC_0 \}
\subset 
\cC
\]
is also closed under colimits.
\end{observation}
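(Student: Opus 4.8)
The plan is to base everything on the following remark: whenever the inclusion $i\colon \cC_0 \hookra \cC$ admits a right adjoint $R\colon \cC \to \cC_0$, the right-orthogonal $\cC_0^\perp$ is precisely the kernel $\ker(R) = \{Z \in \cC : RZ \simeq 0\}$. This is a one-line consequence of the adjunction together with stability: for $Z \in \cC$ and $X \in \cC_0$ we have $\ulhom_\cC(iX, Z) \simeq \ulhom_{\cC_0}(X, RZ)$, and --- since $i$ is fully faithful, so that $\mathrm{id}_{\cC_0} \simeq Ri$, and $\cC_0$ is pointed --- this vanishes for all $X$ if and only if it vanishes for $X = RZ$, i.e.\ if and only if $RZ \simeq 0$.

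With this in hand the forward implication is immediate: if $\cC_0$ is admissible then $R$ preserves colimits, so $\ker(R) = \cC_0^\perp$ is closed under colimits (the colimit of any diagram in $\ker R$ is carried by $R$ to $\colim 0 \simeq 0$). For the converse, suppose $\cC_0^\perp$ is closed under colimits. Since $\cC$ is presentable and $\cC_0$ is presentable and closed under colimits (e.g.\ by \Cref{ex.of.adm.subcat.gend.by.cpct.objs} in the cases we use), the adjoint functor theorem produces the right adjoint $R$, and by the remark above $\ker(R) = \cC_0^\perp$ is closed under colimits; it remains to deduce that $R$ preserves colimits. Fix a diagram $\{Y_\alpha\}$ in $\cC$ with colimit $Y$. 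The key point is that, for each $\alpha$, the cofiber of the counit $\varepsilon_\alpha\colon i R Y_\alpha \to Y_\alpha$ lies in $\ker(R)$: applying the (exact) functor $R$ to the cofiber sequence $i R Y_\alpha \to Y_\alpha \to \mathrm{cofib}(\varepsilon_\alpha)$ and using that $R(\varepsilon_\alpha)$ is an equivalence --- a triangle identity, since the unit of $i \dashv R$ is an equivalence --- we obtain $R\,\mathrm{cofib}(\varepsilon_\alpha) \simeq 0$. Taking the colimit of these cofiber sequences over $\alpha$ --- legitimate because $\cC_0$ is closed under colimits (so $i$ preserves them) and colimits commute with cofibers --- yields a cofiber sequence $i\bigl(\colim_\alpha R Y_\alpha\bigr) \to Y \to \colim_\alpha \mathrm{cofib}(\varepsilon_\alpha)$ in $\cC$, whose last term lies in $\ker(R)$ by the colimit-closedness hypothesis. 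Applying $R$ once more and using $Ri \simeq \mathrm{id}_{\cC_0}$ then identifies the canonical map $\colim_\alpha R Y_\alpha \to R Y$ with an equivalence, so $R$ preserves colimits and $\cC_0$ is admissible.

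The content lies entirely in the converse, and the one step deserving attention is the verification that $R$ carries each counit $\varepsilon_\alpha$ to an equivalence: this is what places $\mathrm{cofib}(\varepsilon_\alpha)$, and hence its colimit, inside $\ker(R)$, and thus where the hypothesis on $\cC_0^\perp$ is actually used. Everything else is a routine manipulation of cofiber sequences in the stable setting, together with the standard appeal to presentability needed to produce $R$.
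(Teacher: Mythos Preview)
The paper states this as an Observation without proof, so there is no paper argument to compare against directly. Your proof is correct and captures exactly the expected content: the identification $\cC_0^\perp = \ker(R)$ via the adjunction, the immediate forward direction, and the cofiber-sequence manipulation for the converse showing that $R$ preserves colimits once its kernel is colimit-closed. The verification that the map you produce is the canonical comparison map $\colim_\alpha RY_\alpha \to R(\colim_\alpha Y_\alpha)$ goes through as you indicate.

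One point worth tightening: to produce $R$ in the converse you invoke presentability of $\cC_0$ and cite \textup{Example~\ref{ex.of.adm.subcat.gend.by.cpct.objs}} ``in the cases we use.'' This is honest, but the reference is slightly awkward since that example already asserts admissibility outright, which is stronger than what you need and is exactly what you are aiming to establish via the Observation. A cleaner justification is simply that a full stable subcategory of a presentable $\cC$ generated under colimits by a \emph{set} of objects is itself presentable (it is $\kappa$-compactly generated for sufficiently large $\kappa$), whence the adjoint functor theorem supplies $R$. In the generality literally stated --- an arbitrary colimit-closed full stable subcategory --- the existence of $R$ is not automatic without further set-theoretic hypotheses, so some such smallness condition really is needed; all of the paper's applications (in particular \textup{Observation~\ref{obs.KQ.in.C.admissible.and.KQprime.in.KQ.admissible}}) satisfy it.
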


\begin{observation}
\label{obs.adm.subcat.gives.recollement}
Let $\cC_0 \subset \cC$ be an admissible subcategory, and write
\[
\cC_0
\xlonghookra{i_L}
\cC
\xlongra{p_L}
\cC / \cC_0
=:
\cC_1
\]
for the presentable quotient (the cofiber in $\PrL$).  By the admissibility of $\cC_0 \subset \cC$, this extends to a diagram
\begin{equation}
\label{recollement.of.C}
\begin{tikzcd}[column sep=2cm]
\cC_0
\arrow[hook,bend left=60]{r}{i_L}
\arrow[leftarrow]{r}[pos=0.7]{\yo}[transform canvas={yshift=0.25cm}]{\bot}[swap,transform canvas={yshift=-0.25cm}]{\bot}
\arrow[hook,bend right=60]{r}[swap]{i_R}
&
\cC
\arrow[bend left=60]{r}{p_L}
\arrow[hookleftarrow]{r}[pos=0.7]{\nu}[transform canvas={yshift=0.25cm}]{\bot}[swap,transform canvas={yshift=-0.25cm}]{\bot}
\arrow[bend right=60]{r}[swap]{p_R}
&
\cC_1
\end{tikzcd}
\end{equation}
of adjunctions among presentable stable $\infty$-categories, in which all inclusions are fully faithful and all three composites are exact in the sense that we have the equalities
\[
\ker(p_L)
=
\im(i_L)
~,
\qquad
\ker(\yo)
=
\im(\nu)
~,
\qquad
\textup{and}
\qquad
\ker(p_R)
=
\im(i_R)
\]
among full subcategories of $\cC$.
\end{observation}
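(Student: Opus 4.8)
The plan is to build the diagram \Cref{recollement.of.C} one adjoint at a time and then extract the three kernel--image identities from the two gluing fiber sequences that stability makes available. Throughout I would freely use the adjoint functor theorem for presentable $\infty$-categories, together with the fact that, $\cC$ being stable and $i_L$ a fully faithful colimit-preserving functor, the cofiber $\cC_1 := \cC/\cC_0$ in $\PrL$ is the Verdier quotient and hence a reflective localization of $\cC$.

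First I would assemble the six functors. Since $i_L$ preserves colimits (by hypothesis) it admits a right adjoint $\yo$, and since $\cC_0 \subset \cC$ is admissible $\yo$ again preserves colimits and hence admits a right adjoint $i_R$. On the other side, $p_L : \cC \to \cC_1$ is a reflective localization, and its fully faithful right adjoint $\nu$ identifies $\cC_1$ with the full subcategory $\cC_0^\perp := \{ Z \in \cC : \ulhom_\cC(X,Z) \simeq 0 \textup{ for all } X \in \cC_0 \}$ of $\cC_0$-local objects (the localizing subcategory generated by $\cC_0$ is $\cC_0$ itself, being stable and colimit-closed, so the maps inverted by $p_L$ form the saturation of $\{ 0 \to X \}_{X \in \cC_0}$, whose local objects are exactly $\cC_0^\perp$). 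By \Cref{obs.admissible.iff.right.orthog.colim.closed}, $\cC_0^\perp$ is closed under colimits, so $\nu$ preserves colimits and therefore admits a right adjoint $p_R$. All six functors are exact, being adjoints of functors between stable $\infty$-categories; $\cC_1$ is presentable stable as a colimit in $\PrL$ of such; and $\cC_0$ is presentable, being the comodules over the colimit-preserving idempotent comonad $i_L\yo$ on $\cC$.

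Full faithfulness is then quick: $i_L$ is fully faithful by hypothesis, $\nu$ is the inclusion of a reflective subcategory, and for $i_R$ I would compute, for $A,B \in \cC_0$,
\[
\ulhom_{\cC_0}(A,\yo i_R B)
\simeq
\ulhom_\cC(i_L A, i_R B)
\simeq
\ulhom_{\cC_0}(\yo i_L A, B)
\simeq
\ulhom_{\cC_0}(A,B),
\]
where the last equivalence uses $\yo i_L \simeq \mathrm{id}_{\cC_0}$ (full faithfulness of $i_L$); by Yoneda the counit $\yo i_R \to \mathrm{id}_{\cC_0}$ is an equivalence, i.e.\ $i_R$ is fully faithful. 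It then remains to match the kernels with the images. The identity $\ker(\yo) = \im(\nu)$ is immediate, since $\yo Z \simeq 0$ iff $\ulhom_\cC(i_L V, Z) \simeq \ulhom_{\cC_0}(V,\yo Z) \simeq 0$ for all $V \in \cC_0$, i.e.\ $Z \in \cC_0^\perp = \im(\nu)$. For the remaining two, I would construct the gluing fiber sequences $i_L\yo \to \mathrm{id}_\cC \to \nu p_L$ and $\nu p_R \to \mathrm{id}_\cC \to i_R\yo$, whose maps are the counit of $i_L \dashv \yo$ and the unit of $\yo \dashv i_R$: applying $\ulhom_\cC(i_L V,-)$ to $i_L\yo Z \to Z$ shows (via $\yo i_L \simeq \mathrm{id}$) that the cofiber lands in $\cC_0^\perp = \im(\nu)$, and an adjunction computation identifies $Z \mapsto \operatorname{cofib}(i_L\yo Z \to Z)$ with $\nu p_L$; dually, applying $\yo$ to $\mathrm{id}_\cC \to i_R\yo$ gives an equivalence (triangle identity plus $\yo i_R \simeq \mathrm{id}$), so its fiber lands in $\ker(\yo) = \im(\nu)$ and is identified with $\nu p_R$. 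From the first sequence, $p_L Z \simeq 0$ forces $i_L\yo Z \xra{\sim} Z$, so $\ker(p_L) \subseteq \im(i_L)$; and $p_L i_L \simeq 0$ since $\ulhom_{\cC_1}(p_L i_L X, Y) \simeq \ulhom_{\cC_0}(X, \yo\nu Y) \simeq 0$ (using $\yo\nu \simeq 0$), giving $\im(i_L) \subseteq \ker(p_L)$. The identity $\ker(p_R) = \im(i_R)$ follows from the second sequence in exactly the same way, now using $p_R i_R \simeq 0$, which holds since $\ulhom_{\cC_1}(Y, p_R i_R X) \simeq \ulhom_{\cC_0}(\yo\nu Y, X) \simeq 0$.

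The main obstacle is pinning down the two gluing fiber sequences --- i.e.\ checking that the candidate functors $\operatorname{cofib}(i_L\yo - \to -)$ and $\fib(- \to i_R\yo -)$ genuinely agree with the composites $\nu p_L$ and $\nu p_R$ (so that the two resulting recollement squares are cartesian) --- together with the identification $\cC/\cC_0 \simeq \cC_0^\perp$ of the Verdier quotient with the reflective subcategory of $\cC_0$-local objects. Once these are established, everything downstream is a formal manipulation of the four adjunctions $i_L \dashv \yo \dashv i_R$ and $p_L \dashv \nu \dashv p_R$.
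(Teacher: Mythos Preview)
The paper does not actually give a proof of this observation: it is stated as a standard fact about recollements in presentable stable $\infty$-categories, with the subsequent observation pointing to \cite[\S A.8]{LurieHA} for the classical equivalence between such recollements and fractures over $[1]$. Your argument is a correct and complete unwinding of that standard fact, so there is nothing to compare against.

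A couple of minor remarks on your write-up. First, your justification that $\cC_0$ is presentable (via comodules over the colimit-preserving idempotent comonad $i_L\yo$) is the right move, since the definition of admissibility in the paper does not explicitly assume presentability of $\cC_0$; you might spell out that full faithfulness of $i_L$ makes the adjunction comonadic, whence $\cC_0$ is the category of coalgebras over an accessible comonad on a presentable $\infty$-category. Second, your identification of the Verdier quotient $\cC/\cC_0$ with the reflective subcategory $\cC_0^\perp$ is exactly what is needed and is correct as stated, but since you flag it as a ``main obstacle'' you might note explicitly that this is \cite[Proposition 5.2.7.12]{LurieHA} or the analogous statement in \cite[\S A.8]{LurieHA}. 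Beyond these cosmetic points, the construction of the two gluing fiber sequences and the extraction of the three kernel--image identities from them is clean and correct.
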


\begin{definition}
Diagram \Cref{recollement.of.C} defines a \bit{recollement} of the stable $\infty$-category $\cC$.
\end{definition}


\begin{observation}
It is classical that the recollement \Cref{recollement.of.C} of $\cC$ is equivalent to a fracture of $\cC$ over $[1]$, as in \Cref{constr.localization.to.gluing.category}: the functor
\[
\Conv_{[1]}
\longra
\Loc_\cC
\]
selects the diagram
\[ \begin{tikzcd}
&
\cC_1
\\
\cC_0
\arrow[hook]{r}[swap]{i_L}
&
\cC
\arrow[hookleftarrow]{u}[swap]{\nu}
\end{tikzcd} \]
(see e.g.\! \cite[\S A.8]{LurieHA}).  This fact is also straightforward to verify.  On the one hand, the functor
\[
\cC
\xra{\Cref{localizn.to.gluing.cat}}
\lim^\rlax([1] \lacts (\cC_0 \backslash \cC_1))
\]
described informally by the prescription
\[
X
\longmapsto
\left(
\yo X \in \cC_0
~,~
p_L X \in \cC_1
~,~
\begin{tikzcd}
p_L X
\arrow{d}
\\
p_L i_R \yo X
\end{tikzcd}
\right)
\]
and the functor
\[
\lim^\rlax([1] \lacts (\cC_0 \backslash \cC_1))
\longra
\cC
\]
described informally by the prescription
\[
\left(
X_0 \in \cC_0
~,~
X_1 \in \cC_1
~,~
\begin{tikzcd}
X_1
\arrow{d}
\\
p_L i_R X_0
\end{tikzcd}
\right)
\longmapsto
\lim \left( \begin{tikzcd}
&
\nu X_1
\arrow{d}
\\
i_R X_0
\arrow{r}
&
\nu p_L i_R X_0
\end{tikzcd} \right)
\]
are easily checked to define inverse equivalences, so that we have indeed defined a fracture of $\cC$ over $[1]$.\footnote{The composite endofunctor of $\lim^\rlax([1] \lacts (\cC_0 \backslash \cC_1))$ is immediately seen to be an equivalence.  To see that the composite endofunctor of $\cC$ is an equivalence, one must check that for any $X \in \cC$ the commutative square
\[ \begin{tikzcd}[ampersand replacement=\&]
X
\arrow{r}
\arrow{d}
\&
\nu X_1
\arrow{d}
\\
i_R \yo X
\arrow{r}
\&
\nu p_L i_R \yo X
\end{tikzcd} \]
is a pullback square; it suffices to check that the fibers of the horizontal functors are equivalent, and this follows from the equality $\ker(\yo) = \im(\nu)$.}  On the other hand, a fracture of $\cC$ over $[1]$ defines the data of a recollement diagrammatically.
\end{observation}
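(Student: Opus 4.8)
The plan is to unwind both notions explicitly over the walking arrow. Imposing condition~\Cref{require.frac.preserves.terminal.objs} of \Cref{define.fracture}, a fracture of $\cC$ over $[1]$ amounts to no more than a choice of two reflective subcategories $\cC_0, \cC_1 \subseteq \cC$, since $\Conv_{[1]}$ is the cospan poset $\{0\} \to \{0,1\} \leftarrow \{1\}$, a functor into the poset $\Loc_\cC$ is a pair of reflective subcategories of $\cC_{\{0,1\}}$ with no further constraint, and condition~\Cref{require.frac.preserves.terminal.objs} forces $\cC_{\{0,1\}} = \cC$. Condition~\Cref{require.cofiltered.limit} is automatic for the finite poset $[1]$, so only condition~\Cref{require.localizn.to.gluing.cat.is.an.equivalence} remains; applied to $\posQ = [1]$ with the identity $[1] \to [1]$ it says exactly that \Cref{localizn.to.gluing.cat} is an equivalence
\[
\cC \xlongra{\sim} \cG := \lim^\rlax \bigl( [1] \lacts (\cC_0 \backslash \cC_1) \bigr),
\]
its instances for singleton $\posQ$ being degenerate. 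Thus the claim reduces to: a pair of reflective subcategories $(\cC_0, \cC_1)$ of $\cC$ for which \Cref{localizn.to.gluing.cat} is an equivalence is the same data as a recollement of $\cC$, equivalently (via \Cref{obs.adm.subcat.gives.recollement}) as an admissible subcategory $\cC_0 \subseteq \cC$.

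For the forward direction, from a recollement \Cref{recollement.of.C} I would take the fracture functor $\Conv_{[1]} \to \Loc_\cC$ selecting the reflective subcategory $\im(i_R) \subseteq \cC$ (with reflector $\yo$) at $\{0\}$ and $\im(\nu) \subseteq \cC$ (with reflector $p_L$) at $\{1\}$. To see that \Cref{localizn.to.gluing.cat} is then an equivalence I would write down its candidate inverse
\[
\cG \longra \cC, \qquad \bigl(X_0, X_1, X_1 \to p_L i_R X_0\bigr) \longmapsto \lim\bigl(i_R X_0 \to \nu p_L i_R X_0 \leftarrow \nu X_1\bigr),
\]
and check the two composites. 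The resulting endofunctor of $\cG$ is immediately an equivalence, using that $i_R$ and $\nu$ are fully faithful together with the adjunctions $i_L \dashv \yo \dashv i_R$ and $p_L \dashv \nu \dashv p_R$; the resulting endofunctor of $\cC$ is an equivalence once one checks, for each $X \in \cC$, that the square
\[
\begin{tikzcd}
X \arrow{r} \arrow{d} & \nu p_L X \arrow{d} \\
i_R \yo X \arrow{r} & \nu p_L i_R \yo X
\end{tikzcd}
\]
is cartesian --- which follows by comparing the fibers of its two horizontal morphisms and invoking the equality $\ker(\yo) = \im(\nu)$ of \Cref{obs.adm.subcat.gives.recollement}. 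This is the one genuinely computational point.

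For the reverse direction, given a pair $(\cC_0, \cC_1)$ for which \Cref{localizn.to.gluing.cat} is an equivalence, I would recover the recollement by transporting along $\cC \simeq \cG$: the two projections $\cG \to \cC_0$ and $\cG \to \cC_1$ off the triples $(X_0, X_1, \phi)$ are identified with $\yo$ and $p_L$, and the remaining functors of \Cref{recollement.of.C} arise as adjoints of these, which exist by explicit formulas on $\cG$ (for instance the right adjoint $p_R$ of $\nu$ sends $(X_0, X_1, \phi)$ to $\fib(\phi)$). In particular $\im(i_L) \subseteq \cC$ is admissible with $\cC_1 \simeq \cC/\im(i_L)$, and the two constructions are mutually inverse, matching $\cC_0$ with $\im(i_R)$ and $\cC_1$ with $\im(\nu)$. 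For the classical statement underpinning this one may also consult \cite[\S A.8]{LurieHA}.

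The step I expect to be the main obstacle is not the recollement bookkeeping, which is classical, but reconciling it with this paper's formalism: one must confirm that \Cref{localizn.to.gluing.cat} --- defined abstractly in \Cref{constr.localization.to.gluing.category} through \Cref{lemma.ptwise.radjt.has.ptwise.ladjt} and the right-lax limit of the strict $[1]$-module $\cC_0 \backslash \cC_1$ --- actually agrees with the explicit gluing functor $X \mapsto (\yo X, p_L X, p_L X \to p_L i_R \yo X)$. Once this identification of $\cG$ and of that functor is in hand, the remainder is the standard equivalence between recollements and gluing data.
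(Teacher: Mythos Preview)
Your proposal is correct and follows essentially the same route as the paper: give the explicit inverse to \Cref{localizn.to.gluing.cat} by the pullback formula, observe that one composite is immediate, and reduce the other to the cartesian square via $\ker(\yo) = \im(\nu)$; then remark that the reverse passage from a fracture over $[1]$ to a recollement is diagrammatic. The only cosmetic discrepancy is that you place $\cC_0$ in $\Loc_\cC$ via $i_R$ while the paper's displayed diagram labels the inclusion $i_L$; your choice is the one consistent with the formulas (both yours and the paper's), since $\Loc_\cC$ consists of \emph{reflective} subcategories and it is $i_R$, not $i_L$, that has a left adjoint.
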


\begin{notation}
We assume that for each element $p \in \pos$ we are given an admissible subcategory $\cK_p \subset \cC$.  Then, for any $\posQ \in \Conv_\pos$, we write $\cK_\posQ \subset \cC$ for the full stable subcategory generated under colimits by the subcategories $\{ \cK_q \}_{q \in \posQ}$.  Manifestly, morphisms in $\Conv_\pos$ then determine inclusions of subcategories of $\cC$.
\end{notation}

\begin{observation}
\label{obs.KQ.in.C.admissible.and.KQprime.in.KQ.admissible}
By \Cref{obs.admissible.iff.right.orthog.colim.closed}, the subcategory $\cK_\posQ \subset \cC$ is admissible, and more generally for a morphism $\posQ' \ra \posQ$ in $\Conv_\pos$ the subcategory $\cK_{\posQ'} \subset \cK_\posQ$ is admissible.\footnote{In fact, this is true more generally for any subsets of the underlying set of $\pos$: its poset structure plays no role here.}
\end{observation}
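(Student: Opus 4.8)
The plan is to deduce both assertions from \Cref{obs.admissible.iff.right.orthog.colim.closed}, by checking in each case that the relevant right-orthogonal subcategory is closed under colimits. For the first assertion, recall that $\cK_\posQ \subset \cC$ is by construction a full stable subcategory closed under colimits, so by \Cref{obs.admissible.iff.right.orthog.colim.closed} it suffices to show that
\[
\cK_\posQ^\perp
:=
\{ Z \in \cC : \ulhom_\cC(X,Z) \simeq 0 \textup{ for all } X \in \cK_\posQ \}
\]
is closed under colimits. The load-bearing point is the identification
\[
\cK_\posQ^\perp
=
\bigcap_{q \in \posQ} \cK_q^\perp
~.
\]
The inclusion ``$\subseteq$'' is immediate since $\cK_q \subset \cK_\posQ$ for each $q \in \posQ$. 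For ``$\supseteq$'', given $Z \in \bigcap_{q} \cK_q^\perp$, the full subcategory $\{ X \in \cC : \ulhom_\cC(X,Z) \simeq 0 \} \subset \cC$ is stable and closed under colimits, since $\ulhom_\cC(-,Z)$ is exact and carries colimits to limits (and the limit of the constant diagram at $0$ is $0$); it contains every $\cK_q$ by hypothesis, hence contains the colimit-closure $\cK_\posQ$, so $Z \in \cK_\posQ^\perp$.

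Now, since each $\cK_q \subset \cC$ is admissible, each $\cK_q^\perp$ is closed under colimits by \Cref{obs.admissible.iff.right.orthog.colim.closed}; as an intersection of colimit-closed subcategories is again colimit-closed, we conclude that $\cK_\posQ^\perp$ is colimit-closed, and hence that $\cK_\posQ \subset \cC$ is admissible. For the second assertion, note first that a morphism $\posQ' \to \posQ$ in $\Conv_\pos$ is the inclusion of a subset, so $\cK_{\posQ'} \subseteq \cK_\posQ$ as subcategories of $\cC$, and this inclusion preserves colimits (colimits in both $\cK_{\posQ'}$ and $\cK_\posQ$ are computed in $\cC$, as both are colimit-closed). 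By the first assertion, the inclusion of $\cK_{\posQ'}$ into $\cC$ admits a colimit-preserving right adjoint $R \colon \cC \to \cK_{\posQ'}$, which factors through $\cK_{\posQ'} \subseteq \cK_\posQ$; I claim its restriction $R|_{\cK_\posQ} \colon \cK_\posQ \to \cK_{\posQ'}$ is right adjoint to the inclusion $\cK_{\posQ'} \subseteq \cK_\posQ$. Indeed, for $X \in \cK_{\posQ'}$ and $Y \in \cK_\posQ$ one has natural equivalences of mapping spaces $\Map_{\cK_\posQ}(X,Y) \simeq \Map_\cC(X,Y) \simeq \Map_{\cK_{\posQ'}}(X,R(Y)) \simeq \Map_{\cK_\posQ}(X,R(Y))$, using that all the categories in sight are full subcategories of $\cC$ together with the adjunction over $\cC$. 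Finally, $R|_{\cK_\posQ}$ preserves colimits since $R$ does and colimits in $\cK_\posQ$ and $\cK_{\posQ'}$ agree with those in $\cC$. Hence $\cK_{\posQ'} \subset \cK_\posQ$ is admissible.

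I expect the only genuine subtlety to be the identification $\cK_\posQ^\perp = \bigcap_q \cK_q^\perp$; the remaining arguments are routine manipulations of adjunctions and colimit-closed subcategories. In particular, as the footnote to the statement records, the poset structure of $\pos$ plays no role beyond the containment $\posQ' \subseteq \posQ$, so the identical argument applies with arbitrary subsets in place of convex subsets.
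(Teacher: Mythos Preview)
Your proposal is correct and essentially follows the approach the paper intends: the paper gives no explicit proof beyond citing the right-orthogonal criterion, and you have filled in exactly the details that criterion demands, including the key identification $\cK_\posQ^\perp = \bigcap_{q \in \posQ} \cK_q^\perp$. The only minor deviation is that for the second assertion you restrict the ambient adjunction $\cK_{\posQ'} \rightleftarrows \cC$ to $\cK_\posQ$, whereas the paper's phrasing suggests applying the criterion directly (i.e., the right-orthogonal of $\cK_{\posQ'}$ inside $\cK_\posQ$ is $\cK_{\posQ'}^\perp \cap \cK_\posQ$, again colimit-closed as an intersection); both arguments are correct and essentially interchangeable.
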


\begin{notation}
For any $\posQ \in \Conv_\pos$, we write
\[ ^\leq \posQ \in \Conv_\pos \]
for the downwards closure of $\posQ$ in $\pos$ (the subposet of all objects of $\pos$ admitting a map to some $q \in \posQ$), and we write
\[
^< \posQ
:=
(^\leq \posQ) \backslash \posQ \in \Conv_\pos
\]
for its subset of elements not contained in $\posQ$.
\end{notation}

\begin{observation}
By construction, for any $\posQ \in \Conv_\pos$ both $^\leq \posQ$ and $^<\posQ$ are downwards closed.
\end{observation}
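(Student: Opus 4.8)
The plan is to verify both claims directly from the definitions, using transitivity of the order for the first assertion and convexity of $\posQ$ for the second.

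First I would handle $^\leq\posQ$. By definition an element $x\in\pos$ lies in $^\leq\posQ$ precisely when there exists $q\in\posQ$ with $x\leq q$. So, given any $y\leq x$ with $x\in{}^\leq\posQ$, I choose such a $q$; then $y\leq x\leq q$ yields $y\leq q$, whence $y\in{}^\leq\posQ$. This step is immediate and uses only transitivity of the order relation on $\pos$.

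Next I would treat $^<\posQ := ({}^\leq\posQ)\setminus\posQ$. Suppose $y\leq x$ with $x\in{}^<\posQ$. Since $x\in{}^\leq\posQ$ and $^\leq\posQ$ is downwards closed (the first step), we get $y\in{}^\leq\posQ$, and it remains only to rule out $y\in\posQ$. If $y\in\posQ$, I pick $q\in\posQ$ witnessing $x\in{}^\leq\posQ$, i.e.\ with $x\leq q$; then $y\leq x\leq q$ with $y,q\in\posQ$ forces $x\in\posQ$ by the convexity of $\posQ$, contradicting $x\in{}^<\posQ$. Hence $y\notin\posQ$, so $y\in{}^<\posQ$, as desired.

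The only point with any content — the ``main obstacle,'' such as it is — is the second claim: a priori, deleting the elements of $\posQ$ from the downwards-closed set $^\leq\posQ$ might destroy downwards closure. Convexity of $\posQ$ is exactly what prevents this, since any $x\in{}^<\posQ$ lies below some $q\in\posQ$, so a predecessor of $x$ inside $\posQ$ would sandwich $x$ between two elements of $\posQ$ and thereby force $x\in\posQ$. No further machinery is needed.
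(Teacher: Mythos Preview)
Your proof is correct. The paper gives no argument for this observation, treating it as immediate from the definitions; your write-up simply spells out the two-line verification, with convexity of $\posQ$ doing exactly the work you identify in the second claim.
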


\begin{notation}
For any $\posQ \in \Conv_\pos$, we write
\[
\cK_{^<\posQ}
\xlongra{i_L}
\cK_{^\leq\posQ}
\xlongra{p_L}
\cK_{^\leq\posQ} / \cK_{^<\posQ}
=:
\cC_\posQ
\]
for the presentable quotient, which extends to a recollement of $\cK_{^\leq\posQ}$ by \Cref{obs.KQ.in.C.admissible.and.KQprime.in.KQ.admissible}.  We then write
\[
\rho:
\cC_\posQ
\xlonghookra{\nu}
\cC_{^\leq \posQ}
\simeq
\cK_{^\leq \posQ}
\xlonghookra{i_R}
\cC
\]
for the composite of fully faithful right adjoints, and we write
\[ \begin{tikzcd}[column sep=1.5cm]
\cC
\arrow[dashed,transform canvas={yshift=0.9ex}]{r}{\lambda}
\arrow[hookleftarrow, transform canvas={yshift=-0.9ex}]{r}[yshift=-0.2ex]{\bot}[swap]{\rho}
&
\cC_\posQ
\end{tikzcd} \]
for its left adjoint.
\end{notation}

\begin{warning}
For a downwards closed convex subset $^\leq \posQ \in \Conv_\pos$ of $\pos$, the two inclusions
\[
\cC_{^\leq \posQ}
\xlonghookra{\rho}
\cC
\qquad
\textup{and}
\qquad
\cC_{^\leq \posQ}
\simeq
\cK_{^\leq \posQ}
\xlonghookra{i_L}
\cC
\]
are distinct.
\end{warning}

\begin{notation}
For any $\posQ_0 , \posQ_1 \in \Conv_\pos$, we write
\[
\Psi_{\posQ_0,\posQ_1}
:
\cC_{\posQ_0}
\xlongra{\rho}
\cC
\xlongra{\lambda}
\cC_{\posQ_1}
\]
for the composite.
\end{notation}

\begin{remark}
In the case that our data define a stratification, and hence a stable left-lax left $\pos$-module through \Cref{thm.fracs.are.llax.modules}, the cocartesian monodromy functor thereof over a morphism $p_0 \ra p_1$ in $\pos$ will be given by
\[
\cC_{p_0}
\xra{\Psi_{p_0,p_1}}
\cC_{p_1}
~.
\]
\end{remark}

\begin{lemma}
\label{lemma.get.ladjt.factorizn}
Let $\posQ \in \Conv_\pos$ be equipped with a functor $\posQ \ra [1]$.  Then there exists a factorization
\begin{equation}
\label{factorizn.defining.iota.Qzero.Q}
\begin{tikzcd}
\cK_{\posQ_0}
\arrow{r}{i_L}
\arrow{d}[swap]{i_L}
&
\cK_{^\leq \posQ}
\arrow{r}{p_L}
&
\cC_\posQ
\\
\cK_{^\leq \posQ_0}
\arrow{r}[swap]{p_L}
&
\cC_{\posQ_0}
\arrow[dashed]{ru}[swap, sloped, pos=0.1]{\iota_{\posQ_0,\posQ}}
\end{tikzcd}
\end{equation}
which is left adjoint to $\cC_\posQ \xra{\Psi_{\posQ,\posQ_0}} \cC_{\posQ_0}$ and whose image coincides with that of the upper composite in diagram \Cref{factorizn.defining.iota.Qzero.Q}.
\end{lemma}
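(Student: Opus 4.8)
The plan is to construct $\iota_{\posQ_0,\posQ}$ by hand via the universal property of the presentable quotient, and then to verify separately that it is left adjoint to $\Psi_{\posQ,\posQ_0}$ and that it has the asserted essential image. Write $\kappa\colon \cK_{^\leq \posQ_0}\hookra\cK_{^\leq \posQ}$ for the inclusion, which is admissible by \Cref{obs.KQ.in.C.admissible.and.KQprime.in.KQ.admissible}, and write $\kappa^R$ for its right adjoint. The one combinatorial input is this: an element of $^< \posQ_0$ admits a map to some element of $\posQ_0\subseteq\posQ$, hence lies in $^\leq \posQ$, and it cannot lie in $\posQ$ --- otherwise the functor $\posQ\to[1]$ would force it into $\posQ_0$ --- so $^< \posQ_0\subseteq{}^< \posQ$, and therefore $\cK_{^< \posQ_0}\subseteq\cK_{^< \posQ}=\ker(p_L^\posQ)$ (using the recollement of \Cref{obs.adm.subcat.gives.recollement}).

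\emph{The factorization.} The composite $p_L^\posQ\circ\kappa\colon\cK_{^\leq \posQ_0}\to\cC_\posQ$ preserves colimits and annihilates $\cK_{^< \posQ_0}$, so by the universal property of the presentable quotient $\cC_{\posQ_0}=\cK_{^\leq \posQ_0}/\cK_{^< \posQ_0}$ it factors uniquely as $\iota_{\posQ_0,\posQ}\circ p_L^{\posQ_0}$ for a colimit-preserving functor $\iota_{\posQ_0,\posQ}$; precomposing this identity with $\cK_{\posQ_0}\hookra\cK_{^\leq \posQ_0}$ shows that the square in the statement commutes. Using the equivalence $p_L^{\posQ_0}\circ\nu^{\posQ_0}\simeq\id$ we may also rewrite $\iota_{\posQ_0,\posQ}\simeq p_L^\posQ\circ\kappa\circ\nu^{\posQ_0}$.

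\emph{The adjunction.} Unwinding the definitions of $\rho_\posQ$ and of $\lambda_{\posQ_0}$ (the latter as the left adjoint of $\rho_{\posQ_0}$), and using that the reflective inclusion $\cK_{^\leq \posQ}\hookra\cC$ followed by the retraction $\cC\to\cK_{^\leq \posQ_0}$ is $\kappa^R$, a short computation with the recollement adjunctions gives $\Psi_{\posQ,\posQ_0}\simeq p_L^{\posQ_0}\circ\kappa^R\circ\nu^\posQ$. Dually, taking right adjoints of $\iota_{\posQ_0,\posQ}\simeq p_L^\posQ\circ\kappa\circ\nu^{\posQ_0}$ --- using $p_L^\posQ\adj\nu^\posQ$, $\kappa\adj\kappa^R$, and $\nu^{\posQ_0}\adj p_R^{\posQ_0}$ --- identifies the right adjoint of $\iota_{\posQ_0,\posQ}$ with $p_R^{\posQ_0}\circ\kappa^R\circ\nu^\posQ$. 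It therefore remains to see that $p_L^{\posQ_0}$ and $p_R^{\posQ_0}$ agree after $\kappa^R\circ\nu^\posQ$; this is the one place where the inclusion $^< \posQ_0\subseteq{}^< \posQ$ is used, and I expect it to be the main obstacle. The point is that $\im(\nu^\posQ)$ is the right-orthogonal $\{W\in\cK_{^\leq \posQ}:\ulhom(V,W)\simeq 0\textup{ for all }V\in\cK_{^< \posQ}\}$; hence for $V\in\cK_{^< \posQ_0}\subseteq\cK_{^< \posQ}$ the adjunction $\kappa\adj\kappa^R$ gives $\ulhom(V,\kappa^R\nu^\posQ Y)\simeq\ulhom(V,\nu^\posQ Y)\simeq 0$, so that $\kappa^R\circ\nu^\posQ$ factors through the reflective subcategory $\im(\nu^{\posQ_0})\subset\cK_{^\leq \posQ_0}$; and on $\im(\nu^{\posQ_0})$ both $p_L^{\posQ_0}$ and $p_R^{\posQ_0}$ restrict to the inverse of the equivalence $\nu^{\posQ_0}$, since $p_L^{\posQ_0}\nu^{\posQ_0}\simeq\id\simeq p_R^{\posQ_0}\nu^{\posQ_0}$. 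This gives $p_R^{\posQ_0}\circ\kappa^R\circ\nu^\posQ\simeq\Psi_{\posQ,\posQ_0}$, i.e.\ $\iota_{\posQ_0,\posQ}\adj\Psi_{\posQ,\posQ_0}$.

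\emph{The essential image.} Since $p_L^{\posQ_0}$ is a localization, hence essentially surjective, $\im(\iota_{\posQ_0,\posQ})=\im(\iota_{\posQ_0,\posQ}\circ p_L^{\posQ_0})=\im(p_L^\posQ\circ\kappa)$. To identify this with the essential image of the upper composite $\cK_{\posQ_0}\hookra\cK_{^\leq \posQ}\xra{p_L^\posQ}\cC_\posQ$, it suffices to check that the restriction of $p_L^{\posQ_0}$ to $\cK_{\posQ_0}$ is already essentially surjective: its essential image is closed under colimits in $\cC_{\posQ_0}$ (as $\cK_{\posQ_0}$ is closed under colimits and $p_L^{\posQ_0}$ preserves them), it contains $p_L^{\posQ_0}(\cK_q)$ for each $q\in\posQ_0$ while $p_L^{\posQ_0}(\cK_q)\simeq 0$ for $q\in{}^< \posQ_0$, and $\cC_{\posQ_0}$ is generated under colimits by $\{p_L^{\posQ_0}(\cK_q)\}_{q\in{}^\leq \posQ_0}$. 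Combining this essential surjectivity with the commuting square from the first step then yields $\im(\iota_{\posQ_0,\posQ})=\im\bigl(p_L^\posQ\circ(\cK_{\posQ_0}\hookra\cK_{^\leq \posQ})\bigr)$, as desired.
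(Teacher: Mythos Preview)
Your proof is correct and follows essentially the same route as the paper's. Both arguments hinge on the single combinatorial observation $^<\posQ_0\subseteq{}^<\posQ$ and its consequence that $\kappa^R\circ\nu^\posQ$ lands in $\im(\nu^{\posQ_0})$; the paper defines $\iota_{\posQ_0,\posQ}$ directly as $p_L^\posQ\circ\kappa\circ\nu^{\posQ_0}$ and invokes full faithfulness of $\nu^{\posQ_0}$ to read off the adjunction in one line, whereas you pass through termwise right adjoints and then identify $p_R^{\posQ_0}$ with $p_L^{\posQ_0}$ on $\im(\nu^{\posQ_0})$ --- a slightly longer but equally valid packaging of the same hom-set computation.
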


\begin{proof}
Consider the functor
\[
\Psi_{\posQ,\posQ_0}
:
\cC_\posQ
\xlongra{\nu}
\cC_{^\leq\posQ}
\xlongra{\yo}
\cC_{^\leq\posQ_0}
\xlongra{p_L}
\cC_{\posQ_0}
~.
\]
By assumption, we have that $(^<\posQ_0) \subset (^<\posQ)$, and therefore we have the composite inclusion
\[
\yo(\nu(\cC_\posQ))
\longhookra
\nu(\cC_{\posQ_0})
\longhookra
\cC_{^\leq \posQ_0}
\]
of full subcategories.  Hence, the composite
\[
\iota_{\posQ_0,\posQ}
:
\cC_{\posQ_0}
\xlongra{\nu}
\cC_{^\leq \posQ_0}
\xra{i_L}
\cC_{^\leq \posQ}
\xra{p_L}
\cC_\posQ
\]
is left adjoint to $\Psi_{\posQ,\posQ_0}$.  Moreover, since the composite
\[
\cK_{^<\posQ_0}
\simeq
\cC_{^<\posQ_0}
\xlongra{i_L}
\cK_{^\leq\posQ}
\simeq
\cC_{^\leq\posQ}
\xlongra{p_L}
\cC_{\posQ}
\]
vanishes we obtain the desired factorization, and since the composite
\[
\cK_{\posQ_0}
\longra
\cK_{^\leq\posQ_0}
\xlongra{p_L}
\cC_{\posQ_0}
\]
is surjective we obtain the asserted equality of images.
\end{proof}

\begin{definition}
We say that a poset $\pos$ satisfies the \bit{dcc} if it satisfies the descending chain condition, namely that every decreasing sequence stabilizes, or equivalently that every subset has a minimal element.
\end{definition}

\begin{observation}
If a poset satisfies the dcc, then so does any subposet.
\end{observation}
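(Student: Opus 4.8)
The plan is to use the ``minimal element'' reformulation of the dcc recorded in the definition. Given a poset $\pos$ satisfying the dcc and a subposet $\posQ \subseteq \pos$, I would take an arbitrary nonempty subset $S \subseteq \posQ$. Since $S$ is in particular a subset of $\pos$, the hypothesis on $\pos$ furnishes a minimal element $m \in S$. Because the partial order on $\posQ$ is the restriction of that on $\pos$, the element $m$ remains minimal when $S$ is regarded as a subset of $\posQ$: any $x \in S$ with $x \leq m$ in $\posQ$ also satisfies $x \leq m$ in $\pos$, and minimality of $m$ in $\pos$ then forces $x = m$. Hence every nonempty subset of $\posQ$ has a minimal element, which is exactly the dcc for $\posQ$.

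Alternatively, one could argue directly via the descending-chain formulation: a weakly decreasing sequence in $\posQ$ is in particular a weakly decreasing sequence in $\pos$, hence stabilizes by hypothesis; as stabilization is merely an assertion of equality between certain elements, it transfers verbatim back to $\posQ$. Either route is immediate, and there is no genuine obstacle: the statement is recorded only because the dcc hypothesis will be applied not just to $\pos_G$ but to many of its subposets of closed subgroups in the arguments that follow.
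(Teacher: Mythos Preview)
Your proposal is correct. The paper does not actually supply a proof of this observation at all --- it is stated as self-evident --- so your argument (via either the minimal-element or the descending-sequence formulation) is exactly the sort of routine verification the authors left to the reader.
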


\begin{lemma}
\label{lemma.if.dcc.then.conservative}
Suppose that $\pos$ satisfies the dcc.  Then, for any $\posQ \in \Conv_\pos$, the functor
\[
\cC_\posQ
\xra{ (\Psi_{\posQ,q}) }
\prod_{q \in \posQ}
\cC_q
\]
is conservative.
\end{lemma}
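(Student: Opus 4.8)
First I would observe that the functor $\cC_\posQ \to \prod_{q \in \posQ} \cC_q$ is exact, being a product of the functors $\Psi_{\posQ,q}$, each of which is a composite of exact functors (localizations and fully faithful right adjoints between stable $\infty$-categories). Since its source is stable, an exact functor is conservative precisely when it detects zero objects, so it suffices to show that if $X \in \cC_\posQ$ has $\Psi_{\posQ,q}(X) \simeq 0$ for all $q \in \posQ$, then $X \simeq 0$. Write $\nu \colon \cC_\posQ \hookrightarrow \cK_{^\leq\posQ}$ for the fully faithful right adjoint to $p_L$ --- so that $\cC_\posQ$ is identified with the right-orthogonal complement of $\cK_{^<\posQ}$ inside $\cK_{^\leq\posQ}$ --- and set $M := \nu X$. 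Since $\nu$ is fully faithful, $X \simeq 0$ iff $M \simeq 0$, and by construction $M$ is right-orthogonal to $\cK_{^<\posQ}$. Because $\cK_{^\leq\posQ}$ is generated under colimits by $\{\cK_r\}_{r \in {}^\leq\posQ}$ and right-orthogonal complements are closed under colimits, it then suffices to show that $M$ is right-orthogonal to $\cK_q$ for every $q \in \posQ$ (the subcategories $\cK_r$ with $r \in {}^<\posQ$ being already handled, as $\cK_r \subseteq \cK_{^<\posQ}$).

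I would prove this by well-founded induction on $q \in \posQ$, which is exactly where the dcc hypothesis enters: being a subposet of $\pos$, the poset $\posQ$ satisfies the dcc and is therefore well-founded. Fix $q \in \posQ$ and assume $M$ is right-orthogonal to $\cK_{q'}$ for every $q' < q$ in $\posQ$; then $M$ is right-orthogonal to $\cK_{q'}$ for every $q' < q$ in $\pos$ as well, since such $q'$ either lie in $\posQ$ (inductive hypothesis) or lie in ${}^<\posQ$ (whence $\cK_{q'} \subseteq \cK_{^<\posQ}$). Hence $M$ is right-orthogonal to $\cK_{^<q}$. The crux of the argument --- and the step I expect to require the most care --- is the claim that, under precisely this hypothesis, the vanishing $\Psi_{\posQ,q}(X) \simeq 0$ is equivalent to $M$ being right-orthogonal to $\cK_q$. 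Granting this, the hypothesis $\Psi_{\posQ,q}(X) \simeq 0$ completes the inductive step; and once $M$ is right-orthogonal to every $\cK_q$ with $q \in \posQ$, it is right-orthogonal to all of $\cK_{^\leq\posQ}$, hence to itself, so $M \simeq 0$ and $X \simeq 0$. (When $q$ is minimal in $\posQ$ --- the base of the induction --- the claim is immediate from \Cref{lemma.get.ladjt.factorizn}, which exhibits $\Psi_{\posQ,q}$ as right adjoint to a functor whose image is generated under colimits by the image of $\cK_q$; the content lies in the general inductive step.)

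It remains to establish the crux. Unwinding \Cref{obs.adm.subcat.gives.recollement} and the definitions of $\rho$ and $\lambda$, the functor $\Psi_{\posQ,q}$ is the composite $\cC_\posQ \xrightarrow{\nu} \cK_{^\leq\posQ} \xrightarrow{i_R} \cC \xrightarrow{\yo} \cK_{^\leq q} \xrightarrow{p_L} \cC_q$; since $\ker(p_L \colon \cK_{^\leq q} \to \cC_q) = \cK_{^<q}$, we get that $\Psi_{\posQ,q}(X) \simeq 0$ iff $\yo(i_R(M)) \in \cK_{^<q}$. Here one must be careful that the two embeddings of $\cK_{^\leq\posQ}$ into $\cC$ (via $i_L$ and via $i_R$) differ, as flagged in the Warning preceding the statement; but comparing left adjoints shows that $\yo \circ i_R \colon \cK_{^\leq\posQ} \to \cK_{^\leq q}$ is right adjoint to the plain inclusion $\cK_{^\leq q} \hookrightarrow \cK_{^\leq\posQ}$ (which makes sense because $q \in \posQ$ forces ${}^\leq q \subseteq {}^\leq\posQ$), i.e.\ it is the coreflection. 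Thus $\yo(i_R(M))$ is the coreflection $M^{\leq q} \in \cK_{^\leq q}$ of $M$, characterized by $\ulhom_{\cK_{^\leq q}}(L, M^{\leq q}) \simeq \ulhom_{\cK_{^\leq\posQ}}(L, M)$ for $L \in \cK_{^\leq q}$. Now if $M$ is right-orthogonal to $\cK_{^<q}$, so is $M^{\leq q}$; since $\cK_{^<q}$ meets its right-orthogonal complement inside $\cK_{^\leq q}$ only in the zero object, $M^{\leq q} \in \cK_{^<q}$ iff $M^{\leq q} \simeq 0$ iff $M$ is right-orthogonal to $\cK_{^\leq q}$ iff (given right-orthogonality to $\cK_{^<q}$, and since $\cK_{^\leq q}$ is generated under colimits by $\cK_{^<q}$ together with $\cK_q$) $M$ is right-orthogonal to $\cK_q$. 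Chaining these equivalences yields the crux, which completes the proof.
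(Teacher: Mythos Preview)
Your proof is correct and follows essentially the same approach as the paper's: both arguments show, using the dcc, that the coreflection of $\nu X$ into each $\cK_{^\leq q}$ lies in $\cK_{^<q}$ while also being right-orthogonal to it, hence vanishes. The only differences are cosmetic --- the paper argues by choosing a minimal counterexample and routes through the auxiliary conservative functor $\cC_\posQ \to \prod_{q}\cC_{^\leq q}$, whereas you phrase the same step as a well-founded induction expressed directly in terms of right-orthogonality to the generators $\cK_q$.
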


\begin{proof}
Suppose that $X \in \cC_\posQ$ with $(\Psi_{\posQ,q})(X) \simeq 0$.  Observe that the functor
\[
\cC_{^\leq\posQ}
\xra{ \left( \Psi_{^\leq\posQ,^\leq q} \right) }
\prod_{q \in \posQ} \cC_{^\leq q}
\]
is clearly conservative, and hence so is the composite
\begin{equation}
\label{conservative.composite.from.CQ}
\cC_\posQ
\longra
\cC_{^\leq \posQ}
\xra{ \left( \Psi_{^\leq\posQ,^\leq q} \right) }
\prod_{q \in \posQ} \cC_{^\leq q}
~.
\end{equation}
So it suffices to show that for every $q \in \posQ$, the image $\Psi_{\posQ,^\leq q}(X) \in \cC_{^\leq q}$ under the composite \Cref{conservative.composite.from.CQ} followed by the projection is zero.

Suppose otherwise.  Using the dcc for $\posQ$, we can choose a minimal element $q \in \posQ$ witnessing the failure of our claim, i.e.\! which is minimal among elements $q \in \posQ$ such that $\Psi_{\posQ,^\leq q}(X)\in \cC_{^\leq q}$ is nonzero.  As by assumption $X$ is taken to zero under the composite
\[
\cC_\posQ
\longra
\cC_{^\leq \posQ}
\longra
\cC_{^\leq q}
\longra
\cC_q
~,
\]
it follows that $\Psi_{\posQ,^\leq q}(X) \in \cC_{^\leq q}$ lies in the image of
\[
\cC_{^< q}
\xlongra{i_L}
\cC_{^\leq q}
~.
\]

Now, observe that $\cC_{^< q}$ is generated by $\cK_{(^< \posQ) \cap (^< q)}$ and $\cK_{\posQ \cap (^< q)}$.  For any $Y \in \cK_{^< \posQ}$ and hence in particular for any $Y \in \cK_{(^<\posQ ) \cap (^< q)}$, we have
\[
\ulhom_\cC(Y,\Psi_{\posQ,^\leq q}(X)) \simeq 0
\]
by definition of $\cC_\posQ$.  On the other hand, by the assumed minimality of $q \in \posQ$, it must be that for any $Z \in \cK_{\posQ \cap (^< q)}$, we have
\[
\ulhom_\cC(Z,\Psi_{\posQ,^\leq q}(X)) \simeq 0
\]
as well.  Thus $\Psi_{\posQ,^\leq q}(X) \simeq 0$, which is a contradiction.
\end{proof}

\begin{definition}
\label{defn.fracturing}
We say that an assignment of an admissible subcategory $\cK_p \subset \cC$ to each element $p \in \pos$ defines a \bit{fracturing} of $\cC$ over $\pos$ if for any pair of elements $p_0,p_1 \in \pos$ with $\hom_\pos(p_1,p_0) \simeq \es$, the functor
\[
\cC_{p_1}
\xra{\Psi_{p_1,p_0}}
\cC_{p_0}
\]
is zero.
\end{definition}

\begin{remark}
Heuristically, the idea of \Cref{defn.fracturing} is as follows.  Of course, we would like our resulting assignment $\posQ \mapsto \cC_\posQ$ to define a fracture $\fF$ of $\cC$ over $\pos$.  Recall from \Cref{thm.fracs.are.llax.modules} that this determines a reidentification of $\cC$ as the right-lax limit of a left-lax left $\pos$-module $\makemod(\fF)$, whose value on $p \in \pos$ is the subcategory $\cC_p$.  Now, if there is no map from $p_1$ to $p_0$ in $\pos$, we cannot possibly expect to recover ``gluing data'' running from $\cC_{p_1}$ to $\cC_{p_0}$ inside of $\lim^\rlax(\makemod(\fF))$.  So, \Cref{defn.fracturing} amounts to the requirement that such gluing data is unnecessary (because it is necessarily trivial).
\end{remark}

\begin{lemma}
\label{lemma.fracturing.implies.backwards.monodromy.vanishes}
Suppose that $\pos$ satisfies the dcc, and suppose we are given any fracturing of $\cC$ over $\pos$.  Then, for any $\posQ \in \Conv_\pos$ and any functor $\posQ \ra [1]$, the functor
\[
\cC_{\posQ_1}
\xra{\Psi_{\posQ_1,\posQ_0}}
\cC_{\posQ_0}
\]
is zero.
\end{lemma}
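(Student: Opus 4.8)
The plan is to first reduce to the case where $\pos$, and hence $\posQ$, is finite — both $\posQ \mapsto \cC_\posQ$ and the monodromy functors $\Psi$ being compatible with filtered unions of posets, in the spirit of condition \Cref{require.cofiltered.limit} of \Cref{define.fracture} — and to then replace $\cC$ by $\cK_{^\leq \posQ}$ and $\pos$ by $^\leq \posQ$, which is harmless since $\cC_{\posQ_0}$, $\cC_{\posQ_1}$ and the functor $\Psi_{\posQ_1,\posQ_0}$ all factor through $\cK_{^\leq \posQ}$ and, by \Cref{obs.KQ.in.C.admissible.and.KQprime.in.KQ.admissible}, the restricted data still defines a fracturing. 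The crux will then be an induction on $|\posQ_1|$, organized around a single auxiliary assertion: \emph{for any element $q_1 \in \pos$ with $\hom_\pos(q_1,q_0) \simeq \es$ for every $q_0 \in \posQ_0$, the functor $\Psi_{q_1,\posQ_0} \colon \cC_{q_1} \to \cC_{\posQ_0}$ vanishes.}

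The auxiliary assertion I would prove directly: \Cref{lemma.if.dcc.then.conservative} makes the functor $\cC_{\posQ_0} \to \prod_{q_0 \in \posQ_0} \cC_{q_0}$ conservative, so it is enough to see that each composite $\Psi_{\posQ_0,q_0} \circ \Psi_{q_1,\posQ_0}$ vanishes; and $\Psi_{q_1,q_0} = 0$ for every $q_0 \in \posQ_0$ by \Cref{defn.fracturing}, so one need only compare $\Psi_{\posQ_0,q_0} \circ \Psi_{q_1,\posQ_0}$ with $\Psi_{q_1,q_0}$. These two functors differ only by the reflection onto $\cC_{\posQ_0}$ inserted between $\rho_{q_1}$ and $\locL_{q_0}$, and a short cofiber-sequence argument shows that $\locL_{q_0}$ annihilates the resulting discrepancy (it lies in the kernel of the reflection onto $\cC_{\posQ_0}$), yielding the auxiliary assertion. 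Granting it, the base case $|\posQ_1| = 1$ of the main induction is immediate: for $q_1 \in \posQ_1$ and any $q_0 \in \posQ_0$, any morphism $q_1 \to q_0$ in $\pos$ would lie in the full subposet $\posQ$, contradicting that the structure functor $\posQ \to [1]$ sends $q_1$ to $1$ and $q_0$ to $0$, so $\hom_\pos(q_1,q_0) \simeq \es$ and the auxiliary assertion applies with $\posQ_1 = \{q_1\}$. For the inductive step, choose a maximal element $m \in \posQ_1$; since nothing in $\posQ_0$ lies above an element of $\posQ_1$, $m$ is maximal in $\posQ$, so $\posQ \setminus \{m\}$ is again convex and (via \Cref{lemma.get.ladjt.factorizn} together with the recollement formalism of \Cref{obs.adm.subcat.gives.recollement}) $\cC_{\posQ_1}$ is exhibited as a recollement with constituents $\cC_{\posQ_1 \setminus \{m\}}$ and $\cC_m$. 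Now $\Psi_{\posQ_1,\posQ_0} = \locL_{\posQ_0} \circ \rho_{\posQ_1}$ is exact — a composite of the colimit-preserving exact localization $\locL_{\posQ_0}$ with the exact fully faithful right adjoint $\rho_{\posQ_1}$ — so it suffices to show it vanishes on each of the two recollement pieces, and the left-adjoint factorizations furnished by \Cref{lemma.get.ladjt.factorizn} identify the corresponding restrictions with $\Psi_{\posQ_1 \setminus \{m\},\posQ_0}$ (zero by the inductive hypothesis, applied to the convex subposet $\posQ \setminus \{m\}$ with its induced map to $[1]$) and with $\Psi_{m,\posQ_0}$ (zero by the auxiliary assertion, since $\hom_\pos(m,q_0) \simeq \es$ for all $q_0 \in \posQ_0$).

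The main obstacle is the comparison step invoked twice above. Because the monodromy functors $\Psi$ genuinely fail to compose strictly — this left-laxness being the entire point of \Cref{thm.fracs.are.llax.modules} — the composites $\Psi_{\posQ_0,q_0} \circ \Psi_{q_1,\posQ_0}$ and $\Psi_{\posQ_1,\posQ_0} \circ (\text{recollement inclusion})$ are not literally smaller monodromy functors; they differ from them by functors factoring through the complementary summand of the relevant recollement, and one must verify that the outer localization kills these discrepancies. This is where the already-established vanishing statements and the coincidence-of-images clause of \Cref{lemma.get.ladjt.factorizn} enter. Carrying out these identifications precisely — and confirming that the recollement invoked in the inductive step really has $\cC_m$ and $\cC_{\posQ_1 \setminus \{m\}}$ as its two constituents — is the bulk of the work.
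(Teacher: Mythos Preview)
Your overall strategy---reduce to finite posets, invoke \Cref{lemma.if.dcc.then.conservative}, and induct on $|\posQ_1|$ via a recollement decomposition---is genuinely different from the paper's proof, which is a direct argument by contradiction using two applications of the dcc and the generating subcategories $\cK_q$. The paper never composes $\Psi$-functors and never needs the conservativity lemma here.

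The gap is exactly the ``comparison step'' you flag as the main obstacle, and your sketch does not close it. To deduce that $\Psi_{\posQ_0,q_0}\circ\Psi_{q_1,\posQ_0}$ vanishes from $\Psi_{q_1,q_0}=0$, your cofiber-sequence argument needs $\lambda_{q_0}$ to annihilate the fiber of the unit $\id\to\rho_{\posQ_0}\lambda_{\posQ_0}$, i.e.\ needs $\ker(\lambda_{\posQ_0})\subset\ker(\lambda_{q_0})$. Unwinding, this is equivalent to the inclusion $\rho_{q_0}(\cC_{q_0})\subset\rho_{\posQ_0}(\cC_{\posQ_0})$ of reflective subcategories of $\cC$. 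But that inclusion is precisely the functoriality of $\posQ\mapsto\rho(\cC_\posQ)$, which in the paper is established in the proof of \Cref{thm.dcc.and.fracturing.gives.fracture} \emph{using} the very lemma you are proving; the paper notes it is immediate only when both convex subsets are downward closed, which $\{q_0\}$ and $\posQ_0$ generally are not---even after your reduction to $\pos={}^{\leq}\posQ$. The same circularity recurs in your inductive step: identifying $\Psi_{\posQ_1,\posQ_0}\circ\iota_{\posQ_1\setminus\{m\},\posQ_1}$ with $\Psi_{\posQ_1\setminus\{m\},\posQ_0}$ amounts to asking that $\rho_{\posQ_1}\circ\iota_{\posQ_1\setminus\{m\},\posQ_1}$ agree with $\rho_{\posQ_1\setminus\{m\}}$ after applying $\lambda_{\posQ_0}$, and this again leans on inclusions of reflective subcategories not yet available. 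The paper sidesteps all of this by working only with downward-closed convex sets ${}^{\leq}q_0$, for which the factorization $\Psi_{q_1,q_0}=(\cC_{q_1}\to\cC_{{}^{\leq}q_0}\to\cC_{q_0})$ is transparent, and then running a minimal-element contradiction inside ${}^{\leq}\posQ_0$.
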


\begin{proof}  
Suppose otherwise.  Using the dcc for $\posQ_1$, we see that there must exist some minimal element $q_1 \in \posQ_1$ with the property that the composite
\[
\cK_{q_1}
\longra
\cC_{\posQ_1}
\xra{\Psi_{\posQ_1,\posQ_0}}
\cC_{\posQ_0}
\]
is nonzero, since as $q_1 \in \posQ_1$ varies the $\cK_{q_1}$ generate $\cC_{\posQ_1}$.  Hence, the functor $\Psi_{\posQ_1,\posQ_0}$ factors through the quotient
\[
\cC_{\posQ_1}
\longra
\cC_{\posQ_1} / \cK_{\posQ_1 \cap (^<q_1)}
\simeq
\cK_{^{\leq \posQ_1}} / \cK_{(^<\posQ_1) \cap (^<q_1)}
~.
\]
In particular, we obtain a factorization
\[ \begin{tikzcd}
\cK_{q_1}
\arrow{r}
\arrow{d}
&
\cC_{\posQ_1} / \cK_{\posQ_1 \cap (^<q_1)}
\arrow{r}
&
\cC_{^\leq \posQ_1}
\\
\cC_{q_1}
\arrow[dashed, bend right=10]{rru}
\end{tikzcd}~, \]
so that the functor
\[
\cC_{q_1}
\xra{\Psi_{q_1,\posQ_0}}
\cC_{\posQ_0}
\]
must be nonzero, which implies that the functor
\[
\cC_{q_1}
\xra{\Psi_{q_1,(^\leq \posQ_0)}}
\cC_{^\leq\posQ_0}
\]
must also be nonzero.

Now, note that for any $q \in \posQ_0$, we have a factorization
\[ \begin{tikzcd}[column sep=1.5cm]
\cC_{q_1}
\arrow{rr}{\Psi_{q_1,(^\leq q_0)}}
\arrow{rd}[sloped,swap, pos=0.9]{\Psi_{q_1,(^\leq \posQ_0)}}
&
&
\cC_{^\leq q_0}
\\
&
\cC_{^\leq \posQ_0}
\arrow[dashed]{ru}
\end{tikzcd}~. \]
Hence, using the dcc for $^\leq \posQ_0$, we can choose a minimal element $q_0 \in (^\leq \posQ_0)$ such that
\[
\cC_{q_1}
\xra{\Psi_{q_1,(^\leq q_0)}}
\cC_{^\leq q_0}
\]
is nonzero.

By the definition of a fracturing, it must be that the composite
\[
\cC_{q_1}
\xra{\Psi_{q_1,(^\leq q_0)}}
\cC_{^\leq q_0}
\longra
\cC_{q_0}
\]
is zero.  Therefore, the functor
\[
\cC_{q_1}
\xra{\Psi_{q_1,(^<q_0)}}
\cC_{^<q_0}
\]
is nonzero.  In particular, there exists some $q \in (^< q_0)$ such that the functor
\[
\cC_{q_1}
\xra{\Psi_{q_1,(^\leq q)}}
\cC_{^\leq q}
\]
is nonzero.  But this contradicts the minimality of $q_0 \in (^\leq \posQ_0)$, proving the claim.
\end{proof}

\begin{theorem}
\label{thm.dcc.and.fracturing.gives.fracture}
Suppose that $\pos$ satisfies the dcc.  Then a fracturing of the presentable stable $\infty$-category $\cC$ over $\pos$ such that the subcategories $\{ \cK_p \}_{p \in \pos}$ generate $\cC$ defines a fracture
\[ \begin{tikzcd}[column sep=2cm, row sep=0cm]
\Conv_\pos
\arrow{r}
&
\Loc_\cC
\\
\rotatebox{90}{$\in$}
&
\rotatebox{90}{$\in$}
\\
\posQ
\arrow[mapsto]{r}
&
\rho(\cC_\posQ)
\end{tikzcd} \]
of $\cC$ over $\pos$.
\end{theorem}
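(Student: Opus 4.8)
The plan is to verify, for the prescription $\posQ \mapsto \rho(\cC_\posQ)$, the three conditions of \Cref{define.fracture}. First one must see that this is a functor $\Conv_\pos \to \Loc_\cC$ at all: each $\rho(\cC_\posQ) \subseteq \cC$ is a reflective subcategory since $\rho$ is by construction a composite of fully faithful right adjoints, and because $\Loc_\cC$ is a poset it remains only to check that $\posQ' \subseteq \posQ$ in $\Conv_\pos$ forces $\rho(\cC_{\posQ'}) \subseteq \rho(\cC_\posQ)$. I would factor such an inclusion as $\posQ' \subseteq \posQ'' \subseteq \posQ$ with $\posQ''$ the downward closure of $\posQ'$ inside $\posQ$, so that $\posQ''$ is downward closed in $\posQ$ and --- using that $\posQ'$ is convex in $\posQ$ --- $\posQ'$ is upward closed in $\posQ''$; the downward-closed step is then supplied by the left adjoint $\iota_{\posQ'',\posQ}$ of \Cref{lemma.get.ladjt.factorizn} (together with its stated compatibility with the ambient recollements of \Cref{obs.adm.subcat.gives.recollement}), and the upward-closed step by the evident dual. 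Condition \ref{require.frac.preserves.terminal.objs} is immediate, since ${}^\leq\pos = \pos$ and ${}^<\pos = \es$, whence $\cC_\pos = \cK_\pos/\cK_\es = \cC/0 = \cC$ by the generation hypothesis, and $\rho = \id$. Condition \ref{require.cofiltered.limit} follows from the compatibility of all the constructions with filtered colimits of posets: a directed union $\posQ = \bigcup_i \posQ_i$ induces directed unions ${}^\leq\posQ = \bigcup_i{}^\leq\posQ_i$ and ${}^<\posQ = \bigcup_i{}^<\posQ_i$, hence filtered-colimit identifications $\cK_{{}^\leq\posQ} \simeq \colim_i \cK_{{}^\leq\posQ_i}$ and $\cK_{{}^<\posQ} \simeq \colim_i \cK_{{}^<\posQ_i}$ in $\PrL$, and since presentable quotients commute with filtered colimits we get $\cC_\posQ \simeq \colim_i \cC_{\posQ_i}$ in $\PrL$; rewriting this colimit of left adjoints as the corresponding limit of right adjoints (and tracking the embeddings $\rho$) is precisely the asserted equivalence.

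The substance of the proof is condition \ref{require.localizn.to.gluing.cat.is.an.equivalence}. Fix $\posQ \in \Conv_\pos$ and a map $\posQ \to [1]$ with fibers $\posQ_0$ (downward closed in $\posQ$) and $\posQ_1$. By the functoriality established above, together with \Cref{obs.KQ.in.C.admissible.and.KQprime.in.KQ.admissible}, both $\cC_{\posQ_0}$ and $\cC_{\posQ_1}$ sit inside $\cC_\posQ = \cK_{{}^\leq\posQ}/\cK_{{}^<\posQ}$ as reflective subcategories --- the first as a ``closed'' stratum, arising from the left adjoint $\iota_{\posQ_0,\posQ}$ of \Cref{lemma.get.ladjt.factorizn}, and the second as a Bousfield localization of $\cC_\posQ$. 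By the classical equivalence, for a stable $\infty$-category, between recollements and fractures over $[1]$ (the discussion surrounding \Cref{recollement.of.C}), it then suffices to show that $\cC_{\posQ_0}$ and $\cC_{\posQ_1}$ constitute a recollement of $\cC_\posQ$ --- concretely, that the quotient of $\cC_\posQ$ by $\cC_{\posQ_0}$ is carried isomorphically onto $\cC_{\posQ_1}$. I would do this in the manner of the footnote to that discussion: produce the candidate inverse to the functor \Cref{localizn.to.gluing.cat} diagrammatically, and reduce its two triangle identities to the claim that certain comparison squares are pullbacks. The two inputs that make this go through are \Cref{lemma.fracturing.implies.backwards.monodromy.vanishes}, which says the ``wrong-way'' gluing functor $\Psi_{\posQ_1,\posQ_0}\colon \cC_{\posQ_1} \to \cC_{\posQ_0}$ vanishes, and \Cref{lemma.if.dcc.then.conservative}, which gives joint conservativity of $(\Psi_{\posQ,q})_{q \in \posQ}$ and hence lets one detect the relevant localizations on the strata $\cC_q$. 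These two lemmas are exactly where the dcc and the fracturing hypothesis are used.

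The step I expect to be the main obstacle is this last verification --- getting the nested recollements straight. One must keep track, inside $\cK_{{}^\leq\posQ}/\cK_{{}^<\posQ}$, of which of the functors $i_L$, $i_R$, $\nu$ of the ambient recollement each of $\cC_{\posQ_0}$, $\cC_{\posQ_1}$ is built from, and then check that the quotient of $\cC_\posQ$ by the (coreflective) copy of $\cC_{\posQ_0}$ genuinely recovers $\cC_{\posQ_1}$ rather than just some abstract Verdier quotient. Lemmas \ref{lemma.if.dcc.then.conservative} and \ref{lemma.fracturing.implies.backwards.monodromy.vanishes} are engineered to supply the two halves of exactly this comparison, so once they are invoked the remaining work is organizational --- much as in the proof of \Cref{thm.fracs.are.llax.modules} and in the footnote following \Cref{recollement.of.C} --- rather than conceptual.
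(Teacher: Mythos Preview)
Your handling of conditions \ref{require.frac.preserves.terminal.objs} and \ref{require.localizn.to.gluing.cat.is.an.equivalence} is close in spirit to the paper's, though the paper verifies functoriality differently: rather than factoring $\posQ' \subset \posQ$ through an intermediate $\posQ''$, it directly shows that the composite $\cC_{\posQ'} \xra{\nu} \cC_{^\leq\posQ'} \xra{i_R} \cC_{^\leq\posQ} \xra{\yo} \cC_{^<\posQ}$ vanishes, by applying \Cref{lemma.fracturing.implies.backwards.monodromy.vanishes} to the convex set $(^<\posQ) \sqcup \posQ'$. For condition \ref{require.localizn.to.gluing.cat.is.an.equivalence} the paper uses only \Cref{lemma.fracturing.implies.backwards.monodromy.vanishes} (together with \Cref{lemma.get.ladjt.factorizn}); the conservativity \Cref{lemma.if.dcc.then.conservative} is not needed there.

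The genuine gap is in condition \ref{require.cofiltered.limit}. Your claim that ${}^<\posQ = \bigcup_i {}^<\posQ_i$ is false: an element of ${}^<\posQ_i$ may well lie in $\posQ_j$ for some later $j$, hence in $\posQ$, hence not in ${}^<\posQ$. (Already for $\pos = [1]$, $\posQ_1 = \{1\} \subset \posQ_2 = [1]$, one has ${}^<\posQ_1 = \{0\}$ but ${}^<\posQ = \emptyset$.) So the formal colimit argument does not go through. The paper's proof of condition \ref{require.cofiltered.limit} is where the real work happens and where \Cref{lemma.if.dcc.then.conservative} is actually used: one replaces each $\posQ_i$ by its maximal subset $\posQ_i'$ that is downward closed \emph{in $\posQ$} (the dcc ensures $\posQ = \colim_i \posQ_i'$ still holds), applies a general result (\Cref{lemma.limit.of.right.adjt.inclns.in.PrL}) showing that a limit of reflective localizations is a reflective localization, and then uses \Cref{lemma.if.dcc.then.conservative} to see that both $\cC_\posQ \to \lim_i \cC_{\posQ_i'}$ and $\lim_i \cC_{\posQ_i} \to \lim_i \cC_{\posQ_i'}$ are conservative, hence equivalences. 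You have essentially swapped the roles of the two key lemmas: the vanishing lemma powers condition \ref{require.localizn.to.gluing.cat.is.an.equivalence}, while the conservativity lemma powers condition \ref{require.cofiltered.limit}.
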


The proof of \Cref{thm.dcc.and.fracturing.gives.fracture} will require the following general result.

\begin{lemma}
\label{lemma.limit.of.right.adjt.inclns.in.PrL}
Suppose we are given two diagrams $C_\bullet,D_\bullet \in \Fun(\cI,\PrL)$ whose transitions maps preserve all limits, and suppose we are given a natural transformation
\[
C_\bullet
\xlongra{F_\bullet}
D_\bullet
\]
in $\Fun(\cI,\PrL)$ such that each component $F_i$ is a limit-preserving reflective localization with right adjoint $G_i$.  Then the induced functor
\[
F
:
\lim_\cI(C_\bullet)
\xra{\lim_\cI(F)}
\lim_\cI(D_\bullet)
\]
is also a reflective localization.
\end{lemma}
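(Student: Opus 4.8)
The plan is as follows. Since $F := \lim_\cI(F_\bullet)$ is, by construction, the image of the natural transformation $F_\bullet$ under the functor $\lim_\cI \colon \Fun(\cI,\PrL) \to \PrL$, it is a morphism in $\PrL$; in particular $\lim_\cI(C_\bullet)$ and $\lim_\cI(D_\bullet)$ are presentable and $F$ preserves all small colimits, so that $F$ admits a right adjoint $G$ by the adjoint functor theorem. It therefore suffices to show that $G$ is fully faithful, equivalently that the counit $FG \to \id_{\lim_\cI(D_\bullet)}$ is an equivalence. Throughout we use that limits in $\PrL$ are computed on underlying $\infty$-categories, so that an object of $\lim_\cI(D_\bullet)$ is a compatible family $(Y_i \in D_i)_{i \in \cI}$ and the projections $\pi_j^D \colon \lim_\cI(D_\bullet) \to D_j$ are jointly conservative (and likewise for $C_\bullet$).

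The heart of the argument is to identify $G$ with the levelwise right adjoint. For an arrow $\alpha \colon i \to j$ of $\cI$, write $c_\alpha \colon C_i \to C_j$ and $d_\alpha \colon D_i \to D_j$ for the transition functors; by hypothesis these preserve all limits, and being accessible they therefore admit left adjoints. Likewise each $F_i$, being a limit-preserving morphism of $\PrL$, admits a left adjoint $E_i$, and one checks — using that $G_i$ is fully faithful, i.e.\ that $F_i G_i \simeq \id$, together with the uniqueness of adjoints — that $E_i$ is fully faithful as well, so that each $F_i$ is simultaneously a reflective and a coreflective localization. Using this, one shows that the defining squares $d_\alpha \circ F_i \simeq F_j \circ c_\alpha$ are right adjointable: the mate transformation $c_\alpha \circ G_i \Rightarrow G_j \circ d_\alpha$ is an equivalence, equivalently $c_\alpha$ carries $F_i$-local objects to $F_j$-local objects. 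This is the one place where the limit-preservation hypothesis on the transition functors is genuinely used; concretely, one matches the two adjoint families $E_\bullet \dashv F_\bullet \dashv G_\bullet$ against the transition functors, exploiting that $c_\alpha$ and $d_\alpha$ — preserving both all limits and all colimits — commute with the localizations up to the relevant mates. Granting this adjointability, the levelwise adjunctions $F_i \dashv G_i$ assemble into a levelwise adjunction of $\cI$-diagrams of $\infty$-categories (cf.\ the passage-to-adjoints reasoning of \Cref{lemma.ptwise.radjt.has.ptwise.ladjt}), and applying $\lim_\cI$ identifies $G$ with $\lim_\cI(G_\bullet)$; in particular $\pi_j^C \circ G \simeq G_j \circ \pi_j^D$ for every $j \in \cI$.

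Finally, to see that $FG \to \id$ is an equivalence it suffices, by joint conservativity, to check this after applying each $\pi_j^D$. For $j \in \cI$ we have $\pi_j^D \circ F \simeq F_j \circ \pi_j^C$ by construction of $F$ and $\pi_j^C \circ G \simeq G_j \circ \pi_j^D$ from the previous step, so $\pi_j^D(FGY) \simeq F_j G_j(\pi_j^D Y)$, and the counit of $F \dashv G$ is carried by $\pi_j^D$ to the counit of $F_j \dashv G_j$ at $\pi_j^D Y$, which is an equivalence since $G_j$ is fully faithful. Hence $FG \to \id$ is an equivalence, and $F$ is a reflective localization. The main obstacle, as indicated, is the adjointability of the squares $d_\alpha \circ F_i \simeq F_j \circ c_\alpha$ invoked in the second step — the remaining assertions are formal consequences of the fact that $\lim_\cI$ preserves adjunctions of $\cI$-diagrams once such adjunctions are available.
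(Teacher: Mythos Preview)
Your proof has a genuine gap at the adjointability step. You claim that the mate $c_\alpha \circ G_i \Rightarrow G_j \circ d_\alpha$ is an equivalence, equivalently that $c_\alpha$ carries $F_i$-local objects to $F_j$-local objects, but this does not follow from the hypotheses. For a counterexample take $\cI = [1]$, set $C_0 = \Spectra \times \Spectra$ and $C_1 = \Spectra$ with $c$ the first projection, set $D_0 = \Spectra$ and $D_1 = \pt$ with $d$ the unique functor, and let $F_0$ and $F_1$ be the evident projections (so $G_0(X) = (X,0)$ and $G_1 = 0$). Every hypothesis of the lemma is satisfied, yet $c(G_0(X)) = X$ while $G_1(d(X)) = 0$, so the squares are not right adjointable and the $G_i$ do \emph{not} assemble into a natural transformation $D_\bullet \to C_\bullet$. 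Your appeal to the triple $E_i \dashv F_i \dashv G_i$ is correct as far as it goes, but passing to left adjoints of $d_\alpha F_i \simeq F_j c_\alpha$ only yields $E_i \circ d_\alpha^L \simeq c_\alpha^L \circ E_j$, which is a different square from the one you need.

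The paper's proof avoids this issue by writing down $G$ explicitly: its component at $i \in \cI$ is $\lim_{(j \to i) \in \cI_{/i}} C_{j \to i}(G_j(Y_j))$, a formula which uses the mates as transition maps without requiring them to be equivalences. One then verifies $FG \simeq \id$ and the adjunction identity by direct computation, using that each $F_i$ preserves limits and satisfies $F_i G_i \simeq \id$.
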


\begin{proof}
We claim that the right adjoint inclusion is the functor
\[
\lim_\cI ( C_\bullet)
\xlongla{G}
\lim_\cI( D_\bullet )
\]
given by taking a compatible system $Y_\bullet := (Y_i \in D_i)_{i \in \cI}$ to the compatible system whose value at $i \in \cI$ is given by the formula
\[
\lim_{(j \da i) \in \cI_{/i}}
C_{j \da i}(G_j(Y_j))
~.\footnote{The condition that the transition maps of $C_\bullet$ and $D_\bullet$ preserve limits is implicitly used here to guarantee that this formula indeed defines a functor $\cI \ra \PrL$.}   
\]
First, to check that $F \circ G$ is the identity functor it suffices to check at $D_i$, and for this we compute that
\begin{align*}
F_i \left( \lim_{(j \da i) \in \cI_{/i}}
C_{j \da i}(G_j(Y_j)) \right)
&
\simeq
\lim_{(j \da i) \in \cI_{/i}}
F_i ( C_{j \da i}(G_j(Y_j)) )
\\
&
\simeq
\lim_{(j \da i) \in \cI_{/i}} D_{j \da i}(F_j(G_j(Y_j)))
\\
&
\simeq
\lim_{(j \da i) \in \cI_{/i}} D_{j \da i}(Y_j)
\\
&
\simeq
Y_i
~.
\end{align*}
Then, to check that $G$ is indeed a right adjoint to $F$, for any compatible systems $X_\bullet := (X_i \in C_i)_{i \in \cI}$ and $Y_\bullet := (Y_i \in D_i)_{i \in \cI}$ we compute that
\begin{align}
\nonumber
\hom_{\lim_\cI(C_\bullet)}(X_\bullet,G(Y_\bullet))
& \simeq
\lim_{(j \ra i) \in \Ar(\cI)}
\hom_{C_i}(X_i,C_{j\da i} G_j(Y_j))
\\
\label{use.initiality.of.id.arrows.in.Ar.I.op}
& \xlongra{\sim}
\lim_{i \in \cI} \hom_{C_i}(X_i,G_i(Y_i))
\\
\nonumber
& \simeq
\lim_{i \in \cI} \hom_{D_i}(F_i(X_i),Y_i)
\\
\nonumber
&
\simeq \hom_{\lim_\cI(D_\bullet)}(F(X_\bullet),Y_\bullet)
~,
\end{align}
where the equivalence \Cref{use.initiality.of.id.arrows.in.Ar.I.op} uses the initiality of the full subcategory on the equivalences inside of $\Ar(\cI)$.
\end{proof}

\begin{proof}[Proof of \Cref{thm.dcc.and.fracturing.gives.fracture}]
We first must check that this assignment indeed defines a functor $\Conv_\pos \ra \Loc_\cC$ of posets.  For this, given any morphism $\posQ' \ra \posQ$ in $\Conv_\pos$, we must show an inclusion $\rho(\cC_{\posQ'}) \subset \rho(\cC_\posQ)$ of full subcategories of $\cC$.  This is clear if both $\posQ'$ and $\posQ$ are downwards closed, because by definition $\cK_{\posQ'} \subset \cK_\posQ$ and the inclusions $\rho$ of $\cC_\posQ$ and $\cC_{\posQ'}$ are respectively the right adjoints of the left adjoint projections
\[
\cC
\xlongra{\yo}
\cC_\posQ
\xlongra{\yo}
\cC_{\posQ'}
\]
from $\cC$.  In the general case, we have that the inclusion $\rho$ of $\cC_{\posQ'}$ is given by the composite
\[
\cC_{\posQ'}
\xlongra{\nu}
\cC_{^\leq \posQ'}
\xra{i_R}
\cC_{^\leq \posQ}
\xra{i_R}
\cC
~.
\]
So it suffices to show that the image of $\cC_{\posQ'}$ in $\cC_{^\leq \posQ}$ lies inside the image of $\cC_\posQ$, i.e.\! that the composite
\[
\cC_{\posQ'}
\xlongra{\nu}
\cC_{^\leq \posQ'}
\xra{i_R}
\cC_{^\leq \posQ}
\xlongra{\yo}
\cC_{^<\posQ}
\]
is zero.  This follows immediately from \Cref{lemma.fracturing.implies.backwards.monodromy.vanishes} applied to the element $\tilde{Q} := (^< \posQ) \sqcup \posQ' \in \Conv_\pos$ equipped with the functor $\tilde{Q} \ra [1]$ sending $^< \posQ$ to 0 and $\posQ'$ to 1.

Now, condition \Cref{require.frac.preserves.terminal.objs} of \Cref{define.fracture} is satisfied by assumption.

Let us turn to condition \Cref{require.localizn.to.gluing.cat.is.an.equivalence} of \Cref{define.fracture}.  For this, suppose that we have some $\posQ \in \Conv_\pos$ equipped with a functor $\posQ \ra [1]$.  We will show that $\cC_{\posQ_0}$ embeds into $\cC_\posQ$ as an admissible subcategory via the left adjoint $\iota_{\posQ_0,\posQ}$ of the projection given by \Cref{lemma.get.ladjt.factorizn}, such that $\cC_\posQ / \cC_{\posQ_0} \simeq \cC_{\posQ_1}$.

For this, consider the projection functor
\[
\Psi_{\posQ,\posQ_0}
:
\cC_\posQ
\xlongra{\nu}
\cC_{^\leq \posQ}
\xlongra{\yo}
\cC_{^\leq \posQ_0}
\xra{p_L}
\cC_{\posQ_0}
~.
\]
By \Cref{lemma.get.ladjt.factorizn}, this admits a left adjoint $\iota_{\posQ_0,\posQ}$ whose image agrees with that of the composite
\[
\cK_{\posQ_0}
\xra{i_L}
\cK_{^\leq \posQ}
\xra{p_L}
\cC_\posQ
~.
\]
By what we have just seen, the right adjoint of $\Psi_{\posQ,\posQ_0}$ is fully faithful.  Hence, the functor $\iota_{\posQ_0,\posQ}$ is fully faithful as well, so that it witnesses $\cC_{\posQ_0}$ as an admissible subcategory of $\cC_\posQ$.

Now, by \Cref{lemma.fracturing.implies.backwards.monodromy.vanishes} the composite functor
\[
\Psi_{\posQ_1,\posQ_0}
:
\cC_{\posQ_1}
\longhookra
\cC_\posQ
\xra{\Psi_{\posQ,\posQ_0}}
\cC_{\posQ_0}
\]
is zero, so that $\cC_{\posQ_1} \subset \ker(\Psi_{\posQ,\posQ_0})$.  To show that in fact $\cC_{\posQ} = \ker(\Psi_{\posQ,\posQ_0})$, it therefore suffices to show that the composite functor $\cC_{\posQ_1} \ra \cC_\posQ \ra \cC_\posQ / \cC_{\posQ_0}$ is surjective.  But this follows from the facts that $\im(\iota_{\posQ_0,\posQ}) \subset \cC_\posQ$ equals the image of $\cK_{\posQ_0}$ and that the functor $\cK_\posQ \ra \cC_\posQ$ is surjective.  Finally, because $(^< \posQ_1) \subset (^< \posQ) \cup \posQ_0$, then we have a factorization
\[ \begin{tikzcd}
\cK_{^\leq \posQ_1}
\arrow{rr}
\arrow{rd}
&
&
\cC_\posQ / \cC_{\posQ_0}
\\
&
\cC_{\posQ_1}
\arrow[dashed]{ru}
\end{tikzcd}~, \]
from which our desired surjectivity follows.

Finally, we turn to condition \Cref{require.cofiltered.limit} of \Cref{define.fracture}.  For this, suppose that a functor $\cI^\rcone \ra \Conv_\pos$ witnesses $\posQ = \colim_\cI(\posQ_\bullet)$ as a filtered colimit.  We must show that the canonical functor
\[
\cC_\posQ
\longra
\lim_\cI ( \cC_{\posQ_\bullet})
\]
is an equivalence.

For this, for each $i \in \cI$ let $\posQ_i' \subset \posQ_i$ be the maximal subset of $\posQ_i$ which is downwards closed in $\posQ$ (i.e.\! the union of all of its downwards closed subsets).  Since $\posQ$ satisfies the dcc, we also have that $\posQ = \colim_\cI(\posQ'_\bullet)$.  Now, consider the functors
\[
\cC_\posQ
\xra{F_1}
\lim_\cI(\cC_{\posQ_\bullet})
\xra{F_2}
\lim_\cI(\cC_{\posQ'_\bullet})
~.
\]
By \Cref{lemma.limit.of.right.adjt.inclns.in.PrL}, we see that both $F_2$ and $F_2 \circ F_1$ are localizations.  We claim that they are both also conservative, and hence both equivalences, so that $F_1$ is also an equivalence.  That $F_2 \circ F_1$ is conservative follows from \Cref{lemma.if.dcc.then.conservative}.  So it only remains to see that $F_2$ is conservative.

For this, using the notation from the proof of \Cref{lemma.limit.of.right.adjt.inclns.in.PrL} let us suppose that $Y_\bullet := (Y_i \in \cC_{\posQ_i})_{i \in \cI}$ is a compatible system such that $F_2(Y_\bullet) \simeq 0$.  We must show that $Y_i \simeq 0$ for each $i \in \cI$.  By \Cref{lemma.if.dcc.then.conservative}, it suffices to show that $\Psi_{\posQ_i,q}(Y_i) \simeq 0$ for any $q \in \posQ_i$.  So, choose some $j \in \cI$ such that $(^\leq q) \subset \posQ_i'$ (in the notation of the previous paragraph).  Then we have a factorization
\[ \begin{tikzcd}[column sep=1.5cm]
\cC_{\posQ_j}
\arrow{r}{\Psi_{\posQ_j,\posQ_i}}
\arrow{rd}[swap,sloped,pos=0.8]{(F_2)_j}
&
\cC_{\posQ_i}
\arrow{r}{\Psi_{\posQ_i,q}}
&
\cC_q
\\
&
\cC_{\posQ'_j}
\arrow[dashed]{ru}
\end{tikzcd}; \]
since by assumption we have an equivalence $\Psi_{\posQ_j,\posQ_i}(Y_j) \simeq Y_i$ and $(F_2)_j(Y_j) \simeq 0$, it follows that also $\Psi_{\posQ_i,q}(Y_i) \simeq 0$.
\end{proof}

We now apply \Cref{thm.dcc.and.fracturing.gives.fracture} to obtain the canonical stratification of the $\infty$-category $\Spectra^{\gen G}$ of genuine $G$-spectra.

\begin{notation}
\label{fix.notation.for.pos.G.in.general}
We fix a compact Lie group $G$, we write $\Orb_G$ for its orbit $\infty$-category, and we write $\pos_G$ for its poset of closed subgroups ordered by subconjugacy.  We write
\[
\Spaces_*^{\gen G}
:=
\Fun ( \Orb_G^\op,\Spaces_* )
\]
for the $\infty$-category of pointed genuine $G$-spaces, and we write
\[ \begin{tikzcd}[column sep=1.5cm]
\Spaces^{\gen G}_*
\arrow[transform canvas={yshift=0.9ex}]{r}{\Sigma^\infty_G}
\arrow[leftarrow, transform canvas={yshift=-0.9ex}]{r}[yshift=-0.2ex]{\bot}[swap]{\Omega^\infty_G}
&
\Spectra^{\gen G}
\end{tikzcd} \]
for the adjunction.
\end{notation}

\begin{notation}
We write $\cK_H \subset \Spectra^{\gen G}$ for the full stable subcategory generated by $\Sigma^\infty_G(G/H)_+$ under colimits.
\end{notation}

\begin{prop}
\label{prop.can.fracture.of.gen.G.spt}
The assignment to each element $H \in \pos_G$ of the stable subcategory $\cK_H \subset \Spectra^{\gen G}$ defines a canonical fracture
\[
\Conv_{\pos_G}
\xra{\fF_G}
\Loc_{\Spectra^{\gen G}}
\]
of $\Spectra^{\gen G}$ over $\pos_G$.
\end{prop}

\begin{proof}
We apply \Cref{thm.dcc.and.fracturing.gives.fracture}.  Observe first that the poset $\pos_G$ satisfies the dcc by the compactness of the Lie group $G$.  Moreover, because the objects $\{ \Sigma^\infty_G (G/H)_+ \}_{H \in \pos_G}$ form a set of compact generators for $\Spectra^{\gen G}$, then the subcategories $\cK_H \subset \Spectra^{\gen G}$ are admissible (as a special case of \Cref{ex.of.adm.subcat.gend.by.cpct.objs}) and collectively generate $\Spectra^{\gen G}$.  So it remains to check that this assignment indeed defines a fracturing of $\Spectra^{\gen G}$ over $\pos_G$.

For this, suppose we are given closed subgroups $H \leq G$ and $K \leq G$ such that $H$ is not subconjugate to $K$, and consider any $X \in \Spectra^{\gen G}_H \subset \Spectra^{\gen G}$.  It suffices to show that $X^I \simeq 0$ for any $I$ subconjugate to $K$; replacing $K$ by $I$ (since $H$ cannot be subconjugate to $I$), it suffices to show just that $X^K \simeq 0$.

Let us write
\[
\Orb_{G,^\leq H}
\xlonghookra{\eta}
\Orb_G
\]
for the inclusion of the full subcategory on those $G$-orbits admitting a map to $G/H$ (i.e.\! those $G/I$ with $I$ subconjugate to $H$).  Writing
\[
\Spaces_*^{\gen G,^\leq H}
:=
\Fun ( \Orb_{G,^\leq H}^\op,\Spaces_*)
~,
\]
observe the factorization
\[ \begin{tikzcd}[row sep=1.5cm, column sep=1.5cm]
\Spaces^{\gen G,^\leq H}_*
\arrow{r}{\eta_!}
\arrow[dashed]{d}[swap]{\Sigma^\infty_{G,^\leq H}}
&
\Spaces^{\gen G}_*
\arrow{d}{\Sigma^\infty_G}
\\
\cK_{^\leq H}
\arrow{r}[swap]{i_L}
&
\Spectra^{\gen G}
\end{tikzcd}~. \]
Upon taking right adjoints of the two horizontal functors, we obtain another diagram
\[ \begin{tikzcd}[row sep=1.5cm, column sep=1.5cm]
\Spaces^{\gen G,^\leq H}_*
\arrow{d}[swap]{\Sigma^\infty_{G,^\leq H}}
&
\Spaces^{\gen G}_*
\arrow{d}{\Sigma^\infty_G}
\arrow{l}[swap]{\eta^*}
\\
\cK_{^\leq H}
&
\Spectra^{\gen G}
\arrow{l}{\yo}
\end{tikzcd} \]
which commutes because the canonical natural transformation comparing its two composites is an equivalence (because it is an equivalence on geometric fixedpoints for all subgroups subconjugate to $H$).

Now, let us identify $X \in \Spectra^{\gen G}_H$ with its image $X := \nu(X) \in \cK_{^\leq H}$.  Then, it remains to show that the object $i_R(X) \in \Spectra^{\gen G}$ has $i_R(X)^K \simeq 0$.  By adjunction, we have that
\[
i_R(X)^K
\simeq
\ulhom_{\Spectra^{\gen G}} \left( \Sigma^\infty_G (G/K)_+ , i_R(X) \right)
\simeq
\ulhom_{\cK_{^\leq H}} \left( \yo(\Sigma^\infty_G(G/K)_+) , X \right)
~.
\]
To show that this hom-spectrum is zero for all $X \in \cK_{^\leq H}$, it suffices to show that its underlying space is contractible for all $X \in \cK_{^\leq H}$, or equivalently that the hom-space
\[
\hom_{\Spaces^{\gen G,^\leq H}_*} ( \eta^* (G/K)_+ , \Omega^\infty_{G,^\leq H}(X))
\]
is contractible (using the evident notation).  This follows from the facts that $\Omega^\infty_{G,^\leq H}(X)$ is supported on the object $(G/H)^\circ \in \Orb_{G,^\leq H}^\op$ by assumption and that $((G/K)_+)^H \simeq \pt$ since $H$ is not subconjugate to $K$.
\end{proof}

\begin{notation}
We write
\[
\themod^{\gen G}
\in
\StLMod^\rlax_{\llax.\pos_G}
\]
for the stable left-lax left $\pos_G$-module corresponding via \Cref{thm.fracs.are.llax.modules} to the fracture
\[
\pos_G
\xra{\fF_G}
\Spectra^{\gen G}
\]
of $\Spectra^{\gen G}$ over $\pos_G$ obtained in \Cref{prop.can.fracture.of.gen.G.spt}.
\end{notation}

\begin{observation}
The value of the left-lax left $\pos_G$-module $\themod^{\gen G}$ on an object $H \in \pos_G$ is the $\infty$-category
\[
\Spectra^{\htpy \Weyl(H)}
\]
of homotopy $\Weyl(H)$-equivariant spectra, embedded in $\Spectra^{\gen G}$ as in \Cref{obs.interesting.embedding.of.htpy.Weyl.H.spt.in.gen.G.spt}.  When $G$ is abelian, its monodromy functor over a morphism $H_0 \ra H_1$ in $\pos_G$ is precisely the generalized Tate construction
\[
\Spectra^{\htpy (G/H_0)}
\xra{(-)^{\tate (H_1/H_0)}}
\Spectra^{\htpy (G/H_1)}
~.
\]
(A more elaborate formula holds in the general nonabelian case.)
\end{observation}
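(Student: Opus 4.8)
The plan is to deduce both assertions from the results established above. By \Cref{thm.fracs.are.llax.modules} applied to the canonical fracture $\fF_G$ of \Cref{prop.can.fracture.of.gen.G.spt}, the stable left-lax left $\pos_G$-module $\themod^{\gen G} = \makemod(\fF_G)$ has as its fiber over $H \in \pos_G$ the reflective subcategory $\rho(\cC_H) \subset \Spectra^{\gen G}$, where $\cC_H := \cK_{^\leq H}/\cK_{^< H}$, and its cocartesian monodromy functor over a morphism $H_0 \ra H_1$ in $\pos_G$ is the composite $\Psi_{H_0,H_1} \colon \cC_{H_0} \xlongra{\rho} \Spectra^{\gen G} \xlongra{\lambda} \cC_{H_1}$. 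So there are two things to prove: that $\rho(\cC_H)$ coincides with the reflective subcategory $\Spectra^{\htpy\Weyl(H)} \hookra \Spectra^{\gen G}$ of \Cref{obs.interesting.embedding.of.htpy.Weyl.H.spt.in.gen.G.spt}, and that when $G$ is abelian the functor $\Psi_{H_0,H_1}$ is carried to the generalized Tate construction of \Cref{define.tate} under that identification.

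For the fiber I would argue by isotropy separation. Since $\cK_{^\leq H}$ is generated under colimits by the objects $\Sigma^\infty_G(G/I)_+$ with $I$ subconjugate to $H$, it is the subcategory of $\ms{F}_{^\leq H}$-torsion genuine $G$-spectra -- equivalently, $\ms{F}_{^\leq H}$ being a family, the full subcategory on those $X$ whose geometric $K$-fixedpoints vanish for all $K$ not subconjugate to $H$ -- and likewise $\cK_{^< H}$ is the $\ms{F}_{^< H}$-torsion spectra; hence $\cC_H = \cK_{^\leq H}/\cK_{^< H}$ is the subcategory of genuine $G$-spectra concentrated over the conjugacy class of $H$. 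On this subcategory $\Phi^H$ is conservative (the geometric fixedpoints functors jointly detect equivalences, and the others vanish identically there), while by \Cref{obs.genzd.tate} it is a reflective localization; a conservative reflective localization being an equivalence onto its image, $\Phi^H$ restricts to an equivalence from $\cC_H$ onto the genuine $\Weyl(H)$-spectra concentrated over the trivial subgroup -- the \emph{free} genuine $\Weyl(H)$-spectra, classically identified with $\Spectra^{\htpy\Weyl(H)}$. This identification is moreover compatible with the inclusions into $\Spectra^{\gen G}$: unwinding the recollement (\Cref{obs.adm.subcat.gives.recollement}) together with \Cref{obs.genzd.tate}, one checks that the localization $\lambda = p_L \circ \yo \colon \Spectra^{\gen G} \ra \cC_H$ corresponds under it to $U \circ \Phi^H$, which is precisely the functor left adjoint to the embedding of \Cref{obs.interesting.embedding.of.htpy.Weyl.H.spt.in.gen.G.spt}. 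Since a reflective subcategory is determined by its localization functor, $\rho(\cC_H)$ is the asserted subcategory.

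For the monodromy, take $G$ abelian, so that every closed subgroup is normal, $\Weyl(H) = G/H$, and geometric fixedpoints compose: $\Phi^{H_1} \simeq \Phi^{H_1/H_0} \circ \Phi^{H_0}$. By \Cref{obs.genzd.tate} the functor $\Phi^{H_0}$ is a reflective localization; write $j_{H_0}$ for its fully faithful right adjoint, so that $\Phi^{H_0} \circ j_{H_0} \simeq \id$. By the previous paragraph, the inclusion $\rho \colon \cC_{H_0} \hookra \Spectra^{\gen G}$ is the embedding of \Cref{obs.interesting.embedding.of.htpy.Weyl.H.spt.in.gen.G.spt}, namely $j_{H_0} \circ \beta_{G/H_0}$, while $\lambda \colon \Spectra^{\gen G} \ra \cC_{H_1}$ is $U \circ \Phi^{H_1}$. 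Composing and cancelling,
\begin{align*}
\Psi_{H_0,H_1}
&\simeq
U \circ \Phi^{H_1} \circ j_{H_0} \circ \beta_{G/H_0}
\\
&\simeq
U \circ \Phi^{H_1/H_0} \circ \beta_{G/H_0}
=:
(-)^{\tate (H_1/H_0)}
\end{align*}
(where we use the identification $\Weyl_{G/H_0}(H_1/H_0) = G/H_1$), which is the generalized Tate construction as claimed.

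I expect the main obstacle to be the fiber identification, and within it the check that the localization $\lambda = p_L \circ \yo$ agrees with $U \circ \Phi^H$: the delicate point is that $\cC_H$ is the quotient of the \emph{proper} admissible subcategory $\cK_{^\leq H} \subset \Spectra^{\gen G}$, so one must keep careful track of the two distinct fully faithful inclusions $i_L$ and $i_R$ of $\cK_{^\leq H}$ and verify that $\rho$ uses the right-adjoint one, which is the inclusion matching the right adjoint of $U\Phi^H$. For $G$ a finite group this bookkeeping, together with the identification of the strata with homotopy $\Weyl(H)$-spectra, is cleanest in the language of spectral Mackey functors, following \cite{GM-gen}; the general compact Lie case requires only minor additional care, and the statement is in the spirit of \cite{Saul-strat, MNN}. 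Finally, the more elaborate nonabelian monodromy formula alluded to at the end of the statement is genuinely more involved, since for $H_0$ not normal in $G$ geometric fixedpoints no longer compose in so simple a fashion.
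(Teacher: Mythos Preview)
The paper offers no proof of this Observation; it is stated as self-evident from the preceding constructions. Your write-up supplies exactly the argument one would want, and both the fiber identification (via $\ker\lambda = \ker(U\Phi^H)$) and the abelian monodromy computation are correct.

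One logical step is phrased imprecisely. You write that on $\cC_H$ the functor $\Phi^H$ is conservative, ``while by \Cref{obs.genzd.tate} it is a reflective localization; a conservative reflective localization being an equivalence onto its image.'' The general principle is fine, but it does not apply as stated: $\Phi^H$ is a reflective localization on all of $\Spectra^{\gen G}$, whereas conservativity holds only after restriction to $\cC_H$, and the restriction of a reflective localization to an arbitrary full subcategory need not remain one. The clean fix is the observation you are implicitly using anyway: the image $i_L\nu(\cC_H)$ consists of $\ms{F}_{^<H}$-local objects (orthogonality to $\cK_{^<H}$ inside $\cK_{^\leq H}$ is the same as inside $\Spectra^{\gen G}$, by full faithfulness of $i_L$), and $\Phi^H$ restricted to the $\ms{F}_{^<H}$-local objects is an equivalence onto $\Spectra^{\gen\Weyl(H)}$ by \Cref{obs.genzd.tate}. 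Hence $\Phi^H|_{\cC_H}$ is fully faithful, and your computation $\Phi^H(E\ms{F}_{^\leq H,+}) \simeq E\Weyl(H)_+$ (implicit in checking $\lambda \simeq U\Phi^H$) identifies the image as the free $\Weyl(H)$-spectra. With that adjustment the argument is complete.
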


\begin{cor}
\label{cor.gen.G.spt.as.rlax.lim}
There is a canonical equivalence
\[
\Spectra^{\gen G}
\simeq
\lim^\rlax \left( \pos_G \overset{\llax}{\lacts} \themod^{\gen G} \right)
\]
between the $\infty$-category of genuine $G$-spectra and the right-lax limit of the stable left-lax left $\pos_G$-module $\themod^{\gen G} \in \StLMod^\rlax_{\llax.\pos_G}$.
\end{cor}

\begin{proof}
This follows by combining \Cref{thm.fracs.are.llax.modules} with \Cref{prop.can.fracture.of.gen.G.spt}.
\end{proof}

We also record a variant of \Cref{cor.gen.G.spt.as.rlax.lim} for proper-genuine $G$-spectra.

\begin{notation}
We write
\[
\pos_{^\proper G}
\in
\Conv_{\pos_G}
\]
for the convex subset of $\pos_G$ containing all but the maximal element $G \in \pos_G$, and we write
\[
\themod^{\gen^\proper G}
\in \StLMod^\rlax_{\llax.\pos_{^\proper G}}
\]
for the pulllback of
\[
\themod^{\gen G}
\in \StLMod^\rlax_{\llax.\pos_G}
\]
along the inclusion.
\end{notation}

\begin{observation}
\label{obs.pos.T.is.rcone.on.Ndiv}
In the special case that $G = \TT$, we have a canonical identification
\[
\pos_{^\proper \TT}
\cong
\Ndiv
\]
with the poset of natural numbers ordered by divisibility: the element $r \in \Ndiv$ corresponds to the proper subgroup $\Cyclic_r \leq \TT$.
\end{observation}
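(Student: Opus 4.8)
The plan is to classify the closed subgroups of $\TT$ directly and read off the resulting poset. First I would use that $\TT$ is connected and one-dimensional: the identity component of any closed subgroup $H \leq \TT$ is a closed connected subgroup, hence either trivial or all of $\TT$, so either $H = \TT$ or $H$ is discrete; in the latter case $H$ is also compact and therefore finite. Conversely, identifying $\TT \cong \RR/\ZZ$, for each $r \in \Nx$ the subgroup $\Cyclic_r := \frac{1}{r}\ZZ/\ZZ$ is the unique subgroup of order $r$, and every finite subgroup of $\TT$ arises this way. Hence the underlying set of $\pos_\TT$ is $\{\Cyclic_r\}_{r \in \Nx} \sqcup \{\TT\}$.

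Next I would identify the order. Since $\TT$ is abelian, the normalizer of any subgroup is all of $\TT$, so the subconjugacy relation defining $\pos_\TT$ coincides with inclusion of subgroups. One then checks that $\Cyclic_r \leq \Cyclic_s$ if and only if $\frac{1}{r}\ZZ \subseteq \frac{1}{s}\ZZ$, i.e.\ $r \mid s$, and that $\Cyclic_r \leq \TT$ for all $r$ while $\TT$ is contained in no $\Cyclic_r$. Thus $\TT$ is the unique maximal element of $\pos_\TT$, and deleting it leaves exactly the poset $\Ndiv$ of natural numbers under divisibility, with $r$ corresponding to $\Cyclic_r$. (Equivalently, $\pos_\TT \cong (\Ndiv)^\rcone$ with cone point $\TT$, as already noted in \Cref{subsection.intro.cyclo.spt}.)

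Finally I would combine this with the definition of $\pos_{^\proper G}$ recalled just above: $\pos_{^\proper G} \in \Conv_{\pos_G}$ is the convex subset obtained by removing the maximal element $G \in \pos_G$ (this is a nonempty downward-closed, hence convex, subset). For $G = \TT$ the maximal element is $\TT$ itself, so $\pos_{^\proper \TT}$ is precisely the subposet on the proper closed subgroups $\{\Cyclic_r\}_{r \in \Nx}$, which by the previous paragraph is $\Ndiv$ with $r \leftrightarrow \Cyclic_r$. This is essentially a bookkeeping argument, and I anticipate no real obstacle; the only point needing (minor) care is confirming that the element deleted from $\pos_\TT$ in the definition of $\pos_{^\proper G}$ is genuinely the cone point $\TT$ rather than some other subgroup, which is immediate once one knows $\TT$ is the unique maximal element of $\pos_\TT$.
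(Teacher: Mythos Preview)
Your proposal is correct. The paper treats this as an \emph{Observation} with no accompanying proof; it is regarded as evident (and indeed the identification $\pos_\TT \cong (\Ndiv)^\rcone$ was already stated without argument in \Cref{subsection.intro.cyclo.spt}). Your direct verification---classifying the closed subgroups of $\TT$ as $\TT$ itself together with the finite cyclic groups $\Cyclic_r$, noting that abelianness collapses subconjugacy to inclusion, and checking $\Cyclic_r \leq \Cyclic_s \iff r \mid s$---is exactly the routine argument one would supply if asked to justify the observation, and there is nothing more to compare.
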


\begin{cor}
\label{cor.proper.gen.G.spt.as.rlax.lim}
The equivalence of \Cref{cor.gen.G.spt.as.rlax.lim} restricts to an equivalence
\[
\Spectra^{\gen^\proper G}
\simeq
\lim^\rlax \left( \pos_{^\proper G} \overset{\llax}{\lacts} \themod^{\gen^\proper G} \right)
\]
between the $\infty$-category of proper-genuine $G$-spectra and the right-lax limit of the stable left-lax left $\pos_{^\proper G}$-module $\themod^{\gen^\proper G} \in \StLMod^\rlax_{\llax.\pos_{^\proper G}}$.
\end{cor}

\begin{proof}
By \Cref{obs.fractures.restrict} (and \Cref{thm.fracs.are.llax.modules}), this restriction gives $\Spectra^{\gen G}_{\pos_{^< G}}$; since the element $\pos_{^< G} \in \Conv_{\pos_G}$ is downwards closed, this is just the full subcategory generated by $\{ \Sigma^\infty_G(G/H)_+ \}_{H \proper G}$, which is indeed $\Spectra^{\gen^\proper G}$.
\end{proof}

\subsection{A naive approach to cyclotomic spectra}
\label{subsection.naive.cyclo.spectra.for.real}

In this subsection, we apply the results of the previous subsection to prove \Cref{mainthm.cyclo.spt} (a naive description of cyclotomic spectra).

\begin{definition}
\label{define.genuine.cyclo.spectra}
The commutative monoid $\Nx$ acts on the $\infty$-category $\Spectra^{\gen^\proper \TT}$ by taking the element $r \in \Nx$ to the composite endofunctor
\begin{equation}
\label{composite.endofunctor.on.cyclonic.spectra}
\begin{tikzcd}[column sep=2cm]
\Spectra^{\gen^\proper\TT}
\arrow{r}{\Phi^{\Cyclic_r}}
&
\Spectra^{\gen^\proper(\TT/\Cyclic_r)}
\arrow{r}{(\TT/\Cyclic_r) \simeq \TT}[swap]{\sim}
&
\Spectra^{\gen^\proper\TT}
\end{tikzcd}
~,
\end{equation}
and we define its limit to be the $\infty$-category
\[
\Cyclo^\gen(\Spectra)
\]
of \bit{genuine cyclotomic spectra}.
\end{definition}

\begin{remark}
Barwick--Glasman prove in \cite{BG-cyclo} that \Cref{define.genuine.cyclo.spectra} recovers the underlying $\infty$-category of the ``model* category'' of cyclotomic spectra defined by Blumberg--Mandell in \cite{BluMan-cyclo}.
\end{remark}

\begin{construction}
We organize the action of \Cref{define.genuine.cyclo.spectra} as a \textit{right} action, which we denote by
\[
\SpgTPhi
\in
\RMod_\BN
\simeq
\coCart_\BNop
~.\footnote{This choice of handedness is motivated by our construction of the cyclotomic structure on $\THH$ in \cite{AMR-trace}.}
\]
Considering this as a cocartesian fibration over $\BNop$, we extract the full subcategory on the image of the inclusion
\[
\Spectra^{\htpy \TT}
\xlonghookra{\beta}
\Spectra^{\gen^\proper \TT}
\]
into the fiber over $\pt \in \BNop$.  This defines a locally cocartesian fibration over $\BNop$, whose cocartesian monodromy over the morphism $[1] \xra{r^\circ} \BNop$ is precisely the endofunctor
\[
\Spectra^{\htpy \TT}
\xra{(-)^{\tate \Cyclic_r}}
\Spectra^{\htpy \TT}
\]
(recall the convention established in \Cref{define.tate}); we denote the resulting object by
\[
\themod^{\Cyclo}
:=
\SphTt
\in
\RMod_{\llax.\BN}
\simeq
\loc.\coCart_{\BNop}
~.\footnote{The first of these two is only included to make contact with the discussion of \Cref{subsection.intro.cyclo.spt}; the latter will be more relevant for everything that follows.}
\]
\end{construction}

\begin{definition}
We define the $\infty$-category of \bit{cyclotomic spectra} to be the right-lax limit
\[
\Cyclo(\Spectra)
:=
\lim^\rlax \left( \SphTt \overset{\llax}{\racts} \BN \right)
~.
\]
\end{definition}

\begin{construction}
Let us consider the defining inclusion
\[ \begin{tikzcd}
\SpgTPhi
\arrow[\surjmonoleft]{rr}
\arrow{rd}
&
&
\SphTt
\arrow{ld}
\\
&
\BNop
\end{tikzcd} \]
as a morphism in $\Cat_{\loc.\cocart/\BNop} =: \LMod^\llax_{\llax.\BNop} =: \RMod^\llax_{\llax.\BN}$.  As such, it is a fiberwise right adjoint, and so by \Cref{lemma.ptwise.radjt.has.ptwise.ladjt} the fiberwise left adjoint
\[
\Spectra^{\gen^\proper \TT}
\xlongra{U}
\Spectra^{\htpy \TT}
\]
assembles into a morphism
\begin{equation}
\label{rlax.ladjt.on.Nx.modules.from.genuine.to.htpy}
\SpgTPhi
\longra
\SphTt
\end{equation}
in $\LMod^\rlax_{\llax.\BNop} =: \RMod^\rlax_{\llax.\BN}$.  Thereafter, we obtain the composite
\begin{equation}
\label{composite.from.gen.cyclo.spt.to.cyclo.spt}
\Cyclo^\gen(\Spectra)
:=
\lim \left( \SpgTPhi \right)
\xlonghookra{\ff}
\lim^\rlax \left( \SpgTPhi \right)
\longra
\lim^\rlax \left( \SphTt \right)
=:
\Cyclo(\Spectra)
\end{equation}
(where the fully faithful inclusion is that of \Cref{obs.map.from.strict.limit.to.lax.limit}).
\end{construction}

\begin{theorem}
\label{thm.in.body.cyclo.spt}
The functor \Cref{composite.from.gen.cyclo.spt.to.cyclo.spt} defines an equivalence
\[
\Cyclo^\gen(\Spectra)
\xlongra{\sim}
\Cyclo(\Spectra)
\]
of $\infty$-categories.
\end{theorem}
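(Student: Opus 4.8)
The plan is to deduce the theorem from \Cref{cor.proper.gen.G.spt.as.rlax.lim} by analyzing the $\Nx$-equivariance of the canonical fracture of $\Spectra^{\gen^\proper \TT}$. The commutative monoid $\Nx$ acts on $\pos_\TT \cong (\Ndiv)^\rcone$ by dilation, fixing the cone point $\TT$ and sending $\Cyclic_i \mapsto \Cyclic_{ir}$; write $r \colon \pos_\TT \to \pos_\TT$ for this endofunctor. First I would check that the canonical fracture $\fF_\TT$ of \Cref{prop.can.fracture.of.gen.G.spt} is $\Nx$-equivariant, in the sense that $\Phi^{\Cyclic_r}$ (post-composed with the identification $\TT/\Cyclic_r \simeq \TT$) carries $\fF_\TT$ to the pullback $r^* \fF_\TT$. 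This is visible on compact generators: geometric $\Cyclic_r$-fixedpoints carries $\Sigma^\infty_\TT(\TT/\Cyclic_j)_+$ to $\Sigma^\infty_\TT(\TT/\Cyclic_{j/r})_+$ when $r \mid j$ and to a contractible object otherwise, so the colimit-preserving functor $\Phi^{\Cyclic_r}$ carries the reflective subcategory $\cK_{\Cyclic_j}$ into $\cK_{\Cyclic_{j/r}}$, and since the fracture of \Cref{prop.can.fracture.of.gen.G.spt} is built functorially from these subcategories, it is transported as claimed. Applying the naturality of the equivalence of \Cref{thm.fracs.are.llax.modules} in the indexing poset, it follows that $\themod^{\gen^\proper \TT}$ acquires coherent identifications $r^* \themod^{\gen^\proper \TT} \simeq \themod^{\gen^\proper \TT}$ assembling into a $\BN$-action, and --- writing $\mathcal{M} \da \Ndiv$ for the locally cocartesian fibration underlying $\themod^{\gen^\proper \TT}$ --- that $\SpgTPhi$ is recovered as the ``fiberwise right-lax $\Ndiv$-limit'' of this equivariant module: its classifying functor $\BNop \to \Cat$ sends $r$ to the endofunctor of $\lim^\rlax(\Ndiv \lacts \themod^{\gen^\proper \TT})$ given by restriction along $r$ followed by the above identification. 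This endofunctor is the geometric-fixedpoints endofunctor \Cref{composite.endofunctor.on.cyclonic.spectra} by a fiberwise check, since evaluating a cyclonic spectrum in the reflective subcategory $\cK_{\Cyclic_r}$ returns its underlying homotopy $(\TT/\Cyclic_r)$-spectrum $U\Phi^{\Cyclic_r}$.

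Now to the interchange of limits. By definition $\Cyclo^\gen(\Spectra) = \lim(\SpgTPhi)$ is the strict limit of the classifying functor $\BNop \to \Cat$ just described. Using the description $\lim^\rlax(\Ndiv \lacts -) = \Fun^\cocart_{/\Ndiv}(\sd(\Ndiv),-)$ of \Cref{define.rest.of.limits}, an object of $\Cyclo^\gen(\Spectra)$ is a cocartesian-preserving section of $\mathcal{M}$ over $\sd(\Ndiv)$ together with coherent identifications with its restrictions along the endofunctors $\sd(r)$ of $\sd(\Ndiv)$ --- equivalently, a cocartesian-preserving section over the colimit in $\Cat$ of the diagram $\BN \to \Cat$, $\ast \mapsto \sd(\Ndiv)$ (with $r$ acting by $\sd(r)$), of the locally cocartesian fibration obtained by descending $\mathcal{M}$ along the equivariance data. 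Two facts then finish the step. First, $\sd$ preserves colimits, being the left Kan extension of \Cref{define.sd}, so this colimit of indexing categories is $\sd$ applied to the colimit of the dilation diagram $\BN \to \Cat$, $\ast \mapsto \Ndiv$. Second --- this is the assertion that \emph{``$\Ndiv$ mod $\Nx$ is $\BN$''} --- that latter colimit is $\BN$: a direct one-categorical computation, since the dilation action is free, so the quotient has a single object whose endomorphism monoid is generated by the images of the divisibility morphisms $\Cyclic_i \to \Cyclic_{ir}$, which depend only on $r \in \Nx$ and compose by multiplication. Descending $\mathcal{M}$ along the equivariance data thus gives a locally cocartesian fibration over $\BN$, which one identifies with $\SphTt = \themod^\Cyclo$ by comparing cocartesian monodromy over $[1] \xra{r} \BN$, where both recover $(-)^{\tate \Cyclic_r}$. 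Hence $\Cyclo^\gen(\Spectra) \simeq \Fun^\cocart_{/\BNop}(\sd(\BNop),\SphTt) = \lim^\rlax(\SphTt \racts \BN) = \Cyclo(\Spectra)$.

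It remains to check that the equivalence so produced agrees with the explicit functor \Cref{composite.from.gen.cyclo.spt.to.cyclo.spt}, which is assembled from the fiberwise left adjoint $U$ of the fiberwise-fully-faithful inclusion $\SphTt \hookrightarrow \SpgTPhi$ (via \Cref{lemma.ptwise.radjt.has.ptwise.ladjt}) composed with the canonical functor from the strict to the right-lax limit (\Cref{obs.map.from.strict.limit.to.lax.limit}). Both functors are compatible with restriction along maps $[n] \to \BNop$ and with passage to underlying homotopy $\TT$-spectra, so by \Cref{remark.naive.cyclo.structure} it suffices to match the resulting cyclotomic structure maps; and on the underlying spectrum the interchange equivalence sends a genuine cyclotomic spectrum $\tilde{T}$ to $U\tilde{T}$ equipped with $\sigma_r$ the composite $U\tilde{T} \xla{\sim} U\Phi^{\Cyclic_r}\tilde{T} \longra U\Phi^{\Cyclic_r}\beta U\tilde{T} =: (U\tilde{T})^{\tate \Cyclic_r}$, exactly as displayed in \Cref{remark.naive.cyclo.structure}.

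The main obstacle is making the interchange of the second paragraph precise: one must verify that the strict $\BN$-limit of the $\BNop$-indexed family of right-lax $\Ndiv$-limits, twisted by the dilation equivariance, is literally computed by a single right-lax limit over $\colim_\BN \Ndiv = \BN$. This requires carefully tracking the cocartesian-morphism-preservation conditions through the descent of $\mathcal{M}$ along the equivariance data, and combining it with the elementary-but-not-purely-formal identification $\colim_\BN(\ast \mapsto \Ndiv) \simeq \BN$ and the colimit-preservation of $\sd$. The $\Nx$-equivariance of the canonical fracture and the identification of the explicit functor are then routine.
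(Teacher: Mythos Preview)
Your proposal is correct and follows essentially the same approach as the paper: both arguments rest on the identification of $\BNop$ as the quotient of $\Ndiv$ by the dilation $\Nx$-action (the paper's \Cref{obs.Ndiv.mod.Nx.is.BN}), and both deduce the theorem from the $\Nx$-equivariance of the equivalence of \Cref{cor.proper.gen.G.spt.as.rlax.lim}. The paper's proof is organized slightly more directly --- it pulls back the morphism $\SpgTPhi \to \SphTt$ along $\pi \colon \Ndiv \to \BNop$ and identifies the resulting $\Nx$-equivariant composite \emph{as} the equivalence of \Cref{cor.proper.gen.G.spt.as.rlax.lim}, so that \Cref{composite.from.gen.cyclo.spt.to.cyclo.spt} is literally the $\Nx$-fixed points of an equivalence --- whereas you descend from $\Ndiv$ to $\BNop$ and carry out the interchange explicitly; but these are dual presentations of the same maneuver.
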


In order to prove \Cref{thm.in.body.cyclo.spt}, we will use the following.

\begin{observation}
\label{obs.Ndiv.mod.Nx.is.BN}
The dilation action of the commutative monoid $\Nx$ on the poset $\Ndiv$ can be presented via the cartesian fibration
\[ \begin{tikzcd}
\Ar(\BNop)
\arrow{d}{s}
\\
\BNop
\end{tikzcd}
~,
\]
whose source is then its right-lax homotopy quotient.  We would like to obtain the strict homotopy quotient of this action, for which we must freely invert the cartesian morphisms in this cartesian fibration.  But these are precisely the morphisms that are taken to equivalences under the functor
\[
\Ar(\BNop)
\xlongra{t}
\BNop
~,
\]
which admits a fully faithful right adjoint and so is precisely the desired localization.  In other words, we have obtained an identification
\[
\Ndiv
\longra
(\Ndiv)_{\htpy \Nx}
\simeq
\BNop
\]
of the strict quotient of the $\Nx$-action on $\Ndiv$ as the category $\BNop$.
\end{observation}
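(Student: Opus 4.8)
The plan is to realize the dilation action explicitly as a cartesian fibration over $\BNop$, to invoke the standard fact that the colimit of a $\Cat$-valued diagram is obtained by inverting the (co)cartesian morphisms of its classifying (co)cartesian fibration, and then to identify the resulting localization by exhibiting an explicit reflective localization. First I would unwind $\Ar(\BNop)$: since $\Nx$ is commutative we have $\BN \cong \BNop$, an object of $\Ar(\BNop)$ is an element $n \in \Nx$, and a morphism from $m$ to $n$ is a commutative square in $\BNop$, i.e.\ a pair $(a,b)$ of elements of $\Nx$ with $bm = na$, where $s$ records the source leg $a$ and $t$ records the target leg $b$. The fiber of the source-evaluation functor $s$ over the unique object is then the coslice of $\BNop$ under that object: its objects are the elements of $\Nx$, and it contains a (necessarily unique) morphism $m \to n$ exactly when $m \mid n$ --- so this fiber is $\Ndiv$. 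Since the source-evaluation functor of any $\Ar(\cD)$ is a cartesian fibration, with cartesian morphisms exactly those squares whose target leg is an equivalence, one computes that the cartesian monodromy of $r \in \Nx$ is the endofunctor $n \mapsto nr$ of $\Ndiv$, i.e.\ dilation by $r$; thus under $\Cart_{\BNop} \simeq \Fun(\BN,\Cat)$ the fibration $s$ presents the dilation action, with total space $\Ar(\BNop)$ its right-lax homotopy quotient.

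Next I would identify the strict quotient. Inverting the cartesian morphisms of $s$ computes the colimit of the diagram it classifies, so
\[
(\Ndiv)_{\htpy \Nx} \simeq \Ar(\BNop)\left[ \{ s\text{-cartesian morphisms} \}^{-1} \right].
\]
On the other hand, the target-evaluation functor $t \colon \Ar(\BNop) \to \BNop$ admits a fully faithful right adjoint, namely the functor sending an object of $\BNop$ to its identity endomorphism: a morphism in $\Ar(\BNop)$ from an object $g$ to the identity endomorphism of $c$ is precisely a morphism $t(g) \to c$ in $\BNop$. A functor with a fully faithful right adjoint is a reflective localization, so $t$ exhibits $\BNop$ as the localization of $\Ar(\BNop)$ at exactly the morphisms it inverts --- which are precisely the squares with invertible target leg, i.e.\ the $s$-cartesian morphisms. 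By the essential uniqueness of localizations the two displays agree, whence $(\Ndiv)_{\htpy \Nx} \simeq \BNop$.

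The main obstacle --- such as it is --- is purely bookkeeping: every ingredient (source-evaluation is a cartesian fibration with the stated cartesian morphisms; colimits of $\Cat$-valued diagrams are localizations of total spaces at the classifying (co)cartesian morphisms; functors with fully faithful right adjoints are reflective localizations) is standard, but one must track the opposite-category conventions carefully --- checking that it is source- rather than target-evaluation that is the relevant cartesian fibration, that its monodromy over $r$ is dilation by $r$ and not by some inverse or the identity, and that the \emph{strict} homotopy quotient is the colimit obtained by inverting cartesian (not cocartesian) morphisms over $\BNop$.
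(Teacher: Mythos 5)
Your proposal is correct and follows essentially the same route as the paper: present the dilation action by the cartesian fibration $\Ar(\BNop) \xra{s} \BNop$, note that the strict quotient is the localization of the total space at the $s$-cartesian morphisms, and identify that localization with $\BNop$ via the target functor $t$, which inverts exactly those morphisms and admits the fully faithful right adjoint given by the identity section. The extra verifications you supply (the fiber of $s$ being $\Ndiv$, the monodromy being dilation by $r$, and the adjunction computation for $t$) are exactly the unwinding the paper leaves implicit.
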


\begin{proof}[Proof of \Cref{thm.in.body.cyclo.spt}]
Let us pull back the morphism \Cref{rlax.ladjt.on.Nx.modules.from.genuine.to.htpy} in $\LMod^\rlax_{\llax.\BNop}$ along the homotopy quotient
\[
\Ndiv
\xra{\pi}
(\Ndiv)_{\htpy \Nx}
\simeq
\BNop
\]
of \Cref{obs.Ndiv.mod.Nx.is.BN} to obtain a morphism
\[
\pi^* \SpgTPhi
\longra
\pi^* \SphTt
\]
in $\LMod^\rlax_{\llax.\Ndiv}$.  By virtue of this construction, we can view the composite \Cref{composite.from.gen.cyclo.spt.to.cyclo.spt} as the homotopy quotient by $\Nx$ of an $\Nx$-equivariant composite
\begin{equation}
\label{Nx.equivariant.composite.showing.gen.cyclo.spt.is.cyclo.spt}
\lim \left( \Ndiv \lacts \pi^* \SpgTPhi \right)
\xlonghookra{\ff}
\lim^\rlax \left( \Ndiv \lacts \pi^* \SpgTPhi \right)
\longra
\lim^\rlax \left( \Ndiv \overset{\llax}{\lacts} \pi^* \SphTt \right)
~.
\end{equation}
Now, observe that evaluation at the initial object $1 \in \Ndiv$ defines an equivalence
\[
\lim \left( \Ndiv \lacts \pi^* \SpgTPhi \right)
\xra[\sim]{\ev_1}
\Spectra^{\gen^\proper \TT}
~.
\]
On the other hand, unwinding the definitions, we find an equivalence
\[
\pi^* \SphTt
\simeq
\themod^{\gen^\proper \TT}
~,
\]
and moreover that under these identifications the $\Nx$-equivariant composite \Cref{Nx.equivariant.composite.showing.gen.cyclo.spt.is.cyclo.spt} is precisely the canonical equivalence
\[
\Spectra^{\gen^\proper \TT}
\xlongra{\sim}
\lim^\rlax \left( \Ndiv \overset{\llax}{\lacts} \themod^{\gen^\proper \TT} \right)
\]
of \Cref{cor.proper.gen.G.spt.as.rlax.lim}.  This proves the claim.
\end{proof}

\section{The formula for $\TC$}
\label{section.proof.of.TC.formula}

In this section, we derive the explicit formula for $\TC$ of \Cref{mainthm.formula.for.TC} -- in other words, an explicit formula for the right adjoint in a certain adjunction
\[ \begin{tikzcd}[column sep=2cm, row sep=0cm]
\Spectra
\arrow[transform canvas={yshift=0.9ex}]{r}{\triv}
\arrow[leftarrow, transform canvas={yshift=-0.9ex}]{r}[yshift=-0.2ex]{\bot}[swap]{(-)^{\htpy \Cyclo}}
&
\Cyclo(\Spectra)
\\
\rotatebox{90}{$\in$}
&
\rotatebox{90}{$\in$}
\\
\TC
&
\THH
\arrow[maps to]{l}
\end{tikzcd} \]
relating spectra and cyclotomic spectra.  Here, the left adjoint is the \textit{trivial cyclotomic spectrum} functor, which we obtain in \Cref{subsection.trivial.cyclo.spt}.

This requires auxiliary input.  First of all, the construction of the functor $\triv$ makes crucial use of the $\infty$-category $\Cyclo^\htpy(\Spectra)$ of \textit{cyclotomic spectra with Frobenius lifts}, which we examine in \Cref{subsection.cyclo.spt.w.frob}; these are cyclotomic spectra whose cyclotomic structure maps are equipped with compatible factorizations
\[ \begin{tikzcd}[row sep=1.5cm, column sep=1.5cm]
T
\arrow[dashed]{r}{\tilde{\sigma}_r}
\arrow{rd}[swap]{\sigma_r}
&
T^{\htpy \Cyclic_r}
\arrow{r}
\arrow{d}
&
T
\\
&
T^{\tate \Cyclic_r}
\end{tikzcd}~. \]
In turn, the construction of this $\infty$-category along with its forgetful functor
\[
\Cyclo^\htpy(\Spectra)
\longra
\Cyclo(\Spectra)
\]
relies on a sophisticated understanding of the generalized Tate construction; this is provided by the \textit{proto Tate package}, which is the subject of \Cref{subsection.proto.tate}.

Finally, we prove \Cref{mainthm.formula.for.TC} (as \Cref{thm.formula.for.cyclo.fixedpts}) in \Cref{subsection.formula.for.TC.for.real}, using the formalism of \textit{partial adjunctions} that we introduce in \Cref{subsection.partial.adjns}.

\subsection{The proto Tate package}
\label{subsection.proto.tate}

In this subsection, we establish the \bit{proto Tate package} (\Cref{proto.tate}); this encapsulates the simultaneous and interwoven functorialities of the generalized Tate construction, as indicated in \Cref{remark.sell.proto.tate.package}.\footnote{We reserve the name ``Tate package'' for the resulting object that induces the cyclotomic structure on $\THH$ (see \cite{AMR-trace}).}

\begin{notation}
\label{notation.G.is.all.gps}
In this subsection, we use the letter
\[
G
\]
interchangeably to refer either to a specific (though never actually specified) finite group or to the set of all finite groups at once.
\end{notation}

\begin{notation}
We write $\Span(\Cat)$ for the $\infty$-category whose objects are $\infty$-categories, whose morphisms are spans of $\infty$-categories, and whose composition is given by pullback.\footnote{This can be easily constructed e.g.\! as a complete Segal space, using the fact that $\Cat$ admits finite limits.}  Then, we write
\[
\GSpan
\longrsurjmono
\Span(\Cat)
\]
for the surjective monomorphism from the subcategory whose morphisms are those spans
\begin{equation}
\label{morphism.in.GSpan}
\begin{tikzcd}
&
\cB_{01}
\arrow{ld}
\arrow{rd}[pos=0.4]{\BG\dKan}
\\
\cB_0
&
&
\cB_1
\end{tikzcd}
\end{equation}
whose forward morphism is a Kan fibration (i.e.\! a left and right fibration) with fiber a connected $\pi$-finite 1-type, i.e.\! a space of the form $\BG$ for some finite group $G$.\footnote{To be precise (in the face of the potential ambiguity presented by \Cref{notation.G.is.all.gps}), we clarify that for simplicity and because it is the only case we need to consider, we will assume that all the fibers are abstractly equivalent, even if $(\cB_1)^\gpd$ is disconnected.}\footnote{Such morphisms are clearly closed under composition in $\Span(\Cat)$, since connected $\pi$-finite 1-types are closed under fiber sequences (namely, they loop down to extension sequences of groups).}
\end{notation}

\needspace{2\baselineskip}
\begin{theorem}[The proto Tate package]
\label{proto.tate}
\begin{enumerate}
\item[]
\item There exists a canonical left $\GSpan$-module
\[ \left(
\begin{tikzcd}
\Fh
\arrow{d}
\\
\GSpan
\end{tikzcd}
\right)
\in
\LMod_\GSpan := \coCart_\GSpan
 \]
whose fiber over an object $\cB \in \GSpan$ is the $\infty$-category
\[
\Fun(\cB,\Spectra)
\]
and whose cocartesian monodromy over a morphism \Cref{morphism.in.GSpan} is the composite
\[
\Fun(\cB_0,\Spectra)
\longra
\Fun(\cB_{01},\Spectra)
\xra{(-)^{\htpy G}}
\Fun(\cB_1,\Spectra)
\]
of pullback followed by fiberwise homotopy fixedpoints.

\item There exists a canonical left-lax left $\GSpan$-module
\[
\left(
\begin{tikzcd}
\Ftau
\arrow{d}
\\
\GSpan
\end{tikzcd}
\right)
\in
\LMod_{\llax.\GSpan} := \loc.\coCart_\GSpan
\]
whose fiber over an object $\cB \in \GSpan$ is the $\infty$-category
\[
\Fun(\cB,\Spectra)
\]
and whose cocartesian monodromy over a morphism \Cref{morphism.in.GSpan} is the composite
\[
\Fun(\cB_0,\Spectra)
\longra
\Fun(\cB_{01},\Spectra)
\xra{(-)^{\tate G}}
\Fun(\cB_1,\Spectra)
\]
of pullback followed by the fiberwise Tate construction, and whose left-lax structure maps encode its lax functoriality for extensions among finite groups.

\item There exists a right-lax equivariant functor
\[
\Fh
\xra{\htpy \ra \tate}
\Ftau
\]
of left-lax left $\GSpan$-modules -- that is, a morphism in $\LMod^\rlax_{\llax.\GSpan}$ -- which is an equivalence on fibers and which over a morphism \Cref{morphism.in.GSpan} is given fiberwise by the natural transformation
\[
(-)^{\htpy G}
\longra
(-)^{\tate G}
\]
in $\Fun(\Spectra^{\htpy G},\Spectra)$.

\end{enumerate}
\end{theorem}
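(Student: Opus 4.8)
The plan is to extract all three pieces of data from the Mackey-functor presentation of genuine equivariant spectra (Guillou--May \cite{GM-gen}, Barwick \cite{Bar-Mack}), organized over the span category $\GSpan$, and then to pass to a fibrewise reflective subcategory exactly as in the construction of $\themod^\Cyclo$ in \Cref{subsection.naive.cyclo.spectra.for.real}. For part (1), the assignment $\cB \mapsto \Fun(\cB,\Spectra)$ is a functor $\GSpan \to \Cat$ sending a span \Cref{morphism.in.GSpan} to pullback along its backward leg followed by right Kan extension along its forward leg; since the forward leg is a Kan fibration with fibre $\BG$, this Kan extension is pointwise and is precisely fibrewise homotopy $G$-fixedpoints. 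That this assembles into a functor out of the span category follows from the standard straightening-of-spans recipe (the formalism of \cite{GR}), whose only hypothesis is the Beck--Chevalley condition for the pullback squares appearing in composition -- and this holds because right Kan extension along a Kan fibration is pointwise, hence stable under base change. Unstraightening gives $\Fh \in \coCart_\GSpan =: \LMod_\GSpan$. (Equivalently, $\Fh$ is the fibrewise image of $\beta$ inside the module $\cF_{\mathrm{fix}}$ below; the two descriptions agree, both being the strict $\GSpan$-module with fibre $\Fun(\cB,\Spectra)$ and monodromy ``pullback then homotopy fixedpoints''.)

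For parts (2) and (3), using $\Spectra^{\gen G} \simeq \Fun^{\times}(\Span(\mathrm{Fin}_G),\Spectra)$ and its $\cB$-parametrized generalization, I would construct two strict left $\GSpan$-modules $\cF_{\mathrm{fix}}, \cF_\Phi \in \LMod_\GSpan = \coCart_\GSpan$ with the same fibre over $\cB$ -- the $\cB$-parametrized genuine spectra -- whose cocartesian monodromy over a span \Cref{morphism.in.GSpan} is pullback followed by fibrewise genuine $G$-fixedpoints for $\cF_{\mathrm{fix}}$, and by fibrewise geometric $G$-fixedpoints for $\cF_\Phi$. Both are strictly functorial: genuine fixedpoints is restriction along the Burnside-category inflation functor, geometric fixedpoints is this restriction precomposed with the localization $\locL_{\Phi^N}$ of \Cref{obs.genzd.tate}, and each composes coherently (in the second case because $\Phi^{G/N}\Phi^N \simeq \Phi^G$); in both cases the relevant Beck--Chevalley condition is verified on Burnside categories. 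The unit $\mathrm{id} \Rightarrow \locL_{\Phi^N}$ of the localization, postcomposed with genuine fixedpoints, is the canonical transformation $(-)^N \Rightarrow \Phi^N$; assembled over $\GSpan$ it gives a morphism $\cF_{\mathrm{fix}} \to \cF_\Phi$ which is the identity on each fibre, and which -- reading its fibrewise structure map $(-)^N \Rightarrow \Phi^N$ against \Cref{table.laxness.G.action} -- is right-lax equivariant.

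Now pass to the fibrewise full subcategory on the image of $\beta$ (cf.\ the construction preceding \Cref{thm.in.body.cyclo.spt}). Inside $\cF_{\mathrm{fix}}$ this is closed under the monodromy, since the genuine $N$-fixedpoints of a Borel-complete genuine spectrum is again Borel-complete, with underlying spectrum its homotopy $N$-fixedpoints (\Cref{remark.borel.complete}); so this subcategory is a sub-cocartesian-fibration, and it reproduces $\Fh$. Inside $\cF_\Phi$ it is \emph{not} closed under the monodromy -- geometric fixedpoints destroys Borel-completeness -- so the induced monodromy is geometric fixedpoints followed by reflection, i.e.\ $U\Phi^G\beta = (-)^{\tate G}$, and one obtains only a locally cocartesian fibration $\Ftau \in \loc.\coCart_\GSpan =: \LMod_{\llax.\GSpan}$ whose left-lax structure maps are the comparisons $(-)^{\tate G} \to (-)^{\tate(G/N)}\circ(-)^{\tate N}$ induced by the units of $U \adj \beta$ -- precisely the asserted lax functoriality for extensions. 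The defining inclusion $\Ftau \hookrightarrow \cF_\Phi$ is a morphism in $\LMod^\llax_{\llax.\GSpan}$ which is fibrewise a right adjoint, so by \Cref{lemma.ptwise.radjt.has.ptwise.ladjt} the fibrewise reflections assemble into a morphism $\cF_\Phi \to \Ftau$ in $\LMod^\rlax_{\llax.\GSpan}$; composing this with the restriction to $\Fh$ of the right-lax equivariant morphism $\cF_{\mathrm{fix}} \to \cF_\Phi$ yields the required
\[
\Fh \xra{\htpy \ra \tate} \Ftau \ \in \ \LMod^\rlax_{\llax.\GSpan},
\]
which is the identity on fibres, hence a fibrewise equivalence, and whose fibrewise monodromy comparison is the quotient map \Cref{h.to.tau.quotient.map}, i.e.\ $(-)^{\htpy G} \to (-)^{\tate G}$.

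The main obstacle is the construction of the strict $\GSpan$-modules $\cF_{\mathrm{fix}}$ and $\cF_\Phi$: upgrading genuine and geometric fixedpoints to functors out of the large span category $\GSpan$, compatibly with base change (the backward legs), with group extensions (composition of the forward legs), and with the localizations of \Cref{obs.genzd.tate}. This is where the Mackey-functor formalism is essential -- one realizes $\GSpan$-functoriality via a suitable functor from $\GSpan$ into a span category of (fibrewise) effective Burnside $\infty$-categories, with genuine fixedpoints, geometric fixedpoints, and the transformation between them all visible already on those Burnside categories, and then postcomposes with $\Fun^{\times}(-,\Spectra)$; verifying the requisite Beck--Chevalley and coherence conditions there is the crux, and everything else then follows formally from \Cref{section.lax.actions.and.limits}.
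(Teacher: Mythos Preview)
Your approach is conceptually appealing --- it parallels the construction of $\SphTt$ from $\SpgTPhi$ in \Cref{subsection.naive.cyclo.spectra.for.real} --- but there is a genuine gap in the construction of the auxiliary strict module $\cF_\Phi$.

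The problem is that over an \emph{object} $\cB \in \GSpan$ there is no group in sight: the group only appears in the forward leg of a \emph{morphism}. Hence the fibre of any $\GSpan$-module at $\cB$ can only be $\Fun(\cB,\Spectra)$ (which you call ``$\cB$-parametrized genuine spectra'', but for the trivial group this is just naive spectra). For $\cF_\Phi$ you want the monodromy over a span \Cref{morphism.in.GSpan} to be ``pullback followed by geometric $G$-fixedpoints''; but geometric fixedpoints $\Phi^G$ is a functor on \emph{genuine} $G$-spectra, so in order to feed it naive input you must first apply $\beta_G$. The composite $U\Phi^G\beta_G = (-)^{\tate G}$ is exactly the Tate construction --- so your $\cF_\Phi$ \emph{is} $\Ftau$, and your claim that it is strict is precisely the false assertion that generalized Tate constructions compose. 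Said differently: $\Phi^{G/N}\Phi^N \simeq \Phi^G$ holds on genuine $G$-spectra, but once you interpose $\beta$ between the two geometric fixedpoints (as you must, since the fibres are naive), you get $\Phi^{G/N}\beta_{G/N}U_{G/N}\Phi^N\beta_G$, and the middle $\beta_{G/N}U_{G/N}$ is not the identity. This is exactly the analogue of the observation in \Cref{subsection.naive.cyclo.spectra.for.real} that $\SpgTPhi$ (with genuine fibres) is strict while $\SphTt$ (with naive fibres) is only left-lax; but there the genuine fibres were available because a group $\TT$ was fixed from the outset, whereas here the groups vary with the morphisms.

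The paper sidesteps this by abandoning the search for an ambient strict module and instead working directly with the left-lax structure. Using the identification $\LMod_{\llax.\GSpan} \simeq \Fun(\llax(\GSpan),\Cat)$ together with \Cref{left.laxification.is.left.kan.extended}, it suffices to specify the data simplex by simplex over $[n]$-points of $\GSpan$. For each such $[n]$-point the paper writes down an auxiliary commutative diagram of $\infty$-categories (e.g.\ \Cref{diagram.for.brax.two.points.of.GSpan} for $n=2$) in which the parametrized genuine $\infty$-categories $\Gamma(\Spectra^{\gen \cB_{0i}|\cB_j})$ appear as \emph{intermediate nodes depending on the entire simplex}, not as fibres over the vertices. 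The values of $\Fh$, $\Ftau$, and the right-lax transformation are then read off as certain sections of the cocartesian fibration classified by this diagram, with $\Ftau$ using the localizations $\locL_{\Phi^G}$ at the genuine nodes. The point is that the genuine structure is allowed to live in this simplex-dependent scaffolding rather than being forced into fixed fibres; this is what your ``main obstacle'' paragraph gestures at, but the resolution is not to build strict modules and restrict --- it is to encode the genuine information simplicially.
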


\begin{remark}
In fact, we do not need the full generality of the proto Tate package in this paper: we only use its restriction to the subcategory of $\GSpan$ on the forwards maps, i.e.\! the subcategory of $\Cat$ on the $\BG$-Kan fibrations (see Constructions \ref{construction.functor.to.GSpan} \and \ref{constrn.actually.use.proto.tate}).  However, we use its full strength in \cite{AMR-trace} to construct the \textit{Tate package}, which induces the cyclotomic structure on $\THH$.\footnote{The proof of the full proto Tate package is not much harder than that of its ``forwards maps only'' version -- the backwards maps are easy to manage, as they simply correspond to pullbacks -- which is why we simply prove the entire proto Tate package here.}
\end{remark}

The proof of \Cref{proto.tate} will require some preliminaries.

\begin{observation}
\label{obs.genuine.G.spt.are.natural.in.BG}
The constructions of the $\infty$-categories of homotopy and genuine $G$-spectra can be made ``coordinate-free'', in the following sense.  Given a space $\cB$ which admits an equivalence $\cB \simeq \BG$, we can form the category
\[
\cB\Fin
:=
\Fun(\cB,\Fin)
\]
of ``finite $\cB$-sets'', i.e.\! functors from $\cB$ to the category of finite sets (or equivalently, the category of Kan fibrations over $\cB$ whose fibers lie in $\Fin \subset \Spaces$, a full subcategory of $\Spaces_{/\cB}$).  Note that the Yoneda embedding defines a fully faithful inclusion
\begin{equation}
\label{include.B.into.finite.B.sets}
\cB^\op
\xlonghookra{\ff}
\cB\Fin
~.
\end{equation}
This admits an enhancement to the Burnside $\infty$-category (in fact $(2,1)$-category)
\begin{equation}
\label{include.finite.B.sets.into.its.burnside.cat}
\cB\Fin^\op
\longrsurjmono
\Burn(\cB\Fin)
:=
\Span(\cB\Fin)
\end{equation}
with the same objects, whose morphisms are given by spans and whose composition is given by pullback.  By work of Barwick \cite{Bar-Mack} (based on work of Guillou--May \cite{GM-gen}), we can then define the $\infty$-category of ``genuine $\cB$-spectra'' as
\[
\Spectra^{\gen \cB}
:=
\Fun^\oplus ( \Burn(\cB\Fin) , \Spectra)
~,
\]
the $\infty$-category of \textit{spectral Mackey functors} for $\cB\Fin$ (i.e.\! additive functors from its Burnside $\infty$-category to the $\infty$-category of spectra).  Restriction along the composite of the functors \Cref{include.B.into.finite.B.sets,include.finite.B.sets.into.its.burnside.cat} induces the forgetful functor
\[
\Spectra^{\gen \cB}
\xlongra{U}
\Fun(\cB,\Spectra)
~,
\]
a ``coordinate-free'' version of the forgetful functor
\[
\Spectra^{\gen G}
\xlongra{U}
\Spectra^{\htpy G} := \Fun(\BG,\Spectra)
\]
from genuine $G$-spectra to homotopy $G$-spectra.
\end{observation}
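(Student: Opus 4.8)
The plan is to assemble the observation from standard facts, the only genuinely nontrivial point being the identification of the restriction functor with $U$; naturality in the space $\cB$ will then be automatic, since every construction involved is manifestly functorial in $\cB$.

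First I would record the structure of $\cB\Fin := \Fun(\cB,\Fin)$. Because $\cB$ is a $\pi$-finite space (being equivalent to $\BG$ with $G$ finite), straightening/unstraightening gives $\Fun(\cB,\Spaces)\simeq\Spaces_{/\cB}$, under which $\cB\Fin$ is the full subcategory on the maps $X\to\cB$ with fibers in $\Fin\subset\Spaces$ -- this is precisely the alternative description offered in the observation. In particular, for $\cB=\BG$ a functor $\BG\to\Fin$ is exactly a finite set with $G$-action, so $\cB\Fin\simeq\Fin_G$, the ordinary category of finite $G$-sets. The $\infty$-category $\cB\Fin$ has all finite limits, computed pointwise in $\Fin$, so the span construction $\Burn(\cB\Fin)=\Span(\cB\Fin)$ is defined (e.g.\ as a complete Segal space, exactly as for $\Span(\Cat)$).

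Next, the $\infty$-categorical Yoneda lemma gives a fully faithful functor $\cB^\op\hookrightarrow\Fun(\cB,\Spaces)$, and since $\cB$ is $\pi$-finite its representables are $\Fin$-valued, so this factors through $\cB\Fin$ to give \Cref{include.B.into.finite.B.sets}. Postcomposing with the inclusion $\cB\Fin^\op\hookrightarrow\Burn(\cB\Fin)$ of \Cref{include.finite.B.sets.into.its.burnside.cat} (which adjoins the wrong-way maps, but is the identity on objects and an inclusion of the subcategory $\cB\Fin^\op$) keeps the composite fully faithful. For $\cB=\BG$ this exhibits $\BG$ inside $\Burn(\Fin_G)$ as the full subcategory spanned by the free orbit $G/e$, with $\Aut(G/e)\cong G$.

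Finally I would invoke the theorem of Guillou--May \cite{GM-gen}, in the $\infty$-categorical form of Barwick \cite{Bar-Mack}: for $G$ finite, $\Fun^\oplus(\Burn(\Fin_G),\Spectra)$ is canonically equivalent to the $\infty$-category $\Spectra^{\gen G}$ of \Cref{notation.various.genuine.spectra}, which validates the coordinate-free definition $\Spectra^{\gen\cB}:=\Fun^\oplus(\Burn(\cB\Fin),\Spectra)$ depending only on the space $\cB$. The step requiring care -- and the main obstacle -- is matching the restriction functor along \Cref{include.B.into.finite.B.sets} and \Cref{include.finite.B.sets.into.its.burnside.cat} with the forgetful functor $U$: restriction to $\cB^\op$ evaluates a spectral Mackey functor on the representables, and for $\cB=\BG$ this is evaluation on $G/e$, producing a spectrum together with the action of $\Aut(G/e)\cong G$, i.e.\ an object of $\Fun(\BG,\Spectra)=\Spectra^{\htpy G}$. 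That this agrees with the usual forgetful functor $\Spectra^{\gen G}\to\Spectra^{\htpy G}$ under the Guillou--May/Barwick equivalence is part of the content of \cite{Bar-Mack} -- the equivalence is built compatibly with restriction-to-orbits, and the underlying homotopy $G$-spectrum is by definition the restriction to the free orbit -- and once this bookkeeping is in place there is nothing further to check.
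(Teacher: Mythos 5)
Your proposal is correct and follows essentially the same route as the paper, which treats this as an observation assembled from standard facts: unwind $\cB\Fin\simeq\Fin_G$, note the Yoneda embedding lands in finite $\cB$-sets by $\pi$-finiteness, and cite Guillou--May/Barwick both for the spectral Mackey functor model of $\Spectra^{\gen G}$ and for the compatibility of that equivalence with evaluation on the free orbit, which identifies restriction along the composite with the forgetful functor $U$. One side remark in your write-up is inaccurate, though harmless: the composite $\cB^\op\hookrightarrow\cB\Fin^\op\to\Burn(\cB\Fin)$ is \emph{not} fully faithful, and $\BG$ is not the full subcategory of $\Burn(\Fin_G)$ on $G/e$ --- the map \Cref{include.finite.B.sets.into.its.burnside.cat} is only a wide subcategory inclusion, and the Burnside mapping spaces on $G/e$ contain all spans $G/e\leftarrow X\to G/e$, not just the automorphisms coming from $G$. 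Since the argument only needs the composite as a functor (so that restriction along it records the evaluation at $G/e$ together with its $G$-action), this does not affect your conclusion, but the full-faithfulness claim should be dropped.
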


\begin{notation}
\label{notation.for.param.gen.and.naive.spt}
Let
\[
\left(
\cE
\da
\cB
\right)
\in
\Cat_{/\cB}
\]
be a $\BG$-Kan fibration.  Informally speaking, we use the notation
\[
\cE | \cB
\]
to denote the family of copies of $\BG$ determined by this functor; more precisely, we write
\[
\Spectra^{\htpy \cE|\cB}
:=
\Fun^\rel_{/\cB}(\cE , \Spectra)
\]
for the $\infty$-category of ``homotopy $\cE|\cB$-spectra'', and we write
\[
\Spectra^{\gen \cE|\cB}
\]
for the $\infty$-category of ``genuine $\cE|\cB$-spectra'', obtained by applying the construction of \Cref{obs.genuine.G.spt.are.natural.in.BG} fiberwise (or equivalently, to the image of the straightening $\cB \ra \Cat$, which by assumption lands in the full subgroupoid on the object $\BG \in \Spaces \subset \Cat$).  If $\cB \simeq \pt$ then we may omit it from the notation; in other words, if there exists an equivalence $\cE \simeq \BG$ then (in a slight abuse of notation) we may simply write
\[
\Spectra^{\htpy \cE} := \Spectra^{\htpy \cE|\pt}
\qquad
\textup{and}
\qquad
\Spectra^{\gen \cE} := \Spectra^{\gen \cE|\pt}
~.
\]
\end{notation}

\begin{observation}
The $\infty$-categories of \Cref{notation.for.param.gen.and.naive.spt} participate in an adjunction
\[ \begin{tikzcd}[column sep=1.5cm]
\Spectra^{\gen \cE|\cB}
\arrow[transform canvas={yshift=0.9ex}]{r}{U}
\arrow[hookleftarrow, transform canvas={yshift=-0.9ex}]{r}[yshift=-0.2ex]{\bot}[swap]{\beta}
&
\Spectra^{\htpy \cE|\cB}
\end{tikzcd} \]
in $\Cat_{/\cB}$, which on fibers over each object $b \in \cB$ recovers the adjunction
\[ \begin{tikzcd}[column sep=1.5cm]
\Spectra^{\gen \cE_{|b}}
\arrow[transform canvas={yshift=0.9ex}]{r}{U}
\arrow[hookleftarrow, transform canvas={yshift=-0.9ex}]{r}[yshift=-0.2ex]{\bot}[swap]{\beta}
&
\Spectra^{\htpy \cE_{|b}}
\end{tikzcd}~. \]
Note that we can identify the global sections of the $\cB$-parametrized $\infty$-category of homotopy $\cE|\cB$-spectra as
\[
\Gamma \left( \Spectra^{\htpy \cE|\cB} \right)
:=
\Fun_{/\cB} \left( \cB , \Spectra^{\htpy \cE|\cB} \right)
\simeq
\Fun(\cE,\Spectra)
~,
\]
and that we simply have
\[
\Spectra^{\htpy \cB|\cB}
\simeq
\Spectra^{\gen \cB | \cB}
\simeq
\Fun(\cB,\Spectra)
\]
(since in this case the group is trivial).  On the other hand, an object of the $\infty$-category of global sections
\[
\Gamma \left( \Spectra^{\gen \cE | \cB} \right)
\]
of the $\cB$-parametrized $\infty$-category of genuine $\cE|\cB$-spectra assigns a ``genuine $\cE_{|b}$-spectrum'' naturally to each object $b \in \cB$.
\end{observation}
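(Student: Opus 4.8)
The plan is to realize both parametrized $\infty$-categories and the functor $U$ as (un)straightenings of data over $\cB$, and then to produce $\beta$ by the relative adjoint functor theorem. First I would record that the $\BG$-Kan fibration $\cE \da \cB$ is classified by a functor $\chi\colon \cB \to \Spaces$ which sends every morphism of $\cB$ to an equivalence (since $\cE \da \cB$ is a right fibration as well as a left fibration) and which, by hypothesis, takes values in the full subgroupoid of $\Spaces$ spanned by $\BG$. By the very construction of \Cref{notation.for.param.gen.and.naive.spt}, the fibrations $\Spectra^{\gen \cE|\cB} \da \cB$ and $\Spectra^{\htpy \cE|\cB} \da \cB$ are the unstraightenings of the composites of $\chi$ with the functors $\Spectra^{\gen (-)}$ and $\Spectra^{\htpy (-)}$ appearing in \Cref{obs.genuine.G.spt.are.natural.in.BG}; in particular both are cocartesian fibrations over $\cB$, and their cocartesian transport functors are all equivalences.

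Next, \Cref{obs.genuine.G.spt.are.natural.in.BG} exhibits the forgetful functor $U$ as restriction along an inclusion $\cB^\op \hookrightarrow \Burn(\cB\Fin)$ which is manifestly natural in $\cB$; hence $\Spectra^{\gen(-)}$ and $\Spectra^{\htpy(-)}$ are joined by a natural transformation $U$, whose unstraightening along $\chi$ is a functor
\[ U\colon \Spectra^{\gen \cE|\cB} \longra \Spectra^{\htpy \cE|\cB} \]
over $\cB$ that preserves cocartesian morphisms and restricts over each $b \in \cB$ to the forgetful functor $\Spectra^{\gen \cE_{|b}} \to \Spectra^{\htpy \cE_{|b}}$. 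Now comes the key step: by the relative adjoint functor theorem in its cocartesian form (see e.g.\! \cite[\S 7.3.2]{LurieHA}), a functor over $\cB$ between cocartesian fibrations that preserves cocartesian morphisms admits a right adjoint relative to $\cB$ — equivalently, an adjunction in $\Cat_{/\cB}$ — precisely when its restriction to each fiber admits a right adjoint, and this relative right adjoint restricts fiberwise to the chosen fiberwise right adjoints. The fiberwise hypothesis holds: choosing $\cE_{|b} \simeq \BG$, the functor $U_b$ is the left adjoint of the classical reflective localization between genuine and homotopy $\BG$-spectra. This yields the asserted adjunction $U \adj \beta$ in $\Cat_{/\cB}$ together with the statement that it recovers the classical adjunction on each fiber.

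For the remaining assertions I would argue directly from the definitions. The identification $\Gamma(\Spectra^{\htpy \cE|\cB}) = \Fun_{/\cB}(\cB, \Spectra^{\htpy \cE|\cB}) \simeq \Fun(\cE,\Spectra)$ is the universal property of the relative functor $\infty$-category $\Fun^\rel_{/\cB}(\cE,\Spectra)$ evaluated on $\cB$ itself — the same universal property evaluated on $\{b\} \hookrightarrow \cB$ giving its fiber $\Fun(\cE_{|b},\Spectra)$. When the fiber of $\cE \da \cB$ is trivial, i.e.\! $\cE \simeq \cB$, the relevant group is trivial, so $\Spectra^{\gen \cE_{|b}} \simeq \Spectra^{\htpy \cE_{|b}} \simeq \Spectra$ fiberwise with $U$ an equivalence, whence $\Spectra^{\gen \cB|\cB} \simeq \Spectra^{\htpy \cB|\cB} \simeq \Fun(\cB,\Spectra)$; and a global section of the cocartesian fibration $\Spectra^{\gen \cE|\cB} \da \cB$ is, upon unwinding, exactly a coherent choice of genuine $\cE_{|b}$-spectrum for each $b \in \cB$.

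The only real subtlety is bookkeeping. One must be careful that $\Spectra^{\gen(-)}$ and $\Spectra^{\htpy(-)}$ are being used covariantly in the variable space, which is legitimate because we restrict them to the subgroupoid of $\Spaces$ on $\BG$, and one must invoke the relative adjoint functor theorem in the variance that requires no Beck--Chevalley condition (we only need $\beta$ to exist over $\cB$, not to preserve cocartesian morphisms — though in fact it does here, since all transport functors are equivalences). There is no deeper obstruction: the observation is a soft consequence of straightening/unstraightening together with the classical fiberwise adjunction recalled earlier.
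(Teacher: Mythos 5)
Your argument is correct and is essentially the justification the paper intends: the Observation is stated without proof, being regarded as immediate from the fact that $\Spectra^{\gen \cE|\cB}$ and $\Spectra^{\htpy \cE|\cB}$ are built by straightening the $\BG$-Kan fibration and applying the (co)functorial constructions of \Cref{obs.genuine.G.spt.are.natural.in.BG} fiberwise, with $U \adj \beta$ applied fiberwise. Your explicit invocation of the relative adjoint functor theorem (fiberwise right adjoints plus preservation of cocartesian edges, all monodromies being equivalences), together with the universal property of $\Fun^\rel_{/\cB}(\cE,\ul{\Spectra})$ for the global-sections identifications, fills in exactly the bookkeeping the paper leaves implicit.
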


\begin{observation}
Let
\[
K
\longhookra
G
\longrsurj
Q
\]
be a short exact sequence of finite groups, and suppose we are given a functor
\[ \begin{tikzcd}
\cE_0
\arrow{rr}
\arrow{rd}[swap]{\BG\dKan}
&
&
\cE_1
\arrow{ld}{\sB Q\dKan}
\\
&
\cB
\end{tikzcd} \]
over $\cB$ which is given in each fiber by a functor
\[
\BG
\longra
\sB Q
\]
corresponding to the quotient homomorphism.  Then, the $\cB$-parametrized $\infty$-category $\Spectra^{\gen \cE_0|\cB} \da \cB$ has fibers all equivalent to $\Spectra^{\gen G}$; these admit genuine $K$-fixedpoints functors, which assemble into a functor
\[ \begin{tikzcd}
\Spectra^{\gen \cE_0|\cB}
\arrow{rr}{(-)^K}
\arrow{rd}
&
&
\Spectra^{\gen \cE_1|\cB}
\arrow{ld}
\\
&
\cB
\end{tikzcd} \]
over $\cB$.
\end{observation}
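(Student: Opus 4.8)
The plan is to realize the asserted functor over $\cB$ by straightening, exploiting the fact that the spectral-Mackey-functor construction of \Cref{obs.genuine.G.spt.are.natural.in.BG} is functorial in \emph{arbitrary} maps of spaces, not merely in equivalences. Concretely, I would first record that $\cB' \mapsto \Spectra^{\gen \cB'} := \Fun^\oplus(\Burn(\cB'\Fin),\Spectra)$ is covariantly functorial in $\cB' \in \Spaces$: a map $f : \cB'_0 \ra \cB'_1$ induces the restriction functor $f^* : \Fun(\cB'_1,\Fin) \ra \Fun(\cB'_0,\Fin)$, which preserves finite limits and finite coproducts (these being computed pointwise in $\Fin$), hence induces a direct-sum-preserving functor $\Burn(f^*) : \Burn(\cB'_1\Fin) \ra \Burn(\cB'_0\Fin)$, and restriction of spectral Mackey functors along $\Burn(f^*)$ then defines $\Spectra^{\gen \cB'_0} \ra \Spectra^{\gen \cB'_1}$, functorially in $f$. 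This packages into a functor $\Spectra^{\gen(-)} : \Spaces \ra \Cat$.

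Next I would identify, on a single fiber, the functor thereby attached to the quotient map $\BG \ra \sB Q$ (the one induced by the quotient homomorphism $G \ra Q = G/K$) with the classical genuine $K$-fixedpoints functor $\Spectra^{\gen G} \ra \Spectra^{\gen Q}$. Along this map $f^*$ is the inflation functor, sending a $Q$-set, viewed as a functor $\sB Q \ra \Fin$, to the $G$-set obtained by precomposing with $\BG \ra \sB Q$; and restriction of Mackey functors along $\Burn(\inf)$ is exactly $(-)^K$, since $(M^K)(Q/L) \simeq M(G/\bar L)$ for $\bar L \leq G$ the preimage of $L \leq Q$ (and similarly for restrictions and transfers). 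In the spectral-Mackey-functor model of \cite{Bar-Mack,GM-gen} this is essentially a matter of definition, and I expect this fiberwise identification to be the only step whose content is more than bookkeeping.

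With that in hand, the rest is assembly. Since $\cE_0 \da \cB$ and $\cE_1 \da \cB$ are left fibrations, the given functor $\cE_0 \ra \cE_1$ over $\cB$ automatically preserves cocartesian morphisms, so it straightens to a morphism in $\Fun(\cB,\Spaces)$, i.e.\ to a functor $\cB \ra \Fun([1],\Spaces)$ whose value over each $b \in \cB$ is the quotient map $\cE_{0|b} \ra \cE_{1|b}$. Postcomposing with $\Spectra^{\gen(-)} : \Spaces \ra \Cat$ yields a functor $\cB \ra \Fun([1],\Cat)$, i.e.\ a morphism in $\Fun(\cB,\Cat)$ from $b \mapsto \Spectra^{\gen \cE_{0|b}}$ to $b \mapsto \Spectra^{\gen \cE_{1|b}}$ whose component over each $b$ is $(-)^K$. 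By \Cref{notation.for.param.gen.and.naive.spt} the source and target of this morphism are precisely the straightenings of $\Spectra^{\gen \cE_0|\cB} \da \cB$ and $\Spectra^{\gen \cE_1|\cB} \da \cB$, so unstraightening it produces the desired functor $\Spectra^{\gen \cE_0|\cB} \ra \Spectra^{\gen \cE_1|\cB}$ over $\cB$, restricting on each fiber to genuine $K$-fixedpoints. The main obstacle is the fiberwise identification of the previous paragraph; everything else is formal manipulation of straightening/unstraightening together with the functoriality of $\Burn$ and of $\Fun^\oplus(-,\Spectra)$.
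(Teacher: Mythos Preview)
The paper states this as an observation without any accompanying proof; it is treated as immediate from the coordinate-free construction of $\Spectra^{\gen \cB'}$ in the preceding observation. Your proposal is a correct and complete elaboration of the implicit argument: you make explicit the covariant functoriality of $\cB' \mapsto \Spectra^{\gen \cB'}$ in maps of spaces (via the pullback-preserving restriction $f^*$ on $\cB'\Fin$, passage to Burnside categories, and restriction of additive functors), identify the functor induced by $\BG \to \sB Q$ with genuine $K$-fixedpoints, and then assemble these fiberwise functors over $\cB$ by straightening/unstraightening. This is exactly the natural justification the paper's observation is gesturing toward, so there is nothing to critique or compare---you have simply written out what the authors left tacit.
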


\begin{proof}[Proof of \Cref{proto.tate}]
We work in the straightened context: we define functors
\[
\llax(\GSpan)
\xra{\Fh}
\Cat
\qquad
\textup{and}
\qquad
\llax(\GSpan)
\xra{\Ftau}
\Cat
\]
(the former of which factors through the localization $\llax(\GSpan) \ra \GSpan$), as well as a right-lax natural transformation between them.\footnote{This is in essence only a cosmetic simplification, which reduces us to describing our various fibrational structures only on cocartesian morphisms over $\GSpan$: it is a straightforward exercise to translate the constructions we give here into the fibrational language of \Cref{section.lax.actions.and.limits}.}  By \Cref{left.laxification.is.left.kan.extended}, it suffices to work at the level of $[n]$-points of $\GSpan$.  Each functor $[n] \ra \GSpan$ will determine a certain commutative diagram of $\infty$-categories, and the corresponding composite functors
\[
\llax([n])
\longra
\llax(\GSpan)
\xra{\Fh}
\Cat
\qquad
\textup{and}
\qquad
\llax([n])
\longra
\llax(\GSpan)
\xra{\Ftau}
\Cat
\]
as well as the right-lax natural transformation between them will all be defined in terms of this commutative diagram.  We describe the situation for $n \leq 2$: the general case is no different, but the combinatorics are more cumbersome.

We begin by describing these commutative diagrams.
\begin{enumerate}

\setcounter{enumi}{-1}

\item A $[0]$-point of $\GSpan$ is the data of an $\infty$-category $\cB_0$.  This is associated to the (tautologically commutative) diagram
\begin{equation}
\label{diagram.for.brax.zero.points.of.GSpan}
\Fun(\cB_0,\Spectra)
\end{equation}
of $\infty$-categories.

\item A $[1]$-point of $\GSpan$ in the data of a span \Cref{morphism.in.GSpan}.  This is associated to the (tautologically commutative) diagram
\begin{equation}
\label{diagram.for.brax.one.points.of.GSpan}
\begin{tikzcd}[row sep=2cm]
&
\Gamma \left( \Spectra^{\htpy \cB_{01}|\cB_1} \right)
\arrow[hook]{r}{\beta_G}
&
\Gamma \left( \Spectra^{\gen \cB_{01}|\cB_1} \right)
\arrow{rd}{(-)^G}
\\
\Fun(\cB_0 , \Spectra)
\arrow{ru}
&
&
&
\Fun(\cB_1,\Spectra)
\end{tikzcd}
\end{equation}
of $\infty$-categories.  Note that the composite of the latter two functors is the (fiberwise) homotopy fixedpoints functor
\[
(-)^{\htpy G}
:
\Gamma \left( \Spectra^{\htpy \cB_{01}|\cB_1} \right)
\xra{\beta_G}
\Gamma \left( \Spectra^{\gen \cB_{01}|\cB_1} \right)
\xra{(-)^G}
\Fun(\cB_1,\Spectra)
~.
\]

\item A $[2]$-point of $\GSpan$ is the data of a diagram
\begin{equation}
\label{brax.two.point.of.GSpan}
\begin{tikzcd}
&
&
\cB_{02}
\arrow{ld}
\arrow{rd}{\sB K\dKan}
\\
&
\cB_{01}
\arrow{ld}
\arrow{rd}{\sB K\dKan}
&
&
\cB_{12}
\arrow{ld}
\arrow{rd}{\sB Q\dKan}
\\
\cB_0
&
&
\cB_1
&
&
\cB_2
\end{tikzcd}
\end{equation}
of $\infty$-categories with fibrational properties as indicated and in which the square is a pullback; the composite forwards map $\cB_{02} \ra \cB_{12} \ra \cB_2$ is then a $\BG$-Kan fibration, for some short exact sequence
\[
K
\longhookra
G
\longrsurj
Q
\]
of finite groups (incarnated as a fiber sequence
\[
\sB K
\longra
\BG
\longra
\sB Q
\]
of groupoids).  This is associated to the diagram of $\infty$-categories depicted in \Cref{comm.diagram.for.brax.two.point.of.GSpan}, which commutes since the diagram
\[ \begin{tikzcd}[column sep=0cm]
&
\Gamma \left( \Spectra^{\htpy \cB_{02} | \cB_2} \right)
\arrow{rd}
\\
\Gamma \left( \Spectra^{\htpy \cB_{01} | \cB_1} \right)
\arrow{ru}
\arrow{rd}
&
&
\Gamma \left( \Spectra^{\htpy \cB_{12} | \cB_2} \right)
\\
&
\Fun(\cB_1,\Spectra)
\arrow{ru}
\end{tikzcd} \]
commutes, where both downwards functors are given by right Kan extension: these right Kan extensions are both along $\sB K$-Kan fibrations, so are given by fiberwise homotopy $K$-fixedpoints.
\begin{sidewaysfigure}
\vspace{400pt}
\begin{equation}
\label{diagram.for.brax.two.points.of.GSpan}
\hspace{-50pt}
\begin{tikzcd}[row sep=2cm]
&
&
\Gamma \left( \Spectra^{\htpy \cB_{02}|\cB_2} \right)
\arrow[hook]{r}{\beta_G}
&
\Gamma \left( \Spectra^{\gen \cB_{02}|\cB_2} \right)
\arrow{rrd}{(-)^K}
\\
&
\Gamma \left( \Spectra^{\htpy \cB_{01}|\cB_1} \right)
\arrow[hook]{r}{\beta_K}
\arrow{ru}
&
\Gamma \left( \Spectra^{\gen \cB_{01}|\cB_1} \right)
\arrow{rd}{(-)^K}
&
&
\Gamma \left( \Spectra^{\htpy \cB_{12}|\cB_2} \right)
\arrow[hook]{r}{\beta_Q}
&
\Gamma \left( \Spectra^{\gen \cB_{12} | \cB_2} \right)
\arrow{rd}{(-)^Q}
\\
\Fun(\cB_0 , \Spectra)
\arrow{ru}
&
&
&
\Fun(\cB_1,\Spectra)
\arrow{ru}
&
&
&
\Fun(\cB_2,\Spectra)
\end{tikzcd}
\end{equation}
\vspace{2.5cm}
\caption{The commutative diagram of $\infty$-categories corresponding to the $[2]$-point \Cref{brax.two.point.of.GSpan} of $\GSpan$.}
\label{comm.diagram.for.brax.two.point.of.GSpan}
\end{sidewaysfigure}
\end{enumerate}

We now construct the various asserted data, cycling through those associated to the three asserted objects in turn: the left $\GSpan$-module $\Fh$, the left-lax left $\GSpan$-module $\Ftau$, and the right-lax $\GSpan$-equivariant functor from the former to the latter.  The various simplicial structure maps that relate these data will be evident from their constructions.

\begin{enumerate}

\setcounter{enumi}{-1}

\item [$(0)_\htpy$]

The functor $\Fh$ takes the $[0]$-point of $\GSpan$ corresponding to the $\infty$-category $\cB_0$ to the functor
\[
[0]
\longra
\Cat
\]
selecting the $\infty$-category $\Fun(\cB_0,\Spectra)$, i.e.\! the unique $\infty$-category in the diagram \Cref{diagram.for.brax.zero.points.of.GSpan}.

\item [$(0)_\tate$]

The functor $\Ftau$ takes the $[0]$-point of $\GSpan$ corresponding to the $\infty$-category $\cB_0$ to the functor
\[
[0]
\simeq
\llax([0])
\longra
\Cat
\]
selecting the $\infty$-category $\Fun(\cB_0,\Spectra)$, i.e.\! the unique $\infty$-category in the diagram \Cref{diagram.for.brax.zero.points.of.GSpan}.

\item[$(0)_{\htpy \ra \tate}$]

The right-lax natural transformation
\[
\Fh
\xra{\htpy \ra \tate}
\Ftau
\]
takes the $[0]$-point of $\GSpan$ corresponding to the $\infty$-category $\cB_0$ to the morphism in the $\infty$-category
\[
\Fun^\rlax(\llax([0]),\Cat)
\simeq
\Fun^\rlax([0],\Cat)
\simeq
\Cat_{\cart/[0]^\op}
\simeq
\Cat
\]
given by the identity functor on the $\infty$-category $\Fun(\cB_0,\Spectra)$.

\item [$(1)_\htpy$]

The functor $\Fh$ takes the $[1]$-point of $\GSpan$ corresponding to the span \Cref{morphism.in.GSpan} to the functor
\[
[1]
\longra
\Cat
\]
selecting composite functor \Cref{diagram.for.brax.one.points.of.GSpan}.  However, we will describe this in another way.  Namely, the diagram \Cref{diagram.for.brax.one.points.of.GSpan} classifies a cocartesian fibration over its indexing diagram (namely the category $[3]$), and each object of $\Fun(\cB_0,\Spectra)$ determines a unique cocartesian section thereof, whose value at the object $3 \in [3]$ selects its target in $\Fun(\cB_1,\Spectra)$.  We depict a typical such a cocartesian section by the diagram
\[ \begin{tikzcd}[row sep=1.25cm]
&
Y
\arrow[rightsquigarrow]{r}
&
\beta_G Y
\arrow[rightsquigarrow]{rd}
\\
X
\arrow[rightsquigarrow]{ru}
&
&
&
Y^{\htpy G}
\end{tikzcd}~, \]
in which the squiggly arrows denote cocartesian morphisms.  Throughout the proof, we will use the convention that squiggly arrows denote cocartesian morphisms.  We will also continue to use the convention that we simply write $Y$ for the pullback of $X$ (and similarly for $Z$ below), rather than introducing more notation.

\item [$(1)_\tate$]

Given the $[1]$-point of $\GSpan$ corresponding to the span \Cref{morphism.in.GSpan}, observe that each object $X \in \Fun(\cB_0,\Spectra)$ determines a unique diagram
\begin{equation}
\label{limit.point.for.F.tau.on.brax.one.point}
\begin{tikzcd}[row sep=1.25cm]
&
Y
\arrow[rightsquigarrow]{r}
&
\beta_G Y
\arrow{d}[description]{\locL_{\Phi^G}}
\\
&
&
\locL_{\Phi^G} \beta_G Y
\arrow[rightsquigarrow]{rd}
\\
X
\arrow[rightsquigarrow]{ruu}
&
&
&
Y^{\tate G}
\end{tikzcd}
\end{equation}
in the cocartesian fibration over $[3]$ classified by the diagram \Cref{diagram.for.brax.one.points.of.GSpan} in which the straight vertical arrow, a morphism in
\[
\Gamma \left( \Spectra^{\gen \cB_{01} | \cB_1} \right)~,
\]
is given in each fiber (a copy of $\Spectra^{\gen G}$) by the indicated localization (namely that of \Cref{obs.genzd.tate}, whose genuine fixedpoints compute geometric fixedpoints (leaving the fully faithful inclusion implicit)).  Throughout the proof, we will use the convention that straight arrows interrupted by labels denote morphisms which are required to be given fiberwise by the indicated localization.  Then, using the notation in diagram \Cref{limit.point.for.F.tau.on.brax.one.point}, we declare that the composite functor
\[
[1]
\simeq
\llax([1])
\longra
\llax(\GSpan)
\xra{\Ftau}
\Cat
\]
selects the functor
\[ \begin{tikzcd}[row sep=0cm]
\Fun(\cB_0,\Spectra)
\arrow{r}
&
\Fun(\cB_1,\Spectra)
\\
\rotatebox{90}{$\in$}
&
\rotatebox{90}{$\in$}
\\
X
\arrow[maps to]{r}
&
Y^{\tate G}
\end{tikzcd}~. \]

\item [$(1)_{\htpy \ra \tate}$]

Given the $[1]$-point of $\GSpan$ corresponding to the span \Cref{morphism.in.GSpan}, observe that each object $X \in \Fun(\cB_0,\Spectra)$ determines a unique commutative diagram in the cocartesian fibration over $[3]$ classified by diagram \Cref{diagram.for.brax.one.points.of.GSpan} of the form
\begin{equation}
\label{limit.point.for.h.to.tau.on.brax.one.point}
\begin{tikzcd}[row sep=1.25cm]
&
Y
\arrow[rightsquigarrow]{r}
&
\beta_G Y
\arrow{d}[description]{\locL_{\Phi^G}}
\arrow[rightsquigarrow]{rd}
\\
&
&
\locL_{\Phi^G} \beta_G Y
\arrow[rightsquigarrow]{rd}
&
Y^{\htpy G}
\arrow{d}
\\
X
\arrow[rightsquigarrow]{ruu}
&
&
&
Y^{\tate G}
\end{tikzcd}~.
\end{equation}
(Since the square commutes, the lower straight vertical morphism must be given fiberwise by the canonical one of \Cref{obs.genzd.tate}.)  Then, we declare that the right-lax natural transformation
\[
\Fh
\xra{\htpy \ra \tate}
\Ftau
\]
takes this $[1]$-point of $\GSpan$ to the morphism in the $\infty$-category
\[
\Fun^\rlax(\llax([1]),\Cat)
\simeq
\Fun^\rlax([1],\Cat)
\simeq
\Cat_{\cart/[1]^\op}
\]
corresponding to the lax-commutative square
\[ \begin{tikzcd}[row sep=1.5cm, column sep=1.5cm]
\Fun(\cB_0,\Spectra)
\arrow{r}{X \mapsto Y^{\htpy G}}[swap, transform canvas={yshift=-0.75cm}]{\rotatebox{-135}{$\Rightarrow$}}
\arrow{d}[sloped, anchor=north]{\sim}
&
\Fun(\cB_1,\Spectra)
\arrow{d}[sloped, anchor=south]{\sim}
\\
\Fun(\cB_0,\Spectra)
\arrow{r}[swap]{X \mapsto Y^{\tate G}}
&
\Fun(\cB_1,\Spectra)
\end{tikzcd} \]
in $\Cat$ determined by the (natural) lower straight vertical morphism in diagram \Cref{limit.point.for.h.to.tau.on.brax.one.point}.

\item [$(2)_\htpy$]

Given the $[2]$-point of $\GSpan$ corresponding to the diagram \Cref{brax.two.point.of.GSpan}, observe that each object $X \in \Fun(\cB_0,\Spectra)$ determines a unique commutative diagram
\begin{equation}
\label{limit.point.for.F.h.on.brax.two.point}
\begin{tikzcd}[row sep=1.25cm]
&
&
Z
\arrow[rightsquigarrow]{r}
&
\beta_G Z
\arrow[rightsquigarrow]{rrd}
\\
&
Y
\arrow[rightsquigarrow]{ru}
\arrow[rightsquigarrow]{r}
&
\beta_K Y
\arrow[rightsquigarrow]{rd}
&
&
Z^{\htpy K}
\arrow[bend left, crossing over, rightsquigarrow]{rr}
&
(\beta_G Z)^K
\arrow{r}{\sim}
\arrow[rightsquigarrow]{rd}
&
\beta_Q \left( Z^{\htpy K} \right)
\arrow[rightsquigarrow]{rd}
\\
X
\arrow[rightsquigarrow]{ru}
&
&
&
Y^{\htpy K}
\arrow[rightsquigarrow]{ru}
&
&
&
Z^{\htpy G}
\arrow{r}{\sim}
&
\left( Z^{\htpy K} \right)^{\htpy Q}
\end{tikzcd}
\end{equation}
in the cocartesian fibration classified by diagram \Cref{diagram.for.brax.two.points.of.GSpan}.  Then, we declare that the composite functor
\[
[2]
\longra
\GSpan
\xra{\Fh}
\Cat
\]
selects the diagram
\[ \begin{tikzcd}[row sep=2cm]
&
\Fun(\cB_1,\Spectra)
\arrow{rd}[sloped, pos=0.3]{Y \mapsto Z^{\htpy Q}}
\\
\Fun(\cB_0,\Spectra)
\arrow{ru}[sloped, pos=0.7]{X \mapsto Y^{\htpy K}}
\arrow{rr}[swap]{X \mapsto Z^{\htpy G}}
&
&
\Fun(\cB_2,\Spectra)
\end{tikzcd}~, \]
which commutes via the data of the (natural) lower horizontal equivalence in diagram \Cref{limit.point.for.F.h.on.brax.two.point}.

\item [$(2)_\tate$]

Given the $[2]$-point of $\GSpan$ corresponding to the diagram \Cref{brax.two.point.of.GSpan}, observe that each object $X \in \Fun(\cB_0,\Spectra)$ determines a unique commutative diagram
\begin{equation}
\label{limit.point.for.F.tau.on.brax.two.point}
\begin{tikzcd}[row sep=1.25cm]
&
&
Z
\arrow[rightsquigarrow]{r}
&
\beta_G Z
\arrow{d}
\\
&
&
&
\locL_{\Phi^K} \beta_G Z
\arrow{d}
\arrow[rightsquigarrow]{rrd}
\\
&
Y
\arrow[rightsquigarrow]{r}
\arrow[rightsquigarrow]{ruu}
&
\beta_K Y
\arrow{d}
&
\locL_{\Phi^G} \beta_G Z
\arrow[rightsquigarrow]{rrd}
&
Z^{\tate K}
\arrow[crossing over, bend left, rightsquigarrow]{rr}
&
\Phi^K \beta_G Z
\arrow{d}[description]{\locL_{\Phi^Q}}
\arrow{r}[description]{\beta_Q U_Q}
&
\beta_Q Z^{\tate K}
\arrow{d}[description]{\locL_{\Phi^Q}}
\\
&
&
\locL_{\Phi^K} \beta_K Y
\arrow[rightsquigarrow]{rd}
&
&
&
L_{\Phi^Q} \Phi^K \beta_G Z
\arrow{r}
\arrow[rightsquigarrow]{rd}
&
L_{\Phi^Q} \beta_Q Z^{\tate K}
\arrow[rightsquigarrow]{rd}
\\
X
\arrow[rightsquigarrow]{ruu}
&
&
&
Y^{\tate K}
\arrow[rightsquigarrow, crossing over]{ruu}
&
&
&
Z^{\tate G}
\arrow{r}
&
\left( Z^{\tate K} \right)^{\tate Q}
\end{tikzcd}
\end{equation}
in the cocartesian fibration classified by diagram \Cref{diagram.for.brax.two.points.of.GSpan}.  Then, we declare that the composite functor
\[
\llax([2])
\longra
\llax(\GSpan)
\xra{\Ftau}
\Cat
\]
selects the diagram
\[ \begin{tikzcd}[row sep=2cm]
&
\Fun(\cB_1,\Spectra)
\arrow{rd}[sloped, pos=0.3]{Y \mapsto Z^{\tate Q}}
\\
\Fun(\cB_0,\Spectra)
\arrow{ru}[sloped, pos=0.7]{X \mapsto Y^{\tate K}}
\arrow{rr}[transform canvas={yshift=0.8cm}]{\rotatebox{90}{$\Rightarrow$}}[swap]{X \mapsto Z^{\tate G}}
&
&
\Fun(\cB_2,\Spectra)
\end{tikzcd}~, \]
which lax-commutes via the data of the (natural) bottom horizontal morphism in diagram \Cref{limit.point.for.F.tau.on.brax.two.point}.

\item [$(2)_{\htpy \ra \tate}$]

Given the $[2]$-point of $\GSpan$ corresponding to the diagram \Cref{brax.two.point.of.GSpan}, observe that each object $X \in \Fun(\cB_0,\Spectra)$ determines a unique commutative diagram
\begin{equation}
\label{limit.point.for.h.to.tau.on.brax.two.point}
\begin{tikzcd}[row sep=1.25cm]
&
&
Z
\arrow[rightsquigarrow]{r}
&
\beta_G Z
\arrow{d}
\arrow[rightsquigarrow]{rrd}
\\
&
&
&
\locL_{\Phi^K} \beta_G Z
\arrow{d}
\arrow[rightsquigarrow]{rrd}
&
Z^{\htpy K}
\arrow[bend left, crossing over, rightsquigarrow]{rr}
&
(\beta_Q Z)^K
\arrow{r}{\sim}
\arrow{d}
\arrow[rightsquigarrow]{rddd}
&
\beta_Q \left( Z^{\htpy K} \right)
\arrow{d}
\arrow[rightsquigarrow]{rddd}
\\
&
Y
\arrow[rightsquigarrow]{r}
\arrow[rightsquigarrow]{ruu}
&
\beta_K Y
\arrow{d}
\arrow[rightsquigarrow]{rdd}
&
\locL_{\Phi^G} \beta_G Z
\arrow[rightsquigarrow]{rrd}
&
Z^{\tate K}
\arrow[crossing over, bend left, rightsquigarrow]{rr}
\arrow[crossing over, leftarrow]{u}
&
\Phi^K \beta_G Z
\arrow{d}[description]{\locL_{\Phi^Q}}
\arrow{r}[description]{\beta_Q U_Q}
&
\beta_Q Z^{\tate K}
\arrow{d}[description]{\locL_{\Phi^Q}}
\\
&
&
\locL_{\Phi^K} \beta_K Y
\arrow[rightsquigarrow]{rdd}
&
&
&
L_{\Phi^Q} \Phi^K \beta_G Z
\arrow{r}
\arrow[rightsquigarrow]{rdd}
&
L_{\Phi^Q} \beta_Q Z^{\tate K}
\arrow[rightsquigarrow]{rdd}
\\
&
&
&
Y^{\htpy K}
\arrow[crossing over, rightsquigarrow, bend left=6]{ruuu}
&
&
&
Z^{\htpy G}
\arrow[crossing over]{r}[swap]{\sim}
\arrow{d}
&
\left( Z^{\htpy K} \right)^{\htpy Q}
\arrow{d}
\\
X
\arrow[rightsquigarrow]{ruuu}
&
&
&
Y^{\tate K}
\arrow[rightsquigarrow, crossing over]{ruuu}
\arrow[leftarrow]{u}
&
&
&
Z^{\tate G}
\arrow{r}
&
\left( Z^{\tate K} \right)^{\tate Q}
\end{tikzcd}
\end{equation}
in the cocartesian fibration classified by diagram \Cref{diagram.for.brax.two.points.of.GSpan}.  Then, we declare that the right-lax natural transformation
\[
\Fh
\xra{\htpy \ra \tate}
\Ftau
\]
takes this $[2]$-point of $\GSpan$ to the morphism in the $\infty$-category
\[
\Fun^\rlax ( \llax([2]),\Cat)
\]
corresponding to the diagram
\[ \begin{tikzcd}[row sep=2cm]
&
&
&
&
\Fun(\cB_1,\Spectra)
\arrow{rd}[sloped, pos=0.3]{Y \mapsto Z^{\tate Q}}
\\
&
\Fun(\cB_1,\Spectra)
\arrow{rrru}[sloped]{\sim}[swap, transform canvas={xshift=-1.5cm, yshift=-1.2cm}]{\rotatebox{0}{$\Rightarrow$}}
&
&
\Fun(\cB_0,\Spectra)
\arrow{rr}[transform canvas={yshift=0.8cm}]{\rotatebox{90}{$\Rightarrow$}}[swap]{X \mapsto Z^{\tate G}}
\arrow{ru}[sloped, pos=0.7]{X \mapsto Y^{\tate K}}
\arrow[leftarrow]{llld}[sloped, swap, pos=0.4]{\sim}
&
&
\Fun(\cB_2,\Spectra)
\\
\Fun(\cB_0,\Spectra)
\arrow{ru}[sloped, pos=0.7]{X \mapsto Y^{\htpy K}}
\arrow{rr}[swap]{X \mapsto Z^{\htpy G}}[transform canvas={yshift=0.8cm}]{\rotatebox{90}{$\Rightarrow$}}
&
&
\Fun(\cB_2,\Spectra)
\arrow[crossing over, leftarrow]{lu}[sloped, pos=0.1]{Y \mapsto Z^{\htpy Q}} 
\arrow{rrru}[sloped, swap]{\sim}[transform canvas={xshift=-2.7cm, yshift=-0.2cm}]{\rotatebox{50}{$\Rightarrow$}}[transform canvas={xshift=-1cm, yshift=0.6cm}]{\rotatebox{50}{$\Rightarrow$}}
\end{tikzcd}~, \]
which lax-commutes via the data described previously along with the (natural) bottom right commutative square in diagram \Cref{limit.point.for.h.to.tau.on.brax.two.point} (filling in the 3-cell). \qedhere
\end{enumerate}
\end{proof}

\begin{remark}
\label{remark.no.Tate.package.over.Z}
The proof of the proto Tate package carries through without change after replacing $\Spectra$ by \textit{any} stable $\infty$-category.  However, our construction of the Tate package in \cite{AMR-trace} uses crucially that $\Spectra$ is the stabilization of a \textit{cartesian} symmetric monoidal $\infty$-category: the Tate package arises from the \textit{diagonal package} for $\Spaces$.  Thus, while there exists a proto Tate package for $H\ZZ$-module spectra, it does not induce a Tate package over $H\ZZ$, concordant with the folklore result that $\TC$ cannot be defined in the $\ZZ$-linear setting.
\end{remark}

\begin{notation}
Consider the right action of the commutative monoid $\Nx$ on the circle group $\TT$, wherein the element $r \in \Nx$ acts by the homomorphism
\begin{equation}
\label{homomorphism.from.T.to.itself}
\TT
\xlongla{r}
\TT
\end{equation}
with kernel $\Cyclic_r \leq \TT$.  We write
\[
\WW := \TT \rtimes \Nx
\]
for the resulting semidirect product monoid.  So by definition, we have a pullback square
\[ \begin{tikzcd}
\BT
\arrow{r}
\arrow{d}
&
\BW
\arrow{d}
\\
\pt
\arrow{r}
&
\BN
\end{tikzcd} \]
in which the functor on the right is a right fibration, with cartesian monodromy functor over the morphism $[1] \xra{r} \BN$ given by the functor
\begin{equation}
\label{endomorphism.of.BT}
\BT
\xla{\sB r}
\BT
\end{equation}
obtained by applying $\sB$ to the homomorphism \Cref{homomorphism.from.T.to.itself}.
\end{notation}

\begin{remark}
The $\infty$-category $\BW$ arises most natively through manifold theory, as described in \cite{AMR-fact}.  Namely, can be described as the $\infty$-category whose objects are framed circles and whose morphisms are opposite to framed covering maps.  (The 1-fold covers determine the maximal subgroupoid $\BT \subset \BW$.)
\end{remark}

\begin{construction}
\label{construction.functor.to.GSpan}
We take the cartesian dual of the cartesian fibration
\[
(\BW \da \BN) \in \Cart_\BN
~.
\]
The resulting cocartesian fibration
\[
\left( \BW^\cartdual \da \BNop \right) \in \coCart_\BNop
\]
is classified by a functor
\[
\BNop
\longra
\Cat
\]
which sends the unique object of $\BNop$ to the space $\BT$ and sends the morphism $[1] \xra{r^\circ} \BNop$ to the endomorphism \Cref{endomorphism.of.BT}.  This is a $\BG$-Kan fibration, so determines a functor
\begin{equation}
\label{functor.from.BNop.to.GSpan}
\BNop
\longra
\GSpan
\end{equation}
(landing in the subcategory on those morphisms whose backwards functors are equivalences).
\end{construction}

\begin{observation}
It follows from the definitions that the pullback of the left-lax left $\GSpan$-module
\[
\Ftau
\in
\LMod_{\llax.\GSpan}
\]
of the proto Tate package (\Cref{proto.tate}) along the functor \Cref{functor.from.BNop.to.GSpan} is precisely the left-lax right $\BN$-module
\[
\SphTt
\in
\LMod_{\llax.\BNop}
\simeq
\RMod_{\llax.\BN}
~.
\]
\end{observation}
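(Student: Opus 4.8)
The plan is to compute both sides directly from their definitions and check that they agree as objects of $\loc.\coCart_\BNop$. First I would unwind the functor \Cref{functor.from.BNop.to.GSpan}: by \Cref{construction.functor.to.GSpan} it carries the unique object of $\BNop$ to the space $\BT$, and it carries the morphism $[1] \xra{r^\circ} \BNop$ to the morphism of $\GSpan$ whose underlying span has both feet $\BT$, whose backward leg is the identity of $\BT$, and whose forward leg is the map $\sB r : \BT \to \BT$ of \Cref{endomorphism.of.BT} -- which, by the short exact sequence $\Cyclic_r \hookra \TT \xra{r} \TT$, is a Kan fibration with fiber $\sB\Cyclic_r$, hence (as $\Cyclic_r$ is finite) a legitimate forward leg for a morphism of $\GSpan$. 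On higher simplices, a composite $r_1^\circ \cdots r_n^\circ = (r_1 \cdots r_n)^\circ$ is carried to the span whose forward leg is the factorization of $\sB(r_1 \cdots r_n)$ as the tower of $\sB\Cyclic_{r_i}$-Kan fibrations $\sB r_i$; this is a diagram of the shape \Cref{brax.two.point.of.GSpan} (and its higher analogues) with every base equal to $\BT$, realizing the extensions $\Cyclic_{r_i} \hookra \Cyclic_{r_1 \cdots r_i} \longrsurj \Cyclic_{r_1 \cdots r_{i-1}}$.

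Next I would feed these simplices into the explicit construction of $\Ftau$ from the proof of \Cref{proto.tate}. Over the unique object of $\BNop$ one obtains the $\infty$-category $\Fun(\BT,\Spectra) = \Spectra^{\htpy\TT}$, which is the value of $\SphTt$. Over $r^\circ$, by part (2) of \Cref{proto.tate} the cocartesian monodromy of $\Ftau$ along the span above is pullback along the (identity) backward leg followed by the fiberwise-over-$\BT$ generalized Tate construction along $\sB r : \BT \to \BT$; and by case $(1)_\tate$ in the proof of \Cref{proto.tate} the latter is assembled out of $\beta_{\Cyclic_r}$, the geometric-fixedpoints localization $\locL_{\Phi^{\Cyclic_r}}$, the parametrized genuine $\Cyclic_r$-fixedpoints, and $U$. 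Via \Cref{obs.genuine.G.spt.are.natural.in.BG} and the normality of $\Cyclic_r \trianglelefteq \TT$, which identifies the base $\BT$ of $\sB r$ with $\sB(\TT/\Cyclic_r)$, this recipe is precisely the composite
\[
\Spectra^{\htpy\TT} \xra{\beta} \Spectra^{\gen\TT} \xra{\Phi^{\Cyclic_r}} \Spectra^{\gen(\TT/\Cyclic_r)} \xra[\sim]{(\TT/\Cyclic_r) \simeq \TT} \Spectra^{\gen\TT} \xra{U} \Spectra^{\htpy\TT}
\]
defining the endofunctor $(-)^{\tate\Cyclic_r}$ in \Cref{define.tate}; and this in turn is the cocartesian monodromy of $\SphTt$ over $r^\circ$, by the construction of $\SphTt$ out of $\SpgTPhi$.

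Finally I would match the left-lax structure. The $[2]$-point associated to $r^\circ$ followed by $s^\circ$ is the diagram \Cref{brax.two.point.of.GSpan} with $K = \Cyclic_r$, $Q = \Cyclic_s$, $G = \Cyclic_{rs}$, and all bases $\BT$; by case $(2)_\tate$ in the proof of \Cref{proto.tate}, $\Ftau$ assigns to it the lax-commutative triangle whose lax structure map $(-)^{\tate\Cyclic_{rs}} \Rightarrow (-)^{\tate\Cyclic_s} \circ (-)^{\tate\Cyclic_r}$ is induced from the canonical equivalence $\Phi^{\Cyclic_{rs}} \simeq \Phi^{\Cyclic_s}\Phi^{\Cyclic_r}$ together with a unit of the adjunction $U \adj \beta$. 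This is exactly the left-lax structure carried by $\SphTt$ -- namely the failure of $(-)^{\tate\Cyclic_r} = U\Phi^{\Cyclic_r}\beta$ to strictly compose, which is induced from the very same data -- and the comparison on higher simplices is identical. The one step here that is more than bookkeeping is the identification, carried out in the second paragraph, of the $\BT$-parametrized genuine equivariant constructions of the proto Tate package with the unparametrized functor $(-)^{\tate\Cyclic_r}$ of \Cref{define.tate}, which rests on $\Cyclic_r$ being normal in $\TT$; granting that, the rest is a direct unwinding of definitions.
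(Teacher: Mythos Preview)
Your proposal is correct and is, in essence, exactly what the paper means when it says the identification ``follows from the definitions'': the paper gives no argument beyond that phrase, so your explicit unwinding of both sides over $[0]$-, $[1]$-, and $[2]$-points of $\BNop$ is already more than is offered there.

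One small sharpening is worth making in your second paragraph. The proto Tate package builds its monodromy out of \emph{parametrized genuine $\Cyclic_r$-spectra} over the target $\BT$ (via the Mackey-functor description of \Cref{obs.genuine.G.spt.are.natural.in.BG}), whereas the definition of $\SphTt$ goes through $\Spectra^{\gen^\proper\TT}$. You assert these yield the same endofunctor $(-)^{\tate\Cyclic_r}$ of $\Spectra^{\htpy\TT}$, attributing this to normality of $\Cyclic_r \trianglelefteq \TT$. That is true, but the cleanest justification is not normality per se: it is that the generalized Tate construction is insensitive to the ambient group, as made precise by \Cref{rmk.genzd.tate.isnt.genuine} (both recipes compute the left Kan extension of $(-)^{\htpy\Cyclic_r}$ along the quotient by the orbits $\{\Sigma^\infty(\Cyclic_r/H)_+\}_{H \lneq \Cyclic_r}$). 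With that in hand, your matching of monodromies and of left-lax structure maps is exactly right.
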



\begin{construction}
\label{constrn.actually.use.proto.tate}
Pulling back the entire proto Tate package (\Cref{proto.tate}) along the functor \Cref{functor.from.BNop.to.GSpan} yields a (strict) right $\BN$-module, which we denote by
\[
\SphTh
\in
\LMod_\BNop
\simeq
\RMod_\BN
~,
\]
as well as a morphism
\begin{equation}
\label{morphism.before.rlax.lims.from.cyclo.spt.w.frob.lifts.to.cyclo.spt}
\SphTh
\longra
\SphTt
\end{equation}
in $\LMod^\rlax_{\llax.\BNop} =: \RMod^\rlax_{\llax.\BN}$.
\end{construction}

\begin{remark}
We will reidentify the object $\SphTh \in \LMod_\BNop$ in simpler terms in \Cref{cor.cart.fibn.to.BN.for.htpy.cyclo.sp}.
\end{remark}


\subsection{Cyclotomic spectra with Frobenius lifts}
\label{subsection.cyclo.spt.w.frob}

Recall that a cyclotomic spectrum $T \in \Cyclo(\Spectra)$ has a cyclotomic structure map
\[
T
\xlongra{\sigma_r}
T^{\tate \Cyclic_r}
\]
for each $r \in \Nx$; this may be thought of as a \textit{Tate-valued Frobenius} map \cite{NS}.  On the other hand, it is also possible for a cyclotomic spectrum to have an honest Frobenius endomorphism: a \textit{Frobenius lift} of the cyclotomic structure map $\sigma_r$ is a (suitably equivariant) lift
\[ \begin{tikzcd}[row sep=1.5cm, column sep=1.5cm]
T
\arrow[dashed]{r}{\tilde{\sigma}_r}
\arrow{rd}[swap]{\sigma_r}
&
T^{\htpy \Cyclic_r}
\arrow{r}
\arrow{d}
&
T
\\
&
T^{\tate \Cyclic_r}
\end{tikzcd}~. \]
Requiring that these lifts be compatible for all $r \in \Nx$, we arrive at the following notion.

\begin{definition}
\label{define.cyclo.spt.with.frob.lifts}
The $\infty$-category of \bit{cyclotomic spectra with Frobenius lifts} is
\[
\Cyclo^\htpy(\Spectra)
:=
\lim^\rlax
\left( \SphTh \racts \BN \right)
~.
\]
\end{definition}

\begin{notation}
In order to emphasize the contrast with $\Cyclo^\htpy(\Spectra)$, we may write
\[
\Cyclo^\tate(\Spectra)
:=
\Cyclo(\Spectra)
\]
for the $\infty$-category of cyclotomic spectra.
\end{notation}

\begin{observation}
\label{obs.forget.frob.lifts}
Applying the right-lax limit functor
\[
\RMod^\rlax_{\llax.\BN}
\xra{\lim^\rlax}
\Cat
\]
to the morphism \Cref{morphism.before.rlax.lims.from.cyclo.spt.w.frob.lifts.to.cyclo.spt} defines a functor
\[
\Cyclo^\htpy(\Spectra)
\longra
\Cyclo^\tate(\Spectra)
=:
\Cyclo(\Spectra)
\]
which forgets the Frobenius lifts.
\end{observation}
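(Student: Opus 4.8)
The plan is to observe that this is a formal consequence of the functoriality of the right-lax limit constructed in \Cref{section.lax.actions.and.limits}, followed by a bookkeeping verification that the induced functor acts on objects in the claimed way.

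First I would recall that, after the domain extension of \Cref{define.rest.of.equivariant.functors} and \Cref{obs.that.defn.of.rlax.lim.extends}, the right-lax limit is an honest functor $\RMod^\rlax_{\llax.\BN} \xra{\lim^\rlax} \Cat$, realized concretely as the composite of the right-lax Yoneda embedding $\Yo^\rlax \colon \RMod^\rlax_{\llax.\BN} \hookra \Fun((\bDelta_{/\BNop})^\op,\Cat)$ with $\lim$. Since the morphism \Cref{morphism.before.rlax.lims.from.cyclo.spt.w.frob.lifts.to.cyclo.spt} lies in $\RMod^\rlax_{\llax.\BN}$ by \Cref{constrn.actually.use.proto.tate}, applying $\lim^\rlax$ produces a functor between right-lax limits; its source is $\lim^\rlax(\SphTh \racts \BN) =: \Cyclo^\htpy(\Spectra)$ by \Cref{define.cyclo.spt.with.frob.lifts}, and its target is $\lim^\rlax(\SphTt \racts \BN) =: \Cyclo^\tate(\Spectra) = \Cyclo(\Spectra)$. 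This already produces the desired functor, so it only remains to identify its effect.

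For the identification I would unwind the section model of \Cref{define.rest.of.limits} (and \Cref{define.Yo.rlax}): an object of $\Cyclo^\htpy(\Spectra)$ is a cocartesian-morphism-preserving section of $\SphTh$ over $\sd(\BNop) \xra{\max} \BNop$, which records a homotopy $\TT$-spectrum $T$ (its value at the empty word), the Frobenius lifts $\tilde{\sigma}_r \colon T \to T^{\htpy \Cyclic_r}$ (its values at length-one words $(r)$), and all the higher coherences appearing in \Cref{remark.naive.cyclo.structure}. The morphism \Cref{morphism.before.rlax.lims.from.cyclo.spt.w.frob.lifts.to.cyclo.spt} acts on such sections by postcomposition, and by part (3) of the proto Tate package (\Cref{proto.tate}), pulled back along the functor \Cref{functor.from.BNop.to.GSpan}, it is the identity on fibers and its cocartesian monodromy over $[1] \xra{r} \BN$ is precisely the comparison transformation $(-)^{\htpy \Cyclic_r} \to (-)^{\tate \Cyclic_r}$ of \Cref{remark.genzd.tate}. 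Hence postcomposition replaces each $\tilde{\sigma}_r$ by the composite $T \xra{\tilde{\sigma}_r} T^{\htpy \Cyclic_r} \to T^{\tate \Cyclic_r}$, which is exactly the cyclotomic structure map $\sigma_r$, and similarly on all higher data since \Cref{morphism.before.rlax.lims.from.cyclo.spt.w.frob.lifts.to.cyclo.spt} is right-lax $\BN$-equivariant; so the functor is precisely ``forget the Frobenius lifts.''

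The only genuine obstacle is this last step: carefully tracking how the components of a right-lax equivariant morphism of left-lax $\BN$-modules act on the $\sd(\BNop)$-indexed sections that comprise the right-lax limit, and matching the outcome against the informal description of composing a lift with $T^{\htpy \Cyclic_r} \to T^{\tate \Cyclic_r}$. Given the formalism of \Cref{section.lax.actions.and.limits} and the explicit description of the transition maps of $\SphTh$ and $\SphTt$ coming from the proto Tate package, this is routine, and no further input is required.
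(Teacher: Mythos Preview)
Your proposal is correct and matches the paper's treatment: the paper records this as an Observation with no proof, since the existence of the functor is immediate from the functoriality of $\lim^\rlax$ on $\RMod^\rlax_{\llax.\BN}$ established in \Cref{section.lax.actions.and.limits}, and the interpretation as ``forgetting Frobenius lifts'' is implicit in the description of the morphism \Cref{morphism.before.rlax.lims.from.cyclo.spt.w.frob.lifts.to.cyclo.spt} via the proto Tate package. Your additional unwinding of the section model over $\sd(\BNop)$ is a welcome elaboration but not something the paper spells out.
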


\begin{remark}
For our purposes here, the main example of a cyclotomic spectrum with Frobenius lifts is a \textit{trivial} cyclotomic spectrum; indeed, we will construct the ``trivial cyclotomic spectrum'' functor in \Cref{subsection.trivial.cyclo.spt} as a composite
\[
\Spectra
\longra
\Cyclo^\htpy(\Spectra)
\longra
\Cyclo(\Spectra)
~.
\]
However, there is another interesting source of cyclotomic spectra with Frobenius lifts, namely the suspension spectrum of an \textit{unstable cyclotomic space}, i.e.\! a functor $\BW \ra \Spaces$.\footnote{This follows immediately from \Cref{cor.cart.fibn.to.BN.for.htpy.cyclo.sp}.}  In turn, the main example of an unstable cyclotomic space is the factorization homology
\[
\THH_\Spaces(\cC)
:=
\int_{S^1} \cC
\]
of a spatially-enriched (i.e.\! unenriched) $\infty$-category.\footnote{As factorization homology is by definition a colimit, we have an identification
\[
\Sigma^\infty_+ \left( \int_{S^1} \cC \right)
\simeq
\int_{S^1} (\fB\Sigma^\infty_+) (\cC)
~,
\]
where (as in \cite[\Cref*{trace:section.cyclo.trace}]{AMR-trace}) we write $(\fB\Sigma^\infty_+) (\cC)$ to denote the $\Spectra$-enriched $\infty$-category obtained from $\cC$ by taking hom-wise suspension spectra.  This provides another (essentially equivalent) source of cyclotomic spectra with Frobenius lifts.}  This relationship is a key ingredient in our construction in \cite{AMR-trace} of the cyclotomic trace.
\end{remark}

We now provide a general result which specializes to identify the cocartesian dual
\[
\left( (\SphTh)^\cocartdual \da \BN \right) \in \Cart_\BN
~.\footnote{As is explained in \cite{AMR-trace}, this general result plays a crucial role in the consideration of enriched factorization homology and its functoriality.}
\]
This identification will allow us to easily construct the functor
\[
\Spectra
\triv^\htpy
\Cyclo^\htpy(\Spectra)
\]
taking a spectrum to the corresponding trivial cyclotomic spectrum with Frobenius lifts in \Cref{subsection.trivial.cyclo.spt}.

\begin{notation}
We fix a cocartesian fibration
\[ (\cE \da \BW) \in \coCart_\BW~, \]
and we write
\[
\begin{tikzcd}
\cE_0
\arrow[hook, two heads]{r}
\arrow{d}
&
\cE
\arrow{d}
\\
\BT
\arrow[hook, two heads]{r}
&
\BW
\end{tikzcd}
\]
for the pullback (or equivalently, the fiber over the unique point in $\BN$).\footnote{Here we'll just be interested in the case that $\cE \xra{\sim} \BW$, but the proof of the general result (\Cref{identify.relFun.as.cart.fibn.with.htpy.fixedpts}) is no more difficult.}
\end{notation}

\begin{construction}
The space of degree-$r$ maps in $\BW$ -- that is, maps that project to the map $[1] \xra{r} \BN$ -- forms a copy of $\BC_r$.  Hence, for an object $e \in \cE_0$, cocartesian pushforward over all these maps simultaneously defines a functor
\[
\pi_r^*(e)
:
\BC_r
\longra
\cE_0
~.
\]
Assembling this construction over all $e \in \cE_0$, we obtain a functor
\[
\pi_r^*
:
\cE_0
\longra
\Fun(\BC_r,\cE_0)
~.
\]
\end{construction}

\begin{lemma}
\label{identify.relFun.as.cart.fibn.with.htpy.fixedpts}
Let $\cV$ be an $\infty$-category admitting homotopy $\Cyclic_r$-fixedpoints for all $r \in \Nx$.  Then, the functor
\begin{equation}
\label{Fun.rel.BN.E.V}
\begin{tikzcd}
\Fun^\rel_{/\BN}(\cE,\ul{\cV})
\arrow{d}
\\
\BN
\end{tikzcd}
\end{equation}
is a cartesian fibration, with cartesian monodromy functor over the morphism $[1] \xra{r} \BN$ given by the formula
\[
\begin{tikzcd}[row sep=0cm]
\Fun(\cE_0,\cV)
&
\Fun(\cE_0,\cV)
\arrow{l}
\\
\rotatebox{90}{$\in$}
&
\rotatebox{90}{$\in$}
\\
(F(\pi_r^*(-)))^{\htpy \Cyclic_r}
&
F
\arrow[mapsto]{l}
\end{tikzcd}~.
\]
\end{lemma}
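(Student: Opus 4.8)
The plan is to verify directly that the functor \Cref{Fun.rel.BN.E.V} is a cartesian fibration by exhibiting its cartesian edges and computing the associated monodromy. First I would unwind what $\Fun^\rel_{/\BN}(\cE,\ul{\cV})$ is: an object over the unique point of $\BN$ is a functor $\cE_0 \to \cV$, and more generally a section over $[n] \to \BN$ is a functor from the total space $\cE_{|[n]}$ of the pulled-back cocartesian fibration to $\cV$. Since $\ul{\cV} = \cV \times \BN$ is a trivial (in fact constant) family, these relative functors are just plain functors out of total spaces. The key structural input is that $\BW \da \BN$ is a right fibration whose cartesian monodromy over $[1] \xra{r} \BN$ is the endomorphism $\sB r \colon \BT \to \BT$, whose ``fiber'' of degree-$r$ maps is a copy of $\BC_r$; combined with $\cE \da \BW$ being cocartesian, this means that over $[1] \xra{r} \BN$ the total space $\cE_{|[1]}$ is a cocartesian fibration over $[1]$ whose source and target fibers are both $\cE_0$ and whose monodromy ``spreads out'' an object $e$ along the $\BC_r$ of degree-$r$ maps, i.e.\ is recorded by the functor $\pi_r^* \colon \cE_0 \to \Fun(\BC_r,\cE_0)$.

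Next I would identify the would-be cartesian lift. Given a functor $F \colon \cE_0 \to \cV$ (the ``target'' object over $[1] \xra{r} \BN$, say) and asking for a cartesian morphism lying over it, the candidate source is the functor $G \colon \cE_0 \to \cV$ defined by $G(-) = (F(\pi_r^*(-)))^{\htpy \Cyclic_r}$; here one uses the hypothesis that $\cV$ admits homotopy $\Cyclic_r$-fixedpoints so that the right-hand side is well-defined (and functorial in the object of $\cE_0$, by functoriality of $\pi_r^*$ and of fixedpoints). The morphism $G \to F$ living over $[1] \xra r \BN$ is built from the canonical map: restricting a functor $\cE_{|[1]} \to \cV$ to the two fibers, the condition of being cartesian over $[1]$ for the fibration \Cref{Fun.rel.BN.E.V} should translate, via the relative adjunction / relative right Kan extension description of $\Fun^\rel_{/\BN}$, into the statement that the section is right Kan extended from the target fiber. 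I would therefore reformulate: a section $[1] \to \Fun^\rel_{/\BN}(\cE,\ul\cV)$ over $[1]\xra r \BN$ is a functor $\cE_{|[1]} \to \cV$, it is cartesian precisely when it is a relative right Kan extension along $\cE_{|[1]} \hookla \cE_0$ (the target fiber inclusion), and the value of such a right Kan extension on the source fiber is computed by the pointwise formula as a limit over the relevant comma category. The comma category in question, for a fixed $e \in \cE_0$ in the source fiber, is exactly the category of degree-$r$ maps out of $e$ together with cocartesian structure — which is $\BC_r$ with $\cE_0$ mapping via $\pi_r^*(e)$ — so the limit is $(F \circ \pi_r^*(e))^{\htpy \Cyclic_r}$, matching the asserted monodromy.

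Concretely the steps are: (1) recall the pullback square relating $\BW \da \BN$ and $\BT$, and extract that $\cE_{|[1]}$ over $[1]\xra r \BN$ is the cocartesian fibration whose monodromy is encoded by $\pi_r^*$; (2) set up the identification of sections of \Cref{Fun.rel.BN.E.V} with functors out of pulled-back total spaces, and observe that cartesian-ness over a simplex is a fiberwise condition because $\ul\cV$ is constant, reducing to the case $[1] \xra r \BN$; (3) prove that a functor $\cE_{|[1]} \to \cV$ determines a cartesian edge iff it is a relative right Kan extension along the target-fiber inclusion $\cE_0 \hookra \cE_{|[1]}$, which is the standard characterization of cartesian morphisms in a $\Fun^\rel$ fibration (cf.\ the cocartesian/cartesian fibration criteria for functor categories, dualized); (4) invoke the pointwise formula for relative right Kan extensions to compute the source fiber value, identifying the relevant slice category with $\BC_r$ acting on $\cE_0$ through $\pi_r^*$, whence the value is $(F(\pi_r^*(-)))^{\htpy\Cyclic_r}$ and the existence of these fixedpoints (the hypothesis on $\cV$) guarantees the right Kan extension exists; (5) check these cartesian edges are stable under composition and that every morphism in the base admits such a lift, completing the verification that \Cref{Fun.rel.BN.E.V} is a cartesian fibration with the stated monodromy. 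The main obstacle I anticipate is step (3)–(4): pinning down precisely which comma category computes the pointwise right Kan extension and checking it is equivalent — compatibly with the $\Cyclic_r$-action — to the copy of $\BC_r$ arising from the degree-$r$ mapping space in $\BW$, since this requires carefully tracking how the right-fibration structure of $\BW \da \BN$ interacts with the cocartesian structure of $\cE \da \BW$ and the functoriality in $e \in \cE_0$. The rest is bookkeeping with standard fibrational lemmas.
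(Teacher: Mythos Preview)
Your proposal is correct and follows essentially the same route as the paper: restrict over $[1]\xra{r}\BN$, identify cartesian morphisms as those sections that are right Kan extended from the target fiber, and compute the pointwise right Kan extension value as a limit over the relevant comma category, which is identified with $\BC_r$. One small sharpening: the full comma category $\cE_{|\{1\}} \times_{\cE_{|[1]}} (\cE_{|[1]})_{e/}$ is not literally $\BC_r$; rather, the paper shows that the subcategory on the morphisms out of $e$ that are \emph{cocartesian over $\BW$} is equivalent to $\BC_r$ and is initial (it admits a right adjoint coming from the cocartesian factorization system on $\cE \da \BW$), which is exactly the point you flag as the main obstacle in steps (3)--(4).
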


\begin{proof}
The functor \Cref{Fun.rel.BN.E.V} restricts to a cartesian fibration over the morphism $[1] \xra{r} \BN$ with cartesian monodromy given by the leftwards composite
\[ \begin{tikzcd}[ampersand replacement=\&, column sep=1.5cm]
\Gamma_{\{0\}} \Cref{Fun.rel.BN.E.V}
\&
\Gamma_{[1]} \Cref{Fun.rel.BN.E.V}
\arrow[transform canvas={yshift=0.9ex}]{r}
\arrow[dashed, leftarrow, transform canvas={yshift=-0.9ex}]{r}[yshift=-0.2ex]{\bot}
\arrow{l}
\&
\Gamma_{\{1\}} \Cref{Fun.rel.BN.E.V}
\end{tikzcd}~, \]
i.e.\! the leftwards composite
\[ \begin{tikzcd}[ampersand replacement=\&, column sep=1.5cm]
\Fun(\cE_{|\{0\}},\cV)
\&
\Fun \left( \cE_{|[1]} , \cV \right)
\arrow[transform canvas={yshift=0.9ex}]{r}
\arrow[dashed, leftarrow, transform canvas={yshift=-0.9ex}]{r}[yshift=-0.2ex]{\bot}
\arrow{l}
\&
\Fun ( \cE_{|\{1\}} , \cV )
\end{tikzcd} \]
in which the right adjoint is given by right Kan extension.  Given an object $F \in \Fun(\cE_{|\{1\}},\cV)$, the value of its image on an object $e \in \cE_{|\{0\}} \subset \cE_{|[1]}$ is therefore given by the limit of the composite
\[
\cE_{|\{1\}} \underset{\cE_{|[1]}}{\times} \left( \cE_{|[1]} \right)_{e/}
\longra
\cE_{|\{1\}}
\xlongra{F}
\cV
~.
\]
Let us write
\[
\cE_{|\{1\}} \underset{\cE_{|[1]}}{\times} \left( \cE_{|[1]} \right)_{e/^{\cocart/\BW}}
\longhookra
\cE_{|\{1\}} \underset{\cE_{|[1]}}{\times} \left( \cE_{|[1]} \right)_{e/}
\]
for the inclusion of the subcategory on those morphisms with source $e \in \cE_{|[1]}$ and target lying in $\cE_{|\{1\}} \subset \cE_{|[1]}$ that are cocartesian over $\BW$.  Note that the source of this inclusion is equivalent to $\BC_r$, incarnated as the space of morphisms in $\BW$ lying over $[1] \xra{r} \BN$ with source the image of $e$ (since each such morphism admits a unique cocartesian lift to $\cE$ with source $e$).  Then, the asserted identification 
of the cartesian monodromy for \Cref{Fun.rel.BN.E.V} follows from the fact that this inclusion is initial, since it admits a right adjoint given by the factorization system on $\cE$ coming from its cocartesian fibration to $\BW$.  Moreover, the fact that it is a cartesian (and not just locally cartesian) fibration follows from the observation that these cartesian monodromies compose.
\end{proof}

\begin{cor}
\label{cor.cart.fibn.to.BN.for.htpy.cyclo.sp}
The cocartesian dual of the cocartesian fibration
\[
(\SphTh \da \BNop)
\in \coCart_\BNop
\]
is the cartesian fibration
\[
\left(
\begin{tikzcd}
\Fun^\rel_{/\BN} \left( \BW,\ul{\Spectra} \right)
\arrow{d}
\\
\BN
\end{tikzcd}
\right)
\in
\Cart_\BN
~. \]
\end{cor}

\begin{proof}
This follows from \Cref{identify.relFun.as.cart.fibn.with.htpy.fixedpts} in the case that $\cE \xra{\sim} \BW$ and $\cV = \Spectra$.
\end{proof}

\subsection{Trivial cyclotomic spectra}
\label{subsection.trivial.cyclo.spt}

In this subsection, we construction the trivial cyclotomic spectrum functor
\[
\Spectra
\xra{\triv}
\Cyclo(\Spectra)
~;
\]
this is given as \Cref{define.triv.cyclo.spt}.

\begin{construction}
Consider the morphism
\begin{equation}
\label{terminal.morphism.from.BW.to.BN.over.BN}
\begin{tikzcd}
\BN
\arrow{rd}[swap, sloped, pos=0.6]{\sim}
&
&
\BW
\arrow{ld}
\arrow{ll}
\\
&
\BN
\end{tikzcd}
\end{equation}
in $\EFib_\BN$.  Applying the functor
\[
\left( \EFib_\BN \right)^\op
\xra{\Fun^\rel_{/\BN} \left( - , \ul{\Spectra} \right)}
\Cat_{/\BN}
\]
to the morphism \Cref{terminal.morphism.from.BW.to.BN.over.BN} yields a morphism
\[ \begin{tikzcd}
\ul{\Spectra}
\arrow{rr}
\arrow{rd}
&
&
\Fun^\rel_{/\BN} \left( \BW , \ul{\Spectra} \right)
\arrow{ld}
\\
&
\BN
\end{tikzcd} \]
in $\Cat_{/\BN}$.  In fact, this is a morphism in $\RMod^\rlax_\BN := \Cat_{\cart/\BN}$, since both maps to $\BN$ are cartesian fibrations -- the former manisfestly and the latter by \Cref{identify.relFun.as.cart.fibn.with.htpy.fixedpts}.  Over the unique object of $\BN$, this is the functor
\[
\Spectra
\xra{\const}
\Fun(\BT,\Spectra)
~,
\]
a left adjoint.  Thus, by (the 2-opposite of) \Cref{lemma.ptwise.radjt.has.ptwise.ladjt}, this provides a canonical morphism
\begin{equation}
\label{llax.map.backwards.from.htpy.cyclo.sp.to.triv.sp}
\ul{\Spectra}
\longla
\Fun^\rel_{/\BN} \left( \BW , \ul{\Spectra} \right)
\end{equation}
in $\RMod^\llax_\BN$ which over the unique object of $\BN$ is the right adjoint
\[
\Spectra
\xla{(-)^{\htpy \TT}}
\Fun(\BT,\Spectra)
~.
\]
But in fact, the morphism \Cref{llax.map.backwards.from.htpy.cyclo.sp.to.triv.sp} is \textit{strictly} equivariant, i.e.\! it lies in the subcategory $\RMod_\BN := \Cart_\BN$, because homotopy fixedpoints compose in the sense that
\[
\left( (-)^{\htpy \Cyclic_r} \right)^{\htpy (\TT/\Cyclic_r)}
\simeq
(-)^{\htpy \TT}
~.
\]
Thus, by \Cref{cor.cart.fibn.to.BN.for.htpy.cyclo.sp}, taking the cartesian dual of the morphism \Cref{llax.map.backwards.from.htpy.cyclo.sp.to.triv.sp} (considered in $\Cart_\BN$) yields a morphism
\begin{equation}
\label{right.adjoint.from.Modh.to.sp}
\begin{tikzcd}
\ul{\Spectra}
\arrow{rd}
&
&
\SphTh
\arrow{ll}
\arrow{ld}
\\
&
\BNop
\end{tikzcd}
\end{equation}
in $\LMod_\BNop := \coCart_\BNop$.  This is again a fiberwise right adjoint, and so by \Cref{lem.get.r.lax.left.adjt} its fiberwise left adjoint assembles into a morphism
\begin{equation}
\label{take.rlax.lim.to.get.triv.cyclo.sp.with.frob.lifts}
\ul{\Spectra}
\longra
\SphTh
\end{equation}
in $\LMod^\rlax_\BNop$.
\end{construction}

\begin{definition}
\label{define.triv.cyclo.spt}
We define the \bit{trivial cyclotomic spectrum with Frobenius lifts} functor to be the composite
\[
\triv^\htpy
:
\Spectra
\xra{\const}
\Fun ( \BNop , \Spectra )
\simeq
\lim^\rlax \left( \ul{\Spectra} \racts \BN \right)
\xra{\lim^\rlax\Cref{take.rlax.lim.to.get.triv.cyclo.sp.with.frob.lifts}}
\lim^\rlax \left( \SphTh \overset{\llax}{\racts} \BN \right)
=:
\Cyclo^\htpy(\Spectra)
~,
\]
where the equivalence follows from \Cref{obs.rlax.lim.of.trivial.action}.  Thereafter, we define the \bit{trivial cyclotomic spectrum} functor to be the composite
\[
\triv
:
\Spectra
\xra{\triv^\htpy}
\Cyclo^\htpy(\Spectra)
\longra
\Cyclo(\Spectra)
~.
\]
\end{definition}

\subsection{Partial adjunctions}
\label{subsection.partial.adjns}

In this subsection, we introduce a formalism which will be useful in our proof of \Cref{mainthm.formula.for.TC} in \Cref{subsection.formula.for.TC.for.real}.

\begin{definition}
\label{defn.of.partial.adjn}
A (\bit{left-})\bit{partial adjunction} is the data of two $\infty$-categories $\cC$ and $\cD$ equipped with full subcategories $\cC_0 \subset \cC$ and $\cD_0 \subset \cD$ and a lax-commutative square
\begin{equation}
\label{the.partial.adjn}
\begin{tikzcd}
\cC_0
\arrow{r}{L}[swap, transform canvas={yshift=-0.3cm}]{\stackrel{\eta}{\Rightarrow}}
\arrow[hook]{d}[swap]{\ff}
&
\cD_0
\arrow[hook]{d}{\ff}
\\
\cC
&
\cD
\arrow{l}{R}
\end{tikzcd}
\end{equation}
such that for all $c \in \cC_0$ and $d \in \cD$, the induced composite map
\[
\hom_\cD(Lc,d)
\xlongra{R}
\hom_\cC(RLc,Rd)
\xlongra{\eta_c^*}
\hom_\cC(c,Rd)
\]
is an equivalence.  We refer to $L$ as the \bit{partial left adjoint} of the partial adjunction and to $R$ as the \bit{right adjoint} of the partial adjunction.
\end{definition}

\begin{observation}
\label{obs.partial.adjns.compose}
Partial adjunctions compose: if in the diagram
\[ \begin{tikzcd}
\cC_0
\arrow{r}{L_1}[swap, transform canvas={yshift=-0.3cm}]{\stackrel{\eta_1}{\Rightarrow}}
\arrow[hook]{d}[swap]{\ff}
&
\cD_0
\arrow[hook]{d}{\ff}
\arrow{r}{L_2}[swap, transform canvas={yshift=-0.3cm}]{\stackrel{\eta_2}{\Rightarrow}}
&
\cE_0
\arrow[hook]{d}{\ff}
\\
\cC
&
\cD
\arrow{l}{R_1}
&
\cE
\arrow{l}{R_2}
\end{tikzcd} \]
the two squares are partial adjunctions, then so is the outer rectangle.
\end{observation}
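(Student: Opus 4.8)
The statement to prove is \Cref{obs.partial.adjns.compose}: that partial adjunctions compose. Here is my plan.

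\medskip

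The plan is to verify the two conditions in \Cref{defn.of.partial.adjn} directly for the outer rectangle. The outer lax-commutative square is formed by pasting: its top horizontal is the composite partial left adjoint $L_2 L_1 \colon \cC_0 \to \cE_0$, its bottom horizontal is the composite right adjoint $R_1 R_2 \colon \cE \to \cC$, its left vertical is the fully faithful inclusion $\cC_0 \hookrightarrow \cC$, its right vertical is $\cE_0 \hookrightarrow \cE$, and its lax structure $2$-morphism $\eta$ is obtained by pasting $\eta_1$ with $R_1 \eta_2 L_1$ (i.e.\ $\eta$ is the composite $\id_{\cC_0 \hookra \cC} \Rightarrow R_1 L_1 \Rightarrow R_1 R_2 L_2 L_1$, where the second $2$-morphism is $R_1$ applied to $\eta_2$ whiskered appropriately by $L_1$, using that the inclusion $\cD_0 \hookra \cD$ lets us regard $L_1$ as landing in $\cD$). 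The left verticals being fully faithful inclusions is immediate since a composite of full subcategory inclusions is one. So the only real content is the hom-equivalence condition.

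\medskip

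First I would fix $c \in \cC_0$ and $e \in \cE$ and unwind the composite map
\[
\hom_\cE(L_2 L_1 c, e)
\xlongra{R_2}
\hom_\cD(R_2 L_2 L_1 c, R_2 e)
\xlongra{(\eta_2)_{L_1 c}^*}
\hom_\cD(L_1 c, R_2 e)
\xlongra{R_1}
\hom_\cC(R_1 L_1 c, R_1 R_2 e)
\xlongra{(\eta_1)_c^*}
\hom_\cC(c, R_1 R_2 e)
~.
\]
The first two arrows compose to an equivalence by the partial-adjunction condition for $(L_2, R_2)$ applied with the object $L_1 c \in \cD_0$ and the object $e \in \cE$ — this is exactly the second displayed condition in \Cref{defn.of.partial.adjn} for the $(\cD,\cE)$ pair. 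Hence $\hom_\cE(L_2 L_1 c, e) \xrightarrow{\sim} \hom_\cD(L_1 c, R_2 e)$. The last two arrows compose to an equivalence by the partial-adjunction condition for $(L_1, R_1)$ applied with the object $c \in \cC_0$ and the object $R_2 e \in \cD$. Composing, the whole map is an equivalence. The one thing to check carefully is that this four-fold composite really is the map appearing in \Cref{defn.of.partial.adjn} for the outer rectangle, i.e.\ that the pasted $2$-cell $\eta$ induces precisely $(\eta_1)_c^* \circ (R_1 \eta_2)^*$ after applying $R_1 R_2$; this is a routine diagram chase using functoriality of whiskering and the interchange law, and I would spell it out only at the level of ``unwinding the definition of the pasting composite.''

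\medskip

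The main (and essentially only) obstacle is bookkeeping: making sure the component of the pasted $2$-morphism $\eta$ at an object $c$ is correctly identified, so that the induced map on hom-spaces factors as the advertised composite of an $R_2$-then-$\eta_2^*$ piece and an $R_1$-then-$\eta_1^*$ piece. Once that identification is in place, the argument is just ``a composite of two equivalences is an equivalence,'' applied twice. I expect no genuine difficulty, which is consistent with this being stated as an \texttt{observation} rather than a \texttt{lemma}; the proof in the paper is likely a one-line pointer to the unwinding above.
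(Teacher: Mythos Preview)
Your proposal is correct and is exactly the routine unwinding one would expect; the paper gives no proof for this observation at all, so your argument is the natural justification the reader is meant to supply.
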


\begin{observation}
\label{obs.partial.adjn.gives.adjn}
Given a partial adjunction \Cref{the.partial.adjn}, a factorization
\[ \begin{tikzcd}
\cC_0
\arrow[hook]{d}[swap]{\ff}
&
\cD_0
\arrow[dashed]{l}[swap]{\tilde{R}}
\arrow[hook]{d}{\ff}
\\
\cC
&
\cD
\arrow{l}{R}
\end{tikzcd} \]
of the restriction $R_{|\cD_0}$ induces an adjunction
\[ \begin{tikzcd}[column sep=2cm, row sep=0cm]
\cC_0
\arrow[transform canvas={yshift=0.9ex}]{r}{L}
\arrow[leftarrow, transform canvas={yshift=-0.9ex}]{r}[yshift=-0.2ex]{\bot}[swap]{\tilde{R}}
&
\cD_0
\end{tikzcd}
~.
\]
\end{observation}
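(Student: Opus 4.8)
The plan is to verify the triangle identities directly, using the defining adjunction equivalence of the partial adjunction to produce the unit and the counit. First I would take the unit to be the natural transformation $\eta \colon \id_{\cC_0} \Rightarrow RL$ of the lax-commutative square \Cref{the.partial.adjn}, reinterpreted as a transformation $\id_{\cC_0} \Rightarrow \tilde R L$ via the factorization $R_{|\cD_0} \simeq \tilde R$ (here I use that the inclusion $\cD_0 \hookra \cD$ is fully faithful, so that $\eta$, whose target $RLc$ already lies in $\cD_0$, factors through $\tilde R L$). The counit $\varepsilon \colon L \tilde R \Rightarrow \id_{\cD_0}$ I would obtain by invoking the partial-adjunction equivalence
\[
\hom_\cD(Lc, d)
\xlongra{\sim}
\hom_\cC(c, Rd)
\]
in the case $c = \tilde R d$ for $d \in \cD_0$: the identity of $\hom_\cC(\tilde R d, \tilde R d) \simeq \hom_\cC(\tilde R d, R d)$ corresponds to a map $L \tilde R d \to d$ in $\cD_0$, and naturality of the equivalence in $d$ (and in $c$) assembles these into the desired transformation.

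Next I would check the two triangle identities. Both reduce, after applying the fully faithful inclusion $\cD_0 \hookra \cD$ and the partial-adjunction equivalence, to the statement that a certain composite in $\hom$-spaces is the identity; each such check is forced by the very definition of $\varepsilon$ as the mate of $\id_{\tilde R d}$ under the equivalence, together with the naturality square of that equivalence and the coherence of $\eta$ with the factorization $R_{|\cD_0} \simeq \tilde R$. Concretely, the identity $(\varepsilon L)\circ(L\eta) = \id_L$ transports across the adjunction equivalence to the tautology $\eta_c^* \circ R(\text{stuff}) = \eta_c^*$ appearing in \Cref{defn.of.partial.adjn}, and the identity $(\tilde R \varepsilon)\circ(\eta \tilde R) = \id_{\tilde R}$ unwinds to the definition of $\varepsilon$ as the adjunct of an identity morphism. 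Rather than spelling out the coherence data by hand, I would phrase the argument as: the equivalences $\hom_{\cD_0}(Lc,d) \simeq \hom_\cD(Lc,d) \simeq \hom_\cC(c,Rd) \simeq \hom_{\cC_0}(c,\tilde R d)$ (the outer two using full faithfulness of the inclusions) are natural in $c \in \cC_0$ and $d \in \cD_0$, which is precisely the definition of an adjunction $L \dashv \tilde R$ between $\cC_0$ and $\cD_0$ (e.g.\ via the characterization of adjunctions by a natural equivalence of $\hom$-functors).

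The main obstacle I expect is purely bookkeeping: making sure the natural equivalence of $\hom$-spaces genuinely lands in $\cD_0$ and $\cC_0$ (i.e.\ that restricting along the fully faithful inclusions is what converts the partial adjunction into an honest one) and that the factorization $\tilde R$ is used consistently wherever $R$ appeared. There is no real homotopy-theoretic content beyond the fact that a fully faithful functor induces equivalences on mapping spaces and that an adjunction is detected by a natural equivalence of represented functors; once those are invoked, the statement follows formally. I would therefore keep the write-up short, citing the $\hom$-space characterization of adjunctions and deducing the claim from \Cref{defn.of.partial.adjn} together with the full faithfulness of $\cC_0 \hookra \cC$ and $\cD_0 \hookra \cD$.
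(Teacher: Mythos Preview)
Your proposal is correct; the paper states this as an observation without proof, and your final formulation via the chain of natural equivalences $\hom_{\cD_0}(Lc,d) \simeq \hom_\cD(Lc,d) \simeq \hom_\cC(c,Rd) \simeq \hom_{\cC_0}(c,\tilde R d)$ is exactly the intended verification. One small slip to fix in your first paragraph: the target $RLc$ of $\eta_c$ lies in $\cC_0$ (not $\cD_0$), and it is the full faithfulness of $\cC_0 \hookrightarrow \cC$ that lets you lift $\eta$ to a transformation $\id_{\cC_0} \Rightarrow \tilde R L$.
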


\subsection{The formula for $\TC$}
\label{subsection.formula.for.TC.for.real}

\begin{theorem}
\label{thm.formula.for.cyclo.fixedpts}
There exists a canonical functor
\begin{equation}
\label{htpy.T.fixedpoints.plus.plus}
\Cyclo(\Spectra)
\xra{(-)^{\htpy \TT}}
\Fun(\sd(\BNop),\Spectra)
\end{equation}
taking a cyclotomic spectrum $T \in \Cyclo(\Spectra)$ to the diagram
\begin{equation}
\label{formula.for.htpy.T.fixedpoints.plus.plus}
\begin{tikzcd}[row sep=0cm, column sep=1.5cm]
&
\sd(\BNop)
\arrow{r}{T^{\htpy \TT}}
&
\Spectra
\\
&
\rotatebox{90}{$\in$}
&
\rotatebox{90}{$\in$}
\\
(r_1,\ldots,r_k)
=:
\hspace{-2.5cm}
&
W
\arrow[maps to]{r}
&
\left( T^{\tate \Cyclic_W} \right)^{\htpy \TT}
&
\hspace{-1.7cm}
:= \left( T^{\tate \Cyclic_{r_1} \cdots \tate \Cyclic_{r_k}} \right)^{\htpy \TT}
\end{tikzcd}~.
\end{equation}
Moreover, postcomposing with the limit gives a canonical factorization
\[
\begin{tikzcd}[row sep=1.5cm]
\Spectra
\arrow[leftarrow]{rr}{(-)^{\htpy \Cyclo}}
&
&
\Cyclo(\Spectra)
\arrow{ld}{(-)^{\htpy \TT}}
\\
&
\Fun(\sd(\BN),\Spectra)
\arrow{lu}{\lim}
\end{tikzcd}
\]
of the right adjoint to the trivial cyclotomic spectrum functor
\[
\Spectra
\xra{\triv}
\Cyclo(\Spectra)
.
\]
\end{theorem}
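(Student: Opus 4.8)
The plan is to build the functor \Cref{htpy.T.fixedpoints.plus.plus} as a right-lax limit of the morphism of $\BN$-modules obtained in \Cref{constrn.actually.use.proto.tate}, and then to identify the right adjoint $(-)^{\htpy \Cyclo}$ via the formalism of partial adjunctions from \Cref{subsection.partial.adjns}, using the explicit combinatorics of $\sd(\BNop)$ and \Cref{obs.rlax.lim.of.trivial.action}.

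First I would construct the functor \Cref{htpy.T.fixedpoints.plus.plus}. Recall from \Cref{constrn.actually.use.proto.tate} that we have a morphism $\SphTh \to \SphTt$ in $\RMod^\rlax_{\llax.\BN}$, and from \Cref{subsection.trivial.cyclo.spt} a morphism $\ul{\Spectra} \to \SphTh$ in $\LMod^\rlax_\BNop = \RMod^\rlax_\BN$, whose composite $\ul{\Spectra} \to \SphTt$ is a morphism of left-lax right $\BN$-modules whose restriction to the unique object of $\BN$ is the left adjoint $\const : \Spectra \to \Fun(\BT,\Spectra)$. Dually (via the 2-opposite of \Cref{lemma.ptwise.radjt.has.ptwise.ladjt}, as in \Cref{subsection.trivial.cyclo.spt}), the fiberwise right adjoint $(-)^{\htpy\TT}$ assembles into a right-lax equivariant morphism $\SphTt \to \ul{\Spectra}$ of left-lax right $\BN$-modules. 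Applying the right-lax limit functor $\RMod^\rlax_{\llax.\BN} \xra{\lim^\rlax} \Cat$ yields a functor $\Cyclo(\Spectra) = \lim^\rlax(\SphTt \racts \BN) \to \lim^\rlax(\ul{\Spectra} \racts \BN)$, and by \Cref{obs.rlax.lim.of.trivial.action} the target is canonically $\Fun(\BN^\op,\Spectra)$; but this only recovers the ``one-point'' part of the claimed diagram. To get the full $\sd(\BNop)$-indexed diagram, I would instead work directly with the $\Yo^\llax$ description of \Cref{define.rest.of.limits}: an object of $\Cyclo(\Spectra)$ is a section of $\sd(\BNop)^\op \to \BNop$ into $\SphTt$ preserving cartesian morphisms, and evaluating the fiberwise $(-)^{\htpy\TT}$ along this section over each vertex $W = (r_1,\dots,r_k) \in \sd(\BNop)$ produces exactly $(T^{\tate\Cyclic_{r_1}\cdots\tate\Cyclic_{r_k}})^{\htpy\TT}$, with functoriality in $W \in \sd(\BNop)$ coming from the lax structure maps of $\SphTt$ recorded in \Cref{proto.tate}; the subdivision appears precisely because we are taking a left-lax limit of a left-lax (here right-)module, as flagged in \Cref{subsection.lax.limits.in.hard.case}. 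This gives \Cref{htpy.T.fixedpoints.plus.plus} together with its formula \Cref{formula.for.htpy.T.fixedpoints.plus.plus}.

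Next I would establish the factorization through $\lim$. Postcomposing \Cref{htpy.T.fixedpoints.plus.plus} with $\Fun(\sd(\BNop),\Spectra) \xra{\lim} \Spectra$ gives a functor $\Cyclo(\Spectra) \to \Spectra$; I claim it is right adjoint to $\triv$. The strategy is to exhibit a partial adjunction in the sense of \Cref{defn.of.partial.adjn} and invoke \Cref{obs.partial.adjns.compose} and \Cref{obs.partial.adjn.gives.adjn}. Concretely, $\triv$ is built in \Cref{define.triv.cyclo.spt} as a composite $\Spectra \xra{\const} \Fun(\BNop,\Spectra) \simeq \lim^\rlax(\ul{\Spectra}\racts\BN) \to \lim^\rlax(\SphTh\racts\BN) = \Cyclo^\htpy(\Spectra) \to \Cyclo(\Spectra)$, where each stage arises from a morphism of $\BN$-modules that is fiberwise a left adjoint (via \Cref{lem.get.r.lax.left.adjt}). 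Passing to fiberwise right adjoints and then to lax limits, each such stage should yield a partial adjunction on the relevant $\sd(\BNop)$-indexed (resp.\ $\BNop$-indexed) functor categories — fully faithfulness on the relevant subcategories (those sections which are ``constant'' or of trivial-module type) is exactly \Cref{obs.rlax.lim.of.trivial.action}, and the hom-space condition of \Cref{defn.of.partial.adjn} reduces fiberwise to the ordinary adjunction $\const \dashv (-)^{\htpy\TT}$ on $\Fun(\BT,\Spectra)$ together with $\triv \dashv (-)^{\htpy G}$ on genuine equivariant spectra. Composing these partial adjunctions (\Cref{obs.partial.adjns.compose}) and noting that the composite right adjoint factors through $\lim$ (\Cref{obs.partial.adjn.gives.adjn}, with $\tilde R = \lim \circ (-)^{\htpy\TT}$) gives the asserted factorization.

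The main obstacle I anticipate is the bookkeeping in the second step: precisely matching the lax-equivariance structure produced by $\Yo^\llax$/$\Yo^\rlax$ with the Gray-product machinery of \Cref{lem.get.r.lax.left.adjt}, so that ``pass to fiberwise adjoints, then take lax limits'' genuinely yields a \emph{partial} adjunction rather than merely a lax-commuting square — the subtlety being that a right-lax limit of a left-lax module is not itself a limit of any strict diagram (\Cref{rmk.lax.Yo.fails.Segal}), so the fully-faithful subcategory inclusions $\cC_0 \subset \cC$ in the partial adjunctions must be identified by hand with the images of the trivial-module inclusions of \Cref{obs.map.from.strict.limit.to.lax.limit} and \Cref{obs.rlax.lim.of.trivial.action}. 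Once the partial-adjunction diagram is set up correctly, verifying the hom-equivalence is a fiberwise check that reduces to classical facts (homotopy fixedpoints compose; $\triv$ is right adjoint to the forgetful functor on genuine $G$-spectra composed with genuine fixedpoints), so the remaining work is purely organizational.
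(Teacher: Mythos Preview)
Your overall architecture is correct and matches the paper: decompose $\triv$ into its three stages, produce a partial adjunction at each stage, and compose them via \Cref{obs.partial.adjns.compose} and \Cref{obs.partial.adjn.gives.adjn}. The place where your proposal is genuinely incomplete is the construction of the functor \Cref{htpy.T.fixedpoints.plus.plus} itself, and this is exactly the point where the paper introduces a device you are missing.

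You correctly observe that taking fiberwise right adjoints and then $\lim^\rlax$ only lands in $\Fun(\BNop,\Spectra)$, not $\Fun(\sd(\BNop),\Spectra)$. Your proposed fix --- ``evaluate $(-)^{\htpy\TT}$ along the section at each vertex of $\sd(\BNop)$'' --- is the right intuition but is not yet a construction: you need a functor out of the total category $\SphTt$ (or something playing that role) to make this functorial in $\sd(\BNop)$, and the morphism $\SphTt \to \ul{\Spectra}$ you want does not exist as a right-lax equivariant map (so you cannot simply apply $\lim^\rlax$ to it). Also, a small slip: the relevant embedding here is $\Yo^\rlax$, not $\Yo^\llax$, since $\SphTt \in \loc.\coCart_{\BNop}$.

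The paper's resolution is to introduce, alongside $\Yorlax := \Yo^\rlax$, a companion functor
\[
\Zorlax(\cE)\bigl([n]\xra{\varphi}\cB\bigr) \;:=\; \Fun_{/[n]}\bigl(\sd([n]),\varphi^*\cE\bigr)
\]
which drops the requirement that sections preserve cocartesian morphisms. There is then a natural pointwise-fully-faithful transformation $\Yorlax \Rightarrow \Zorlax$, and crucially $\lim\Zorlax(\ul{\Spectra}) \simeq \Fun(\sd(\BNop),\Spectra)$ (whereas $\lim\Yorlax(\ul{\Spectra}) \simeq \Fun(\BNop,\Spectra)$). The three partial adjunctions are then set up with $\lim\Yorlax(-)$ on top and $\lim\Zorlax(-)$ on the bottom: the vertical fully faithful inclusions are $\Yorlax \hookrightarrow \Zorlax$, the top row gives the three stages of $\triv$, and the bottom row gives the desired right adjoints. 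In particular, the functor \Cref{htpy.T.fixedpoints.plus.plus} is \emph{defined} as the composite
\[
\lim\Yorlax(\SphTt) \hookrightarrow \lim\Zorlax(\SphTt) \longrightarrow \lim\Zorlax(\SphTh) \longrightarrow \lim\Zorlax(\ul{\Spectra}) \simeq \Fun(\sd(\BNop),\Spectra),
\]
where the middle arrow uses that $\SphTh \to \SphTt$ is a fiberwise equivalence (so on \emph{arbitrary} sections one can go backwards), and the last uses that $(-)^{\htpy\TT}:\SphTh \to \ul{\Spectra}$ is \emph{strictly} equivariant (homotopy fixedpoints compose), hence induces a map on $\Zorlax$. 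This $\Zorlax$ trick is precisely what converts your informal ``evaluate at each vertex'' into an honest functor and simultaneously supplies the fully faithful inclusions $\cC_0 \subset \cC$ needed for the partial-adjunction formalism --- resolving exactly the bookkeeping obstacle you flag in your final paragraph.
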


\begin{notation}
\label{Y.for.Yorlax}
For a base $\infty$-category $\cB$, we simply write
\[
\Yorlax := \Yo^\rlax
\]
for the right-lax Yoneda embedding
\[ \begin{tikzcd}[row sep=0cm]
\loc.\coCart_\cB
\arrow{r}{\Yorlax}
&
\Fun \left( \left( \bDelta_{/\cB} \right)^\op , \Cat \right)
\\
\rotatebox{90}{$\in$}
&
\rotatebox{90}{$\in$}
\\
(\cE \da \cB)
\arrow[mapsto]{r}
&
\left(
\left( [n] \xlongra{\varphi} \cB \right)
\longmapsto
\Fun^\cocart_{/[n]} \left( \sd([n]) , \varphi^* \cE \right)
\right)
\end{tikzcd} \]
of \Cref{define.Yo.rlax}, so that $\lim^\rlax := \lim \circ \Yorlax$.
\end{notation}

\begin{notation}
Complementing \Cref{Y.for.Yorlax}, we introduce the notation
\[ \begin{tikzcd}[row sep=0cm]
\loc.\coCart_\cB
\arrow{r}{\Zorlax}
&
\Fun \left( \left( \bDelta_{/\cB} \right)^\op , \Cat \right)
\\
\rotatebox{90}{$\in$}
&
\rotatebox{90}{$\in$}
\\
(\cE \da \cB)
\arrow[mapsto]{r}
&
\left(
\left( [n] \xlongra{\varphi} \cB \right)
\longmapsto
\Fun_{/[n]} \left( \sd([n]) , \varphi^* \cE \right)
\right)
\end{tikzcd}
~.
\]
\end{notation}

\begin{observation}
\label{Yorlax.maps.to.Zorlax}
Directly from the definitions, there is a natural transformation
\[ \begin{tikzcd}[column sep=1.5cm]
\loc.\coCart_\cB
\arrow[bend left]{r}{\Yorlax}[swap, transform canvas={yshift=-0.55cm}]{\rotatebox{-90}{$\Rightarrow$}}
\arrow[bend right]{r}[swap]{\Zorlax}
&
\Fun \left( \left( \bDelta_{/\cB} \right)^\op , \Cat \right)
\end{tikzcd} \]
whose components are pointwise fully faithful embeddings.
\end{observation}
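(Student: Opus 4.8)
\textbf{Plan for Observation \ref{Yorlax.maps.to.Zorlax}.}
The goal is to produce a natural transformation $\Yorlax \Rightarrow \Zorlax$ between the two functors $\loc.\coCart_\cB \to \Fun((\bDelta_{/\cB})^\op,\Cat)$, and to check that each of its components (indexed by an object $\cE\da\cB$ of $\loc.\coCart_\cB$ and further by an object $[n]\xra{\varphi}\cB$ of $\bDelta_{/\cB}$) is a fully faithful embedding of $\infty$-categories. The key point is that, for each $[n]\in\bDelta$, the inclusion
\[
\Fun^\cocart_{/[n]}\left(\sd([n]),\varphi^*\cE\right)
\longhookra
\Fun_{/[n]}\left(\sd([n]),\varphi^*\cE\right)
\]
is the inclusion of a full subcategory, by the very definition of $\Fun^\cocart_{/[n]}(-,-)$ recorded in the \emph{Notation} preceding \Cref{define.rest.of.limits} (it is cut out of $\Fun_{/[n]}(-,-)$ by a condition on objects, namely that the section preserve cocartesian morphisms over $[n]$ — which is the condition of sending certain specified morphisms to certain specified morphisms, hence a full subcategory condition). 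So the plan is: first construct the transformation componentwise at the level of $\bDelta$, then promote it to a natural transformation of functors out of $\loc.\coCart_\cB$ by functoriality, and finally invoke the displayed full-faithfulness.

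\textbf{First step: construct the transformation.} For a fixed cocartesian fibration $(\cE\da\cB)$ and a fixed $([n]\xra{\varphi}\cB)\in\bDelta_{/\cB}$, the desired component is precisely the full inclusion above applied to the pullback $\varphi^*\cE\da[n]$. I would record that this is natural in $([n]\xra{\varphi}\cB)$: given a morphism $([m]\xra{\psi}\cB)\to([n]\xra{\varphi}\cB)$ in $\bDelta_{/\cB}$ induced by $\alpha\colon[m]\to[n]$, the restriction-along-$\sd(\alpha)$ functors for $\Fun^\cocart$ and for $\Fun$ fit into a commuting square with the two inclusions, because restriction preserves the property ``preserves cocartesian morphisms over the base'' (the functor $\sd(\alpha)\colon\sd([m])\to\sd([n])$ is compatible with the $\max$-fibrations, carrying cocartesian morphisms to cocartesian morphisms — this is exactly the compatibility exhibited in \Cref{sd.of.brackets.2}, where the monodromy is given by union). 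Hence we obtain a natural transformation $\Yorlax(\cE)\Rightarrow\Zorlax(\cE)$ in $\Fun((\bDelta_{/\cB})^\op,\Cat)$ for each $\cE$.

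\textbf{Second step: naturality in $\cE$.} For a morphism $\cE_1\to\cE_2$ in $\loc.\coCart_\cB$, postcomposition with the induced functor $\varphi^*\cE_1\to\varphi^*\cE_2$ carries cocartesian-preserving sections to cocartesian-preserving sections (since morphisms in $\loc.\coCart_\cB$ are by definition functors over $\cB$ preserving cocartesian morphisms), and carries arbitrary sections to arbitrary sections; these two postcomposition functors commute with the two full inclusions. This is exactly the square needed to exhibit the family $\{\Yorlax(\cE)\Rightarrow\Zorlax(\cE)\}_\cE$ as a natural transformation of functors $\loc.\coCart_\cB\to\Fun((\bDelta_{/\cB})^\op,\Cat)$.

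\textbf{Third step: pointwise full faithfulness.} It remains to observe that each component $\Fun^\cocart_{/[n]}(\sd([n]),\varphi^*\cE)\hookra\Fun_{/[n]}(\sd([n]),\varphi^*\cE)$ is fully faithful, which holds because it is the inclusion of a full subcategory. \emph{I do not expect any real obstacle here}; the only mild subtlety worth a sentence is making sure that the "naturality in $[n]$" and "naturality in $\cE$" are compatible — i.e.\ that we genuinely have a natural transformation of functors out of $\loc.\coCart_\cB$ and not just a pointwise family — but this is immediate from the fact that in both cases the comparison map is literally the identity-on-objects inclusion of a subcategory, so all the coherence squares commute strictly at the level of the defining constructions. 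Thus the statement follows; no calculation beyond unwinding the definitions of $\Fun^\cocart_{/[n]}$ and $\Fun_{/[n]}$ is required.
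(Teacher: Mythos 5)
Your proposal is correct and matches the paper's treatment: the paper offers no argument beyond "directly from the definitions," precisely because each component is, by the definition of $\Fun^\cocart_{/[n]}$, the inclusion of a full subcategory of $\Fun_{/[n]}$, and the naturality squares (in both $([n]\xra{\varphi}\cB)$ and $\cE$) commute for the elementary reasons you spell out. Your write-up is simply a more explicit unwinding of the same observation.
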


\begin{observation}
\label{obs.lim.of.Zorlax.of.trivial.action}
Consider the projection from the product
\[
\ul{\cG}
:=
\cG \times \cB
\longra
\cB
\]
as an object of $\LMod_\cB \subset \LMod_{\llax.\cB}$.  Then, the limit of the presheaf
\[
\Zorlax \left( \ul{\cG} \right) \in \Fun \left( \left( \bDelta_{/\cB} \right)^\op , \Cat \right)
\]
is given by
\begin{align*}
\lim \left( \Zorlax \left( \ul{\cG} \right) \right)
& :=
\lim_{\left( [n] \xra{\varphi} \cB \right) \in \left( \bDelta_{/\cB} \right)^\op} \Fun_{/[n]} \left( \sd([n]) , \varphi^* \ul{\cG} \right)
\\
& \simeq
\lim_{\left( [n] \xra{\varphi} \cB \right) \in \left( \bDelta_{/\cB} \right)^\op}
\Fun ( \sd([n]) , \cG )
\\
& \simeq
\Fun ( \sd(\cB) , \cG )
~.
\end{align*}
\end{observation}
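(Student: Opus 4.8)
The plan is to verify the two equivalences in the displayed computation: the first reduces the relative functor $\infty$-categories to ordinary ones, and the second invokes the construction of $\sd$. Beyond unwinding definitions, the only inputs are the pointwise colimit formula for the left Kan extension defining $\sd$ (\Cref{define.sd}) and the elementary fact that $\Fun(-,\cG) \colon \Cat^\op \ra \Cat$ carries colimits of $\infty$-categories to limits, which holds because $\Cat$ is cartesian closed, so that binary products of $\infty$-categories preserve colimits in each variable.

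First I would make the presheaf $\Zorlax \left( \ul{\cG} \right) \in \Fun \left( \left( \bDelta_{/\cB} \right)^\op , \Cat \right)$ explicit. For a functor $[n] \xra{\varphi} \cB$, the pullback $\varphi^* \ul{\cG}$ is the projection $\cG \times [n] \ra [n]$; since $\Fun(\sd([n]) , \cG \times [n]) \simeq \Fun(\sd([n]),\cG) \times \Fun(\sd([n]),[n])$, passing to the fiber over the structure map $\sd([n]) \xra{\max} [n]$ gives $\Fun_{/[n]}(\sd([n]) , \varphi^* \ul{\cG}) \simeq \Fun(\sd([n]),\cG)$, naturally in $[n] \in \bDelta$. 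Therefore $\Zorlax \left( \ul{\cG} \right)$ is the restriction along $\left( \bDelta_{/\cB} \right)^\op \ra \bDelta^\op$ of the functor $\bDelta^\op \ra \Cat$ given by $[n] \mapsto \Fun(\sd([n]),\cG)$; this establishes the first equivalence.

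Next, by \Cref{define.sd} the endofunctor $\sd$ of $\Cat$ is the left Kan extension of $\sd|_{\bDelta}$ along the fully faithful inclusion $\bDelta \longhookra \Cat$, so the pointwise formula provides a canonical equivalence $\sd(\cB) \simeq \colim_{\left( [n] \xra{\varphi} \cB \right) \in \bDelta_{/\cB}} \sd([n])$ (as in the analogous formula for $\llax(\cB)$ in \Cref{left.laxification.is.left.kan.extended}). Applying $\Fun(-,\cG)$, which converts this colimit in $\Cat$ into the corresponding limit, gives $\Fun(\sd(\cB),\cG) \simeq \lim_{\left( [n] \xra{\varphi} \cB \right) \in \left( \bDelta_{/\cB} \right)^\op} \Fun(\sd([n]),\cG)$, which is the second equivalence. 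Combining the two yields $\lim \left( \Zorlax \left( \ul{\cG} \right) \right) \simeq \Fun(\sd(\cB),\cG)$ as claimed. I do not expect any genuine obstacle: the argument is entirely formal. The one place warranting a little care is the naturality over $\bDelta_{/\cB}$ of the identification $\Fun_{/[n]}(\sd([n]), \cG \times [n]) \simeq \Fun(\sd([n]),\cG)$, but this is immediate since the $[n]$-coordinate of any such functor is pinned to the structure map $\max$, which is itself natural in $[n]$.
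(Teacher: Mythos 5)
Your argument is correct and matches the paper's own (implicit) justification: the observation is proved in the paper simply by the displayed chain of equivalences, whose two steps are exactly what you verify — the fiberwise identification $\Fun_{/[n]}(\sd([n]),\cG\times[n])\simeq\Fun(\sd([n]),\cG)$ (with the $[n]$-coordinate pinned to $\max$), and the colimit formula $\sd(\cB)\simeq\colim_{\bDelta_{/\cB}}\sd([n])$ from the left Kan extension in \Cref{define.sd} together with $\Fun(-,\cG)$ carrying colimits in $\Cat$ to limits. Nothing is missing.
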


\begin{proof}[Proof of \Cref{thm.formula.for.cyclo.fixedpts}]
Our proof takes places within the context of the string of composable partial adjunctions of \Cref{string.of.partial.adjns}.
\begin{sidewaysfigure}

\vspace{400pt}

\[ \begin{tikzcd}[row sep=2cm, column sep=2cm]
&
\Fun(\BNop,\Spectra)
&
\Cyclo^\htpy(\Spectra)
&
\Cyclo(\Spectra)
\\[-2cm]
&
\rotatebox{-90}{$\simeq$}
&
\rotatebox{-90}{$:=$}
&
\rotatebox{-90}{$:=$}
\\[-2cm]
\Spectra
\arrow{ruu}[sloped, pos=0.6]{\const}
\arrow{r}[swap, transform canvas={yshift=-1.4cm, xshift=0.5cm}]{\Rightarrow}
\arrow[equal]{d}
&
\lim \left( \Yorlax \left( \ul{\Spectra} \right) \right)
\arrow{r}[swap, transform canvas={yshift=-1.4cm, xshift=0.1cm}]{\Rightarrow}
\arrow[hook]{d}
&
\lim
\left(
\Yorlax
\left(
\SphTh
\right)
\right)
\arrow{r}[swap, transform canvas={yshift=-1.4cm}]{\Rightarrow}
\arrow[hook]{d}
&
\lim
\left(
\Yorlax
\left(
\SphTt
\right)
\right)
\arrow[hook]{d}
\\
\Spectra
&
\lim \left( \Zorlax \left( \ul{\Spectra} \right) \right)
\arrow{l}
&
\lim
\left(
\Zorlax
\left(
\SphTh
\right)
\right)
\arrow{l}
&
\lim
\left(
\Zorlax
\left(
\SphTt
\right)
\right)
\arrow{l}
\\[-2cm]
&
\rotatebox{-90}{$\simeq$}
\\[-2cm]
&
\Fun(\sd(\BNop),\Spectra)
\arrow{luu}[sloped, pos=0.4]{\lim}
\end{tikzcd} \]

\vspace{2.5cm}

\caption{The string of partial adjunctions in the proof of \Cref{thm.formula.for.cyclo.fixedpts}.}

\label{string.of.partial.adjns}

\end{sidewaysfigure}
It will be immediate that the upper composite is given by the functor
\[
\Spectra
\xra{\triv}
\Cyclo(\Spectra)
~.
\]
Moreover, we take the functor \Cref{htpy.T.fixedpoints.plus.plus} to be the composite
\[ \begin{tikzcd}[row sep=2cm, column sep=2cm]
&[-2.2cm]
&
&
\Cyclo(\Spectra)
\arrow[dashed, bend right=15]{lllddd}[swap]{(-)^{\htpy \TT}}
\\[-2cm]
&[-2.2cm]
&
&
\rotatebox{-90}{$:=$}
\\[-2cm]
&[-2.2cm]
&
&
\lim \left( \Yorlax \left( \SphTt \right) \right)
\arrow[hook]{d}
\\
\Fun ( \sd(\BNop) , \Spectra )
&[-2.2cm]
\simeq
\lim \left( \Zorlax \left( \ul{\Spectra} \right) \right)
&
\lim \left( \Zorlax \left( \SphTh \right) \right)
\arrow{l}
&
\lim \left( \Zorlax \left( \SphTt \right) \right)
\arrow{l}
\end{tikzcd}~; \]
it will follow from unwinding the definitions that this does indeed act as asserted in formula \Cref{formula.for.htpy.T.fixedpoints.plus.plus}.  The result will then follow immediately from Observations \ref{obs.partial.adjns.compose} \and \ref{obs.partial.adjn.gives.adjn}.  Thus, it remains to describe the three partial adjunctions of \Cref{string.of.partial.adjns}.  We declare immediately that all downwards functors besides the leftmost one arise from \Cref{Yorlax.maps.to.Zorlax}; as fully faithful embeddings are stable under limits in $\Ar(\Cat)$, all downwards functors are indeed fully faithful embeddings, as required by \Cref{defn.of.partial.adjn}.  We also note that the identifications of the two limits in the second column with the indicated functor $\infty$-categories respectively follow from Observations \ref{obs.rlax.lim.of.trivial.action} \and \ref{obs.lim.of.Zorlax.of.trivial.action}.

Let us first address the partial adjunction on the left in \Cref{string.of.partial.adjns}.  Observe that we have an evident identification
\[ \begin{tikzcd}[row sep=1.5cm] 
\Spectra
\arrow{r}{\const}
\arrow[dashed]{rd}[sloped, swap, pos=0.6]{\const}
&
\Fun(\BNop,\Spectra)
\arrow[hook]{d}
\\
&
\Fun(\sd(\BNop),\Spectra)
\end{tikzcd} \]
of the indicated composite.  Then, we obtain the desired partial adjunction the adjunction
\[ \begin{tikzcd}[column sep=2cm, row sep=0cm]
\Spectra
\arrow[transform canvas={yshift=0.9ex}]{r}{\const}
\arrow[leftarrow, transform canvas={yshift=-0.9ex}]{r}[yshift=-0.2ex]{\bot}[swap]{\lim}
&
\Fun(\sd(\BNop),\Spectra)
\end{tikzcd}~: \]
it follows immediately from the resulting diagram
\[ \begin{tikzcd}[row sep=1.5cm, column sep=1.5cm]
\Spectra
\arrow{r}
\arrow[equal]{d}
&
\Fun(\BNop,\Spectra)
\arrow[hook]{d}
\\
\Spectra
\arrow{r}[swap, transform canvas={xshift=0.7cm, yshift=-1cm}]{\Rightarrow}
\arrow[equal]{d}
&
\Fun(\sd(\BNop),\Spectra)
\arrow[equal]{d}
\\
\Spectra
&
\Fun(\sd(\BNop),\Spectra)
\arrow{l}
\end{tikzcd} \]
in which the natural transformation is the unit and the upper square commutes.\footnote{It is also possible to obtain this partial left adjoint as the limit of a morphism
\[
\const(\Spectra)
\longra
\ul{\Spectra}
\]
in $\Fun \left( \left( \bDelta_{/\BNop} \right)^\op , \Cat\right)$.  However, the right adjoint does not exist at this level.}

Let us next address the partial adjunction in the middle in \Cref{string.of.partial.adjns}.  Its upper functor is the right-lax limit of the morphism \Cref{take.rlax.lim.to.get.triv.cyclo.sp.with.frob.lifts} in $\LMod^\rlax_\BNop$.  By construction, this morphism is the left adjoint of an adjunction in $\LMod^\rlax_\BNop$, whose right adjoint lies in the subcategory $\LMod_\BNop \subset \LMod^\rlax_\BNop$.  From the resulting diagram
\[ \begin{tikzcd}[row sep=1.5cm, column sep=1.5cm]
\Adj
\arrow{r}
&
\LMod^\rlax_\BNop
\arrow[hook]{r}{\ff}
&
\Fun \left( \left( \bDelta_\BNop \right)^\op , \Cat \right)
\arrow{r}{\lim}
&
\Cat
\\
{[1]}
\arrow{u}{\sf r.adjt}
\arrow{r}
&
\LMod_\BNop
\arrow[hook, two heads]{u}
\arrow[hook]{ru}[pos=0.55]{\Yorlax}[swap, sloped, transform canvas={xshift=0.1cm, yshift=-0.2cm}]{\rotatebox{-90}{$\Rightarrow$}}
\arrow[bend right]{ru}[swap, pos=0.4]{\Zorlax}
\end{tikzcd} \]
we extract the diagram
\[ \begin{tikzcd}[row sep=1.5cm, column sep=1.5cm]
\lim \left( \Yorlax \left( \ul{\Spectra} \right) \right)
\arrow{r}[swap, transform canvas={xshift=0.2cm, yshift=-1cm}]{\Rightarrow}
\arrow[equal]{d}
&
\lim \left( \Yorlax \left( \SphTh \right) \right)
\arrow[equal]{d}
\\
\lim \left( \Yorlax \left( \ul{\Spectra} \right) \right)
\arrow[hook]{d}
&
\lim \left( \Yorlax \left( \SphTh \right) \right)
\arrow{l}
\arrow[hook]{d}
\\
\lim \left( \Zorlax \left( \ul{\Spectra} \right) \right)
&
\lim \left( \Zorlax \left( \SphTh \right) \right)
\arrow{l}
\end{tikzcd} \]
in which the natural transformation is the unit and the lower square commutes, from which the desired partial adjunction immediately follows.

Let us finally address the partial adjunction on the right in \Cref{string.of.partial.adjns}.  Its upper functor is the right-lax limit of the morphism \Cref{morphism.before.rlax.lims.from.cyclo.spt.w.frob.lifts.to.cyclo.spt} in $\LMod^\rlax_{\llax.\BNop}$.  As this is an equivalence on underlying $\infty$-categories, it is not hard to see that it canonically induces the remaining data of the partial adjunction: indeed, this all arises diagramatically.  
\end{proof}

\begin{observation}
\label{obs.h.to.tau.map.for.cyclo.spt.w.frob.lifts}
The proof of \Cref{thm.formula.for.cyclo.fixedpts} directly furnishes a natural transformation
\[ \begin{tikzcd}[row sep=2cm]
\Cyclo^\htpy(\Spectra)
\arrow{rr}{\fgt}
\arrow{rd}[swap]{(-)^{\htpy \WW}}[sloped, transform canvas={xshift=0.7cm,yshift=0.3cm}]{\Uparrow}
&
&
\Cyclo^\tate(\Spectra)
\arrow{ld}{(-)^{\htpy \Cyclo}}
\\
&
\Spectra
\end{tikzcd}~. \]
Specifically, this is obtained from the diagram in \Cref{string.of.partial.adjns} by applying the lower composite
\[
\Spectra
\longla
\lim \left( \Zorlax \left( \ul{\Spectra} \right) \right)
\longla
\lim ( \Zorlax ( \SphTh ) )
\]
to the unit of the partial adjunction on the right.
\end{observation}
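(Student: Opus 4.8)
The plan is to read the asserted $2$-cell straight off of \Cref{string.of.partial.adjns}: it is obtained by applying the bottom-row composite $\lim(\Zorlax(\SphTh)) \to \Spectra$ appearing there to the unit of the rightmost of the three partial adjunctions.

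First I would fix notation for the relevant pieces of that figure. Let
\[
\iota : \lim \left( \Yorlax \left( \SphTh \right) \right) \longhookra \lim \left( \Zorlax \left( \SphTh \right) \right)
\qquad \textup{and} \qquad
\iota' : \lim \left( \Yorlax \left( \SphTt \right) \right) \longhookra \lim \left( \Zorlax \left( \SphTt \right) \right)
\]
be the two fully faithful embeddings of \Cref{Yorlax.maps.to.Zorlax} along the top right of \Cref{string.of.partial.adjns}, let $R_3 : \lim(\Zorlax(\SphTt)) \to \lim(\Zorlax(\SphTh))$ and $R_2 : \lim(\Zorlax(\SphTh)) \to \lim(\Zorlax(\ul\Spectra))$ be the lower horizontal functors of the rightmost and middle partial adjunctions, and let
\[
P : \lim \left( \Zorlax \left( \SphTh \right) \right) \xlongra{R_2} \lim \left( \Zorlax \left( \ul{\Spectra} \right) \right) \simeq \Fun \left( \sd(\BNop) , \Spectra \right) \xlongra{\lim} \Spectra
\]
be the lower composite of the left two partial adjunctions, using the identification of \Cref{obs.lim.of.Zorlax.of.trivial.action} (so $P$ is exactly the ``lower composite'' referenced in the statement). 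The partial left adjoint of the rightmost partial adjunction is the functor $\fgt$ of \Cref{obs.forget.frob.lifts}, so its unit is a $2$-morphism $\eta : \iota \Rightarrow R_3 \circ \iota' \circ \fgt$ between two functors $\Cyclo^\htpy(\Spectra) \to \lim(\Zorlax(\SphTh))$. I claim the transformation in the observation is $P$ applied to $\eta$ (i.e.\ the postcomposition $P \circ \eta$).

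To see this I would identify the source and target of $P \circ \eta$. By \Cref{obs.partial.adjns.compose} the composite of the left two partial adjunctions of \Cref{string.of.partial.adjns} is again a partial adjunction, with partial left adjoint $\triv^\htpy$; by \Cref{obs.partial.adjn.gives.adjn} together with the factorization of $\triv^\htpy$ through $\Fun(\BNop,\Spectra) \simeq \lim(\Yorlax(\ul\Spectra))$ recorded in the proof of \Cref{thm.formula.for.cyclo.fixedpts}, the corresponding right adjoint is $P \circ \iota$ — this is, by definition, the functor $(-)^{\htpy\WW}$, so $P \circ \iota = (-)^{\htpy\WW}$ is the source of $P \circ \eta$. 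For the target, that same proof exhibits the factorization $(-)^{\htpy\Cyclo} = \lim \circ (-)^{\htpy\TT}$ with $(-)^{\htpy\TT} = R_2 \circ R_3 \circ \iota'$, whence $P \circ R_3 \circ \iota' = \lim \circ R_2 \circ R_3 \circ \iota' = (-)^{\htpy\Cyclo}$; precomposing with $\fgt$ identifies $P \circ R_3 \circ \iota' \circ \fgt$ with $(-)^{\htpy\Cyclo} \circ \fgt$. Hence $P \circ \eta$ is a natural transformation $(-)^{\htpy\WW} \Rightarrow (-)^{\htpy\Cyclo} \circ \fgt$, filling the triangle as stated.

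The only real work — and hence the main obstacle, modest as it is — is organizational: tracing through the three nested ladder diagrams assembled in the proof of \Cref{thm.formula.for.cyclo.fixedpts} to verify that $P \circ \eta$ is formed against literally the functors $(-)^{\htpy\WW}$ and $(-)^{\htpy\Cyclo} \circ \fgt$ (not merely equivalent ones), and that the equivalences $\lim(\Zorlax(\ul\Spectra)) \simeq \Fun(\sd(\BNop),\Spectra)$ and $\Cyclo^\tate(\Spectra) := \Cyclo(\Spectra)$ are inserted compatibly throughout. No homotopy-theoretic input beyond what is already in place is needed; in particular the fact that $\fgt$ is a fiberwise equivalence (that \Cref{morphism.before.rlax.lims.from.cyclo.spt.w.frob.lifts.to.cyclo.spt} is an equivalence on underlying $\infty$-categories, cf.\ \Cref{cor.cart.fibn.to.BN.for.htpy.cyclo.sp}) plays no role in producing the $2$-cell, only potentially in interpreting it afterwards.
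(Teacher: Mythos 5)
Your proposal is correct and is essentially the paper's own construction: the paper likewise obtains the $2$-cell by applying the lower composite $\lim(\Zorlax(\SphTh)) \to \lim(\Zorlax(\ul{\Spectra})) \simeq \Fun(\sd(\BNop),\Spectra) \xra{\lim} \Spectra$ to the unit of the rightmost partial adjunction in \Cref{string.of.partial.adjns}. Your extra bookkeeping identifying the source with $(-)^{\htpy \WW}$ (the right adjoint of the composite of the left two partial adjunctions) and the target with $(-)^{\htpy \Cyclo} \circ \fgt$ is exactly the unwinding the paper leaves implicit.
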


\begin{remark}
\label{unwind.sd.BN}
Let us describe the category $\sd(\BN)$ and the diagram \Cref{formula.for.htpy.T.fixedpoints.plus.plus} in detail.  First of all, an object of $\sd(\BN)$ is unambiguously specified by a possibly empty word
\[ W := (r_1,\ldots,r_k) \]
in $\Nx$ not containing any 1's.  Then, the morphisms are generated by the operations of
\begin{itemize}
\item adding a letter at the beginning, e.g.\! $(2,3) \ra (5,2,3)$,
\item adding a letter at the end, e.g.\! $(2,3) \ra (2,3,5)$, and
\item factoring a letter, e.g.\! $(5,6,4) \ra (5,3,2,4)$;
\end{itemize}
by convention, the operations of adding the same letter before and after the empty word $\emptyword$ are distinct.  Thus, a small portion of this category is depicted by the diagram
\begin{figure}[h]
\begin{tikzcd}[column sep=2cm]
\emptyword
\arrow[transform canvas={yshift=0.8ex}]{r}
\arrow[transform canvas={yshift=-0.8ex}]{r}
\arrow[transform canvas={yshift=0.725ex, xshift=0.325ex}]{rd}
\arrow[transform canvas={yshift=-0.725ex, xshift=-0.325ex}]{rd}
\arrow[transform canvas={yshift=0.55ex, xshift=0.55ex}]{rdd}
\arrow[transform canvas={yshift=-0.55ex, xshift=-0.55ex}]{rdd}
&
(2)
\arrow[transform canvas={yshift=0.8ex}]{r}
\arrow[transform canvas={yshift=-0.8ex}]{r}
\arrow{rd}
\arrow{rdd}
&
(2,2)
&
\cdots
\\
&
(3)
\arrow{r}
\arrow{rd}
\arrow[transform canvas={yshift=0.55ex, xshift=0.55ex}]{rdd}
\arrow[transform canvas={yshift=-0.55ex, xshift=-0.55ex}]{rdd}
&
(2,3)
&
\cdots
\\
&
(6)
\arrow{ru}
\arrow{r}
&
(3,2)
&
\cdots
\\
&
\vdots
&
(3,3)
&
\cdots
\\
&
&
\vdots
&
\ddots
\end{tikzcd}
\caption{A small portion of the category $\sd(\BN)$.}
\label{picture.of.sd.BN}
\end{figure}
in \Cref{picture.of.sd.BN}, in which for instance the composite
\[
\emptyword \sim ( 1 )
\longra
(2 , 1 )
\sim (2)
\longra
(2,3)
\]
is identified with the composite
\[
\emptyword \sim ( 1 )
\longra
( 1 , 3)
\sim (3)
\longra
(2,3)
~.
\]
From here, the corresponding structure maps of diagram \Cref{formula.for.htpy.T.fixedpoints.plus.plus} are respectively given by
\begin{itemize}
\item the map
\begin{equation}
\label{structure.map.prepend.letter}
\left( \left( T^{\tate \Cyclic_2} \right)^{\tate \Cyclic_3} \right)^{\htpy \TT}
\xra{ \left( \left( \left( \sigma_5 \right)^{\tate \Cyclic_2} \right)^{\tate \Cyclic_3} \right)^{\htpy \TT} }
\left( \left( \left( T^{\tate \Cyclic_5} \right)^{\tate \Cyclic_2} \right)^{\tate \Cyclic_3} \right)^{\htpy \TT}
\end{equation}
determined by the cyclotomic structure map $\sigma_5$,
\item the map
\begin{equation}
\label{structure.map.append.letter}
\left( \left( T^{\tate \Cyclic_2} \right)^{\tate \Cyclic_3} \right)^{\htpy \TT}
\simeq
\left( \left( \left( T^{\tate \Cyclic_2} \right)^{\tate \Cyclic_3} \right)^{\htpy \Cyclic_5} \right)^{\htpy (\TT/\Cyclic_5)}
\longra
\left( \left( T^{\tate \Cyclic_2 \tate \Cyclic_3} \right)^{\tate \Cyclic_5} \right)^{\htpy (\TT/\Cyclic_5)}
\simeq
\left( \left( T^{\tate \Cyclic_2 \tate \Cyclic_3} \right)^{\tate \Cyclic_5} \right)^{\htpy \TT}
\end{equation}
determined by the natural transformation
\[
(-)^{\htpy \Cyclic_5}
\longra
(-)^{\tate \Cyclic_5}
\]
coming from the definition of the Tate construction, in which
\begin{itemize}
\item the first equivalence comes from the fact that homotopy fixedpoints compose, while
\item the second equivalence is simply the identification $(\TT/\Cyclic_5) \simeq \TT$,
\end{itemize}
and
\item the map
\begin{equation}
\label{structure.map.factor.letter}
\left( \left( T^{\tate \Cyclic_5} \right)^{\tate \Cyclic_6} \right)^{\tate \Cyclic_4}
\xra{ \left( \chi_{3,2} \right)^{\tate \Cyclic_4} }
\left( \left( \left( T^{\tate \Cyclic_5} \right)^{\tate \Cyclic_3} \right)^{\tate \Cyclic_2} \right)^{\tate \Cyclic_4}
\end{equation}
determined by the component of the natural transformation $\chi_{3,2}$ at the object $T^{\tate \Cyclic_5}$.
\end{itemize}
In particular, in the diagram \Cref{formula.for.htpy.T.fixedpoints.plus.plus}, the object $T^{\htpy \TT}$ itself -- the value at the empty word $\emptyword \in \sd(\BN)$ -- maps out to every other object $\left( T^{\tate \Cyclic_W} \right)^{\htpy \TT}$ in a multitude of different ways, according to the various possible sequences of prepending, appending, and factoring elements of $\Nx$ that take $\emptyword$ to $W$.
\end{remark}

\bibliographystyle{amsalpha}
\bibliography{cyclo}{}

\end{document}